\theoremstyle{thmstyleone}%
\newtheorem{theorem}{Theorem}[section]
\newtheorem{lemma}[theorem]{Lemma}%
\newtheorem{proposition}[theorem]{Proposition}%
\newtheorem{corollary}[theorem]{Corollary}%
\theoremstyle{thmstyletwo}%
\newtheorem{remark}{Remark}[section]%
\theoremstyle{thmstylethree}%
\newtheorem{definition}{Definition}[section]%
\def\@email#1#2{%
	\endgroup
	\patchcmd{\titleblock@produce}
	{\frontmatter@RRAPformat}
	{\frontmatter@RRAPformat{\produce@RRAP{*#1\href{mailto:#2}{#2}}}\frontmatter@RRAPformat}
	{}{}
}%
\begin{document}
	
	\title[Optimal Fluctuations for Nonlinear Chemical Reaction Systems with General Rate Law]{Optimal Fluctuations for Nonlinear Chemical Reaction Systems with General Rate Law}
	\author{Feng Zhao}
	\author{Jinjie Zhu}%
	\affiliation{State Key Laboratory of Mechanics and Control for Aerospace Structures, College of Aerospace Engineering, Nanjing University of Aeronautics and Astronautics, 29 Yudao Street, Nanjing 210016, China}%
	
	\author{Yang Li}
	\affiliation{School of Automation, Nanjing University of Science and Technology, 200 Xiaolingwei Street, Nanjing 210094, China}
	
	\author{Xianbin Liu}
	\altaffiliation[\textbf{Authors to whom correspondence should be addressed}]{}
	\email{xbliu@nuaa.edu.cn}
	\author{Dongping Jin}
	\altaffiliation[\textbf{Authors to whom correspondence should be addressed}]{}
	\email{jindp@nuaa.edu.cn}
	
	\affiliation{State Key Laboratory of Mechanics and Control for Aerospace Structures, College of Aerospace Engineering, Nanjing University of Aeronautics and Astronautics, 29 Yudao Street, Nanjing 210016, China}%

	\date{\today}
	
	\begin{abstract}
		This paper investigates optimal fluctuations for chemical reaction systems with $N$ species, $M$ reactions, and general rate law. In the limit of large volume, large fluctuations for such models occur with overwhelming probability in the vicinity of the so-called optimal path, which is a basic consequence of the Freidlin-Wentzell theory, and is vital in biochemistry as it unveils the almost deterministic mechanism concealed behind rare noisy phenomena such as escapes from the attractive domain of a stable state and transitions between different metastable states. In this study, an alternative description for optimal fluctuations is proposed in both non-stationary and stationary settings by means of a quantity called prehistory probability in the same setting, respectively. The evolution law of each of them is derived, showing their relationship with the time reversal of a specified family of probability distributions respectively. The law of large numbers and the central limit theorem for the reversed processes are then proved. In doing so, the prehistorical approach to optimal fluctuations for Langevin dynamics is naturally generalized to the present case, thereby suggesting a strong connection between optimal fluctuations and the time reversal of the chemical reaction model. 
	\end{abstract}
	
	\maketitle
	
	\section{Introduction}\label{sec01}
	Macroscopic variables characterizing the dynamics of multitudinous realistic systems in physics, chemistry, and biology typically fluctuate continuously due to environmental or intrinsic randomness. They predominantly oscillate stochastically in the vicinity of a stable state, with large deviations occurring occasionally. Such large fluctuations, although rare, are responsible for numerous phenomena in diverse scientific disciplines, including epigenetic switching in genetic networks \cite{Roma_2005}, atomic migration in crystals \cite{Schuss_1978,Schuss_1979}, unidirectional motion and energy transduction for molecular motors \cite{Zhang_2012}, phase transitions \cite{Schlogl_1972,Hao_Ge_2010}, stochastic and coherence resonance \cite{Freidlin_2000,Freidlin_2001,Muratov_2005} in bistable systems, and so on.
	
	In the last few decades, a great deal of effort has been devoted to the study of large fluctuations for nonlinear systems driven by Gaussian noise (with a flat \cite{Maier_1992,Chinarov_1993,Holcman_2018} or non-flat \cite{Bray_1989,Dykman_1990,Dykman_1993b} spectrum). For these models, there are, broadly speaking, two extensively used techniques  to approach significant fluctuations. The rigorous definition of the optimal path is contingent upon the large deviation principle \cite{Freidlin_2012}, or equivalently, the path integral formulation \cite{Graham_1977,McKane_1990a,McKane_1990b}. Both of them assert that the probability of an event is exponentially dominated by the minimum of the so-called Freidlin-Wentzell action functional, and that the rare large fluctuations, if occurring, are more likely to be close to the minimizer of the functional. This determination of the almost deterministic behavior hidden in a rare stochastic event was then proved to be a key to questions involving the behavior of these processes over infinite time intervals, such as the estimates on the stationary distribution \cite{Freidlin_2012}, the exit time \cite{Naeh_1990}, and the exit point distribution \cite{Maier_1997,Luchinsky_1999}, and so on. Building on these findings, several novel phenomena specific to systems far from equilibrium were also discovered, including the singularities in the patterns of extreme paths \cite{Maier_1993,Dykman_1994b,Dykman_1996,Smelyanskiy_1997}, the non-differentiability of the quasi-potential \cite{Maier_1993,Dykman_1996}, and the coexistence of multiple optimal paths \cite{Dykman_1996}.
	
	The statistical description of optimal fluctuations is attributed to Dykman et al.\cite{Dykman_1992}, who first proposed a quantity termed prehistory probability in the framework of Langevin dynamics, and demonstrated that it not only has the property of pinpointing the location of the optimal path, but also provides the statistical information of nearby trajectories. Such a quantity has recently been shown to be intimately associated with the time reversal of a specified family of probability distributions, connecting the optimal fluctuation with a reversed process \cite{Zhao_2022}. The advantages have been numerically substantiated in the investigation of escapes from chaotic attractors \cite{ANISHCHENKO_2001} and the coexistence of multiple optimal paths \cite{Dykman_1996}. 
	
	This paper concerns large fluctuations for nonlinear chemical reaction systems consisting of $N$ chemical species and $M$ reactions
	\begin{equation}\label{eq010101}
		\nu^+_{i1}S_1+\nu^+_{i2}S_2+\cdots+\nu^+_{iN}S_N\ce{<=>[\mathit{r_{+i}}][\mathit{r_{-i}}]}\nu^-_{i1}S_1+\nu^-_{i2}S_2+\cdots+\nu^-_{iN}S_N,
	\end{equation}
	in which $1\leq i\leq M$. $\nu_{ij}=\nu^-_{ij}-\nu^+_{ij}$ are called stoichiometric coefficients, which are always integers measuring the change in the number of the $j$th species when the $i$th forward or backward reaction occurs. Let $n_j(t)$ be the number of molecules of the $j$th species at the moment $t$. We make the following assumptions.
	
	(a) The system under consideration is confined to a volume $V$, well-stirred and in thermal equilibrium at some constant temperature. To the extent that this happens, it is possible to disregard the positions and velocities of the individual molecules, and instead focus exclusively on events that result in a change to the population vector $\boldsymbol{n}=(n_1,n_2,\cdots,n_N)^\top\in\mathbb{N}^N$ (or the concentration vector $\boldsymbol{x}=\boldsymbol{n}/V\in{V}^{-1}\mathbb{N}^N$) of the chemical species \cite{Gillespie_2007}. This simplifies the problem considerably. 
	
	(b) Each reaction is elementary and microscopically reversible, with the forward rate $r_{+i}(\boldsymbol{n},V):\mathbb{N}^N\times\mathbb{R}_{+}\to\mathbb{R}_{+}\triangleq[0,+\infty)$ and the backward rate $r_{-i}(\boldsymbol{n},V):\mathbb{N}^N\times\mathbb{R}_{+}\to\mathbb{R}_{+}$, which depict the number of occurrences of the $i$th forward and backward reactions per unit time respectively. In addition, there exist functions $R_{\pm i}(\boldsymbol{x}):\mathbb{R}_{+}^N\to\mathbb{R}_{+}$ such that $\lim_{V\to\infty}V^{-1}r_{\pm i}(\boldsymbol{n},V)=R_{\pm i}(\boldsymbol{x})$ for any $\boldsymbol{x}\in\mathbb{R}_{+}^N$ at the macroscopic limit $V\to\infty$, $\boldsymbol{n}/V\to\boldsymbol{x}$ \cite{Horn_1972,Hao_Ge_2017}. In particular, in Delbr\"uck-Gillespie's description of chemical kinetics, $r_{\pm i}(\boldsymbol{n},V)=k_{\pm i}V\prod_{j=1}^{N}{\frac{n_j !}{(n_{j}-\nu^{\pm}_{ij})!V^{\nu^{\pm}_{ij}}}}$ \cite{Kurtz_1972a,Gillespie_1977,Gillespie_2007}. It follows that $R_{\pm i}(\boldsymbol{x})=k_{\pm i}\prod_{j=1}^{N}{x_j^{\nu^{\pm}_{ij}}}$, which restores Waage-Guldberg's law of mass action for macroscopic chemical kinetics \cite{Hao_Ge_2016a}.
	
	At finite volume $V$, Eq. (\ref{eq010101}) defines a homogeneous Markov process with values in $V^{-1}\mathbb{N}^N$, whose trajectories are right continuous piecewise constant functions that can be expressed in terms of the random-time-changed Poisson form \cite{Kurtz_2015,Anderson_2015}
	\begin{equation}\label{eq010102}
			\boldsymbol{x}_{V}(t)=\boldsymbol{x}_{V}(0)
			+V^{-1}\sum_{i=1}^{M}{\boldsymbol{\nu}_{i}\left\{Y_{+i}\left(\int_{0}^{t}{r_{+i}(V\boldsymbol{x}_{V}(s),V)\mathrm{d}s}\right)-Y_{-i}\left(\int_{0}^{t}{r_{-i}(V\boldsymbol{x}_{V}(s),V)\mathrm{d}s}\right)\right\}},
	\end{equation}
	where $\boldsymbol{\nu}_{i}=(\nu_{i1},\nu_{i2},\cdots,\nu_{iN})^\top$, and $Y_{\pm i}(u)$ are $2M$ independent, standard Poisson processes.
	
	Based on the large deviation principle for Markov jump processes \cite{Freidlin_2012}, the analogous concept of the optimal path in chemical kinetics was proposed in [\onlinecite{Dykman_1994a}], and successfully applied to the estimation of the behavior of $\boldsymbol{x}_V(t)$ over very long time intervals, including the asymptotics of the stationary distribution \cite{Shwartz_1995}, the mean exit time \cite{Knessl_1984,Shwartz_1995}, and, more recently, the exit location distribution \cite{Pakdaman_2010}. However, to the best of our knowledge, the relationship between optimal paths and the time reversal of the associated process (\ref{eq010102}) has not been established. The objective of this paper is to extend the prehistorical description of optimal fluctuations for Langevin systems to the domain of chemical reaction models. The program's structure is outlined below: Several limit theorems and the associated results are presented in Sec. \ref{sec02}. In Sec. \ref{sec03}, it is demonstrated that the time reversal of a given family of probability distributions can be characterized by a Markov jump process of the same type as Eq. (\ref{eq010102}). This naturally interprets the prehistory probabilities in non-stationary and stationary settings as the conditional probability of a specific reversed stochastic process in the same setting respectively. The prehistorical description of optimal fluctuations on both finite and infinite time intervals is then established in Secs. \ref{sec04} and \ref{sec05}, respectively, by means of the law of large numbers and the central limit theorem of the reversed processes. Numerical examples are exhibited in Sec. \ref{sec06}. The conclusions of this study are set out in Sec. \ref{sec07}. The proofs and algorithms are relegated to the appendix.
	
	\section{Preliminaries}\label{sec02}
	\subsection{Kurtz's Limit Theorems}\label{sec0201}
	As $V\to\infty$, stochastic chemical reaction models in the form of Eq. (\ref{eq010101}) are "almost deterministic", which is a consequence of the type of the law of large numbers (cf. [\onlinecite[Theorem 2.2]{Kurtz_1978}]), and can be formulated as follows.
	\begin{theorem}\label{theo0201}
		Assume that
		\begin{itemize}
			\item[(a)] there exists a constant $\Gamma_0$ such that for any $1\leq i\leq M$, $\boldsymbol{x}\in{V}^{-1}\mathbb{N}_+^N$ and sufficiently large $V$,
			\begin{equation*}
				\begin{aligned}
					\vert V^{-1}r_{\pm i}(V\boldsymbol{x},V)\vert\leq \Gamma_0,\\
					\vert V^{-1}r_{\pm i}(V\boldsymbol{x},V)-R_{\pm i}(\boldsymbol{x})\vert\leq\frac{\Gamma_0}{V};
				\end{aligned}
			\end{equation*} 
			\item[(b)] $\boldsymbol{F}(\boldsymbol{x})\triangleq\sum_{i=1}^{M}\boldsymbol{\nu}_i(R_{+i}(\boldsymbol{x})-R_{-i}(\boldsymbol{x}))$ is Lipschitz continuous.
		\end{itemize}
		Then if $\lim_{V\to\infty}\boldsymbol{x}_V(0)=\boldsymbol{x}_{\infty}(0)$ a.s. (almost surely), for any $T>0$,
		\begin{equation}\label{eq020101}
			\lim_{V\to\infty}\sup_{t\in [0,T]}\vert\boldsymbol{x}_V(t)-\boldsymbol{x}_\infty(t)\vert=0,\;\text{a.s.},
		\end{equation}
		in which $\boldsymbol{x}_\infty(t)$ is the unique solution of the deterministic equation
		\begin{equation}\label{eq020102}
			\boldsymbol{x}_{\infty}(t)=\boldsymbol{x}_\infty(0)+\int_{0}^t{\boldsymbol{F}(\boldsymbol{x}_\infty(s))\,\mathrm{d}s}.
		\end{equation}
		
		Furthermore, if $\vert\boldsymbol{x}_V(0)-\boldsymbol{x}_{\infty}(0)\vert\leq{O}\left({1}/{\sqrt{V}}\right)$ a.s., then
		\begin{equation}\label{eq020103}
			\sup_{t\in [0,T]}\vert\boldsymbol{x}_V(t)-\boldsymbol{x}_\infty(t)\vert\leq{O}\left(\frac{1}{\sqrt{V}}\right),\;\text{a.s.}.
		\end{equation}
	\end{theorem}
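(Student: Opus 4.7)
The plan is to use the random-time-changed Poisson representation~(\ref{eq010102}) together with the deterministic integral equation~(\ref{eq020102}) to write, for $t\in[0,T]$,
\begin{equation*}
\boldsymbol{x}_V(t)-\boldsymbol{x}_\infty(t)=[\boldsymbol{x}_V(0)-\boldsymbol{x}_\infty(0)]+M_V(t)+E_V(t)+D_V(t),
\end{equation*}
where $M_V(t)=V^{-1}\sum_{i}\boldsymbol{\nu}_i[\tilde{Y}_{+i}(A_{+i}(t))-\tilde{Y}_{-i}(A_{-i}(t))]$ is the compensated (martingale) fluctuation built from $\tilde Y_{\pm i}(u)=Y_{\pm i}(u)-u$ and $A_{\pm i}(t)=\int_0^t r_{\pm i}(V\boldsymbol{x}_V(s),V)\,\mathrm{d}s$; $E_V(t)=\sum_i\boldsymbol{\nu}_i\int_0^t[V^{-1}r_{+i}-R_{+i}(\boldsymbol{x}_V)-V^{-1}r_{-i}+R_{-i}(\boldsymbol{x}_V)]\,\mathrm{d}s$ is the rate-approximation error; and $D_V(t)=\int_0^t[\boldsymbol{F}(\boldsymbol{x}_V(s))-\boldsymbol{F}(\boldsymbol{x}_\infty(s))]\,\mathrm{d}s$ captures the drift discrepancy. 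Hypothesis~(a) gives $|E_V(t)|\leq C_1/V$ and the uniform bound $A_{\pm i}(t)\leq V\Gamma_0 T$ on $[0,T]$, while hypothesis~(b) yields $|D_V(t)|\leq L\int_0^t|\boldsymbol{x}_V(s)-\boldsymbol{x}_\infty(s)|\,\mathrm{d}s$.

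Next, I would control $M_V$ by transferring the random time-change to a deterministic window. Because $A_{\pm i}(t)\leq V\Gamma_0 T$, the process $M_V$ is uniformly dominated by $V^{-1}\sum_i|\boldsymbol{\nu}_i|\sup_{0\leq u\leq V\Gamma_0 T}|\tilde Y_{\pm i}(u)|$, and the classical strong law of large numbers for the standard Poisson process, together with monotonicity of the sup, gives
\begin{equation*}
V^{-1}\sup_{0\leq u\leq V\Gamma_0 T}|\tilde Y_{\pm i}(u)|\;\xrightarrow[V\to\infty]{\text{a.s.}}\;0.
\end{equation*}
Combining all three bounds and invoking Gronwall's lemma on $\varepsilon_V(t)=\sup_{s\leq t}|\boldsymbol{x}_V(s)-\boldsymbol{x}_\infty(s)|$ yields~(\ref{eq020101}).

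For the $O(1/\sqrt V)$ rate I would refine the martingale estimate: Doob's $L^2$ inequality applied to $\tilde Y_{\pm i}$ gives $\mathbb{E}\sup_{u\leq V\Gamma_0 T}|\tilde Y(u)|^2\leq 4V\Gamma_0 T$, so $\sup_t|M_V(t)|$ is of order $V^{-1/2}$ in mean square; the corresponding almost-sure bound follows from the law of the iterated logarithm for the Poisson process together with a Borel-Cantelli argument along a geometric subsequence $V_k=2^k$. Plugging this strengthened control, the $O(1/V)$ bound on $E_V$, and the hypothesis on the initial data into Gronwall once more gives~(\ref{eq020103}). The principal obstacle is that the time-change $A_{\pm i}(t)$ is itself a functional of the random process $\boldsymbol{x}_V$, so the Poisson fluctuations are not a priori indexed over a deterministic interval; this difficulty is defused precisely by the uniform-in-$V$ boundedness assumption~(a), which confines the time-change to $[0,V\Gamma_0 T]$ and allows the classical SLLN and LIL for the standard Poisson process to be applied uniformly in $V$.
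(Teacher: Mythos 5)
The paper does not actually prove Theorem \ref{theo0201}; it is imported from Kurtz's 1978 strong-approximation paper (Theorem 2.2 there), so the comparison is against the standard argument underlying that citation --- which is exactly the route you take. Your decomposition $\boldsymbol{x}_V(t)-\boldsymbol{x}_\infty(t)=[\boldsymbol{x}_V(0)-\boldsymbol{x}_\infty(0)]+M_V(t)+E_V(t)+D_V(t)$ is algebraically correct; hypothesis (a) does give $\vert E_V(t)\vert\leq C_1/V$ and confines the random time changes to $A_{\pm i}(t)\leq V\Gamma_0 T$; hypothesis (b) supplies the Gronwall term; and the functional strong law for the centered Poisson processes gives $V^{-1}\sup_{0\leq u\leq V\Gamma_0 T}\vert\tilde Y_{\pm i}(u)\vert\to 0$ a.s., so Gronwall delivers Eq. (\ref{eq020101}). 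This half is sound and is essentially the canonical proof.

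The refinement to Eq. (\ref{eq020103}) has a genuine gap. Doob's inequality does give a mean-square rate $V^{-1/2}$ for $\sup_t\vert M_V(t)\vert$, but the almost-sure rate cannot be extracted the way you propose. The law of the iterated logarithm gives $\sup_{u\leq U}\vert\tilde Y(u)\vert=O(\sqrt{U\ln\ln U})$, and its lower half shows $\vert\tilde Y(U)\vert\geq c\sqrt{U\ln\ln U}$ for arbitrarily large $U$ a.s., so in the coupled construction $V^{-1}\sup_{u\leq V\Gamma_0 T}\vert\tilde Y(u)\vert$ is \emph{not} $O(1/\sqrt V)$ a.s.; the best your tools yield is $O(\sqrt{\ln\ln V/V})$. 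The Borel--Cantelli step along $V_k=2^k$ also does not close with the $L^2$ bound alone: Chebyshev gives $P\left(\sup_{u\leq V_k\Gamma_0 T}\vert\tilde Y(u)\vert>\lambda\sqrt{V_k}\right)\leq 4\Gamma_0 T/\lambda^2$, which is not summable in $k$ for fixed $\lambda$; to make it summable one needs exponential (Chernoff/Bernstein-type) martingale bounds for the compensated Poisson process, and those inevitably introduce logarithmic corrections of exactly the kind that appear in Eqs. (\ref{eq020105}) and (\ref{eq020107}). So either the conclusion must be weakened to $O(\sqrt{\ln\ln V/V})$ (arguably what the cited result actually supports, with the paper's $O(1/\sqrt V)$ read loosely), or an argument not present in your outline is required.
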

	
	More precisely, in some neighborhood of $\{\boldsymbol{x}_\infty(t),t\in [0,T]\}$ (in the space $\mathcal{D}([0,T];\mathbb{R}^N)$ with a uniform-convergence topology), local fluctuations for the chemical reaction model can be approximated as a diffusion process, which was rigorously proved by Kurtz (cf. [\onlinecite[Theorem 3.3]{Kurtz_1978}]) by means of a Poisson representation of the form (\ref{eq010102}). One can also refer to the Kramers-Moyal expansion of the chemical master equation in order to achieve the same diffusion approximation \cite{Kampen_1961,Knessl_1984}.
	\begin{theorem}\label{theo0202}
		Assume that the conditions in Theorem \ref{theo0201} hold, and further suppose that for each $i$, $R_{\pm i}(\boldsymbol{x})$ are Lipschitz continuous. Let $\boldsymbol{y}_V(t)$ be a diffusion process that satisfies
		\begin{equation}\label{eq020104}
			\begin{aligned}
				\boldsymbol{y}_V(t)=&\boldsymbol{y}_V(0)+\int_{0}^{t}\boldsymbol{F}(\boldsymbol{y}_V(s))\mathrm{d}s\\
				&+\frac{1}{\sqrt{V}}\sum_{i=1}^{M}\boldsymbol{\nu}_{i}\left(\int_0^t\sqrt{R_{+i}(\boldsymbol{y}_V(s))}\mathrm{d}w_{+i}(s)-\int_0^t\sqrt{R_{-i}(\boldsymbol{y}_V(s))}\mathrm{d}w_{-i}(s)\right),
			\end{aligned}
		\end{equation}
		where $w_{\pm i}(t)$ are $2M$ independent, standard Wiener processes. 
		
		If $\vert\boldsymbol{x}_V(0)-\boldsymbol{y}_{V}(0)\vert\leq{O}\left({1}/{{V}}\right)$ a.s., then for any $T>0$,
		\begin{equation}\label{eq020105}
			\sup_{t\in [0,T]}\vert\boldsymbol{x}_V(t)-\boldsymbol{y}_V(t)\vert\leq{O}\left(\frac{\ln{V}}{V}\right),\;\text{a.s.}.
		\end{equation}
	\end{theorem}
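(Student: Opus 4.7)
The plan is to put $\boldsymbol{x}_V$ and $\boldsymbol{y}_V$ on a common probability space through a coupling that ties each Poisson process in Eq.~(\ref{eq010102}) to the corresponding Wiener process in Eq.~(\ref{eq020104}), and then bound the resulting pathwise difference via Gronwall's inequality.

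First I would invoke the Koml\'os-Major-Tusn\'ady/Kurtz strong embedding for Poisson processes: on a suitable probability space one can construct a unit-rate Poisson process $Y_{\pm i}(\cdot)$ together with a standard Brownian motion $B_{\pm i}(\cdot)$ such that
\begin{equation*}
\sup_{0\le u\le U}\bigl|Y_{\pm i}(u)-u-B_{\pm i}(u)\bigr|\le C_{\pm i}\log(U\vee 2)\quad\text{a.s.},
\end{equation*}
for some a.s.\ finite random constant $C_{\pm i}$. By assumption~(a) of Theorem~\ref{theo0201}, the time argument $\int_{0}^{t}r_{\pm i}(V\boldsymbol{x}_V(s),V)\,\mathrm{d}s$ is bounded by $\Gamma_0 V T$ on $[0,T]$, so inserting this embedding into Eq.~(\ref{eq010102}) replaces each $V^{-1}Y_{\pm i}(\cdot)$ with $V^{-1}(\cdot)+V^{-1}B_{\pm i}(\cdot)$, introducing an a.s.\ additive error of order $\log V/V$, uniformly in $t\in[0,T]$.

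Next I would apply the Dambis-Dubins-Schwarz time change to rewrite $V^{-1}B_{\pm i}$ evaluated at the (rescaled) intensity clock as the It\^o integral $V^{-1/2}\int_0^t\sqrt{V^{-1}r_{\pm i}(V\boldsymbol{x}_V(s),V)}\,\mathrm{d}w_{\pm i}(s)$ driven by a standard Brownian motion $w_{\pm i}$ constructed from $B_{\pm i}$. Using assumption~(a) once more, $V^{-1}r_{\pm i}=R_{\pm i}+O(V^{-1})$ permits the diffusion coefficient inside the stochastic integral to be replaced by $\sqrt{R_{\pm i}(\boldsymbol{x}_V(s))}$ up to an a.s.\ $O(V^{-1})$ correction, with the deviation in the martingale part controlled by the Burkholder-Davis-Gundy inequality. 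The upshot is that $\boldsymbol{x}_V$ satisfies the same integral equation as $\boldsymbol{y}_V$, driven by the common Brownian motions $w_{\pm i}$, modulo an additive residual $\boldsymbol{\varepsilon}_V(t)$ with $\sup_{t\in[0,T]}|\boldsymbol{\varepsilon}_V(t)|=O(\log V/V)$ a.s.

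Finally, I would subtract the two integral equations, take suprema, and apply Gronwall. The Lipschitz continuity of $\boldsymbol{F}$ handles the drift difference routinely. The hard part will be the diffusion coefficient: the theorem only guarantees that $R_{\pm i}$ is Lipschitz, so $\sqrt{R_{\pm i}}$ is in general merely $\tfrac12$-H\"older at points where $R_{\pm i}$ vanishes, which blocks a direct Lipschitz-based Gronwall step on the stochastic part. I would address this obstacle by a localization argument: Theorem~\ref{theo0201} already confines both $\boldsymbol{x}_V$ and $\boldsymbol{y}_V$ to an a.s.\ $O(V^{-1/2})$-neighborhood of $\{\boldsymbol{x}_\infty(t):t\in[0,T]\}$, and on this compact tube either (i) the rates satisfy $R_{\pm i}\ge\epsilon>0$ and $\sqrt{R_{\pm i}}$ is genuinely Lipschitz, so that BDG combined with Gronwall yields the claimed rate, or (ii) where $R_{\pm i}$ may touch zero, one uses the standard bound $|\sqrt{R(\boldsymbol{x})}-\sqrt{R(\boldsymbol{y})}|^{2}\le L|\boldsymbol{x}-\boldsymbol{y}|$ together with BDG to obtain a Bihari-type integral inequality whose solution still produces the rate $O(\log V/V)$. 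Combining the resulting estimates with the residual $\boldsymbol{\varepsilon}_V$ then delivers Eq.~(\ref{eq020105}).
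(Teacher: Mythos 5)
Your overall architecture --- the Koml\'os--Major--Tusn\'ady/Kurtz strong embedding of each unit Poisson process by a Brownian motion, a joint construction of $\boldsymbol{x}_V$ and $\boldsymbol{y}_V$ on one probability space driven by the same noises, and a Gronwall closure --- is exactly the route of the source the paper cites for this statement (Kurtz 1978, Theorem 3.3); the paper gives no independent proof of its own. The first two steps of your plan (the a.s.\ $O(\log V/V)$ residual from the embedding, obtained from the bound $\Gamma_0 V T$ on the Poisson clocks) are sound.

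The gap is in the final estimation step. First, the Burkholder--Davis--Gundy inequality produces moment bounds, whereas (\ref{eq020105}) is an almost-sure statement; BDG plus Gronwall (or Bihari) yields at best $E\sup_{t\le T}|\boldsymbol{x}_V(t)-\boldsymbol{y}_V(t)|^p = O((\log V/V)^p)$, and passing from that to the a.s.\ rate needs an additional exponential-martingale/Borel--Cantelli argument you have not supplied. Second, the $\tfrac{1}{2}$-H\"older obstruction for $\sqrt{R_{\pm i}}$ is an artifact of insisting on the It\^o form: Kurtz never compares $\sqrt{R_{\pm i}(\boldsymbol{x}_V)}$ with $\sqrt{R_{\pm i}(\boldsymbol{y}_V)}$. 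He keeps the diffusion term in the time-changed form $V^{-1}B_{\pm i}\left(V\int_0^t R_{\pm i}(\cdot)\,\mathrm{d}s\right)$, observes that the two clocks differ by at most $V L T\sup_{s\le T}|\boldsymbol{x}_V(s)-\boldsymbol{y}_V(s)|$ by the Lipschitz continuity of $R_{\pm i}$ itself, and bounds the Brownian increment over that interval by L\'evy's modulus of continuity; this gives an a.s.\ self-consistent inequality of the form $\epsilon_V\le C\log V/V + C\int_0^T\epsilon_V\,\mathrm{d}s + C\sqrt{\epsilon_V\log V/V}$, whence $\epsilon_V=O(\log V/V)$ a.s. If you want to keep your It\^o-form version, replace BDG by the Dambis--Dubins--Schwarz representation of the difference martingale combined with the a.s.\ modulus of continuity of the representing Brownian motion near the origin --- which is the same device in disguise. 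A minor further point: Theorem \ref{theo0201} confines $\boldsymbol{x}_V$, not $\boldsymbol{y}_V$, to the $O(V^{-1/2})$ tube around $\boldsymbol{x}_\infty$; the analogous confinement of the diffusion needs its own (easy) argument before your localization can be invoked.
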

	
	Moreover, the following result regarding the central limit theorem for the chemical reaction model shows that local deviations of order ${O}(1/\sqrt{V})$ are approximately Gaussian (cf. [\onlinecite[Theorem 4.4]{Kurtz_1978}]).
	\begin{theorem}\label{theo0203}
		Assume that the conditions in Theorems \ref{theo0201} and \ref{theo0202} hold, and also suppose that
		\begin{itemize}
			\item[(a)] there exists a constant $L_0$ such that for any $\boldsymbol{x},\boldsymbol{y}\in \mathbb{R}_+^N$,
			\begin{equation*}
				\sum_{i=1}^{M}\left\vert\boldsymbol{\nu}_i\right\vert^2\left(\left\vert \sqrt{R_{+i}(\boldsymbol{x})}-\sqrt{R_{+i}(\boldsymbol{y})}\right\vert^2+\left\vert \sqrt{R_{-i}(\boldsymbol{x})}-\sqrt{R_{-i}(\boldsymbol{y})}\right\vert^2\right)\leq L_0\left\vert\boldsymbol{x}-\boldsymbol{y}\right\vert^2;
			\end{equation*}
			\item[(b)] $\boldsymbol{F}(\boldsymbol{x})$ is bounded, and has bounded and continuous partial derivatives up to order $2$ inclusive.
		\end{itemize}
		
		Let $\boldsymbol{z}(t)$ be the Gaussian process defined by
		\begin{equation}\label{eq020106}
			\begin{aligned}
				\boldsymbol{z}(t)=&\int_{0}^{t}\nabla\boldsymbol{F}(\boldsymbol{x}_{\infty}(s))\cdot\boldsymbol{z}(s)\mathrm{d}s\\
				&+\sum_{i=1}^{M}\boldsymbol{\nu}_i\left(\int_{0}^{t}\sqrt{R_{+i}(\boldsymbol{x}_\infty(s))}\mathrm{d}w_{+i}(s)-\int_{0}^{t}\sqrt{R_{-i}(\boldsymbol{x}_\infty(s))}\mathrm{d}w_{-i}(s)\right).
			\end{aligned}
		\end{equation}
		
		If $\vert\boldsymbol{x}_V(0)-\boldsymbol{x}_{\infty}(0)\vert\leq{O}\left({1}/{{V}}\right)$ a.s., then for any $T>0$,
		\begin{equation}\label{eq020107}
			\sup_{t\in [0,T]}\left\vert \sqrt{V}\left(\boldsymbol{x}_V(t)-\boldsymbol{x}_\infty(t)\right)-\boldsymbol{z}(t)\right\vert\leq{O}\left(\frac{\ln{V}}{\sqrt{V}}\right),\;\text{a.s.}.
		\end{equation}
	\end{theorem}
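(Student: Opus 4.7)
The plan is to treat Theorem \ref{theo0203} as a quantitative coupling result: I would place $\boldsymbol{x}_V(t)$ and $\boldsymbol{z}(t)$ on a common probability space by driving the Poisson representation (\ref{eq010102}) and the linear SDE (\ref{eq020106}) with the \emph{same} family of Wiener noises, and then estimate the sup-norm distance between $\sqrt{V}(\boldsymbol{x}_V-\boldsymbol{x}_\infty)$ and $\boldsymbol{z}$. The essential tool is a Komlós–Major–Tusnády-type strong approximation for standard Poisson processes: on an enlarged probability space one may construct standard Wiener processes $w_{\pm i}$ such that $\sup_{u\le U}|Y_{\pm i}(u)-u-w_{\pm i}(u)|\le O(\ln U)$ a.s. Since assumption (a) of Theorem \ref{theo0201} bounds the internal clocks $\int_0^t r_{\pm i}(V\boldsymbol{x}_V(s),V)\,\mathrm{d}s$ by $O(V)$ on $[0,T]$, this approximation will contribute an error of order $\ln V$, which becomes $O(\ln V/\sqrt{V})$ after the $V^{-1}$ prefactor in (\ref{eq010102}) and the $\sqrt{V}$ rescaling.

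The first step is to substitute $Y_{\pm i}(u)=u+w_{\pm i}(u)+O(\ln V)$ into (\ref{eq010102}), subtract (\ref{eq020102}), multiply by $\sqrt{V}$, and apply the standard time-change identity $w_{\pm i}(\int_0^t A(s)\,\mathrm{d}s)=\int_0^t\sqrt{A(s)}\,\mathrm{d}\tilde{w}_{\pm i}(s)$ for suitable adapted $\tilde{w}_{\pm i}$; combined with assumption (a) of Theorem \ref{theo0201}, this yields
\begin{equation*}
\sqrt{V}\bigl(\boldsymbol{x}_V(t)-\boldsymbol{x}_\infty(t)\bigr)=\sqrt{V}\bigl(\boldsymbol{x}_V(0)-\boldsymbol{x}_\infty(0)\bigr)+\sqrt{V}\!\int_0^t\!\bigl[\boldsymbol{F}(\boldsymbol{x}_V(s))-\boldsymbol{F}(\boldsymbol{x}_\infty(s))\bigr]\mathrm{d}s+\mathcal{M}_V(t)+\mathcal{E}_V(t),
\end{equation*}
where $\mathcal{M}_V(t)$ collects the stochastic integrals $\sum_i\boldsymbol{\nu}_i\int_0^t\sqrt{R_{\pm i}(\boldsymbol{x}_V(s))}\,\mathrm{d}w_{\pm i}(s)$ and $\mathcal{E}_V(t)$ is a remainder of order $\ln V/\sqrt{V}$ uniformly on $[0,T]$. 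I would then exploit the $C^2$ smoothness of $\boldsymbol{F}$ (assumption (b) of Theorem \ref{theo0203}) to write $\boldsymbol{F}(\boldsymbol{x}_V(s))-\boldsymbol{F}(\boldsymbol{x}_\infty(s))=\nabla\boldsymbol{F}(\boldsymbol{x}_\infty(s))\cdot(\boldsymbol{x}_V(s)-\boldsymbol{x}_\infty(s))+O(|\boldsymbol{x}_V(s)-\boldsymbol{x}_\infty(s)|^2)$, whose quadratic remainder, by the a.s. bound (\ref{eq020103}), contributes $O(1/\sqrt{V})$ after multiplication by $\sqrt{V}$. Similarly, assumption (a) of Theorem \ref{theo0203} lets me replace $\sqrt{R_{\pm i}(\boldsymbol{x}_V(s))}$ by $\sqrt{R_{\pm i}(\boldsymbol{x}_\infty(s))}$ in $\mathcal{M}_V$ up to a martingale whose quadratic variation is $O(|\boldsymbol{x}_V-\boldsymbol{x}_\infty|^2)\le O(1/V)$, yielding a pathwise error of at most $O(\sqrt{(\ln V)/V})$ by Burkholder–Davis–Gundy (or a direct modulus-of-continuity argument for Brownian motion).

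Subtracting the analogous integral equation for $\boldsymbol{z}(t)$ from (\ref{eq020106}), the difference $\boldsymbol{\Delta}_V(t):=\sqrt{V}(\boldsymbol{x}_V(t)-\boldsymbol{x}_\infty(t))-\boldsymbol{z}(t)$ satisfies a linear integral inequality of the form $|\boldsymbol{\Delta}_V(t)|\le C\int_0^t|\boldsymbol{\Delta}_V(s)|\,\mathrm{d}s+O(\ln V/\sqrt{V})$ a.s.\ on $[0,T]$, where $C$ depends on the sup-norm of $\nabla\boldsymbol{F}$ along $\boldsymbol{x}_\infty(\cdot)$ (finite by assumption (b)). Gronwall's inequality then gives (\ref{eq020107}). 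The principal technical obstacle is the simultaneous control, at the same $O(\ln V/\sqrt{V})$ precision, of (i) the KMT remainder after the nonlinear, state-dependent time change inside the Poisson arguments, (ii) the frozen-coefficient replacement $\sqrt{R_{\pm i}(\boldsymbol{x}_V)}\to\sqrt{R_{\pm i}(\boldsymbol{x}_\infty)}$ in the diffusion part, and (iii) the Taylor remainder in the drift; each must be fed the a priori rate $|\boldsymbol{x}_V-\boldsymbol{x}_\infty|=O(1/\sqrt{V})$ supplied by Theorem \ref{theo0201} before Gronwall is applied, and any loosening in one term would degrade the final logarithmic rate.
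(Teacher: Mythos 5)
This theorem is quoted by the paper from [Kurtz 1978, Theorem 4.4] without an internal proof, and your sketch reproduces the argument behind that reference: KMT strong approximation of the driving Poisson processes by Brownian motions, linearization of the drift using the a priori rate $\vert\boldsymbol{x}_V-\boldsymbol{x}_\infty\vert=O(1/\sqrt{V})$, freezing of the diffusion coefficients via hypothesis (a), and Gronwall. The error bookkeeping ($O(\ln V)$ from the embedding on clocks of size $O(V)$, $O(1/\sqrt{V})$ from the Taylor remainder, $O(\sqrt{\ln V/V})$ from the martingale replacement) is consistent with the claimed $O(\ln V/\sqrt{V})$ rate, so the proposal is correct and takes essentially the same route as the cited source.
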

	\subsection{Large Deviation Principle}\label{sec0202}
	Diffusion approximation to the chemical reaction systems has been demonstrated to be applicable in a variety of situations, including stochastic simulations \cite{Gillespie_2000} and estimates of the stationary distribution in the vicinity of a stable state \cite{Dykman_1993}. Nonetheless, it has been shown that the approximation becomes invalid if the major contribution of the event under consideration is dominated by large fluctuations \cite{Knessl_1984,Pakdaman_2010,Freidlin_2012}. As demonstrated in the seminal works of Freidlin and Wentzell \cite{Freidlin_2012} (cf. also [\onlinecite{Shwartz_1995}]), the probabilities of rare events for $\boldsymbol{x}_V(t)$ can be described by a rate function (or an action functional) that does not coincide with that of the approximated diffusion process $\boldsymbol{y}_V(t)$. Consequently, the large deviation principle for the original chemical reaction model, as opposed to its diffusion approximation, is imperative for the rigorous definition of the optimal path.
	
	For arbitrary $T>0$, denote by $\mathcal{D}([0,T];\mathbb{R}^N)$ the space containing all the functions of the variable $t\in [0,T]$ with values in $\mathbb{R}^N$ that are right continuous with left limits. Let $\Lambda$ be a collection of strictly increasing real functions $\lambda$ on $[0,T]$, such that $\lambda(0)=0$, $\lambda(T)=T$, and
	\begin{equation*}
		\gamma(\lambda)\triangleq\sup_{0\leq s\leq t\leq T}{\left\vert\ln\frac{\lambda(s)-\lambda(t)}{s-t}\right\vert}<\infty.
	\end{equation*}
	Define a metric on $\mathcal{D}([0,T];\mathbb{R}^N)$ by
	\begin{equation*}
		\rho\left(\left\{\boldsymbol{x}(t):t\in [0,T]\right\},\left\{\boldsymbol{y}(t):t\in [0,T]\right\}\right)\triangleq\inf_{\lambda\in\Lambda}\left\{\max\left(\gamma(\lambda),\sup_{t\in[0,T]}\left\vert\boldsymbol{x}(t)-\boldsymbol{y}(\lambda(t))\right\vert\right)\right\}.
	\end{equation*}
	It follows that $\left(\mathcal{D}([0,T];\mathbb{R}^N),\rho\right)$ is a Polish space (a complete separable metric space) called the Skorohod space (cf. [\onlinecite[Theorem A.55]{Shwartz_1995}]).
	
	Denote 
	\begin{equation*}
		H(\boldsymbol{x},\boldsymbol{\alpha})\triangleq\sum_{i=1}^{M}\left(R_{+i}(\boldsymbol{x})\left(e^{\boldsymbol{\nu}_i\cdot\boldsymbol{\alpha}}-1\right)+R_{-i}(\boldsymbol{x})\left(e^{-\boldsymbol{\nu}_i\cdot\boldsymbol{\alpha}}-1\right)\right),
	\end{equation*}
	and 
	\begin{equation*}
		L(\boldsymbol{x},\boldsymbol{\beta})\triangleq\sup_{\boldsymbol{\alpha}\in\mathbb{R}^N}\left(\boldsymbol{\alpha}\cdot\boldsymbol{\beta}-H(\boldsymbol{x},\boldsymbol{\alpha})\right),
	\end{equation*}
	then the Freidlin-Wentzell-type large deviation principle (cf. [\onlinecite[Theorem 5.1; Proposition 5.49]{Shwartz_1995}]) for stochastic chemical reaction models can be elaborated as follows, with the corresponding rate function on $\mathcal{D}([0,T];\mathbb{R}^N)$ defined by
	\begin{equation}\label{eq020201}
		I_{[0,T]}\left(\left\{\boldsymbol{\phi}(t):t\in[0,T]\right\}\right)\triangleq\begin{cases}
			\int_{0}^{T}L(\boldsymbol{\phi}(s),\dot{\boldsymbol{\phi}}(s))\mathrm{d}s, &\text{if $\boldsymbol{\phi}(t)$ is absolutely continuous}, \\ 
			\infty,&\text{otherwise}.
		\end{cases}
	\end{equation}
	
	\begin{theorem}\label{theo0204}
		Assume that for each $i$, $\ln{R_{\pm i}(\boldsymbol{x})}$ are bounded and Lipschitz continuous, and $V^{-1}r_{\pm i}(V\boldsymbol{x},V)$ converge to $R_{\pm i}(\boldsymbol{x})$ uniformly with respect to $\boldsymbol{x}\in\mathbb{R}_+^N$ as $V\to\infty$. Then 
		\begin{itemize}
			\item[(a)] $I_{[0,T]}$ is a good rate function on $\left(\mathcal{D}([0,T];\mathbb{R}^N),\rho\right)$, i.e., $I_{[0,T]}$ is lower semi-continuous on $\left(\mathcal{D}([0,T];\mathbb{R}^N),\rho\right)$, and the set $\cup_{\boldsymbol{x}_{0}\in{K}}\Phi_{\boldsymbol{x}_{0},[0,T]}(s)$ is compact for any compact subset $K\subset\mathbb{R}_{+}^N$, where $\Phi_{\boldsymbol{x}_{0},[0,T]}(s)$ is defined for each $\boldsymbol{x}_{0}\in\mathbb{R}_{+}^N$ and $s>0$ by
			\begin{equation*}
				\Phi_{\boldsymbol{x}_{0},[0,T]}(s)\triangleq\{\boldsymbol{\phi}(t):t\in[0,T],\boldsymbol{\phi}(0)=\boldsymbol{x}_{0},I_{[0,T]}\left(\{\boldsymbol{\phi}(t):t\in[0,T]\}\right)\leq{s}\};
			\end{equation*}
			\item[(b)] for any open set $\mathcal{G}\subset\mathcal{D}([0,T];\mathbb{R}^N)$, and uniformly for any $\boldsymbol{x}_0$ in each compact subset of $\mathbb{R}_+^N$,
			\begin{equation}\label{eq020202}
				\begin{aligned}
					\liminf_{V\to\infty}V^{-1}&\ln P_{\boldsymbol{x}_0}\left(\left\{\boldsymbol{x}_V(t):t\in[0,T]\right\}\in \mathcal{G}\right)\geq\\
					&-\inf\left\{I_{[0,T]}\left(\left\{\boldsymbol{\phi}(t):t\in[0,T]\right\}\right):\boldsymbol{\phi}(0)=\boldsymbol{x}_0,\left\{\boldsymbol{\phi}(t):t\in[0,T]\right\}\in \mathcal{G}\right\};
				\end{aligned}
			\end{equation}
			\item[(c)] for any closed set $\mathcal{F}\subset\mathcal{D}([0,T];\mathbb{R}^N)$, and $\boldsymbol{x}_0\in\mathbb{R}_+^N$
			\begin{equation}\label{eq020203}
				\begin{aligned}
					\limsup_{V\to\infty}V^{-1}&\ln P_{\boldsymbol{x}_0}\left(\left\{\boldsymbol{x}_V(t):t\in[0,T]\right\}\in \mathcal{F}\right)\leq\\
					&-\inf\left\{I_{[0,T]}\left(\left\{\boldsymbol{\phi}(t):t\in[0,T]\right\}\right):\boldsymbol{\phi}(0)=\boldsymbol{x}_0,\left\{\boldsymbol{\phi}(t):t\in[0,T]\right\}\in \mathcal{F}\right\}.
				\end{aligned}
			\end{equation}
		\end{itemize}
		Here, $P_{\boldsymbol{x}_0}(\cdot)$ means the probability of the process $\boldsymbol{x}_V(t)$ conditioned on $\boldsymbol{x}_V(0)=\boldsymbol{x}_0$.
	\end{theorem}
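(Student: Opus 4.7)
The plan is to adapt the Freidlin--Wentzell program to Markov jump processes, using the Poisson representation (\ref{eq010102}) as the starting point. For part (a), the key observation is that $H(\boldsymbol{x},\boldsymbol{\alpha})$ is a sum of Poisson cumulant generating functions, hence convex in $\boldsymbol{\alpha}$, so $L(\boldsymbol{x},\boldsymbol{\beta})$ is convex, lower semi-continuous in $\boldsymbol{\beta}$, and grows superlinearly. Lower semi-continuity of $I_{[0,T]}$ on the Skorohod space then follows from a Fatou-type argument: along a $\rho$-convergent sequence of paths with bounded action, superlinearity of $L$ yields uniform integrability of the velocities, and Mazur's lemma applied to the derivatives passes lower semi-continuity to the limit. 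Compactness of $\cup_{\boldsymbol{x}_0\in K}\Phi_{\boldsymbol{x}_0,[0,T]}(s)$ is then a consequence of controlling the modulus of continuity of $\boldsymbol{\phi}$ via its action (a de la Vall\'ee Poussin–type estimate) and invoking Arzel\`a--Ascoli.

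The central tool for (b) and (c) is an exponential martingale built from the representation (\ref{eq010102}). For each smooth $\boldsymbol{\alpha}:[0,T]\to\mathbb{R}^N$, I would verify that
\[
M_V^{\boldsymbol{\alpha}}(t) = \exp\!\left(V\boldsymbol{\alpha}(t)\cdot\boldsymbol{x}_V(t) - V\boldsymbol{\alpha}(0)\cdot\boldsymbol{x}_V(0) - V\!\int_0^t\!\left[\dot{\boldsymbol{\alpha}}(s)\cdot\boldsymbol{x}_V(s) + H_V(\boldsymbol{x}_V(s),\boldsymbol{\alpha}(s))\right]\mathrm{d}s\right)
\]
is a mean-one martingale, where $H_V$ is the pre-limit analogue of $H$ assembled from $V^{-1}r_{\pm i}$; this uses only the elementary fact that $\exp(\alpha Y(u)-u(e^\alpha-1))$ is a martingale in $u$ for a standard Poisson process $Y$. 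Under the tilted law $\mathrm{d}\mathbb{P}^{\boldsymbol{\alpha}}_V = M_V^{\boldsymbol{\alpha}}(T)\,\mathrm{d}\mathbb{P}$, each Poisson rate is multiplied by $e^{\pm\boldsymbol{\nu}_i\cdot\boldsymbol{\alpha}(t)}$, and re-applying Theorem \ref{theo0201} to the tilted rates furnishes an LLN with deterministic limit $\boldsymbol{\phi}^{\boldsymbol{\alpha}}$ solving $\dot{\boldsymbol{\phi}}^{\boldsymbol{\alpha}} = \nabla_{\boldsymbol{\alpha}}H(\boldsymbol{\phi}^{\boldsymbol{\alpha}},\boldsymbol{\alpha})$.

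For the lower bound (b), given a smooth $\boldsymbol{\phi}$ with $\boldsymbol{\phi}(0)=\boldsymbol{x}_0$ and $I_{[0,T]}(\boldsymbol{\phi})<\infty$, I would pick the dual control $\boldsymbol{\alpha}^*(s)$ solving $\dot{\boldsymbol{\phi}}(s)=\nabla_{\boldsymbol{\alpha}}H(\boldsymbol{\phi}(s),\boldsymbol{\alpha}^*(s))$, so that $L(\boldsymbol{\phi},\dot{\boldsymbol{\phi}}) = \boldsymbol{\alpha}^*\cdot\dot{\boldsymbol{\phi}} - H(\boldsymbol{\phi},\boldsymbol{\alpha}^*)$ pointwise; rewriting $P_{\boldsymbol{x}_0}(\{\boldsymbol{x}_V\}\in\mathcal{G})$ as an expectation of $1/M_V^{\boldsymbol{\alpha}^*}$ under $\mathbb{P}^{\boldsymbol{\alpha}^*}_V$ and exploiting the tilted LLN yields the exponential rate $-I_{[0,T]}(\boldsymbol{\phi})$, after which a mollification argument using convexity of $L$ extends the estimate to arbitrary absolutely continuous paths of finite action. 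For the upper bound (c), exponential Chebyshev applied to $M_V^{\boldsymbol{\alpha}}$ yields, for any closed $\rho$-ball around a candidate $\boldsymbol{\phi}$, a bound whose supremum over $\boldsymbol{\alpha}$ reproduces $I_{[0,T]}(\boldsymbol{\phi})$. The main obstacle is promoting this ball-wise estimate to a functional-level statement on the full Skorohod closed set $\mathcal{F}$, which requires exponential tightness of $\{\boldsymbol{x}_V\}$. I would establish such tightness by bounding exponential moments of the compensated jump counts using the uniform rate bound $|V^{-1}r_{\pm i}|\leq\Gamma_0$ from Theorem \ref{theo0201}(a) together with the boundedness of $\ln R_{\pm i}$, then cover $\mathcal{F}$ by finitely many small $\rho$-balls inside a compact set of overwhelming probability. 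The Lipschitz hypothesis on $\ln R_{\pm i}$ is precisely what keeps the tilted rates comparable to the original ones where some $R_{\pm i}$ may degenerate near $\partial\mathbb{R}_+^N$, and this degeneracy at the boundary is the most delicate technical point.
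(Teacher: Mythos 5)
The paper does not prove Theorem \ref{theo0204}: it is quoted directly from the monograph of Shwartz and Weiss (their Theorem 5.1 and Proposition 5.49), so there is no internal proof to compare against. Your outline reconstructs essentially the argument used in that reference: convex-analytic properties of $L$ (convexity, lower semicontinuity, superlinear growth on the increment space $\boldsymbol{\nu}^\top(\mathbb{R}^M)$, with $L=\infty$ off it) give goodness of the rate function; the exponential martingale $M_V^{\boldsymbol{\alpha}}$ and the tilted law give the local lower bound via the tilted law of large numbers; exponential Chebyshev plus exponential tightness give the upper bound. Two small corrections to your sketch. First, the hypotheses of the theorem --- $\ln R_{\pm i}$ bounded and $V^{-1}r_{\pm i}\to R_{\pm i}$ uniformly --- already force the rates to be bounded above and bounded below away from zero uniformly in $\boldsymbol{x}$ for all large $V$; so the boundary degeneracy you flag as the delicate point does not arise under the stated assumptions (ruling it out is exactly the role of the boundedness of $\ln R_{\pm i}$), and the uniform rate bound you need for exponential tightness comes from these hypotheses rather than from Theorem \ref{theo0201}(a), which is not assumed here. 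Second, part (b) asserts the lower bound uniformly over $\boldsymbol{x}_0$ in compact sets, so the tilted LLN and the mollification step must be tracked uniformly in the initial point; this is routine given the uniform bounds on the rates but should be made explicit.
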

	\subsection{Optimal Fluctuations on Finite Time Intervals}\label{sec0203}
	Define a mapping by $\boldsymbol{\psi}(\left\{\boldsymbol{\phi}(t):t\in[0,T]\right\})\triangleq\boldsymbol{\phi}(T)$. It can be shown easily that $\boldsymbol{\psi}$ is a continuous mapping from $\mathcal{D}([0,T];\mathbb{R}^N)$ to $\mathbb{R}^N$. Utilizing the contraction principle (cf. [\onlinecite[Chapter 3, Theorem 3.1]{Freidlin_2012}]) and Borovkov's description of the large deviation principle (cf. [\onlinecite[Chapter 3, Theorem 3.4]{Freidlin_2012}]), one can straightforwardly prove the following proposition.
	\begin{proposition}\label{theo0205}
		Assume that the conditions in Theorem \ref{theo0204} hold. For each $T>0$ and $\boldsymbol{x}_0, \boldsymbol{x}_T\in\mathbb{R}_+^N$, define
		\begin{equation}\label{eq020301}
			S(\boldsymbol{x}_T,T\vert\boldsymbol{x}_0)\triangleq\inf_{\boldsymbol{\phi}(0)=\boldsymbol{x}_0,\,\boldsymbol{\phi}(T)=\boldsymbol{x}_T}I_{[0,T]}\left(\left\{\boldsymbol{\phi}(t):t\in[0,T]\right\}\right).
		\end{equation}
		Then for each $\boldsymbol{x}_0\in\mathbb{R}_+^N$ and each $D\subset\mathbb{R}_+^N$ such that 
		\begin{equation*}
			\inf_{\boldsymbol{x}_T\in\bar{D}}S(\boldsymbol{x}_T,T\vert\boldsymbol{x}_0)=\inf_{\boldsymbol{x}_T\in{D}^o}S(\boldsymbol{x}_T,T\vert\boldsymbol{x}_0),
		\end{equation*}
		we have
		\begin{equation}\label{eq020302}
			\lim_{V\to\infty}V^{-1}\ln P_{\boldsymbol{x}_0}\left(\boldsymbol{x}_V(T)\in D\right)=-\inf_{\boldsymbol{x}_T\in{D}}S(\boldsymbol{x}_T,T\vert\boldsymbol{x}_0),
		\end{equation}
		in which $\bar{D}$ and $D^o$ are the closure and the interior of $D$, respectively.
	\end{proposition}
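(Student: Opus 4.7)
The plan is to reduce the statement to a routine application of the contraction principle to the sample-path LDP of Theorem \ref{theo0204}, followed by a sandwich argument that is the content of Borovkov's characterization of the LDP. First, I would verify that $\boldsymbol{\psi}$ is continuous with respect to the Skorohod metric $\rho$, which is the hypothesis required by the contraction principle. Since every $\lambda\in\Lambda$ satisfies $\lambda(T)=T$, one has
\begin{equation*}
|\boldsymbol{\phi}(T)-\boldsymbol{\phi}'(T)|=|\boldsymbol{\phi}(T)-\boldsymbol{\phi}'(\lambda(T))|\leq\sup_{t\in[0,T]}|\boldsymbol{\phi}(t)-\boldsymbol{\phi}'(\lambda(t))|,
\end{equation*}
and taking the infimum over $\lambda$ yields $|\boldsymbol{\psi}(\boldsymbol{\phi})-\boldsymbol{\psi}(\boldsymbol{\phi}')|\leq\rho(\boldsymbol{\phi},\boldsymbol{\phi}')$, so $\boldsymbol{\psi}$ is in fact $1$-Lipschitz.

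Next, I would apply the contraction principle to transfer the LDP for $\{\boldsymbol{x}_V(t):t\in[0,T]\}$ under $P_{\boldsymbol{x}_0}$ granted by Theorem \ref{theo0204} to an LDP for the scalar random element $\boldsymbol{x}_V(T)=\boldsymbol{\psi}(\{\boldsymbol{x}_V(t):t\in[0,T]\})$ on $\mathbb{R}^N$. The contracted rate function is
\begin{equation*}
\tilde{I}_{\boldsymbol{x}_0}(\boldsymbol{x}_T)=\inf\left\{I_{[0,T]}\left(\{\boldsymbol{\phi}(t):t\in[0,T]\}\right):\boldsymbol{\phi}(0)=\boldsymbol{x}_0,\,\boldsymbol{\phi}(T)=\boldsymbol{x}_T\right\}=S(\boldsymbol{x}_T,T\vert\boldsymbol{x}_0),
\end{equation*}
which is automatically a good rate function on $\mathbb{R}^N$ because the continuous image of the compact level sets $\Phi_{\boldsymbol{x}_0,[0,T]}(s)$ is again compact.

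The last step is the sandwich argument. Instantiating this contracted LDP on the closed set $\bar{D}$ (upper bound) and on the open set $D^o$ (lower bound), I obtain
\begin{equation*}
\begin{aligned}
\limsup_{V\to\infty}V^{-1}\ln P_{\boldsymbol{x}_0}(\boldsymbol{x}_V(T)\in D)&\leq\limsup_{V\to\infty}V^{-1}\ln P_{\boldsymbol{x}_0}(\boldsymbol{x}_V(T)\in\bar{D})\leq-\inf_{\boldsymbol{x}_T\in\bar{D}}S(\boldsymbol{x}_T,T\vert\boldsymbol{x}_0),\\
\liminf_{V\to\infty}V^{-1}\ln P_{\boldsymbol{x}_0}(\boldsymbol{x}_V(T)\in D)&\geq\liminf_{V\to\infty}V^{-1}\ln P_{\boldsymbol{x}_0}(\boldsymbol{x}_V(T)\in D^o)\geq-\inf_{\boldsymbol{x}_T\in D^o}S(\boldsymbol{x}_T,T\vert\boldsymbol{x}_0).
\end{aligned}
\end{equation*}
The hypothesis $\inf_{\bar{D}}S=\inf_{D^o}S$, combined with the obvious chain $\inf_{\bar{D}}S\leq\inf_{D}S\leq\inf_{D^o}S$ coming from $D^o\subset D\subset\bar{D}$, forces all three infima to coincide, pinching $\limsup$ and $\liminf$ together and producing \eqref{eq020302}.

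The only subtle point is the continuity of $\boldsymbol{\psi}$, because evaluation at an interior time $t\in(0,T)$ is notoriously discontinuous in the Skorohod topology; the identity $\lambda(T)=T$ enforced on every admissible time change is precisely what rescues continuity at the right endpoint. Beyond this, the argument is a direct translation of standard large-deviation machinery into the chemical-reaction setting, so I do not foresee additional analytic obstacles.
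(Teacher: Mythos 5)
Your proposal is correct and follows essentially the same route the paper indicates: the paper itself only sketches this proof, stating that it follows "straightforwardly" from the contraction principle applied to the evaluation map $\boldsymbol{\psi}$ together with Borovkov's characterization of the LDP, which is precisely your continuity-of-$\boldsymbol{\psi}$ plus sandwich argument. Your explicit verification that $\lambda(T)=T$ makes $\boldsymbol{\psi}$ $1$-Lipschitz for $\rho$ is a correct filling-in of the one detail the paper leaves implicit.
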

	
	Now, we can define the concept of a non-stationary optimal path in the following way.
	\begin{definition}\label{def0201}
		For each $T>0$ and $\boldsymbol{x}_0, \boldsymbol{x}_T\in\mathbb{R}_+^N$, a path $\left\{\boldsymbol{\phi}(t):t\in[0,T]\right\}$ is said to be a non-stationary optimal path (NOP) that connects $\boldsymbol{x}_0$ with $\boldsymbol{x}_T$ in the time span $T$ if it is a minimizer of Eq. (\ref{eq020301}), i.e., a path that satisfies $\boldsymbol{\phi}(0)=\boldsymbol{x}_0$, $\boldsymbol{\phi}(T)=\boldsymbol{x}_T$ and
		\begin{equation*}
			S(\boldsymbol{x}_T,T\vert\boldsymbol{x}_0)=I_{[0,T]}\left(\left\{\boldsymbol{\phi}(t):t\in[0,T]\right\}\right)<\infty.
		\end{equation*}
		We denote it by $\{\boldsymbol{\phi}_{\text{NOP}}(t;\boldsymbol{x}_T,T;\boldsymbol{x}_0):t\in[0,T]\}$ if it exists.
	\end{definition}
	
	Denote the stoichiometric matrix by $\boldsymbol{\nu}^\top\triangleq[\boldsymbol{\nu}_1,\boldsymbol{\nu}_2,\cdots,\boldsymbol{\nu}_M]$. Each vector in the left null space $\boldsymbol{\nu}^{-1}(\boldsymbol{0})$ of $\boldsymbol{\nu}^\top$ sets a conservation law for the chemical reaction model, i.e., if $\boldsymbol{\eta}\in\mathbb{R}^N$ satisfies $\boldsymbol{\eta}\cdot\boldsymbol{\nu}_i=\boldsymbol{0}$ for each $1\leq i\leq M$, then
	\begin{equation*}
		\frac{\mathrm{d}\left(\boldsymbol{\eta}\cdot\boldsymbol{x}_V(t)\right)}{\mathrm{d}t}=0.
	\end{equation*}
	Therefore, the image space $\boldsymbol{\nu}^\top(\mathbb{R}^M)$ is the increment space of the reaction scheme (\ref{eq010101}) in the sense that $\boldsymbol{x}_V(t)-\boldsymbol{x}_V(0)$, $\boldsymbol{x}_\infty(t)-\boldsymbol{x}_\infty(0)$ and $\boldsymbol{y}_V(t)-\boldsymbol{y}_V(0)$ all belong to $\boldsymbol{\nu}^\top(\mathbb{R}^M)$ for each $\boldsymbol{x}_V(0),\boldsymbol{x}_\infty(0),\boldsymbol{y}_V(0)\in\mathbb{R}_+^N$ and $t>0$. The following proposition gives several properties that $S(\boldsymbol{x}_T,T\vert\boldsymbol{x}_0)$ and $\{\boldsymbol{\phi}_{\text{NOP}}(t;\boldsymbol{x}_T,T;\boldsymbol{x}_0):t\in[0,T]\}$ obey in the increment space. The proof is left in Appendix \ref{secA1}.
	\begin{proposition}\label{theo0206}
		Assume that the conditions in Theorem \ref{theo0204} hold. Then
		\begin{itemize}
			\item[(a)] if $T>0$, then $S(\boldsymbol{x}_T,T\vert\boldsymbol{x}_0)<\infty$ for each $\boldsymbol{x}_0, \boldsymbol{x}_T\in\mathbb{R}_+^N$ such that $\boldsymbol{x}_T-\boldsymbol{x}_0\in\boldsymbol{\nu}^\top(\mathbb{R}^M)$, and $S(\boldsymbol{x}_T,T\vert\boldsymbol{x}_0)=\infty$ for the remaining $\boldsymbol{x}_0, \boldsymbol{x}_T\in\mathbb{R}_+^N$;
			\item[(b)] if $T>0$, and $\boldsymbol{x}_0, \boldsymbol{x}_T\in\mathbb{R}_+^N$ satisfying $\boldsymbol{x}_T-\boldsymbol{x}_0\in\boldsymbol{\nu}^\top(\mathbb{R}^M)$, then there is at least one (possibly not unique) NOP that connects $\boldsymbol{x}_0$ with $\boldsymbol{x}_T$ in the time span $T$; for each $T>0$, $S(\boldsymbol{x}_T,T\vert\boldsymbol{x}_0)\to\infty$ as $\vert\boldsymbol{x}_T-\boldsymbol{x}_0\vert\to\infty$; for each $\boldsymbol{x}_0,\,\boldsymbol{x}_T\in\mathbb{R}^N_+$ that satisfy $\boldsymbol{x}_0\neq\boldsymbol{x}_T$, $ S(\boldsymbol{x}_T,T\vert\boldsymbol{x}_0)\to\infty$ as $T\to{0}$;
			\item[(c)] if $\{\boldsymbol{\phi}(t):t\in[0,T],\boldsymbol{\phi}(0)=\boldsymbol{x}_0,\boldsymbol{\phi}(T)=\boldsymbol{x}_T\}$ is a (or the unique) NOP that connects $\boldsymbol{x}_0$ with $\boldsymbol{x}_T$ in the time span $T$, then for any $t_1,t_2\in[0,T]$ such that $t_1<t_2$, $\{\boldsymbol{\phi}(t+t_1):t\in[0,t_2-t_1]\}$ is also a (the unique) NOP that connects $\boldsymbol{\phi}(t_1)$ with $\boldsymbol{\phi}(t_2)$ in the time span $t_2-t_1$.
		\end{itemize}
		
		If we further assume that for each $i$, ${R_{\pm i}(\boldsymbol{x})}$ are $\mathcal{C}^{k+1}$ (i.e., functions possessing continuous partial derivatives up to order $k+1$ inclusive) with some integer $k\geq 1$, and $\text{Rank}(\boldsymbol{\nu})=N$, then we have the following consequences.
		\begin{itemize}
			\item[(d)] For each NOP $\{\boldsymbol{\phi}_{\text{NOP}}(t;\boldsymbol{x}_T,T;\boldsymbol{x}_0):t\in[0,T]\}$, the functions
			\begin{equation*}
				\begin{aligned}
					&\boldsymbol{x}(t)=\boldsymbol{\phi}_{\text{NOP}}(t;\boldsymbol{x}_T,T;\boldsymbol{x}_0),\\
					&\boldsymbol{\alpha}(t)=\nabla_{\boldsymbol{\beta}}L\left(\boldsymbol{\phi}_{\text{NOP}}(t;\boldsymbol{x}_T,T;\boldsymbol{x}_0),\dot{\boldsymbol{\phi}}_{\text{NOP}}(t;\boldsymbol{x}_T,T;\boldsymbol{x}_0)\right),
				\end{aligned}
			\end{equation*}
			are also $\mathcal{C}^{k+1}$ and satisfy the following Hamilton's system of equations
			\begin{equation}\label{eq020303}
				\begin{aligned}
					&\dot{\boldsymbol{x}}(t)=\nabla_{\boldsymbol{\alpha}}H(\boldsymbol{x}(t),\boldsymbol{\alpha}(t)),\\
					&\dot{\boldsymbol{\alpha}}(t)=-\nabla_{\boldsymbol{x}}H(\boldsymbol{x}(t),\boldsymbol{\alpha}(t)),
				\end{aligned}
			\end{equation}
			with the constraints
			\begin{equation*}
				\boldsymbol{x}(0)=\boldsymbol{x}_0,\;\boldsymbol{x}(T)=\boldsymbol{x}_T.
			\end{equation*}
			Consequently, for each $t\in[0,T]$,
			\begin{equation}\label{eq020304}
				S(\boldsymbol{x}(t),t\vert\boldsymbol{x}_0)=\int_{0}^{t}\left[\dot{\boldsymbol{x}}(u)\cdot\boldsymbol{\alpha}(u)-H(\boldsymbol{x}(u),\boldsymbol{\alpha}(u))\right]\mathrm{d}u,
			\end{equation}
			and
			\begin{equation}\label{eq020305}
				S(\boldsymbol{x}_T,T-t\vert\boldsymbol{x}(t))=\int_{t}^{T}\left[\dot{\boldsymbol{x}}(u)\cdot\boldsymbol{\alpha}(u)-H(\boldsymbol{x}(u),\boldsymbol{\alpha}(u))\right]\mathrm{d}u.
			\end{equation}
			\item[(e)] Suppose that the NOP $\{\boldsymbol{\phi}_{\text{NOP}}(t;\boldsymbol{x}_T,T;\boldsymbol{x}_0):t\in[0,T]\}$ is a proper subarc of another NOP, i.e., there exist a constant $\gamma_0>0$ and a NOP $\{\bar{\boldsymbol{\phi}}(t):t\in[0,T+2\gamma_0]\}$ such that $\boldsymbol{\phi}_{\text{NOP}}(t;\boldsymbol{x}_T,T;\boldsymbol{x}_0)=\bar{\boldsymbol{\phi}}(t+\gamma_0)$ for $t\in[0,T]$. Let $B_{\delta_0}(\boldsymbol{\phi}_{\text{NOP}}(t;\boldsymbol{x}_T,T;\boldsymbol{x}_0))$ be the open $\delta_0$-neighborhood of $\boldsymbol{\phi}_{\text{NOP}}(t;\boldsymbol{x}_T,T;\boldsymbol{x}_0)$ in $\mathbb{R}_+^N$, and for any $t_1,t_2\in[0,T]$ with $t_1<t_2$, denote
			\begin{equation*}
				\Omega_{\delta_0,[t_1,t_2]}\triangleq\left\{(\boldsymbol{y},t)\in\mathbb{R}_+^N\times[t_1,t_2]:\boldsymbol{y}\in B_{\delta_0}(\boldsymbol{\phi}_{\text{NOP}}(t;\boldsymbol{x}_T,T;\boldsymbol{x}_0))\right\}.
			\end{equation*}
			Then for each $T^*\in(0,T)$ there exists a constant $\delta_0>0$ such that $S(\boldsymbol{x},t\vert\boldsymbol{x}_0)$ and $S(\boldsymbol{x}_T,T-t\vert\boldsymbol{x})$ are $\mathcal{C}^{k+1}$ with respect to the arguments $(\boldsymbol{x},t)$ at points of the sets $\Omega_{\delta_0,[T-T^*,T]}$ and $\Omega_{\delta_0,[0,T^*]}$ respectively.  In this case, they satisfy the Hamilton-Jacobi equations
			\begin{equation}\label{eq020306}
				\frac{\partial}{\partial{t}}S(\boldsymbol{x},t\vert\boldsymbol{x}_0)+H(\boldsymbol{x},\nabla_{\boldsymbol{x}}S(\boldsymbol{x},t\vert\boldsymbol{x}_0))=0,\;\;(\boldsymbol{x},t)\in\Omega_{\delta_0,[T-T^*,T]},
			\end{equation}
			and
			\begin{equation}\label{eq020307}
				\frac{\partial}{\partial{t}}S(\boldsymbol{x}_T,T-t\vert\boldsymbol{x})-H(\boldsymbol{x},-\nabla_{\boldsymbol{x}}{S}(\boldsymbol{x}_T,T-t\vert\boldsymbol{x}))=0,\;\;(\boldsymbol{x},t)\in\Omega_{\delta_0,[0,T^*]},
			\end{equation}
			respectively. The NOP is the unique solution of the following equation
			\begin{equation}\label{eq020308}
				\dot{\boldsymbol{x}}(t)=\nabla_{\boldsymbol{\alpha}}H(\boldsymbol{x}(t),\nabla_{\boldsymbol{x}}S(\boldsymbol{x}(t),t\vert\boldsymbol{x}_0)),\quad t\in[T-T^*,T],
			\end{equation}
			with the terminal condition $\boldsymbol{x}(T)=\boldsymbol{x}_T$, or
			\begin{equation}\label{eq020309}
				\dot{\boldsymbol{x}}(t)=\nabla_{\boldsymbol{\alpha}}H(\boldsymbol{x}(t),-\nabla_{\boldsymbol{x}}S(\boldsymbol{x}_T,T-t\vert\boldsymbol{x}(t))),\quad t\in[0,T^*],
			\end{equation}
			with the initial condition $\boldsymbol{x}(0)=\boldsymbol{x}_0$. Moreover, denote by $\boldsymbol{x}^*(\boldsymbol{x}_0,V)\in V^{-1}\mathbb{N}^N$ the nearest point to $\boldsymbol{x}_0$. Then, for any $(\boldsymbol{x},t)\in \Omega_{\delta_0,[T-T^*,T]}$ and sufficiently small $\varepsilon>0$,
			\begin{equation}\label{eq020310}
				\lim_{V\to\infty}V^{-1}\ln P_{\boldsymbol{x}^*(\boldsymbol{x}_0,V)}\left(\boldsymbol{x}_V(t)\in B_{\varepsilon}(\boldsymbol{x})\right)=-\inf_{\boldsymbol{y}\in B_{\varepsilon}(\boldsymbol{x})}S(\boldsymbol{y},t\vert \boldsymbol{x}_0).
			\end{equation}
		\end{itemize}
	\end{proposition}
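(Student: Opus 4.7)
The plan is to address parts (a)--(e) in turn, exploiting the convex-conjugate structure of $L$ in $\boldsymbol{\beta}$ together with the stoichiometric constraint imposed by $\boldsymbol{\nu}$. For (a), I would observe that for any $\boldsymbol{\alpha}$ in the orthogonal complement of $\{\boldsymbol{\nu}_i\}$ we have $H(\boldsymbol{x},c\boldsymbol{\alpha})\equiv 0$, so $L(\boldsymbol{x},\boldsymbol{\beta})=+\infty$ whenever $\boldsymbol{\beta}\notin\boldsymbol{\nu}^\top(\mathbb{R}^M)$; hence any finite-action absolutely continuous path has $\dot{\boldsymbol{\phi}}(t)$ in the increment space almost everywhere, forcing $\boldsymbol{x}_T-\boldsymbol{x}_0\in\boldsymbol{\nu}^\top(\mathbb{R}^M)$. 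Conversely, when that inclusion holds, an explicit finite-action candidate (e.g., a straight-line interpolation whose constant velocity is a fixed element of $\boldsymbol{\nu}^\top(\mathbb{R}^M)$) establishes finiteness via the boundedness of $\ln R_{\pm i}$. For (b), existence of an NOP follows from the goodness of $I_{[0,T]}$ (Theorem \ref{theo0204}(a)): a minimizing sequence with fixed endpoints lies in a compact level set, and lower semi-continuity in the Skorohod topology delivers an attained minimum. Coercivity as $|\boldsymbol{x}_T-\boldsymbol{x}_0|\to\infty$ and as $T\to 0$ both follow from the super-linear lower bound $L(\boldsymbol{x},\boldsymbol{\beta})\geq c_1|\boldsymbol{\beta}|\ln|\boldsymbol{\beta}|-c_2$ (valid because $H$ is bounded on compacta while growing exponentially in $|\boldsymbol{\alpha}|$ restricted to the row space of $\boldsymbol{\nu}$), combined with Jensen's inequality on $\int_0^T L(\boldsymbol{\phi},\dot{\boldsymbol{\phi}})\,\mathrm{d}s$.

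Part (c) is the dynamic-programming principle: if the restriction of an NOP to $[t_1,t_2]$ were not itself optimal between $\boldsymbol{\phi}(t_1)$ and $\boldsymbol{\phi}(t_2)$, concatenating a strictly cheaper subpath with the unchanged portion outside $[t_1,t_2]$ would contradict global optimality (and uniqueness, in the parenthetical case). For (d), the $\mathcal{C}^{k+1}$ hypothesis on $R_{\pm i}$ together with $\text{Rank}(\boldsymbol{\nu})=N$ makes $H(\boldsymbol{x},\cdot)$ strictly convex in $\boldsymbol{\alpha}$, so the Legendre transform between $L$ and $H$ is a $\mathcal{C}^{k+1}$ diffeomorphism from velocities to momenta. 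Standard calculus of variations then converts the Euler--Lagrange equation for minimizers of $I_{[0,T]}$ into Hamilton's system (\ref{eq020303}); regularity of the NOP follows by bootstrap on this ODE system. Formulas (\ref{eq020304})--(\ref{eq020305}) are immediate from the identity $L(\boldsymbol{x},\dot{\boldsymbol{x}})=\boldsymbol{\alpha}\cdot\dot{\boldsymbol{x}}-H(\boldsymbol{x},\boldsymbol{\alpha})$ along the extremal together with part (c).

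Part (e) is the most delicate. The "proper subarc" hypothesis buys us non-degeneracy at both endpoints, so one may embed $\{\boldsymbol{\phi}_{\text{NOP}}\}$ in a family of extremals parametrized by neighboring terminal (or initial) data. The central calculation is a Jacobi-field / no-conjugate-point argument: along the extremal one shows that the flow $(\boldsymbol{x}_0,\boldsymbol{\alpha}(0))\mapsto(\boldsymbol{x}_0,\boldsymbol{x}(t))$ of (\ref{eq020303}) is a local $\mathcal{C}^{k+1}$ diffeomorphism on a uniform neighborhood, whence the inverse function theorem yields $\mathcal{C}^{k+1}$ regularity of $S$ on the tubes $\Omega_{\delta_0,[T-T^*,T]}$ and $\Omega_{\delta_0,[0,T^*]}$. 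The Hamilton--Jacobi equations (\ref{eq020306})--(\ref{eq020307}) then emerge by differentiating (\ref{eq020304})--(\ref{eq020305}) in the endpoint variables, exploiting the envelope identities $\nabla_{\boldsymbol{x}(t)}S=\boldsymbol{\alpha}(t)$ and $\partial_t S=-H$ (with the appropriate sign flip in the reverse formulation). Equations (\ref{eq020308})--(\ref{eq020309}) are then obtained by substitution, and uniqueness of their solutions is inherited from the $\mathcal{C}^{k+1}$ smoothness of the right-hand side. Finally, (\ref{eq020310}) follows by applying Proposition \ref{theo0205} to $D=B_\varepsilon(\boldsymbol{x})$, after observing that continuity of $S(\cdot,t|\boldsymbol{x}_0)$ inside the smooth tube forces the infima over the open and closed balls to coincide for small $\varepsilon$. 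The hardest step will be ruling out conjugate points along the extremal to secure the $\mathcal{C}^{k+1}$ regularity of $S$; the "proper subarc" assumption is precisely what licenses this, and making the argument uniform enough to extract a single $\delta_0$ valid for all $t\in[T-T^*,T]$ (resp.\ $[0,T^*]$) is where the technical bookkeeping will concentrate.
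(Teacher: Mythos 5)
Your outline for parts (a)--(d) follows essentially the same route as the paper: the orthogonal decomposition of $\boldsymbol{\beta}$ forcing $L=\infty$ off the increment space, a straight-line candidate for finiteness, compactness of level sets plus lower semi-continuity for existence of a minimizer, the concatenation argument for (c), and Legendre duality plus classical calculus of variations for (d). Your coercivity argument in (b) via the uniform bound $L\gtrsim|\boldsymbol{\beta}|\ln|\boldsymbol{\beta}|$ and Jensen is a slightly cleaner packaging of what the paper does by splitting $[0,T]$ into the sets where $|\dot{\boldsymbol{\phi}}|$ is below or above a threshold $R$; both rest on the same uniform superlinearity of $L$ and both work.

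In part (e), however, there is a genuine gap. The inverse function theorem applied to the flow $(\boldsymbol{x}_0,\boldsymbol{q})\mapsto\boldsymbol{x}(t;\boldsymbol{x}_0,\boldsymbol{q})$ only gives you a smooth \emph{local family of extremals} $\boldsymbol{q}(\boldsymbol{z},t)$ through each point of the tube; it does not by itself tell you that the value function $S(\boldsymbol{z},t\vert\boldsymbol{x}_0)$ — an infimum over \emph{all} admissible paths — equals the action along that local extremal. A priori the minimizer for a nearby endpoint $(\boldsymbol{z},t)$ could be attained by an extremal whose initial momentum is bounded away from $\boldsymbol{\alpha}_0$, in which case $S$ would not inherit the smoothness of the field. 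The paper closes this by (i) first proving, via the corner/smoothness argument at an interior time, that the subarc $\{\boldsymbol{x}(u;\boldsymbol{x}_0,\boldsymbol{\alpha}_0):u\in[0,t]\}$ is the \emph{unique} NOP to its endpoint for every $t\in(0,T+\gamma_0)$, and then (ii) running a compactness and lower-semicontinuity contradiction argument on a hypothetical sequence of minimizers $\{\boldsymbol{x}(u;\boldsymbol{x}_0,\bar{\boldsymbol{q}}_m)\}$ with $|\bar{\boldsymbol{q}}_m-\boldsymbol{\alpha}_0|\geq\eta_0$ for endpoints converging to the NOP, extracting a limit that would be a second NOP and thereby contradicting (i). Your sketch jumps from "no conjugate points" directly to "$S$ is $\mathcal{C}^{k+1}$ on the tube," omitting exactly this identification of the global minimizer with the local field; this is where the bulk of the paper's work in (e) lies, and without it the envelope identities $\nabla_{\boldsymbol{z}}S=\boldsymbol{\alpha}$ and $\partial_t S=-H$ that you invoke to derive the Hamilton--Jacobi equations are not justified. (A minor secondary omission: in deriving (\ref{eq020310}) one must also control the replacement of $\boldsymbol{x}_0$ by the lattice point $\boldsymbol{x}^*(\boldsymbol{x}_0,V)$, which uses the smoothness of $S$ in its initial argument.)
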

	
	\begin{remark}
		(a) Note that the NOP is defined for each $\boldsymbol{x}_0,\boldsymbol{x}_T\in\mathbb{R}_+^N$, while chemical reaction models require $\boldsymbol{x}_0,\boldsymbol{x}_T\in V^{-1}\mathbb{N}^N$. The purpose of $\boldsymbol{x}^*(\boldsymbol{x}_0,V)$ defined here is to fill this gap.
		
		(b) $S(\boldsymbol{x},t\vert\boldsymbol{y})$ is not continuous at $t=0$ since $S(\boldsymbol{x},0\vert\boldsymbol{y})=0$ if $\boldsymbol{x}=\boldsymbol{y}$ and $S(\boldsymbol{x},0\vert\boldsymbol{y})=\infty$ otherwise. The parameter $T^*$ is selected to circumvent this non-smoothness.
		
		(c) The assumptions in part (e) of this proposition are slightly stronger than the uniqueness requirement of $\{\boldsymbol{\phi}_{\text{NOP}}(t;\boldsymbol{x}_T,T;\boldsymbol{x}_0):t\in[0,T]\}$. That is, if the assumptions in part (e) are valid, then $\{\boldsymbol{\phi}_{\text{NOP}}(t;\boldsymbol{x}_T,T;\boldsymbol{x}_0):t\in[0,T]\}$ is the unique NOP that connects $\boldsymbol{x}_0$ with $\boldsymbol{x}_T$ in the time span $T$. See Appendix \ref{secA1} for proof details.
		
		(d) Part (e) of this proposition provides a sufficient condition to represent the segments $\{\boldsymbol{\phi}_{\text{NOP}}(t;\boldsymbol{x}_T,T;\boldsymbol{x}_0):t\in[T-T^*,T]\}$ and $\{\boldsymbol{\phi}_{\text{NOP}}(t;\boldsymbol{x}_T,T;\boldsymbol{x}_0):t\in[0,T^*]\}$ as the unique solution of Eqs. (\ref{eq020308}) and (\ref{eq020309}), respectively. This result is a prerequisite for the subsequent limit theorems in Sec. \ref{sec04}.
	\end{remark}
	
	The following lemma will be used in Sec. \ref{sec04}. The proof is also left in Appendix \ref{secA1}.
	\begin{lemma}\label{theo0207}
		Assume that the conditions in Theorem \ref{theo0204} and part (e) of Proposition \ref{theo0206} hold. For any $T^*\in(0,T)$ and sufficiently small $\delta_0>0$, define $\Sigma_{\delta_0,[T-T^*,T]}\triangleq\cup_{t\in[T-T^*,T]}B_{\delta_0}(\boldsymbol{\phi}_{\text{NOP}}(t))$. Assume that for any $T^*\in(0,T)$, there is a constant $\delta_0$, so that for each sufficiently small $\varepsilon$, the pre-factor
		\begin{equation*}
			k^{\varepsilon,V}(\boldsymbol{x},t\vert\boldsymbol{x}_0)\triangleq P_{\boldsymbol{x}^*(\boldsymbol{x}_0,V)}\left(\boldsymbol{x}_V(t)\in B_{\varepsilon}(\boldsymbol{x})\right)\exp\left(V\inf_{\boldsymbol{y}\in B_{\varepsilon}(\boldsymbol{x})}S(\boldsymbol{y},t\vert \boldsymbol{x}_0)\right),
		\end{equation*}
		is continuous in $\Sigma_{\delta_0,[T-T^*,T]}\times[T-T^*,T]$, and there exists a continuous function $f^{\varepsilon,V}$ of $V$ such that 
		\begin{equation*}
			\lim_{V\to\infty,\,\varepsilon\to 0}\frac{k^{\varepsilon,V}(\boldsymbol{x},t\vert\boldsymbol{x}_0)}{f^{\varepsilon,V}}=K(\boldsymbol{x},t\vert\boldsymbol{x}_0),
		\end{equation*}
		exists, uniformly for $(\boldsymbol{x},t)\in\Sigma_{\delta_0,[T-T^*,T]}\times[T-T^*,T]$. We further assume that both $K(\boldsymbol{x},t\vert\boldsymbol{x}_0)$ and $S(\boldsymbol{x},t\vert\boldsymbol{x}_0)$ are at least twice continuous differentiable in $\Sigma_{\delta_0,[T-T^*,T]}\times[T-T^*,T]$ and $K(\boldsymbol{x},t\vert\boldsymbol{x}_0)>0$. Then for any positive function $\bar{\varepsilon}(V)$ such that $V\bar{\varepsilon}(V)>\frac{1}{2}\sqrt{N}\max_{1\leq i\leq M}\vert \boldsymbol{\nu}_i\vert$ and $\lim_{V\to\infty}V\bar{\varepsilon}^2(V)=0$, we have
		\begin{equation}\label{eq020311}
			\lim_{V\to\infty}\frac{P_{\boldsymbol{x}^*(\boldsymbol{x}_0,V)}\left(\boldsymbol{x}_V(t)\in B_{\bar{\varepsilon}(V)}(\boldsymbol{x}\pm V^{-1}\boldsymbol{\nu}_i)\right)}{P_{\boldsymbol{x}^*(\boldsymbol{x}_0,V)}\left(\boldsymbol{x}_V(t)\in B_{\bar{\varepsilon}(V)}(\boldsymbol{x})\right)}=e^{\mp \boldsymbol{\nu}_i\cdot\nabla_{\boldsymbol{x}}S(\boldsymbol{x},t\vert\boldsymbol{x}_0)},
		\end{equation} 
		uniformly for $(\boldsymbol{x},t)\in\Sigma_{\delta_0,[T-T^*,T]}\times[T-T^*,T]$ and $1\leq i\leq M$.
		
		Moreover, denote by $\boldsymbol{x}^{**}(\boldsymbol{x},V)\in\left\{\boldsymbol{y}\in\mathbb{R}_+^N:\boldsymbol{y}=\boldsymbol{x}^{*}(\boldsymbol{x}_0,V)+V^{-1}\boldsymbol{\nu}^\top\boldsymbol{k},\,\boldsymbol{k}\in\mathbb{Z}^M\right\}$ the nearest point to $\boldsymbol{x}$, then
		\begin{equation}\label{eq020312}
			\lim_{V\to\infty}\frac{p_V(\boldsymbol{x}^{**}(\boldsymbol{x},V)\pm V^{-1}\boldsymbol{\nu}_i,t\vert\boldsymbol{x}^*(\boldsymbol{x}_0,V))}{p_V(\boldsymbol{x}^{**}(\boldsymbol{x},V),t\vert\boldsymbol{x}^*(\boldsymbol{x}_0,V))}=e^{\mp \boldsymbol{\nu}_i\cdot\nabla_{\boldsymbol{x}}S(\boldsymbol{x},t\vert\boldsymbol{x}_0)},
		\end{equation}
		uniformly for $(\boldsymbol{x},t)\in\Sigma_{\delta_0,[T-T^*,T]}\times[T-T^*,T]$ and $1\leq i\leq M$.
	\end{lemma}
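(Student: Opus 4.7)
The plan is to reduce both (\ref{eq020311}) and (\ref{eq020312}) to the same mechanism. For each ratio of probabilities I would invoke the defining identity $P_{\boldsymbol{x}^*(\boldsymbol{x}_0,V)}\bigl(\boldsymbol{x}_V(t)\in B_{\varepsilon}(\boldsymbol{y})\bigr)=k^{\varepsilon,V}(\boldsymbol{y},t\vert\boldsymbol{x}_0)\exp(-V\inf_{B_{\varepsilon}(\boldsymbol{y})}S)$ at the two centres, divide top and bottom by the common normalisation $f^{\varepsilon,V}$, and use the hypothesis $k^{\varepsilon,V}/f^{\varepsilon,V}\to K$ together with the continuity and positivity of $K$ to force the prefactor ratio to $1$; what survives is a Boltzmann factor $\exp(-V\Delta m)$ with $\Delta m$ equal to the difference of the two infima of $S$, which I analyse via a second-order Taylor expansion of $\inf_{B_r(\boldsymbol{y})}S$ in both the centre $\boldsymbol{y}$ and the radius $r$, legitimate because $S$ is $\mathcal{C}^2$ in the relevant region by the assumptions inherited from Proposition \ref{theo0206}. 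The two parts differ only in the ball scale: a mesoscopic $\bar\varepsilon(V)$ containing many reachable lattice points for (\ref{eq020311}), and a microscopic $\varepsilon_0(V)\sim V^{-1}$ isolating a single reachable lattice point for (\ref{eq020312}).

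For (\ref{eq020311}), the $\mathcal{C}^2$ identity $\inf_{B_r(\boldsymbol{y})}S(\boldsymbol{z},t\vert\boldsymbol{x}_0)=S(\boldsymbol{y},t\vert\boldsymbol{x}_0)-r\vert\nabla_{\boldsymbol{x}}S(\boldsymbol{y},t\vert\boldsymbol{x}_0)\vert+O(r^2)$ (minimising the linear Taylor term of $S$ on the ball), followed by one more Taylor step in the centre, gives
\[
\inf_{B_{\bar\varepsilon(V)}(\boldsymbol{x}\pm V^{-1}\boldsymbol{\nu}_i)}\!\!S - \inf_{B_{\bar\varepsilon(V)}(\boldsymbol{x})}\!\!S = \pm V^{-1}\boldsymbol{\nu}_i\cdot\nabla_{\boldsymbol{x}}S(\boldsymbol{x},t\vert\boldsymbol{x}_0) + O(V^{-2})+O\bigl(\bar\varepsilon(V)/V\bigr)+O\bigl(\bar\varepsilon(V)^2\bigr).
\]
Multiplying by $V$, each error term vanishes precisely under $V\bar\varepsilon^2(V)\to 0$ (which also forces $\bar\varepsilon(V)\to 0$, so the hypothesis $k^{\bar\varepsilon(V),V}/f^{\bar\varepsilon(V),V}\to K$ indeed applies along this sequence of radii), so the Boltzmann exponent tends to $\mp\boldsymbol{\nu}_i\cdot\nabla_{\boldsymbol{x}}S(\boldsymbol{x},t\vert\boldsymbol{x}_0)$, and combined with the prefactor ratio tending to $1$ this yields (\ref{eq020311}).

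For (\ref{eq020312}), I would pick $\varepsilon_0(V)=c/V$ with $c$ strictly smaller than half of $\min\{\vert\boldsymbol{\nu}^\top\boldsymbol{k}\vert:\boldsymbol{k}\in\mathbb{Z}^M\setminus\{\boldsymbol{0}\}\}$, a quantity that is strictly positive because $\mathrm{Rank}(\boldsymbol{\nu})=N$. Then $B_{\varepsilon_0(V)}(\boldsymbol{y})\cap\Lambda=\{\boldsymbol{y}\}$ for every $\boldsymbol{y}$ in the reachable lattice $\Lambda\triangleq\boldsymbol{x}^*(\boldsymbol{x}_0,V)+V^{-1}\boldsymbol{\nu}^\top\mathbb{Z}^M$, so the ball probability collapses to the point mass $p_V(\boldsymbol{y},t\vert\boldsymbol{x}^*(\boldsymbol{x}_0,V))$, and the hypothesis reads
\[
p_V(\boldsymbol{y},t\vert\boldsymbol{x}^*(\boldsymbol{x}_0,V)) = f^{\varepsilon_0(V),V}K(\boldsymbol{y},t\vert\boldsymbol{x}_0)\exp\Bigl(-V\inf_{B_{\varepsilon_0(V)}(\boldsymbol{y})}\!S\Bigr)(1+o(1))
\]
uniformly in the relevant $\boldsymbol{y}$. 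Dividing this asymptotic at $\boldsymbol{y}=\boldsymbol{x}^{**}(\boldsymbol{x},V)\pm V^{-1}\boldsymbol{\nu}_i$ by its counterpart at $\boldsymbol{y}=\boldsymbol{x}^{**}(\boldsymbol{x},V)$ cancels $f^{\varepsilon_0(V),V}$, the $K$-ratio tends to $1$ by continuity, and the same Taylor analysis—now with $r=c/V$ so every residual is already $O(V^{-1})$ after multiplication by $V$—produces the exponent $\mp\boldsymbol{\nu}_i\cdot\nabla_{\boldsymbol{x}}S(\boldsymbol{x},t\vert\boldsymbol{x}_0)$ in the limit. The main obstacle I anticipate is the bookkeeping required to keep both the Taylor residuals and the convergence $k^{\varepsilon,V}/f^{\varepsilon,V}\to K$ simultaneously uniform over $(\boldsymbol{x},t)\in\Sigma_{\delta_0,[T-T^*,T]}\times[T-T^*,T]$ and over $i\in\{1,\ldots,M\}$. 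This is settled by the compactness of the region together with the $\mathcal{C}^2$ regularity of $K$ and $S$, supplemented by the Lipschitz bound $\bigl\vert\vert\nabla_{\boldsymbol{x}}S(\boldsymbol{y}_1)\vert-\vert\nabla_{\boldsymbol{x}}S(\boldsymbol{y}_2)\vert\bigr\vert\leq\|\nabla^2_{\boldsymbol{x}}S\|_{\infty}\vert\boldsymbol{y}_1-\boldsymbol{y}_2\vert$, which controls the $O(\bar\varepsilon(V)/V)$ and $O(1/V^2)$ terms uniformly even at points where $\nabla_{\boldsymbol{x}}S=\boldsymbol{0}$ and $\vert\nabla_{\boldsymbol{x}}S\vert$ fails to be differentiable.
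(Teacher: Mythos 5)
Your proposal follows essentially the same route as the paper's proof: the hypothesis $k^{\varepsilon,V}/f^{\varepsilon,V}\to K$ with $K$ continuous and positive forces the prefactor ratio to $1$, the expansion $\inf_{B_{\bar\varepsilon(V)}(\boldsymbol{y})}S=S(\boldsymbol{y},t\vert\boldsymbol{x}_0)-\bar\varepsilon(V)\vert\nabla_{\boldsymbol{x}}S(\boldsymbol{y},t\vert\boldsymbol{x}_0)\vert+O(\bar\varepsilon^2(V))$ together with $V\bigl(S(\boldsymbol{x},t\vert\boldsymbol{x}_0)-S(\boldsymbol{x}\pm V^{-1}\boldsymbol{\nu}_i,t\vert\boldsymbol{x}_0)\bigr)\to\mp\boldsymbol{\nu}_i\cdot\nabla_{\boldsymbol{x}}S$ handles the exponent, and Eq. (\ref{eq020312}) is obtained by shrinking the radius so that each ball isolates a single reachable lattice point, exactly as in the paper. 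Your treatment is if anything slightly more careful (the Hessian-Lipschitz control of $\vert\nabla_{\boldsymbol{x}}S\vert$ near critical points, the explicit microscopic radius $c/V$); the only cosmetic slip is that the minimal spacing should be taken over nonzero images $\boldsymbol{\nu}^\top\boldsymbol{k}\neq\boldsymbol{0}$ (it is positive by integrality of the stoichiometric lattice, not by the rank condition), which does not affect the argument.
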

	\begin{remark}
		(a) The time interval is restricted to $[T-T^*,T]$ due to the singular nature of  $p_V(\boldsymbol{x},t\vert\boldsymbol{x}^*(\boldsymbol{x}_0,V))$ at $t=0$. 
		
		(b) The domain is restricted to $\Sigma_{\delta_0,[T-T^*,T]}$, as this constitutes the fundamental requirement of the limit theorems presented in Sec. \ref{sec04}.
		
		(c) Note that $\boldsymbol{x}_V(t)-\boldsymbol{x}_V(0)=V^{-1}\boldsymbol{\nu}^\top\boldsymbol{k}$ for some $\boldsymbol{k}\in\mathbb{Z}^M$. The choice of $\boldsymbol{x}^{**}(\boldsymbol{x},V)$ is made with the intention of ensuring that both the numerator and the denominator in Eq. (\ref{eq020312}) are positive. 
		
		(d) The condition $V\bar{\varepsilon}(V)>\frac{1}{2}\sqrt{N}\max_{1\leq i\leq M}\vert \boldsymbol{\nu}_i\vert$ is sufficient for us to ensure that there exists at least one point $\boldsymbol{x}^{**}(\boldsymbol{x},V)$ in the neighborhood $B_{\bar{\varepsilon}(V)}(\boldsymbol{x})$ of each $\boldsymbol{x}$.
	\end{remark}
	
	\subsection{Optimal Fluctuations on Infinite Time Intervals}
	The large deviation principle also provides a tool for us to estimate the probabilities of improbable events involving the behavior of the stochastic chemical reaction model over very long (possibly infinite) time intervals \cite{Freidlin_2012}, such as the behavior of escapes from a domain for sufficiently large $V$ \cite{Knessl_1984,Shwartz_1995} and the limit behavior of the stationary distribution as $V\to\infty$ \cite{Hu_Gang_1987,Shwartz_1995,Anderson_2015,Hao_Ge_2016b}. The following theorem concerning the limit distribution in the case that the corresponding deterministic system possesses a unique global attractive equilibrium was proved in [\onlinecite[Theorem 6.89]{Shwartz_1995}].
	\begin{theorem}\label{theo0208}
		Assume that the conditions in Theorem \ref{theo0204} hold, $\text{Rank}(\boldsymbol{\nu})=N$, and
		\begin{itemize}
			\item[(a)] $\boldsymbol{x}_{\text{eq}}\in(0,+\infty)^N$ is the unique global attractive equilibrium of Eq. (\ref{eq020102});
			\item[(b)] for each $V$, the stochastic chemical reaction model is positive recurrent such that there exists a unique stationary distribution $\pi_V$ for the process $\boldsymbol{x}_V(t)$.
		\end{itemize} 
		Then for each sufficiently small $\varepsilon$,
		\begin{equation}\label{eq020401}
			\lim_{V\to\infty}\pi_V(B_{\varepsilon}(\boldsymbol{x}_{eq}))=1,
		\end{equation}
		where $B_{\varepsilon}(\boldsymbol{x}_{eq})$ is the open $\varepsilon$-neighborhood of $\boldsymbol{x}_{eq}$ in $\mathbb{R}_+^N$.
		
		Define
		\begin{equation}\label{eq020402}
			S(\boldsymbol{x})\triangleq\inf_{T>0}\inf_{\boldsymbol{\phi}(0)=\boldsymbol{x}_{eq},\boldsymbol{\phi}(T)=\boldsymbol{x}}I_{[0,T]}(\{\boldsymbol{\phi}(t):t\in[0,T]\}).
		\end{equation}
		For a bounded open set $D$ with smooth boundary, define 
		\begin{equation*}
			\mathcal{E}\triangleq\left\{\{\boldsymbol{\phi}(t):t\in [0,T]\}:\boldsymbol{\phi}(0)=\boldsymbol{x}_{eq},\boldsymbol{\phi}(T)\in\bar{D}\;\text{for some}\;T>0\right\},
		\end{equation*}
		and
		\begin{equation*}
			S(\bar{D})\triangleq\inf_{\boldsymbol{x}\in\bar{D}}S(\boldsymbol{x}).
		\end{equation*}
		We further assume that there is a neighborhood of $\boldsymbol{x}_{eq}$ (for example, $B_{\varepsilon_0}(\boldsymbol{x}_{eq})$) such that
		\begin{itemize}
			\item[(c)] $D\subset B_{\varepsilon_0}(\boldsymbol{x}_{eq})$ and $S(\boldsymbol{x})>S(\bar{D})+1$ whenever $\boldsymbol{x}$ is outside $B_{\varepsilon_0}(\boldsymbol{x}_{eq})$;
			\item[(d)] $\mathcal{E}$ is a continuity set, and every point in $\mathcal{E}$ is the limit of points in the interior of $\mathcal{E}$. That is, $\mathcal{E}\subset\overline{\mathcal{E}^o}$ and $S(D^o)=S(\bar{D})$;
			\item[(e)] for each $\eta>0$, there is a constant $T<\infty$ such that, uniformly over $\boldsymbol{x}_0\in B_{\varepsilon_0}(\boldsymbol{x}_{eq})$ with $\boldsymbol{x}_\infty(0)=\boldsymbol{x}_0$, $\vert\boldsymbol{x}_\infty(t)-\boldsymbol{x}_{eq}\vert<\eta$ for all $t>T$.
		\end{itemize}
		Then 
		\begin{equation}\label{eq020403}
			\lim_{V\to\infty}V^{-1}\ln\pi_V(D)=-\inf_{\boldsymbol{x}\in D}S(\boldsymbol{x}).
		\end{equation}
	\end{theorem}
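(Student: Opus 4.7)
The plan is to prove the two assertions separately using Kurtz's law of large numbers (Theorem \ref{theo0201}), the Freidlin-Wentzell large deviation principle (Theorem \ref{theo0204}), and a regenerative cycle decomposition of the stationary chain. Compactness of the sublevel sets of $I_{[0,T]}$ and of $S(\boldsymbol{x})$, inherited from Theorem \ref{theo0204}(a), will be used throughout.

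For Eq. (\ref{eq020401}) I would first establish tightness of $\{\pi_V\}_V$: since $S(\boldsymbol{x})\to\infty$ as $|\boldsymbol{x}|\to\infty$, a direct LDP upper bound on large excursions gives $\pi_V(\{|\boldsymbol{x}|>R\})\leq e^{-Vc(R)}$ with $c(R)\to\infty$, independent of the full sharp asymptotics claimed in Eq. (\ref{eq020403}). With tightness in hand, pick $T$ large via assumption (e) so that $\boldsymbol{x}_\infty(T)\in B_{\varepsilon/2}(\boldsymbol{x}_{eq})$ for every initial point $\boldsymbol{x}_\infty(0)$ in the tight set; Theorem \ref{theo0201} then upgrades this to $P_{\boldsymbol{x}_0}(\boldsymbol{x}_V(T)\in B_\varepsilon(\boldsymbol{x}_{eq}))\to 1$ uniformly on that set. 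Plugging this into the stationarity identity $\pi_V(A)=\int\pi_V(d\boldsymbol{x}_0)\,P_{\boldsymbol{x}_0}(\boldsymbol{x}_V(T)\in A)$ with $A=B_\varepsilon(\boldsymbol{x}_{eq})$ yields the claim.

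For Eq. (\ref{eq020403}) I would use the classical Freidlin-Wentzell cycle argument. Fix radii $0<\rho<\rho'<\varepsilon_0$ and define stopping times $\sigma_0=0$, $\tau_n=\inf\{t>\sigma_n:\boldsymbol{x}_V(t)\notin B_{\rho'}(\boldsymbol{x}_{eq})\}$, $\sigma_{n+1}=\inf\{t>\tau_n:\boldsymbol{x}_V(t)\in \overline{B_\rho(\boldsymbol{x}_{eq})}\}$, partitioning the trajectory into regeneration cycles. Ergodicity expresses $\pi_V(D)$ as the ratio of $\mathbb{E}_{\boldsymbol{x}_{eq}}\int_0^{\sigma_1}\mathbf{1}_D(\boldsymbol{x}_V(s))\,ds$ to $\mathbb{E}_{\boldsymbol{x}_{eq}}(\sigma_1)$, and a standard argument based on Theorem \ref{theo0201} shows the denominator is of order one in $V$. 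For the lower bound of the numerator I would concatenate a near-minimizing path from $\boldsymbol{x}_{eq}$ into $D^o$ with a short sojourn inside $D$ and a return leg, then apply the open-set estimate (\ref{eq020202}); the equality $S(D^o)=S(\bar{D})$ in assumption (d) is what allows this concatenation to realize action arbitrarily close to $S(\bar{D})$. For the upper bound, each cycle visiting $D$ requires action at least $S(\bar{D})$, and (\ref{eq020203}) bounds the per-cycle probability by $e^{-V(S(\bar{D})-\delta)}$; assumption (c) rules out detours through the far field, since such trajectories cost more than $S(\bar{D})+1$.

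The main obstacle is promoting the upper bound to $\limsup V^{-1}\ln\pi_V(D)\leq -S(\bar{D})$, rather than the weaker $-S(D^o)$ one would naively get from the open-set bound alone. This is precisely where assumption (d) (both the continuity-set property of $\mathcal{E}$ and $S(D^o)=S(\bar{D})$) is essential, combined with a careful gluing of the closed-set LDP bound on fixed time windows to a uniform renewal-type tail bound ensuring no cycle is too long on the exponential scale. A secondary technical point is that the pure-jump process does not land on $\partial D$ or $\partial B_\rho$ exactly but overshoots by $O(V^{-1})$; this discrepancy is absorbed into the $\delta$-slack by replacing $D$ and $B_\rho$ with slightly larger $\varepsilon$-neighborhoods and invoking the smooth-boundary assumption on $D$, in the same spirit as the condition $\inf_{\bar D}S=\inf_{D^o}S$ used in Proposition \ref{theo0205}.
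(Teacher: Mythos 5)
The paper offers no proof of Theorem \ref{theo0208}: it is imported verbatim from [\onlinecite[Theorem 6.89]{Shwartz_1995}]. Your outline is, in substance, the proof strategy of that reference — a Khasminskii-type ratio formula over excursion cycles between two small spheres around $\boldsymbol{x}_{eq}$, with the path-space LDP of Theorem \ref{theo0204} supplying the per-cycle upper and lower bounds, assumption (d) closing the gap between $S(D^o)$ and $S(\bar{D})$, and assumption (e) together with Theorem \ref{theo0201} controlling the cycle lengths. So the route is the right one; two steps, however, are not yet sound as written.

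First, your tightness argument for Eq. (\ref{eq020401}) is circular. The LDP of Theorem \ref{theo0204} bounds path probabilities over finite windows from a given initial point; it says nothing directly about $\pi_V$, and converting the one into the other is precisely what the cycle decomposition is for. The far-field control must therefore come \emph{out of} the cycle machinery (the expected time per cycle spent outside $B_{\varepsilon_0}(\boldsymbol{x}_{eq})$ is exponentially negligible relative to the expected cycle length, by assumption (c)), not precede it. Moreover, the premise ``$S(\boldsymbol{x})\to\infty$ as $\vert\boldsymbol{x}\vert\to\infty$'' is not available from the paper's results: Proposition \ref{theo0209}(b) only gives that $S$ is globally Lipschitz, and the hypotheses only guarantee $S(\boldsymbol{x})>S(\bar{D})+1$ outside $B_{\varepsilon_0}(\boldsymbol{x}_{eq})$, which is the bound you should be using. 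Second, the identity $\pi_V(D)=\mathbb{E}_{\boldsymbol{x}_{eq}}\int_0^{\sigma_1}\mathbf{1}_D(\boldsymbol{x}_V(s))\,\mathrm{d}s\,/\,\mathbb{E}_{\boldsymbol{x}_{eq}}(\sigma_1)$ is exact only for regeneration at a single recurrent state; for cycles defined by return to the set $\overline{B_{\rho}(\boldsymbol{x}_{eq})}$ the correct formula integrates numerator and denominator against the invariant measure of the embedded chain on that set, and the LDP estimates must then be made uniform over the (V-dependent) entry points — this is where assumption (e) and Theorem \ref{theo0201} are actually consumed. With these two repairs your argument coincides with the cited proof.
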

	
	Now, the (stationary) optimal path can be defined similarly.
	\begin{definition}\label{def0202}
		For each $\boldsymbol{x}\in\mathbb{R}_+^N$, a path is said to be an optimal path (OP) (or a stationary optimal path) that connects $\boldsymbol{x}_{eq}$ with $\boldsymbol{x}$ if it is a minimizer of Eq. (\ref{eq020402}), i.e., a path that begins at $\boldsymbol{x}_{eq}$ and ends at $\boldsymbol{x}$ with a minimal value of the rate function $I$. We denote it by $\left\{\boldsymbol{\phi}_{\text{OP}}(t;\boldsymbol{x})\right\}$ if it exists.
	\end{definition}
	
	$S(\boldsymbol{x})$ and $\left\{\boldsymbol{\phi}_{\text{OP}}(t;\boldsymbol{x})\right\}$ possess properties that are partly similar to those of $S(\boldsymbol{x}_T,T\vert\boldsymbol{x}_0)$ and $\{\boldsymbol{\phi}_{\text{NOP}}(t;\boldsymbol{x}_T,T;\boldsymbol{x}_0)\}$\cite{Day_1985,Day_1986}. We state them as follows. The proof is left in Appendix \ref{secA2}.
	\begin{proposition}\label{theo0209}
		Assume that the conditions in Theorems \ref{theo0204} and \ref{theo0208} hold. Then
		\begin{itemize}
			\item[(a)] for each $\boldsymbol{x}\in\mathbb{R}_+^N$, $S(\boldsymbol{x})<\infty$;
			\item[(b)] $S(\boldsymbol{x})$ is a global Lipschitz continuous function;
			\item[(c)] if $\left\{\boldsymbol{\phi}(t):t\in[T_1,T_2],\boldsymbol{\phi}(T_1)=\boldsymbol{x}_{eq},\boldsymbol{\phi}(T_2)=\boldsymbol{x}\right\}$ ($-\infty\leq T_1<T_2\leq\infty$) is an (or the unique) OP that connects $\boldsymbol{x}_{eq}$ with $\boldsymbol{x}$, for any $T^*\in(T_1,T_2)$, $\left\{\boldsymbol{\phi}(t):t\in[T_1,T^*]\right\}$ is also an (the unique) OP that connects $\boldsymbol{x}_{eq}$ with $\boldsymbol{\phi}(T^*)$. In addition, for any $t_1,t_2\in(T_1,T_2)$ such that $t_1<t_2$, $\left\{\boldsymbol{\phi}(t+t_1):t\in[0,t_2-t_1]\right\}$ is a (the unique) NOP that connects $\boldsymbol{\phi}(t_1)$ with $\boldsymbol{\phi}(t_2)$ in all possible time span, i.e., 
			\begin{equation*}
				I_{[0,t_2-t_1]}(\{\boldsymbol{\phi}(t+t_1):t\in[0,t_2-t_1]\})=\inf_{T>0}S(\boldsymbol{\phi}(t_2),T\vert\boldsymbol{\phi}(t_1)).
			\end{equation*}
		\end{itemize}
		
		If we further assume that for each $i$, ${R_{\pm i}(\boldsymbol{x})}$ are $\mathcal{C}^{k+1}$ for some $k\geq 1$, then we have the following consequences.
		\begin{itemize}
			\item[(d)] For each $\boldsymbol{x}\in\mathbb{R}_+^N$, there is at least one (possible not unique) OP, which can be represented by $\left\{\boldsymbol{\phi}_{\text{OP}}(t;\boldsymbol{x}):t\in(-\infty,0],\lim_{t\to-\infty}\boldsymbol{\phi}_{\text{OP}}(t;\boldsymbol{x})=\boldsymbol{x}_{eq},\boldsymbol{\phi}_{\text{OP}}(0;\boldsymbol{x})=\boldsymbol{x}\right\}$.
			\item[(e)] For each OP $\left\{\boldsymbol{\phi}_{\text{OP}}(t;\boldsymbol{x}):t\in(-\infty,0]\right\}$, the functions
			\begin{equation*}
				\begin{aligned}
					&\boldsymbol{x}(t)=\boldsymbol{\phi}_{\text{OP}}(t;\boldsymbol{x}),\\
					&\boldsymbol{\alpha}(t)=\nabla_{\boldsymbol{\beta}}L\left(\boldsymbol{\phi}_{\text{OP}}(t;\boldsymbol{x}),\dot{\boldsymbol{\phi}}_{\text{OP}}(t;\boldsymbol{x})\right),
				\end{aligned}
			\end{equation*}
			are also $\mathcal{C}^{k+1}$ and satisfy Eq. (\ref{eq020303}) as well as
			\begin{equation}\label{eq020404}
				H(\boldsymbol{x}(t),\boldsymbol{\alpha}(t))\equiv 0.
			\end{equation}
			Similarly, for each $t\in(-\infty,0]$,
			\begin{equation}\label{eq020405}
				S(\boldsymbol{x}(t))=\int_{-\infty}^{t}\dot{\boldsymbol{x}}(s)\cdot\boldsymbol{\alpha}(s)\mathrm{d}s.
			\end{equation}
			\item[(f)] Suppose that $\left\{\boldsymbol{\phi}_{\text{OP}}(t;\boldsymbol{x}):t\in(-\infty,0]\right\}$ is a proper subarc of another OP, i.e., there exist a constant $\gamma_0>0$ and an OP $\{\bar{\boldsymbol{\phi}}(t):t\in(-\infty,0]\}$ such that $\boldsymbol{\phi}_{\text{OP}}(t;\boldsymbol{x})=\bar{\boldsymbol{\phi}}(t-\gamma_0)$ for $t\in(-\infty,0]$. Then $S(\boldsymbol{x})$ is $\mathcal{C}^{k+1}$ in the vicinity of each point $\boldsymbol{x}(t)$ for $t\in(-\infty,0]$. As a result, in some neighborhood of the OP, $S(\boldsymbol{x})$ satisfies the stationary Hamilton-Jacobi equation
			\begin{equation}\label{eq020406}
				H(\boldsymbol{x},\nabla_{\boldsymbol{x}}S(\boldsymbol{x}))=0,
			\end{equation}
			and the OP is the unique solution of the following equation
			\begin{equation}\label{eq020407}
				\dot{\boldsymbol{x}}(t)=\nabla_{\boldsymbol{\alpha}}H(\boldsymbol{x}(t),\nabla_{\boldsymbol{x}}S(\boldsymbol{x}(t))), \quad t\in(-\infty,0],
			\end{equation}
			with the constraints $\lim_{t\to-\infty}\boldsymbol{x}(t)=\boldsymbol{x}_{eq}$ and $\boldsymbol{x}(0)=\boldsymbol{x}$.
		\end{itemize}
	\end{proposition}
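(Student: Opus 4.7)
My plan is to leverage Proposition \ref{theo0206} for non-stationary optimal fluctuations together with the variational definition $S(\boldsymbol{x}) = \inf_{T>0} S(\boldsymbol{x},T\vert\boldsymbol{x}_{eq})$. The critical bridge is part (c), a Bellman-type (dynamic-programming) principle: if a segment of an OP were not itself optimal for its endpoints, splicing in a better segment would contradict optimality of the whole; restricted to a fixed time span, the same argument shows each middle segment of an OP is an NOP in the sense of Definition \ref{def0201}, allowing me to transfer the Hamiltonian structure and regularity of Proposition \ref{theo0206} to the stationary setting.

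Part (a) is immediate from $\mathrm{Rank}(\boldsymbol{\nu})=N$ (so $\boldsymbol{x}-\boldsymbol{x}_{eq}\in\boldsymbol{\nu}^\top(\mathbb{R}^M)$) combined with Proposition \ref{theo0206}(a), which yields $S(\boldsymbol{x}) \leq S(\boldsymbol{x},1\vert\boldsymbol{x}_{eq})<\infty$. For part (b), boundedness of $\ln R_{\pm i}$ in Theorem \ref{theo0204} makes $L(\boldsymbol{x},\boldsymbol{\beta})$ uniformly bounded for $\boldsymbol{x}\in\mathbb{R}_+^N$ and $\vert\boldsymbol{\beta}\vert\leq 1$. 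Connecting $\boldsymbol{x}$ and $\boldsymbol{y}$ by the straight-line path with unit speed on the interval $[0,\vert\boldsymbol{x}-\boldsymbol{y}\vert]$ then gives action at most $C\vert\boldsymbol{x}-\boldsymbol{y}\vert$, and concatenating with a near-optimal path to $\boldsymbol{x}$ (and then swapping roles) delivers the global Lipschitz estimate $\vert S(\boldsymbol{x})-S(\boldsymbol{y})\vert\leq C\vert\boldsymbol{x}-\boldsymbol{y}\vert$.

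For part (d), I take a minimizing sequence $\{\boldsymbol{\phi}_n(t):t\in[0,T_n]\}$ for $S(\boldsymbol{x})$, reparametrize so each endpoint lies at $t=0$, and extract a convergent subsequence using sublevel-set compactness of the good rate function (Theorem \ref{theo0204}(a)). To see the limit is parametrized on $(-\infty,0]$ with $\boldsymbol{\phi}^*(t)\to\boldsymbol{x}_{eq}$ as $t\to-\infty$, the key input is Proposition \ref{theo0206}(b) ($S(\boldsymbol{x}_T,T\vert\boldsymbol{x}_0)\to\infty$ as $T\to 0$ when $\boldsymbol{x}_0\neq\boldsymbol{x}_T$) together with asymptotic stability of $\boldsymbol{x}_{eq}$ for the deterministic flow: escape from any small ball around $\boldsymbol{x}_{eq}$ along the OP must consume arbitrarily long time, forcing the relevant time spans $T_n\to\infty$. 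Part (e) then follows quickly: by (c) each finite sub-arc is an NOP, so Proposition \ref{theo0206}(d) yields Hamilton's equations (\ref{eq020303}); time-independence of $H$ makes it constant along the flow, and the limits $\boldsymbol{x}(t)\to\boldsymbol{x}_{eq}$, $\dot{\boldsymbol{x}}(t)\to\boldsymbol{F}(\boldsymbol{x}_{eq})=\boldsymbol{0}$ together with the Legendre duality $\boldsymbol{\alpha}=\nabla_{\boldsymbol{\beta}}L$ force $\boldsymbol{\alpha}(t)\to\boldsymbol{0}$, giving $H\equiv 0$. The integral formula (\ref{eq020405}) is then just $L=\dot{\boldsymbol{x}}\cdot\boldsymbol{\alpha}-H$ with $H\equiv 0$.

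Part (f) I expect to be the main obstacle, because it concerns smoothness of the stationary quasipotential $S(\boldsymbol{y})$, whereas Proposition \ref{theo0206}(e) handles only the non-stationary $S(\boldsymbol{y},t\vert\boldsymbol{x}_{eq})$. My approach is the method of characteristics for the stationary Hamilton-Jacobi equation. The extension hypothesis, together with parts (c), (e), and the $\mathcal{C}^{k+1}$ regularity of the Hamiltonian flow of (\ref{eq020303}), ensures that for every $t_0\in(-\infty,0]$ the point $\boldsymbol{x}(t_0)$ is a non-caustic interior point, so the flow-box map is a $\mathcal{C}^{k+1}$ local diffeomorphism in a neighborhood. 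For $\boldsymbol{y}$ close to $\boldsymbol{x}(t_0)$, there is then a unique OP-characteristic through $\boldsymbol{y}$ whose momentum data depend $\mathcal{C}^{k+1}$ on $\boldsymbol{y}$. Integrating along the characteristic yields $\mathcal{C}^{k+1}$ regularity of $S(\boldsymbol{y})=\int_{-\infty}^{t(\boldsymbol{y})}\dot{\boldsymbol{x}}(s)\cdot\boldsymbol{\alpha}(s)\,\mathrm{d}s$ with $\nabla_{\boldsymbol{x}}S(\boldsymbol{y})=\boldsymbol{\alpha}(\boldsymbol{y})$, and combining with $H\equiv 0$ from (e) gives the stationary Hamilton-Jacobi equation (\ref{eq020406}). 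Uniqueness of the OP as the solution of (\ref{eq020407}) under its boundary conditions then follows from standard ODE theory for the $\mathcal{C}^{k+1}$ vector field $\nabla_{\boldsymbol{\alpha}}H(\boldsymbol{x},\nabla_{\boldsymbol{x}}S(\boldsymbol{x}))$.
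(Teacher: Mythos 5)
Your overall architecture is sound, and for parts (a)--(c) it coincides with the paper's proof: (a) is the one-line bound $S(\boldsymbol{x})\le S(\boldsymbol{x},T\vert\boldsymbol{x}_{eq})<\infty$ via Proposition \ref{theo0206}(a), (b) is the straight-line path plus uniform boundedness of $L$ on bounded velocity sets, and (c) is the splicing/contradiction argument. The divergence is in (d)--(f): the paper does not prove these from scratch but cites Day's results (existence and the $(-\infty,0]$ parametrization from Day's Corollary 1 and Lemma 2; smoothness of the extremal, $H\equiv 0$, and the integral formula from Day's Theorem 1 and Corollary 3; $\mathcal{C}^{k+1}$ regularity of $S$ and the stationary Hamilton--Jacobi equation from Day's Theorems 2, 6 and Corollary 5). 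You instead reconstruct these directly. Your argument for (e) — conservation of $H$ along the autonomous Hamiltonian flow together with $\boldsymbol{x}(t)\to\boldsymbol{x}_{eq}$, $\dot{\boldsymbol{x}}(t)\to\boldsymbol{F}(\boldsymbol{x}_{eq})=\boldsymbol{0}$, hence $\boldsymbol{\alpha}(t)=\nabla_{\boldsymbol{\beta}}L\to\boldsymbol{0}$ — is the standard and correct route, and reduces the paper's citation to elementary facts already established in Proposition \ref{theo0206}(d).

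Two steps need more than you give them. In (d), the sublevel sets $\Phi_{\boldsymbol{x}_0,[0,T]}(s)$ are compact only for a \emph{fixed} horizon $T$, whereas your minimizing sequence has time spans $T_n\to\infty$; you need a diagonal extraction on expanding intervals plus an argument that the limiting path does not lose mass near $\boldsymbol{x}_{eq}$ (paths can linger there arbitrarily long at near-zero cost), which is exactly why the paper defers to Day's Lemma 2. More seriously, in (f) the flow-box/non-conjugacy argument shows only that for $\boldsymbol{y}$ near the OP there is a unique nearby characteristic through $\boldsymbol{y}$ and that its action is a $\mathcal{C}^{k+1}$ function of $\boldsymbol{y}$; it does \emph{not} show that this local characteristic realizes the global infimum defining $S(\boldsymbol{y})$, i.e.\ that no path staying far from the reference OP (or using a very different time span) does better. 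Closing that gap is the substantive content of Day's Theorems 2 and 6, and in the non-stationary analogue it is precisely the lengthy compactness-and-contradiction argument occupying the second half of the paper's proof of Proposition \ref{theo0206}(e). As written, your part (f) asserts $S(\boldsymbol{y})=\int\dot{\boldsymbol{x}}\cdot\boldsymbol{\alpha}\,\mathrm{d}s$ along the local characteristic without justifying that identification, so this step must either be proved by an analogous minimizing-sequence argument or delegated to the cited reference as the paper does.
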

	\begin{remark}
		(c) The assumptions in part (f) of this proposition are slightly stronger than the uniqueness requirement of $\{\boldsymbol{\phi}_{\text{OP}}(t;\boldsymbol{x}):t\in(-\infty,0]\}$. That is, if the assumptions in part (e) hold, then $\{\boldsymbol{\phi}_{\text{OP}}(t;\boldsymbol{x}):t\in(-\infty,0]\}$ is the unique OP that connects $\boldsymbol{x}_{eq}$ with $\boldsymbol{x}$.
		
		(d) Part (f) of this proposition provides a sufficient condition to represent the OP as the unique solution of Eq. (\ref{eq020407}). This result constitutes a prerequisite for the ensuing limit theorems in Sec. \ref{sec05}.
	\end{remark}
	The following lemma is from [\onlinecite[Lemma 5]{Hao_Ge_2017}], and will be required in Sec. \ref{sec05}. 
	\begin{lemma}\label{theo0210}
		Assume that the conditions in Theorems \ref{theo0204}, \ref{theo0208} and part (f) of Proposition \ref{theo0209} hold. For any $T^*\in(-\infty,0)$ and sufficiently small $\delta_0>0$, define $\Sigma_{\delta_0,[T^*,0]}=\cup_{t\in[T^*,0]}B_{\delta_0}(\boldsymbol{\phi}_{\text{OP}}(t))$. Assume that for any $T^*\in(-\infty,0)$, there is a constant $\delta_0$, so that for each sufficiently small $\varepsilon$, the pre-factor
		\begin{equation*}
			k^{\varepsilon,V}(\boldsymbol{x})\triangleq\pi_V(B_{\varepsilon}(\boldsymbol{x}))\exp\left(V\inf_{\boldsymbol{y}\in B_{\varepsilon}(\boldsymbol{x})}S(\boldsymbol{y})\right),
		\end{equation*}
		is continuous in $\Sigma_{\delta_0,[T^*,0]}$, and there exists a positive function $f^{\varepsilon,V}$ of $V$ such that
		\begin{equation*}
			\lim_{V\to\infty,\;\varepsilon\to 0}\frac{k^{\varepsilon,V}(\boldsymbol{x})}{f^{\varepsilon,V}}=K(\boldsymbol{x}),
		\end{equation*}
		exists, uniformly for $\boldsymbol{x}\in\Sigma_{\delta_0,[T^*,0]}$. We also assume that both $K(\boldsymbol{x})$ and $S(\boldsymbol{x})$ are at least twice continuous differentiable in $\Sigma_{\delta_0,[T^*,0]}$, and $K(\boldsymbol{x})>0$. Then for any positive function $\bar{\varepsilon}(V)$ such that $V\bar{\varepsilon}(V)>\frac{1}{2}\sqrt{N}\max_{1\leq i\leq M}\vert \boldsymbol{\nu}_i\vert$, and $\lim_{V\to\infty}V\bar{\varepsilon}^2(V)=0$, we have
		\begin{equation}\label{eq020408}
			\lim_{V\to\infty}\frac{\pi_V(B_{\bar{\varepsilon}(V)}(\boldsymbol{x}\pm V^{-1}\boldsymbol{\nu}_i))}{\pi_V(B_{\bar{\varepsilon}(V)}(\boldsymbol{x}))}=e^{\mp\boldsymbol{\nu}_i\cdot\nabla_{\boldsymbol{x}}S(\boldsymbol{x})},
		\end{equation}
		uniformly for $\boldsymbol{x}\in\Sigma_{\delta_0,[T^*,0]}$ and $1\leq i\leq M$.
		
		Furthermore,
		\begin{equation}\label{eq020409}
			\lim_{V\to\infty}\frac{\pi_V(\boldsymbol{x}^{**}(\boldsymbol{x},V)\pm V^{-1}\boldsymbol{\nu}_i)}{\pi_V(\boldsymbol{x}^{**}(\boldsymbol{x},V))}=e^{\mp\boldsymbol{\nu}_i\cdot\nabla_{\boldsymbol{x}}S(\boldsymbol{x})},
		\end{equation}
		uniformly for $\boldsymbol{x}\in\Sigma_{\delta_0,[T^*,0]}$ and $1\leq i\leq M$.
	\end{lemma}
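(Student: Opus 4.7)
The plan is to establish the lemma by adapting the argument used for Lemma~\ref{theo0207} to the stationary setting (a version of this result is proved in \cite{Hao_Ge_2017}); the stationary case is technically simpler because $S(\boldsymbol{x})$ carries no time variable and there is no analogue of the singular behaviour at $t=0$ to avoid. The two main ingredients are the assumed uniform convergence $k^{\varepsilon,V}(\boldsymbol{x})/f^{\varepsilon,V}\to K(\boldsymbol{x})$ with $K$ continuous and positive, and the $C^2$ regularity of $S$ on $\Sigma_{\delta_0,[T^*,0]}$.

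For Eq.~(\ref{eq020408}), I would rewrite both ball probabilities via the defining identity for $k^{\varepsilon,V}$, obtaining
\begin{equation*}
\frac{\pi_V(B_{\bar{\varepsilon}(V)}(\boldsymbol{x}\pm V^{-1}\boldsymbol{\nu}_i))}{\pi_V(B_{\bar{\varepsilon}(V)}(\boldsymbol{x}))}=\frac{k^{\bar{\varepsilon}(V),V}(\boldsymbol{x}\pm V^{-1}\boldsymbol{\nu}_i)}{k^{\bar{\varepsilon}(V),V}(\boldsymbol{x})}\exp(-V\Delta_V^{\pm}),
\end{equation*}
with $\Delta_V^{\pm}\triangleq\inf_{B_{\bar{\varepsilon}(V)}(\boldsymbol{x}\pm V^{-1}\boldsymbol{\nu}_i)}S-\inf_{B_{\bar{\varepsilon}(V)}(\boldsymbol{x})}S$. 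The uniform convergence of the pre-factor together with the continuity and positivity of $K$ forces the pre-factor quotient to tend to $K(\boldsymbol{x})/K(\boldsymbol{x})=1$, uniformly in $\boldsymbol{x}\in\Sigma_{\delta_0,[T^*,0]}$. For the exponent, a second-order Taylor expansion of $S$ around each ball centre (valid since $S\in C^2$ and $\bar{\varepsilon}(V)\to 0$, which follows from $V\bar{\varepsilon}(V)^2\to 0$) gives $\inf_{B_{\bar{\varepsilon}(V)}(\boldsymbol{y})}S=S(\boldsymbol{y})-\bar{\varepsilon}(V)\vert\nabla S(\boldsymbol{y})\vert+O(\bar{\varepsilon}(V)^2)$, so
\begin{equation*}
V\Delta_V^{\pm}=V[S(\boldsymbol{x}\pm V^{-1}\boldsymbol{\nu}_i)-S(\boldsymbol{x})]+O(V\bar{\varepsilon}(V)^2)+O(\bar{\varepsilon}(V))=\pm\boldsymbol{\nu}_i\cdot\nabla S(\boldsymbol{x})+o(1),
\end{equation*}
uniformly in $\boldsymbol{x}$ and in $1\le i\le M$ (the latter being automatic since there are only finitely many reactions).

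For Eq.~(\ref{eq020409}) I would pass from the ball ratio to the pointwise ratio using the invariance of the lattice $\mathcal{L}_V\triangleq\boldsymbol{x}^*(\boldsymbol{x}_0,V)+V^{-1}\boldsymbol{\nu}^\top\mathbb{Z}^M$ under shifts by $V^{-1}\boldsymbol{\nu}_i$. Writing $\pi_V(B_{\bar{\varepsilon}(V)}(\boldsymbol{x}))=\sum_{\boldsymbol{y}\in\mathcal{L}_V\cap B_{\bar{\varepsilon}(V)}(\boldsymbol{x})}\pi_V(\boldsymbol{y})$ and performing the substitution $\boldsymbol{y}\to\boldsymbol{y}+V^{-1}\boldsymbol{\nu}_i$ in the numerator sum, the ratio in Eq.~(\ref{eq020408}) becomes a $\pi_V$-weighted mean of the pointwise ratios $\pi_V(\boldsymbol{y}+V^{-1}\boldsymbol{\nu}_i)/\pi_V(\boldsymbol{y})$ as $\boldsymbol{y}$ ranges over $\mathcal{L}_V\cap B_{\bar{\varepsilon}(V)}(\boldsymbol{x})$. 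The uniform continuity of $\nabla S$ on $\Sigma_{\delta_0,[T^*,0]}$ together with $\bar{\varepsilon}(V)\to 0$ then forces all these pointwise ratios to share a common limit $e^{-\boldsymbol{\nu}_i\cdot\nabla S(\boldsymbol{x})}$, which by the previous step must coincide with the ball-ratio limit; picking the anchor $\boldsymbol{y}=\boldsymbol{x}^{**}(\boldsymbol{x},V)$ yields (\ref{eq020409}).

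The main technical obstacle, as I expect, lies in the last step: the pre-factor hypothesis directly controls only ball probabilities, and one must separately justify that the pointwise ratios $\pi_V(\boldsymbol{y}+V^{-1}\boldsymbol{\nu}_i)/\pi_V(\boldsymbol{y})$ are already asymptotically constant across the shrinking ball --- in effect, promoting the ball-level WKB ansatz to a pointwise one. The scale conditions $V\bar{\varepsilon}(V)>\tfrac{1}{2}\sqrt{N}\max_i\vert\boldsymbol{\nu}_i\vert$ and $V\bar{\varepsilon}(V)^2\to 0$ are calibrated precisely for this passage: the first guarantees a non-empty lattice intersection for each ball (so $\boldsymbol{x}^{**}$ is well defined inside $B_{\bar{\varepsilon}(V)}(\boldsymbol{x})$), while the second keeps both the curvature of $S$ and the oscillation of $\nabla S$ across the ball subdominant to the leading contribution $\boldsymbol{\nu}_i\cdot\nabla S(\boldsymbol{x})$ in the exponent.
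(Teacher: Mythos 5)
Your argument for Eq. (\ref{eq020408}) is correct and is essentially the standard route: the paper does not prove this lemma itself (it cites it from Ge--Qian), but its proof of the parallel non-stationary Lemma \ref{theo0207} in Appendix \ref{secA1} proceeds exactly as you do — cancel the pre-factors using the uniform convergence to a continuous positive $K$, reduce to the exponent $V\bigl(\inf_{B_{\bar{\varepsilon}(V)}(\boldsymbol{x})}S-\inf_{B_{\bar{\varepsilon}(V)}(\boldsymbol{x}\pm V^{-1}\boldsymbol{\nu}_i)}S\bigr)$, and control it by the second-order Taylor expansion of $S$ together with $V\bar{\varepsilon}^2(V)\to 0$. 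No issues there.

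The gap is in your treatment of Eq. (\ref{eq020409}), and it is the one you yourself flag without closing. Writing the ball ratio as a $\pi_V$-weighted mean of the pointwise ratios $\pi_V(\boldsymbol{y}+V^{-1}\boldsymbol{\nu}_i)/\pi_V(\boldsymbol{y})$ over $\boldsymbol{y}$ in the lattice intersection does not let you conclude anything about any individual term: a weighted average can converge while the individual ratios oscillate wildly, and the hypotheses of the lemma control only ball probabilities, so "the pointwise ratios share a common limit" is precisely what needs proof and cannot be extracted from uniform continuity of $\nabla S$ alone. The paper's device (in the proof of Lemma \ref{theo0207}, and implicitly here) sidesteps the averaging entirely: since $\vert\boldsymbol{x}^{**}(\boldsymbol{x},V)-\boldsymbol{x}\vert\leq O(1/V)$ and the lattice $\boldsymbol{x}^{*}(\boldsymbol{x}_0,V)+V^{-1}\boldsymbol{\nu}^\top\mathbb{Z}^M$ has spacing of order $1/V$, one may \emph{choose} an admissible $\bar{\varepsilon}(V)$ for which $B_{\bar{\varepsilon}(V)}(\boldsymbol{x})$ contains exactly one lattice point, namely $\boldsymbol{x}^{**}(\boldsymbol{x},V)$, and likewise for the shifted balls. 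For that choice the ball probability \emph{equals} the point probability, $\pi_V(B_{\bar{\varepsilon}(V)}(\boldsymbol{x}))=\pi_V(\boldsymbol{x}^{**}(\boldsymbol{x},V))$, and Eq. (\ref{eq020409}) is then an immediate corollary of Eq. (\ref{eq020408}) applied to that particular $\bar{\varepsilon}(V)$, the limit on the right being independent of the choice. You should replace the averaging step with this selection argument (noting that the lower bound $V\bar{\varepsilon}(V)>\tfrac{1}{2}\sqrt{N}\max_i\vert\boldsymbol{\nu}_i\vert$ guarantees nonemptiness while a matching upper bound of order $1/V$ can be imposed to force uniqueness, both compatible with $V\bar{\varepsilon}^2(V)\to 0$).
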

	\begin{remark}
		(a)	The domain is restricted to $\Sigma_{\delta_0,[T^*,0]}$, since this is fundamental for the validity of the limit theorems presented in Sec. \ref{sec05}.
		
		(b) Despite the fact that the notations employed in this section are identical to that utilised in the preceding one, it is possible to distinguish them according to whether the case is stationary or non-stationary.
	\end{remark}
	
	\section{Time Reversal of Nonlinear Chemical Reaction Models}\label{sec03}
	In this section, we fix the time interval as $[0,T]$. The infinitesimal generator $\mathscr{J}_{\boldsymbol{x}}$ is defined for each bounded continuous function $f(\boldsymbol{x})$ by the formula
	\begin{equation*}
		\begin{aligned}
			\mathscr{J}_{\boldsymbol{x}}f(\boldsymbol{x})\triangleq&\sum_{i=1}^{M}\left(f(\boldsymbol{x}+V^{-1}\boldsymbol{\nu}_i)-f(\boldsymbol{x})\right)r_{+i}(V\boldsymbol{x},V)\\
			&+\sum_{i=1}^{M}\left(f(\boldsymbol{x}-V^{-1}\boldsymbol{\nu}_i)-f(\boldsymbol{x})\right)r_{-i}(V\boldsymbol{x},V),
		\end{aligned}
	\end{equation*}
	with its adjoint operator being of the form
	\begin{equation*}
		\begin{aligned}
			\mathscr{J}^{*}_{\boldsymbol{x}}f(\boldsymbol{x})=&\sum_{i=1}^{M}\left(f(\boldsymbol{x}-V^{-1}\boldsymbol{\nu}_i)r_{+i}(V\boldsymbol{x}-\boldsymbol{\nu}_i,V)-f(\boldsymbol{x})r_{+i}(V\boldsymbol{x},V)\right)\\
			&+\sum_{i=1}^{M}\left(f(\boldsymbol{x}+V^{-1}\boldsymbol{\nu}_i)r_{-i}(V\boldsymbol{x}+\boldsymbol{\nu}_i,V)-f(\boldsymbol{x})r_{-i}(V\boldsymbol{x},V)\right).
		\end{aligned}
	\end{equation*}
	The time evolution for probability of $\boldsymbol{x}_V(t)$ can be described by the chemical master equation (or the Kolmogorov forward equation)
	\begin{equation}\label{eq030101}
		\begin{aligned}
			\frac{\mathrm{d}p_V(\boldsymbol{x},t)}{\mathrm{d}t}=\mathscr{J}^{*}_{\boldsymbol{x}}p_V(\boldsymbol{x},t),
		\end{aligned}
	\end{equation}
	in which $x\in V^{-1}\mathbb{N}^d$ and $t\in [0,T]$. In particular, the conditional probability $p_V(\boldsymbol{x},t\vert \boldsymbol{x}_0)$ satisfies the forward equation
	\begin{equation*}
		\frac{\mathrm{d}p_V(\boldsymbol{x},t\vert \boldsymbol{x}_0)}{\mathrm{d}t}=\mathscr{J}^{*}_{\boldsymbol{x}}p_V(\boldsymbol{x},t\vert \boldsymbol{x}_0),
	\end{equation*}
	with the initial condition
	\begin{equation*}
		p_V(\boldsymbol{x},0\vert \boldsymbol{x}_0)=\begin{cases}
			1,\quad \boldsymbol{x}=\boldsymbol{x}_0\in V^{-1}\mathbb{N}^N, \\
			0,\quad \text{otherwise},
		\end{cases}
	\end{equation*}
	and also the backward equation
	\begin{equation*}
		\begin{aligned}
			\frac{\mathrm{d}p_V(\boldsymbol{x},t\vert \boldsymbol{x}_0)}{\mathrm{d}t}=\mathscr{J}_{\boldsymbol{x}_0}p_V(\boldsymbol{x},t\vert \boldsymbol{x}_0),
		\end{aligned}
	\end{equation*}
	with the terminal condition 
	\begin{equation*}
		p_V(\boldsymbol{x},0\vert \boldsymbol{x}_0)=\begin{cases}
			1,\quad \boldsymbol{x}_0=\boldsymbol{x}\in V^{-1}\mathbb{N}^N, \\
			0,\quad \text{otherwise}.
		\end{cases}
	\end{equation*}
	\subsection{Time Reversal of a Given Family of Probabilities}\label{sec0301}
	For a given family of probabilities $\{p_V(\boldsymbol{x},t)\}_{t\in [0,T]}$ satisfying Eq. (\ref{eq030101}), we call $\{\bar{p}_V(\boldsymbol{x},t)=p_{V}(\boldsymbol{x},T-t)\}_{t\in [0,T]}$ the time reversal of $\{p_V(\boldsymbol{x},t)\}_{t\in [0,T]}$. The following proposition is easy to verify.
	\begin{proposition}
		Let $\mathscr{K}^{*}_{\boldsymbol{x},t}$ be an operator depending on the given family $\{p_V(\boldsymbol{x},t)\}_{t\in [0,T]}$, which is defined for each bounded continuous function $f(\boldsymbol{x},t)$ by
		\begin{equation*}
			\begin{aligned}
				\mathscr{K}^{*}_{\boldsymbol{x},t}f(\boldsymbol{x},t)\triangleq&\sum_{i=1}^{M}{\left(f(\boldsymbol{x}-V^{-1}\boldsymbol{\nu}_i,t)\bar{r}_{+i}(V\boldsymbol{x}-\boldsymbol{\nu}_i,V,t)-f(\boldsymbol{x},t)\bar{r}_{+i}(V\boldsymbol{x},V,t)\right)}\\
				&+\sum_{i=1}^{M}{\left(f(\boldsymbol{x}+V^{-1}\boldsymbol{\nu}_i,t)\bar{r}_{-i}(V\boldsymbol{x}+\boldsymbol{\nu}_i,V,t)-f(\boldsymbol{x},t)\bar{r}_{-i}(V\boldsymbol{x},V,t)\right)},
			\end{aligned}
		\end{equation*}
		where
		\begin{equation}\label{eq030201}
			\bar{r}_{\pm i}(V\boldsymbol{x},V,t)\triangleq\frac{p_V(\boldsymbol{x}\pm V^{-1}\boldsymbol{\nu}_i,T-t)r_{\mp i}(V\boldsymbol{x}\pm\boldsymbol{\nu}_i,V)}{p_V(\boldsymbol{x},T-t)}.
		\end{equation}
		Then, $\{\bar{p}_V(x,t)\}_{t\in [0,T]}$ satisfies the following master equation
		\begin{equation}\label{eq030202}
			\frac{\mathrm{d}\bar{p}_V(\boldsymbol{x},t)}{\mathrm{d}t}=\mathscr{K}^{*}_{\boldsymbol{x},t}\bar{p}_V(\boldsymbol{x},t).
		\end{equation}
	\end{proposition}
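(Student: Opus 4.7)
The proof is a direct algebraic verification, so my plan is to reduce both sides of the claimed identity to the same expression in terms of $p_V(\boldsymbol{x}, T-t)$ and the original rates $r_{\pm i}$.

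The first step is to handle the left-hand side. By the chain rule, $\frac{d}{dt}\bar{p}_V(\boldsymbol{x},t) = \frac{d}{dt} p_V(\boldsymbol{x}, T-t) = -\left.\frac{d p_V(\boldsymbol{x}, s)}{ds}\right|_{s=T-t}$. Using the forward chemical master equation (\ref{eq030101}), this equals $-\mathscr{J}^*_{\boldsymbol{x}} p_V(\boldsymbol{x},T-t)$, namely
\begin{equation*}
\begin{aligned}
\frac{d\bar{p}_V(\boldsymbol{x},t)}{dt} = &-\sum_{i=1}^M\Bigl[p_V(\boldsymbol{x}-V^{-1}\boldsymbol{\nu}_i, T-t)\, r_{+i}(V\boldsymbol{x}-\boldsymbol{\nu}_i,V) - p_V(\boldsymbol{x},T-t)\, r_{+i}(V\boldsymbol{x},V)\Bigr]\\
&-\sum_{i=1}^M\Bigl[p_V(\boldsymbol{x}+V^{-1}\boldsymbol{\nu}_i, T-t)\, r_{-i}(V\boldsymbol{x}+\boldsymbol{\nu}_i,V) - p_V(\boldsymbol{x},T-t)\, r_{-i}(V\boldsymbol{x},V)\Bigr].
\end{aligned}
\end{equation*}

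The second step is to expand the right-hand side $\mathscr{K}^*_{\boldsymbol{x},t}\bar{p}_V(\boldsymbol{x},t)$ by substituting the definition (\ref{eq030201}) of $\bar{r}_{\pm i}$. The crucial observation is that in every term of $\mathscr{K}^*_{\boldsymbol{x},t}$, the $\bar{p}_V$-factor and the denominator of the corresponding $\bar{r}_{\pm i}$-factor cancel. For instance, $\bar{p}_V(\boldsymbol{x}-V^{-1}\boldsymbol{\nu}_i,t)\,\bar{r}_{+i}(V\boldsymbol{x}-\boldsymbol{\nu}_i,V,t)$ collapses to $p_V(\boldsymbol{x},T-t)\, r_{-i}(V\boldsymbol{x},V)$, while $\bar{p}_V(\boldsymbol{x},t)\,\bar{r}_{+i}(V\boldsymbol{x},V,t)$ collapses to $p_V(\boldsymbol{x}+V^{-1}\boldsymbol{\nu}_i,T-t)\, r_{-i}(V\boldsymbol{x}+\boldsymbol{\nu}_i,V)$, and similarly for the two terms involving $\bar{r}_{-i}$.

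Finally, the third step is to compare: after collecting the four resulting terms, the sum $\mathscr{K}^*_{\boldsymbol{x},t}\bar{p}_V(\boldsymbol{x},t)$ is exactly the displayed expression for $d\bar{p}_V(\boldsymbol{x},t)/dt$ above, so the identity (\ref{eq030202}) follows. There is no real obstacle here; the only delicacy is bookkeeping on indices and signs, specifically noting that a $+\boldsymbol{\nu}_i$ shift paired with $r_{+i}$ in $\mathscr{J}^*_{\boldsymbol{x}}$ becomes a $-\boldsymbol{\nu}_i$ shift paired with $r_{-i}$ after reversal (and conversely), which is precisely the swap forward $\leftrightarrow$ backward encoded in the subscripts of (\ref{eq030201}). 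One should also remark that the quotients defining $\bar{r}_{\pm i}$ are well-defined provided $p_V(\boldsymbol{x},T-t)>0$, which holds on the support of $\bar{p}_V$ where the equation has content.
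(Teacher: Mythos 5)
Your proof is correct and is exactly the direct verification the paper intends (the paper merely states the proposition "is easy to verify" without writing it out): differentiate $\bar{p}_V(\boldsymbol{x},t)=p_V(\boldsymbol{x},T-t)$ to get $-\mathscr{J}^*_{\boldsymbol{x}}p_V(\boldsymbol{x},T-t)$, then observe that the telescoping cancellations in $\mathscr{K}^*_{\boldsymbol{x},t}\bar{p}_V$ reproduce the same four groups of terms with the forward/backward roles swapped. Your closing remarks on the sign/index bookkeeping and on the $0/0$ convention for well-definedness match the paper's own remark following the proposition.
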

	\begin{remark}
		The convention $\frac{0}{0}=0$ is used here and in subsequent contents to ensure that the rate functions $\bar{r}_{\pm i}$ of the reversed process are well-defined. 
	\end{remark}
	It is well known that the time reversal of a Markov process remains a Markov process \cite{Haussmann_1986,Cattiaux_2023}. In the context of a continuous-time setting, a substantial class of Markov processes with jumps is also preserved under the time reversal \cite{Conforti_2022}. Now, let $\bar{\boldsymbol{x}}_{V}(t)$ be a non-homogeneous Markov jump process with its transition rates dependent on the given family $\{p_V(x,t)\}_{t\in [0,T]}$, which is defined by
	\begin{equation}\label{eq030203}
		\begin{aligned}
			\bar{\boldsymbol{x}}_{V}(t)=&\bar{\boldsymbol{x}}_{V}(0)\\
			&+V^{-1}\sum_{i=1}^{M}{\boldsymbol{\nu}_i\left\{Y_{+i}\left(\int_{0}^{t}\bar{r}_{+i}(V\bar{\boldsymbol{x}}_{V}(s),V,s)\mathrm{d}s\right)-Y_{-i}\left(\int_{0}^{t}\bar{r}_{-i}(V\bar{\boldsymbol{x}}_{V}(s),V,s)\mathrm{d}s\right)\right\}}.
		\end{aligned}
	\end{equation}	
	It follows from this proposition that
	\begin{corollary}
		For each $t\in[0,T]$, $\bar{p}_V(x,t)$ is the probability of the reversed process $\bar{\boldsymbol{x}}_{V}(t)$ if the initial distribution of $\bar{\boldsymbol{x}}_{V}(0)$ is given by $\bar{p}_V(x,0)$. In other words, the reversed process $\bar{\boldsymbol{x}}_{V}(t)$ defined here is employed to characterize the time evolution of the reversed family $\{\bar{p}_V(x,t)\}_{t\in[0,T]}$.
	\end{corollary}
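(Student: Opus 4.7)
The plan is to identify the forward Kolmogorov equation of the non-homogeneous Markov jump process $\bar{\boldsymbol{x}}_V(t)$ defined by (\ref{eq030203}) with the master equation (\ref{eq030202}) that $\bar{p}_V(\boldsymbol{x},t)$ is already known to satisfy by the preceding proposition. Once the two equations are seen to be literally the same linear system (indexed by $\boldsymbol{x}\in V^{-1}\mathbb{N}^N$) and the two solutions are prescribed the same initial data, uniqueness forces them to coincide on $[0,T]$.

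First I would read off the jump structure of $\bar{\boldsymbol{x}}_V(t)$ directly from its Poisson representation (\ref{eq030203}): from state $\boldsymbol{x}$ at time $t$ it jumps to $\boldsymbol{x}+V^{-1}\boldsymbol{\nu}_i$ at rate $\bar{r}_{+i}(V\boldsymbol{x},V,t)$ and to $\boldsymbol{x}-V^{-1}\boldsymbol{\nu}_i$ at rate $\bar{r}_{-i}(V\boldsymbol{x},V,t)$. Setting $q_V(\boldsymbol{x},t)\triangleq P(\bar{\boldsymbol{x}}_V(t)=\boldsymbol{x})$ and performing the standard infinitesimal balance of probability inflow and outflow at $\boldsymbol{x}$ (inflow from $\boldsymbol{x}-V^{-1}\boldsymbol{\nu}_i$ at rate $\bar{r}_{+i}(V\boldsymbol{x}-\boldsymbol{\nu}_i,V,t)$ and from $\boldsymbol{x}+V^{-1}\boldsymbol{\nu}_i$ at rate $\bar{r}_{-i}(V\boldsymbol{x}+\boldsymbol{\nu}_i,V,t)$, outflow at combined rate $\sum_i[\bar{r}_{+i}(V\boldsymbol{x},V,t)+\bar{r}_{-i}(V\boldsymbol{x},V,t)]$) will yield exactly
\[
\frac{\mathrm{d}q_V(\boldsymbol{x},t)}{\mathrm{d}t}=\mathscr{K}^{*}_{\boldsymbol{x},t}q_V(\boldsymbol{x},t),
\]
matching term for term the operator $\mathscr{K}^{*}_{\boldsymbol{x},t}$ displayed above (\ref{eq030201}). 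Since both $q_V(\cdot,t)$ and $\bar{p}_V(\cdot,t)$ solve this system and agree at $t=0$ by hypothesis, uniqueness of the associated Cauchy problem delivers $q_V\equiv\bar{p}_V$, which is the claim.

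The main obstacle I anticipate is not the infinitesimal bookkeeping but the uniqueness step: the state space $V^{-1}\mathbb{N}^N$ is countable and the time-dependent rates $\bar{r}_{\pm i}$ involve ratios $p_V(\boldsymbol{x}\pm V^{-1}\boldsymbol{\nu}_i,T-t)/p_V(\boldsymbol{x},T-t)$ that are a priori unbounded and only defined where the denominator is positive (this is precisely what the $0/0=0$ convention is there to handle). Consequently, textbook linear-ODE uniqueness does not apply off the shelf. I would circumvent this either (i) by invoking non-explosion of the original process $\boldsymbol{x}_V(t)$ under the boundedness assumption of Theorem \ref{theo0201}(a), which transfers to $\bar{\boldsymbol{x}}_V(t)$ because on $\mathrm{supp}\,p_V(\cdot,T-t)$ the reversed rates are inherited from the forward ones and therefore remain finite, or (ii) more robustly, by working directly with the Poisson representation (\ref{eq030203}): iterating on the successive jump times, the law of $\bar{\boldsymbol{x}}_V(t)$ is constructed from the prescribed rates and initial distribution alone, so $q_V(\boldsymbol{x},t)$ is determined and must equal $\bar{p}_V(\boldsymbol{x},t)$. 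A minor formal point worth stating explicitly is that support issues are benign here: wherever $p_V(\boldsymbol{x},T-t)=0$, the inflow and outflow terms in $\mathscr{K}^{*}_{\boldsymbol{x},t}$ vanish consistently, so the master equation is satisfied trivially and no spurious probability is created.
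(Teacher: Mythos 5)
Your proposal is correct and is essentially the argument the paper intends: the corollary is stated as an immediate consequence of the preceding proposition, precisely because the forward Kolmogorov equation of the jump process $\bar{\boldsymbol{x}}_V(t)$ read off from its Poisson representation coincides term by term with the master equation (\ref{eq030202}) that $\bar{p}_V(\boldsymbol{x},t)$ satisfies, and the two solutions share the same initial data. Your additional remarks on uniqueness (non-explosion inherited from the forward process, and the benign handling of the support via the $0/0=0$ convention) address a point the paper leaves implicit but do not change the route.
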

	\subsection{Conditional Probability of the Reversed Process}\label{sec0302}
	For a fixed $\boldsymbol{x}_T$, define
	\begin{equation}\label{eq030301}
		q_{V}(\boldsymbol{x},t;\boldsymbol{x}_T,T)\triangleq\frac{p_V(\boldsymbol{x},t)p_V(\boldsymbol{x}_T,T-t\vert\boldsymbol{x})}{p_V(\boldsymbol{x}_T,T)},\quad t\in[0,T],
	\end{equation}
	and its time reversal
	\begin{equation}\label{eq030302}
		\bar{q}_{V}(\boldsymbol{x},t;\boldsymbol{x}_T,T)\triangleq q_{V}(\boldsymbol{x},T-t;\boldsymbol{x}_T,T)=\frac{p_V(\boldsymbol{x},T-t)p_V(\boldsymbol{x}_T,t\vert\boldsymbol{x})}{p_V(\boldsymbol{x}_T,T)},\quad t\in[0,T].
	\end{equation}
	Taking the derivative of these quantities with respect to $t$, substituting the forward and backward equations into them, and regrouping the terms, yield the following proposition.
	\begin{proposition}
		\begin{itemize}
			\item[(a)] $\bar{q}_{V}(\boldsymbol{x},t;\boldsymbol{x}_T,T)$ satisfies the following master equation
			\begin{equation}\label{eq030303}
				\frac{\mathrm{d}\bar{q}_{V}(\boldsymbol{x},t;\boldsymbol{x}_T,T)}{\mathrm{d}t}=\mathscr{K}^{*}_{\boldsymbol{x},t}\bar{q}_{V}(\boldsymbol{x},t;\boldsymbol{x}_T,T).
			\end{equation}
			\item[(b)] Let $\mathscr{L}^{*}_{\boldsymbol{x},t}$ be an operator depending on the given family $\{p_V(\boldsymbol{x}_T,T-t\vert\boldsymbol{x})\}_{t\in [0,T]}$, which is defined for each bounded continuous function $f(\boldsymbol{x},t)$ by
			\begin{equation*}
				\begin{aligned}
					\mathscr{L}^{*}_{\boldsymbol{x},t}f(\boldsymbol{x},t)\triangleq&\sum_{i=1}^{M}{\left(f(\boldsymbol{x}-V^{-1}\boldsymbol{\nu}_i,t)\tilde{r}_{+i}(V\boldsymbol{x}-\boldsymbol{\nu}_i,V,t)-f(\boldsymbol{x},t)\tilde{r}_{+i}(V\boldsymbol{x},V,t)\right)}\\
					&+\sum_{i=1}^{M}{\left(f(\boldsymbol{x}+V^{-1}\boldsymbol{\nu}_i,t)\tilde{r}_{-i}(V\boldsymbol{x}+\boldsymbol{\nu}_i,V,t)-f(\boldsymbol{x},t)\tilde{r}_{-i}(V\boldsymbol{x},V,t)\right)},
				\end{aligned}
			\end{equation*}
			in which 
			\begin{equation}\label{eq030304}
				\tilde{r}_{\pm i}(V\boldsymbol{x},V,t)\triangleq\frac{p_V(\boldsymbol{x}_T,T-t\vert\boldsymbol{x}\pm V^{-1}\boldsymbol{\nu}_i)r_{\pm i}(V\boldsymbol{x},V)}{p_V(\boldsymbol{x}_T,T-t\vert\boldsymbol{x})}.
			\end{equation}
			Then $q_{V}(\boldsymbol{x},t;\boldsymbol{x}_T,T)$ satisfies the following master equation
			\begin{equation}\label{eq030305}
				\frac{\mathrm{d}q_{V}(\boldsymbol{x},t;\boldsymbol{x}_T,T)}{\mathrm{d}t}=\mathscr{L}^{*}_{\boldsymbol{x},t}q_{V}(\boldsymbol{x},t;\boldsymbol{x}_T,T).
			\end{equation}
		\end{itemize}
	\end{proposition}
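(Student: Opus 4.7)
The proof is essentially a direct computation guided by the hint embedded in the statement: differentiate, substitute the Kolmogorov equations, and collect terms. The plan is to handle parts (a) and (b) in parallel, since the bookkeeping is nearly identical modulo which factor is conditional and which is marginal.

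For part (b), I would write $p(\boldsymbol{x},t)\triangleq p_V(\boldsymbol{x},t)$ and $P(\boldsymbol{y})\triangleq p_V(\boldsymbol{x}_T,T-t\vert\boldsymbol{y})$, so that $q_V(\boldsymbol{x},t;\boldsymbol{x}_T,T)=p(\boldsymbol{x},t)P(\boldsymbol{x})/p_V(\boldsymbol{x}_T,T)$. Differentiating in $t$ and applying the Leibniz rule, together with the forward equation for $p$ and the backward equation for $P$ (which picks up a minus sign through the $T-t$ chain rule), yields
\begin{equation*}
p_V(\boldsymbol{x}_T,T)\,\frac{d}{dt}q_V \;=\; P(\boldsymbol{x})\,\mathscr{J}^{*}_{\boldsymbol{x}} p(\boldsymbol{x},t) \;-\; p(\boldsymbol{x},t)\,\mathscr{J}_{\boldsymbol{x}} P(\boldsymbol{x}).
\end{equation*}
Expanding the two generators explicitly and pairing the terms of the form $p(\boldsymbol{x})P(\boldsymbol{x})r_{\pm i}(V\boldsymbol{x},V)$, the four ``diagonal'' contributions cancel exactly. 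What remains is a sum over $i$ of shifted products such as $p(\boldsymbol{x}-V^{-1}\boldsymbol{\nu}_i)P(\boldsymbol{x})r_{+i}(V\boldsymbol{x}-\boldsymbol{\nu}_i,V)-p(\boldsymbol{x})P(\boldsymbol{x}+V^{-1}\boldsymbol{\nu}_i)r_{+i}(V\boldsymbol{x},V)$, plus the analogous $-i$ contribution. Multiplying and dividing each summand by $P(\boldsymbol{x}-V^{-1}\boldsymbol{\nu}_i)$ or $P(\boldsymbol{x})$ reconstructs the rates $\tilde{r}_{\pm i}$ from (\ref{eq030304}) and reproduces $p_V(\boldsymbol{x}_T,T)\mathscr{L}^{*}_{\boldsymbol{x},t}q_V$ term by term, establishing (\ref{eq030305}).

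Part (a) is handled by the mirror computation applied to $\bar{q}_V(\boldsymbol{x},t;\boldsymbol{x}_T,T)=p_V(\boldsymbol{x},T-t)p_V(\boldsymbol{x}_T,t\vert\boldsymbol{x})/p_V(\boldsymbol{x}_T,T)$. Here the forward equation for $p_V(\boldsymbol{x},T-t)$ brings a minus sign through the chain rule, while the backward equation for $p_V(\boldsymbol{x}_T,t\vert\boldsymbol{x})$ contributes with a plus sign. After cancellation of the diagonal terms, the remaining shifted sums are rearranged so that the ratios $p_V(\boldsymbol{x}\pm V^{-1}\boldsymbol{\nu}_i,T-t)/p_V(\boldsymbol{x},T-t)$ combine with the original forward and backward rates; using the index swap $r_{+i}\leftrightarrow r_{-i}$ that is built into the definition (\ref{eq030201}) of $\bar{r}_{\pm i}$, the expression collapses to $\mathscr{K}^{*}_{\boldsymbol{x},t}\bar{q}_V$ exactly, giving (\ref{eq030303}).

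The main obstacle is purely combinatorial: one must correctly track the index pairing that the time reversal induces (a forward jump of species $i$ originating at $\boldsymbol{x}-V^{-1}\boldsymbol{\nu}_i$ is reinterpreted as a backward event landing at $\boldsymbol{x}$), the distinction between $r_{\pm i}(V\boldsymbol{x},V)$ and $r_{\pm i}(V\boldsymbol{x}\pm\boldsymbol{\nu}_i,V)$, and the fact that $\mathscr{J}_{\boldsymbol{x}}$ in the backward equation increments $\boldsymbol{x}$ whereas $\mathscr{J}^{*}_{\boldsymbol{x}}$ in the forward equation combines species-$i$ contributions with opposite shifts. Apart from this bookkeeping, no regularity or convergence issue arises: the convention $0/0=0$ ensures that the ratios defining $\bar{r}_{\pm i}$ and $\tilde{r}_{\pm i}$ are well-posed, and the identities to be checked are exact algebraic equalities rather than asymptotic statements, so the proof reduces to a finite verification for each index $i$.
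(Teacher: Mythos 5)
Your proposal is correct and follows exactly the route the paper takes: the paper's entire proof is the remark that one differentiates $q_V$ and $\bar q_V$ in $t$, substitutes the forward equation for the marginal factor and the backward equation for the conditional factor (with the sign flip from the $T-t$ chain rule), and regroups terms, which is precisely your computation. Your verification that the diagonal terms cancel and that the shifted products reassemble into $\tilde r_{\pm i}$ and $\bar r_{\pm i}$ (with the $+i\leftrightarrow -i$ swap in part (a)) is the correct bookkeeping, and you rightly note that the $0/0=0$ convention is all that is needed for well-posedness.
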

	
	We can interpret $q_{V}(\boldsymbol{x},t;\boldsymbol{x}_T,T)$ and $\bar{q}_{V}(\boldsymbol{x},t;\boldsymbol{x}_T,T)$ as follows.
	\begin{corollary}
		\begin{itemize}
			\item[(a)] For each $t\in[0,T]$, $\bar{q}_{V}(\boldsymbol{x},t;\boldsymbol{x}_T,T)$ is the probability of $\bar{\boldsymbol{x}}_{V}(t)$ if the initial distribution of $\bar{\boldsymbol{x}}_{V}(0)$ is given by
			\begin{equation*}
				\bar{q}_{V}(\boldsymbol{x},0;\boldsymbol{x}_T,T)=\begin{cases}
					1,\quad \boldsymbol{x}=\boldsymbol{x}_T\in V^{-1}\mathbb{N}^N, \\
					0,\quad \text{otherwise},
				\end{cases}
			\end{equation*}
			i.e., $\bar{\boldsymbol{x}}_{V}(0)=\boldsymbol{x}_T$ a.s..
			\item[(b)] $\bar{q}_{V}(\boldsymbol{x},t;\boldsymbol{x}_T,T)$ is the conditional probability of the reversed family $\{\bar{p}_V(\boldsymbol{x},t)\}_{t\in [0,T]}$ in the sense that
			\begin{equation*}
				\sum_{\boldsymbol{x}_T\in V^{-1}\mathbb{N}^N}\bar{p}_V(\boldsymbol{x}_T,0)\bar{q}_{V}(\boldsymbol{x},t;\boldsymbol{x}_T,T)=\bar{p}_V(\boldsymbol{x},t),
			\end{equation*}
			i.e., the Chapman–Kolmogorov-type equation holds.
		\end{itemize}
	\end{corollary}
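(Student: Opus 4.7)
The plan is to treat this as a direct verification using the definition of $\bar{q}_V$, the master equation satisfied by $\bar{q}_V$ (which is the content of the preceding proposition), and a one-line Chapman-Kolmogorov-type computation.

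For part (a), I will proceed in two steps. First, I would evaluate the initial condition by substituting $t=0$ into the definition: since $\bar{q}_V(\boldsymbol{x},0;\boldsymbol{x}_T,T)=q_V(\boldsymbol{x},T;\boldsymbol{x}_T,T)=p_V(\boldsymbol{x},T)p_V(\boldsymbol{x}_T,0\vert\boldsymbol{x})/p_V(\boldsymbol{x}_T,T)$, and since $p_V(\boldsymbol{x}_T,0\vert\boldsymbol{x})=1$ when $\boldsymbol{x}=\boldsymbol{x}_T$ and vanishes otherwise, the initial distribution collapses to a point mass at $\boldsymbol{x}_T$. Second, I would invoke part (a) of the preceding proposition, which states that $\bar{q}_V$ evolves according to $\partial_t\bar{q}_V=\mathscr{K}^*_{\boldsymbol{x},t}\bar{q}_V$. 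Since the operator $\mathscr{K}^*_{\boldsymbol{x},t}$ is, by construction in Sec.~\ref{sec0301}, the forward generator of the time-inhomogeneous jump process $\bar{\boldsymbol{x}}_V(t)$ defined by Eq.~(\ref{eq030203}), and since the Kolmogorov forward equation together with a delta-type initial distribution uniquely determines the law of the process at later times, $\bar{q}_V(\cdot,t;\boldsymbol{x}_T,T)$ coincides with the probability distribution of $\bar{\boldsymbol{x}}_V(t)$ started at $\boldsymbol{x}_T$.

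For part (b), I would perform a straightforward calculation. Using $\bar{p}_V(\boldsymbol{x}_T,0)=p_V(\boldsymbol{x}_T,T)$ and the definition of $\bar{q}_V$, the left-hand side becomes
\begin{equation*}
\sum_{\boldsymbol{x}_T\in V^{-1}\mathbb{N}^N}p_V(\boldsymbol{x}_T,T)\cdot\frac{p_V(\boldsymbol{x},T-t)\,p_V(\boldsymbol{x}_T,t\vert\boldsymbol{x})}{p_V(\boldsymbol{x}_T,T)}=p_V(\boldsymbol{x},T-t)\sum_{\boldsymbol{x}_T}p_V(\boldsymbol{x}_T,t\vert\boldsymbol{x}),
\end{equation*}
and the last sum equals $1$ since $p_V(\cdot,t\vert\boldsymbol{x})$ is a probability distribution on $V^{-1}\mathbb{N}^N$ for each $(\boldsymbol{x},t)$. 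The right-hand side is then $p_V(\boldsymbol{x},T-t)=\bar{p}_V(\boldsymbol{x},t)$, which completes the identification.

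The only genuine subtlety is bookkeeping around the convention $0/0=0$ used to define $\bar{q}_V$: one must verify that the terms in the sum for which $p_V(\boldsymbol{x}_T,T)=0$ (and hence the summand is nominally $0/0$) indeed contribute $0$ rather than being ill-defined. This is automatic because such $\boldsymbol{x}_T$ are not reachable from anywhere with positive probability at time $T$, so $p_V(\boldsymbol{x}_T,t\vert\boldsymbol{x})=0$ whenever $p_V(\boldsymbol{x},T-t)>0$ (otherwise chaining backward-in-time with $p_V(\boldsymbol{x},T-t)$ would make $p_V(\boldsymbol{x}_T,T)$ positive, contradicting the assumption). Hence the convention is consistent with omitting these terms, and no analytic difficulty arises; the corollary follows purely from the preceding proposition and direct substitution.
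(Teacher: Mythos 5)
Your proposal is correct and follows exactly the route the paper intends: the paper offers no separate proof of this corollary, presenting it as an immediate consequence of the preceding proposition (the master equation (\ref{eq030303}) with operator $\mathscr{K}^{*}_{\boldsymbol{x},t}$, i.e.\ the forward equation of $\bar{\boldsymbol{x}}_V(t)$) combined with the delta initial condition read off from the definition, and of the normalization $\sum_{\boldsymbol{x}_T}p_V(\boldsymbol{x}_T,t\vert\boldsymbol{x})=1$ for part (b). Your extra check that the $0/0$ convention is consistent (via the Chapman--Kolmogorov inequality forcing the numerator to vanish whenever $p_V(\boldsymbol{x}_T,T)=0$) is a valid and welcome piece of bookkeeping that the paper leaves implicit.
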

	
	Let $\tilde{\boldsymbol{x}}_{V}(t)$ be another non-homogeneous Markov jump process, which is defined by
	\begin{equation}\label{eq030306}
		\begin{aligned}
			\tilde{\boldsymbol{x}}_{V}(t)=&\tilde{\boldsymbol{x}}_{V}(0)\\
			+&V^{-1}\sum_{i=1}^{M}{\boldsymbol{\nu}_i\left\{Y_{+i}\left(\int_{0}^{t}\tilde{r}_{+i}(V\tilde{\boldsymbol{x}}_{V}(s),V,s)\mathrm{d}s\right)-Y_{-i}\left(\int_{0}^{t}\tilde{r}_{-i}(V\tilde{\boldsymbol{x}}_{V}(s),V,s)\mathrm{d}s\right)\right\}}.
		\end{aligned}
	\end{equation}
	Then we can conclude that
	\begin{corollary}
		For each $t\in[0,T]$, $q_{V}(\boldsymbol{x},t;\boldsymbol{x}_T,T)$ is the probability of $\tilde{\boldsymbol{x}}_{V}(t)$ if the initial distribution is given by $q_{V}(\boldsymbol{x},0;\boldsymbol{x}_T,T)$. That is to say, the reversed process $\tilde{\boldsymbol{x}}_{V}(t)$, defined here, is used to characterize the time evolution of the family $\{q_{V}(\boldsymbol{x},t;\boldsymbol{x}_T,T)\}_{t\in[0,T]}$.
	\end{corollary}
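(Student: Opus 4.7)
The plan is to identify the operator $\mathscr{L}^{*}_{\boldsymbol{x},t}$ with the forward (Kolmogorov) generator of the non-homogeneous Markov jump process $\tilde{\boldsymbol{x}}_{V}(t)$ defined by Eq.~(\ref{eq030306}), and then to invoke uniqueness of solutions to the associated master equation.

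First I would write down the chemical master equation for $\tilde{\boldsymbol{x}}_{V}(t)$. The Poisson representation (\ref{eq030306}) describes a continuous-time Markov jump process in $V^{-1}\mathbb{N}^{N}$ that performs a jump $+V^{-1}\boldsymbol{\nu}_{i}$ with instantaneous rate $\tilde{r}_{+i}(V\tilde{\boldsymbol{x}}_{V}(t),V,t)$ and a jump $-V^{-1}\boldsymbol{\nu}_{i}$ with rate $\tilde{r}_{-i}(V\tilde{\boldsymbol{x}}_{V}(t),V,t)$. Collecting gain and loss contributions at the state $\boldsymbol{x}$ in the standard way (gain from $\boldsymbol{x}-V^{-1}\boldsymbol{\nu}_{i}$ via a $+i$-jump at rate $\tilde{r}_{+i}(V\boldsymbol{x}-\boldsymbol{\nu}_{i},V,t)$, gain from $\boldsymbol{x}+V^{-1}\boldsymbol{\nu}_{i}$ via a $-i$-jump at rate $\tilde{r}_{-i}(V\boldsymbol{x}+\boldsymbol{\nu}_{i},V,t)$, and loss through the total departure rate), one finds that $\tilde{p}_{V}(\boldsymbol{x},t)\triangleq P(\tilde{\boldsymbol{x}}_{V}(t)=\boldsymbol{x})$ satisfies
\begin{equation*}
\frac{\mathrm{d}\tilde{p}_{V}(\boldsymbol{x},t)}{\mathrm{d}t}=\mathscr{L}^{*}_{\boldsymbol{x},t}\tilde{p}_{V}(\boldsymbol{x},t),
\end{equation*}
which coincides term-by-term with the equation defining $\mathscr{L}^{*}_{\boldsymbol{x},t}$.

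Second, from part~(b) of the preceding proposition, the family $\{q_{V}(\boldsymbol{x},t;\boldsymbol{x}_{T},T)\}_{t\in[0,T]}$ solves exactly the same linear system, namely Eq.~(\ref{eq030305}). Hence $\tilde{p}_{V}$ and $q_{V}$ are two solutions of a common first-order ODE system on the countable state space $V^{-1}\mathbb{N}^{N}$. By the standard uniqueness theorem for the forward equation of a non-explosive continuous-time Markov jump process with prescribed initial distribution, it suffices to check agreement at $t=0$. Since by hypothesis $\tilde{\boldsymbol{x}}_{V}(0)$ is distributed according to $q_{V}(\boldsymbol{x},0;\boldsymbol{x}_{T},T)$, this matching is automatic, and the corollary follows.

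The main technical obstacle is securing the well-definedness and non-explosion of $\tilde{\boldsymbol{x}}_{V}(t)$ on $[0,T]$, because the rates $\tilde{r}_{\pm i}$ in Eq.~(\ref{eq030304}) are ratios of conditional probabilities that may become unbounded near the boundary of the support of $p_{V}(\boldsymbol{x}_{T},T-t\vert\,\cdot)$ and are specified only through the convention $0/0=0$. I would handle this by restricting attention to that support: note that $q_{V}(\boldsymbol{x},t;\boldsymbol{x}_{T},T)$ vanishes whenever its denominator $p_{V}(\boldsymbol{x}_{T},T-t\vert\boldsymbol{x})$ does, so such states carry no probability mass under $\tilde{\boldsymbol{x}}_{V}$; on the complement I would use the smoothness of $p_{V}(\boldsymbol{x}_{T},T-t\vert\boldsymbol{x})$ in $t$ (via the backward equation) together with a Kurtz-type cumulative-rate bound on the random time changes in Eq.~(\ref{eq030306}) to exclude explosion on $[0,T]$. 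Once existence and non-explosion of $\tilde{\boldsymbol{x}}_{V}$ are established, the uniqueness step above closes the argument.
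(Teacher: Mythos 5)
Your proposal is correct and follows essentially the same route the paper intends: identify $\mathscr{L}^{*}_{\boldsymbol{x},t}$ as the forward generator of the jump process $\tilde{\boldsymbol{x}}_{V}(t)$ defined by Eq.~(\ref{eq030306}), observe from part (b) of the preceding proposition that $\{q_{V}(\boldsymbol{x},t;\boldsymbol{x}_T,T)\}$ solves the same master equation, and conclude by uniqueness given matching initial data. Your extra care about non-explosion and the $0/0=0$ convention on the support of $p_V(\boldsymbol{x}_T,T-t\vert\cdot)$ is a reasonable tightening of a point the paper leaves implicit, but it does not change the argument.
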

	\begin{remark}
		An alternative description of the process $\tilde{\boldsymbol{x}}_{V}(t)$ can be provided as follows. In fact, based on the result in Sec. \ref{sec0301}, it was already known that $\{\bar{q}_{V}(\boldsymbol{x},t;\boldsymbol{x}_T,T)\}_{t\in [0,T]}$ is a family of probabilities of $\bar{\boldsymbol{x}}_{V}(t)$. Repeating the process above, it is evident that the reversed family $\{\bar{\bar{q}}_{V}(\boldsymbol{x},t;\boldsymbol{x}_T,T)=q_{V}(\boldsymbol{x},t;\boldsymbol{x}_T,T)\}_{t\in [0,T]}$ satisfies another master equation with its rate functions characterised by the following expressions:
		\begin{equation*}
			\begin{aligned}
				\bar{\bar{r}}_{\pm i}(V\boldsymbol{x},V,t)=&\frac{\bar{q}_{V}(\boldsymbol{x}\pm V^{-1}\boldsymbol{\nu}_i,T-t;\boldsymbol{x}_T,T)\bar{r}_{\mp i}(V\boldsymbol{x}\pm \boldsymbol{\nu}_i,V,T-t)}{\bar{q}_{V}(\boldsymbol{x},T-t;\boldsymbol{x}_T,T)}\\
				=&\frac{p_V(\boldsymbol{x}_T,T-t\vert\boldsymbol{x}\pm V^{-1}\boldsymbol{\nu}_i)r_{\pm i}(V\boldsymbol{x},V)}{p_V(\boldsymbol{x}_T,T-t\vert\boldsymbol{x})},
			\end{aligned}
		\end{equation*}
		which is exactly the rate functions defined in Eq. (\ref{eq030304}). Consequently, we can regard $\tilde{\boldsymbol{x}}_{V}(t)$ as a time reversal of the process $\bar{\boldsymbol{x}}_{V}(t)$, and, to a certain extent, a double time reversal of the original process $\boldsymbol{x}_{V}(t)$.
	\end{remark}
	\subsection{Stationary Prehistory Probability}\label{sec0303}
	In this section, we give a rigorous definition of the stationary prehistory probability and relate it to the time reversal of the stationary distribution $\pi_V$.
	\begin{definition}
		The stationary prehistory probability (SPP) $q^{\text{SPP}}_{V}(\boldsymbol{x},t;\boldsymbol{x}_T,T)$ is defined by
		\begin{equation}\label{eq030401}
			q^{\text{SPP}}_{V}(\boldsymbol{x},t;\boldsymbol{x}_T,T)\triangleq\frac{\pi_V(\boldsymbol{x})p_V(\boldsymbol{x}_T,T-t\vert\boldsymbol{x})}{\pi_V(\boldsymbol{x}_T)},\quad t\in[0,T],
		\end{equation}
		with its time reversal given by
		\begin{equation}\label{eq030402}
			\bar{q}^{\text{SPP}}_{V}(\boldsymbol{x},t;\boldsymbol{x}_T,T)\triangleq q^{\text{SPP}}_{V}(\boldsymbol{x},T-t;\boldsymbol{x}_T,T)=\frac{\pi_V(\boldsymbol{x})p_V(\boldsymbol{x}_T,t\vert\boldsymbol{x})}{\pi_V(\boldsymbol{x}_T)},\quad t\in[0,T].
		\end{equation}
	\end{definition}
	
	The ensuing results are analogous to those presented in Sec. \ref{sec0302}.
	\begin{corollary}
		\begin{itemize}
			\item[(a)] $q^{\text{SPP}}_{V}(\boldsymbol{x},t;\boldsymbol{x}_T,T)$ satisfies the master equation
			\begin{equation}\label{eq030403}
				\frac{\mathrm{d}q^{\text{SPP}}_{V}(\boldsymbol{x},t;\boldsymbol{x}_T,T)}{\mathrm{d}t}=\mathscr{L}^{*}_{\boldsymbol{x},t}q^{\text{SPP}}_{V}(\boldsymbol{x},t;\boldsymbol{x}_T,T).
			\end{equation}
			As a result, $q^{\text{SPP}}_{V}(\boldsymbol{x},t;\boldsymbol{x}_T,T)$ is the probability of $\tilde{\boldsymbol{x}}_V(t)$ if the initial distribution obeys $P(\tilde{\boldsymbol{x}}_V(0)=\boldsymbol{x})=q^{\text{SPP}}_{V}(\boldsymbol{x},0;\boldsymbol{x}_T,T)$.
			\item[(b)] $\bar{q}^{\text{SPP}}_{V}(\boldsymbol{x},t;\boldsymbol{x}_T,T)$ satisfies the following master equation
			\begin{equation}\label{eq030404}
				\frac{\mathrm{d}\bar{q}^{\text{SPP}}_{V}(\boldsymbol{x},t;\boldsymbol{x}_T,T)}{\mathrm{d}t}=\mathscr{M}^{*}_{\boldsymbol{x}}\bar{q}^{\text{SPP}}_{V}(\boldsymbol{x},t;\boldsymbol{x}_T,T),
			\end{equation}
			in which $\mathscr{M}^{*}_{\boldsymbol{x}}$ is defined for each bounded continuous function $f(\boldsymbol{x})$ by
			\begin{equation*}
				\begin{aligned}
					\mathscr{M}^{*}_{\boldsymbol{x}}f(\boldsymbol{x})\triangleq&\sum_{i=1}^{M}{\left(f(\boldsymbol{x}-V^{-1}\boldsymbol{\nu}_i)\bar{r}^{\text{SPP}}_{+i}(V\boldsymbol{x}-\boldsymbol{\nu}_i,V)-f(\boldsymbol{x})\bar{r}^{\text{SPP}}_{+i}(V\boldsymbol{x},V)\right)}\\
					&+\sum_{i=1}^{M}{\left(f(\boldsymbol{x}+V^{-1}\boldsymbol{\nu}_i)\bar{r}^{\text{SPP}}_{-i}(V\boldsymbol{x}+\boldsymbol{\nu}_i,V)-f(\boldsymbol{x})\bar{r}^{\text{SPP}}_{-i}(V\boldsymbol{x},V)\right)},
				\end{aligned}
			\end{equation*}
			with 
			\begin{equation}\label{eq030405}
				\bar{r}^{\text{SPP}}_{\pm i}(V\boldsymbol{x},V)\triangleq\frac{\pi_V(\boldsymbol{x}\pm V^{-1}\boldsymbol{\nu}_i)r_{\mp i}(V\boldsymbol{x}\pm\boldsymbol{\nu}_i,V)}{\pi_V(\boldsymbol{x})}.
			\end{equation}
			It follows that $\bar{q}^{\text{SPP}}_{V}(\boldsymbol{x},t;\boldsymbol{x}_T,T)$ is the conditional probability of the reversed family $\{\bar{p}_V(\boldsymbol{x},t)=\pi_V(\boldsymbol{x})\}_{t\in [0,T]}$, which corresponds to the following homogeneous Markov jump process
			\begin{equation}\label{eq030406}
				\begin{aligned}
					\bar{\boldsymbol{x}}^{\text{SPP}}_{V}(t)=&\bar{\boldsymbol{x}}^{\text{SPP}}_{V}(0)\\
					+&V^{-1}\sum_{i=1}^{M}{\boldsymbol{\nu}_i\left\{Y_{+i}\left(\int_{0}^{t}\bar{r}^{\text{SPP}}_{+i}(V\bar{\boldsymbol{x}}^{\text{SPP}}_{V}(s),V)\mathrm{d}s\right)-Y_{-i}\left(\int_{0}^{t}\bar{r}^{\text{SPP}}_{-i}(V\bar{\boldsymbol{x}}^{\text{SPP}}_{V}(s),V)\mathrm{d}s\right)\right\}},
				\end{aligned}
			\end{equation}	
			with its initial condition restricted to be $\bar{\boldsymbol{x}}^{\text{SPP}}_{V}(0)=\boldsymbol{x}_T$ a.s..
		\end{itemize}
	\end{corollary}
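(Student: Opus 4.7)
The plan is to observe that both statements in the corollary are immediate specializations of the propositions of Secs.~\ref{sec0301} and \ref{sec0302} to the time-independent family $p_V(\boldsymbol{x},t)\equiv\pi_V(\boldsymbol{x})$. Because $\pi_V$ is the stationary distribution, one has $\mathscr{J}^{*}_{\boldsymbol{x}}\pi_V(\boldsymbol{x})=0$, so the constant-in-time family satisfies the chemical master equation (\ref{eq030101}) with vanishing left-hand side, and therefore lies in the scope of the hypotheses of both preceding propositions. This is the single mechanism underlying both parts.

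For part (a), I would substitute $p_V(\boldsymbol{x},t)=\pi_V(\boldsymbol{x})$ into the definition (\ref{eq030301}), observe that the result is exactly $q^{\text{SPP}}_V$ from (\ref{eq030401}), and note that the rates $\tilde r_{\pm i}$ in (\ref{eq030304}) depend only on the transition kernel $p_V(\boldsymbol{x}_T,T-t\mid\boldsymbol{x})$, which is unaffected by the choice of marginal; hence the operator $\mathscr{L}^{*}_{\boldsymbol{x},t}$ is unchanged and (\ref{eq030305}) specializes verbatim to (\ref{eq030403}). The identification of $q^{\text{SPP}}_V$ with the law of $\tilde{\boldsymbol{x}}_V(t)$ then follows from the uniqueness of solutions to the linear Kolmogorov forward equation on $V^{-1}\mathbb{N}^N$: both satisfy the same master equation with the same initial datum $q^{\text{SPP}}_V(\boldsymbol{x},0;\boldsymbol{x}_T,T)$. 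For part (b), I apply the proposition of Sec.~\ref{sec0301} to the reversed family $\{\bar p_V(\boldsymbol{x},t)=\pi_V(\boldsymbol{x})\}_{t\in[0,T]}$: substituting $p_V(\boldsymbol{x},T-t)=\pi_V(\boldsymbol{x})$ into (\ref{eq030201}) makes the rates time-independent and equal to the $\bar r^{\text{SPP}}_{\pm i}$ of (\ref{eq030405}), so $\mathscr{K}^{*}_{\boldsymbol{x},t}$ collapses to the time-homogeneous $\mathscr{M}^{*}_{\boldsymbol{x}}$ and (\ref{eq030202}) becomes (\ref{eq030404}). Homogeneity of the rates then yields the homogeneous Markov jump process (\ref{eq030406}); the initial constraint $\bar{\boldsymbol{x}}^{\text{SPP}}_V(0)=\boldsymbol{x}_T$ a.s.\ follows because $\bar q^{\text{SPP}}_V(\boldsymbol{x},0;\boldsymbol{x}_T,T)=q^{\text{SPP}}_V(\boldsymbol{x},T;\boldsymbol{x}_T,T)$ and $p_V(\boldsymbol{x}_T,0\mid\boldsymbol{x})$ is the Kronecker delta $\delta_{\boldsymbol{x},\boldsymbol{x}_T}$.

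The main obstacle I foresee is technical rather than conceptual: justifying the uniqueness step that turns the master equation into a genuine probabilistic identification on the countably infinite state space $V^{-1}\mathbb{N}^N$. One must check that the candidate solutions $q^{\text{SPP}}_V$ and $\bar q^{\text{SPP}}_V$ are summable in $\boldsymbol{x}$ (which follows from the finiteness of $\pi_V$ and the sub-Markov property of $p_V(\boldsymbol{x}_T,\cdot\mid\boldsymbol{x})$), and that the non-explosion of the auxiliary processes $\tilde{\boldsymbol{x}}_V(t)$ and $\bar{\boldsymbol{x}}^{\text{SPP}}_V(t)$ holds so that Feller-type uniqueness for continuous-time Markov chains applies. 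The positive recurrence assumption (b) of Theorem~\ref{theo0208}, together with the boundedness assumptions on $R_{\pm i}$ already imposed throughout Sec.~\ref{sec02}, should be sufficient; beyond that, the argument is essentially algebraic substitution.
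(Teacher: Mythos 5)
Your proposal is correct and follows exactly the route the paper intends: the corollary is obtained by specializing the general time-reversal propositions of Secs.~\ref{sec0301} and \ref{sec0302} to the constant family $p_V(\boldsymbol{x},t)\equiv\pi_V(\boldsymbol{x})$, using stationarity ($\mathscr{J}^{*}_{\boldsymbol{x}}\pi_V=0$) to verify the hypothesis and observing that the rates (\ref{eq030201}) and (\ref{eq030304}) then reduce to (\ref{eq030405}) and the time-homogeneous process (\ref{eq030406}). The paper itself presents this as an immediate analogue without further argument, so your additional remarks on summability and non-explosion only make the identification with the jump process more careful than the original.
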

	
	\subsection{Non-stationary Prehistory Probability}\label{sec0304}
	In this section, we further show that the idea of stationary prehistory probability can also be extended to the non-stationary case, in which analogous properties remain valid.
	\begin{definition}
		The non-stationary prehistory probability (NPP) is defined by 
		\begin{equation}\label{eq030501}
			q^{\text{NPP}}_{V}(\boldsymbol{x},t;\boldsymbol{x}_T,T;\boldsymbol{x}_0)\triangleq\frac{p_V(\boldsymbol{x},t\vert\boldsymbol{x}_0)p_V(\boldsymbol{x}_T,T-t\vert\boldsymbol{x})}{p_V(\boldsymbol{x}_T,T\vert\boldsymbol{x}_0)},\; t\in[0,T],
		\end{equation}
		with its time reversal satisfying
		\begin{equation}\label{eq030502}
			\bar{q}^{\text{NPP}}_{V}(\boldsymbol{x},t;\boldsymbol{x}_T,T;\boldsymbol{x}_0)\triangleq q^{\text{NPP}}_{V}(\boldsymbol{x},T-t;\boldsymbol{x}_T,T;\boldsymbol{x}_0)=\frac{p_V(\boldsymbol{x},T-t\vert\boldsymbol{x}_0)p_V(\boldsymbol{x}_T,t\vert\boldsymbol{x})}{p_V(\boldsymbol{x}_T,T\vert\boldsymbol{x}_0)},\; t\in[0,T].
		\end{equation}
	\end{definition}
	
	Consequently, we may reach analogous conclusions, as outlined below.
	\begin{corollary}
		\begin{itemize}
			\item[(a)] $q^{\text{NPP}}_{V}(\boldsymbol{x},t;\boldsymbol{x}_T,T;\boldsymbol{x}_0)$ satisfies the master equation
			\begin{equation}\label{eq030503}
				\frac{\mathrm{d}q^{\text{NPP}}_{V}(\boldsymbol{x},t;\boldsymbol{x}_T,T;\boldsymbol{x}_0)}{\mathrm{d}t}=\mathscr{L}^{*}_{\boldsymbol{x},t}q^{\text{NPP}}_{V}(\boldsymbol{x},t;\boldsymbol{x}_T,T;\boldsymbol{x}_0).
			\end{equation}
			Therefore, $P(\tilde{\boldsymbol{x}}_V(t)=x)=q^{\text{NPP}}_{V}(\boldsymbol{x},t;\boldsymbol{x}_T,T;\boldsymbol{x}_0)$ if the initial distribution obeys
			\begin{equation*}
				P(\tilde{\boldsymbol{x}}_V(0)=\boldsymbol{x})=q^{\text{NPP}}_{V}(\boldsymbol{x},0;\boldsymbol{x}_T,T;\boldsymbol{x}_0)=\begin{cases}
					1,\quad \boldsymbol{x}=\boldsymbol{x}_0\in V^{-1}\mathbb{N}^N, \\
					0,\quad \text{otherwise},
				\end{cases}
			\end{equation*}
			i.e., $\tilde{\boldsymbol{x}}_V(0)=\boldsymbol{x}_0$ a.s..
			\item[(b)] $\bar{q}^{\text{NPP}}_{V}(\boldsymbol{x},t;\boldsymbol{x}_T,T;\boldsymbol{x}_0)$ is the conditional probability of the reversed family $\{\bar{p}_V(\boldsymbol{x},t)=p_V(\boldsymbol{x},T-t\vert\boldsymbol{x}_0)\}_{t\in [0,T]}$. It naturally satisfies the master equation
			\begin{equation}\label{eq030504}
				\frac{\mathrm{d}\bar{q}^{\text{NPP}}_{V}(\boldsymbol{x},t;\boldsymbol{x}_T,T;\boldsymbol{x}_0)}{\mathrm{d}t}=\mathscr{N}^{*}_{\boldsymbol{x},t}\bar{q}^{\text{NPP}}_{V}(\boldsymbol{x},t;\boldsymbol{x}_T,T;\boldsymbol{x}_0),
			\end{equation}
			in which
			\begin{equation*}
				\begin{aligned}
					\mathscr{N}^{*}_{\boldsymbol{x},t}f(\boldsymbol{x},t)\triangleq&\sum_{i=1}^{M}{\left(f(\boldsymbol{x}-V^{-1}\boldsymbol{\nu}_i,t)\bar{r}^{\text{NPP}}_{+i}(V\boldsymbol{x}-\boldsymbol{\nu}_i,V,t)-f(\boldsymbol{x},t)\bar{r}^{\text{NPP}}_{+i}(V\boldsymbol{x},V,t)\right)}\\
					&+\sum_{i=1}^{M}{\left(f(\boldsymbol{x}+V^{-1}\boldsymbol{\nu}_i,t)\bar{r}^{\text{NPP}}_{-i}(V\boldsymbol{x}+\boldsymbol{\nu}_i,V,t)-f(\boldsymbol{x},t)\bar{r}^{\text{NPP}}_{-i}(V\boldsymbol{x},V,t)\right)},
				\end{aligned}
			\end{equation*}
			with
			\begin{equation}\label{eq030505}
				\bar{r}^{\text{NPP}}_{\pm i}(V\boldsymbol{x},V,t)\triangleq\frac{p_V(\boldsymbol{x}\pm V^{-1}\boldsymbol{\nu}_i,T-t\vert\boldsymbol{x}_0)r_{\mp i}(V\boldsymbol{x}\pm\boldsymbol{\nu}_i,V)}{p_V(\boldsymbol{x},T-t\vert\boldsymbol{x}_0)}.
			\end{equation}
			Let $\bar{\boldsymbol{x}}^{\text{NPP}}_{V}(t)$ be a non-homogeneous Markov jump process, which is defined by
			\begin{equation}\label{eq030506}
				\begin{aligned}
					\bar{\boldsymbol{x}}^{\text{NPP}}_{V}(t)=&\bar{\boldsymbol{x}}^{\text{NPP}}_{V}(0)\\
					+V^{-1}\sum_{i=1}^{M}&{\boldsymbol{\nu}_i\left\{Y_{+i}\left(\int_{0}^{t}\bar{r}^{\text{NPP}}_{+i}(V\bar{\boldsymbol{x}}^{\text{NPP}}_{V}(s),V,s)\mathrm{d}s\right)-Y_{-i}\left(\int_{0}^{t}\bar{r}^{\text{NPP}}_{-i}(V\bar{\boldsymbol{x}}^{\text{NPP}}_{V}(s),V,s)\mathrm{d}s\right)\right\}},
				\end{aligned}
			\end{equation}
			then we have $P(\bar{\boldsymbol{x}}^{\text{NPP}}_{V}(t)=\boldsymbol{x})=\bar{q}^{\text{NPP}}_{V}(\boldsymbol{x},t;\boldsymbol{x}_T,T;\boldsymbol{x}_0)$ if $\bar{\boldsymbol{x}}^{\text{NPP}}_{V}(0)=\boldsymbol{x}_T$ a.s..
		\end{itemize}
	\end{corollary}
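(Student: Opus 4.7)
The plan is to reduce the corollary to the general results of Secs. \ref{sec0301}--\ref{sec0302}, noting that $q^{\text{NPP}}_V$ is precisely the quantity $q_V$ defined in Eq. (\ref{eq030301}) when the underlying family is specialized to the conditional distributions $\{p_V(\boldsymbol{x},t\vert\boldsymbol{x}_0)\}_{t\in[0,T]}$, which satisfies the forward Kolmogorov equation (\ref{eq030101}) with initial datum $\delta_{\boldsymbol{x},\boldsymbol{x}_0}$. All heavy lifting has already been done in the abstract setting, so the work here amounts to verifying this specialization and handling the boundary conditions.

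For part (a), since the operator $\mathscr{L}^{*}_{\boldsymbol{x},t}$ defined in Sec. \ref{sec0302} depends only on the backward probabilities $p_V(\boldsymbol{x}_T,T-t\vert\boldsymbol{x})$ and the original forward rates $r_{\pm i}$, it retains the same form irrespective of which forward family one inserts into Eq. (\ref{eq030301}). I would therefore invoke Eq. (\ref{eq030305}) with $p_V(\boldsymbol{x},t)$ replaced by $p_V(\boldsymbol{x},t\vert\boldsymbol{x}_0)$; this directly yields Eq. (\ref{eq030503}). The initial condition is then read off Eq. (\ref{eq030501}) at $t=0$: using $p_V(\boldsymbol{x},0\vert\boldsymbol{x}_0)=\delta_{\boldsymbol{x},\boldsymbol{x}_0}$, the numerator collapses to $\delta_{\boldsymbol{x},\boldsymbol{x}_0}\,p_V(\boldsymbol{x}_T,T\vert\boldsymbol{x}_0)$, which cancels the denominator to produce $\delta_{\boldsymbol{x},\boldsymbol{x}_0}$. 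Combining this with Eq. (\ref{eq030306}) identifies $q^{\text{NPP}}_V$ with the law of $\tilde{\boldsymbol{x}}_V(t)$ starting from $\tilde{\boldsymbol{x}}_V(0)=\boldsymbol{x}_0$ a.s.

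For part (b), I would apply the time-reversal construction of Sec. \ref{sec0301} to the same specialized family. Substituting $p_V(\boldsymbol{x},t)\to p_V(\boldsymbol{x},t\vert\boldsymbol{x}_0)$ into the rate formula (\ref{eq030201}) produces exactly the $\bar{r}^{\text{NPP}}_{\pm i}$ of Eq. (\ref{eq030505}), so the operator $\mathscr{N}^{*}_{\boldsymbol{x},t}$ coincides with the corresponding $\mathscr{K}^{*}_{\boldsymbol{x},t}$, and Eq. (\ref{eq030504}) is just the master equation from Sec. \ref{sec0301} applied to the reversed family $\{p_V(\boldsymbol{x},T-t\vert\boldsymbol{x}_0)\}_{t\in[0,T]}$. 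Its interpretation as a conditional probability of that reversed family is inherited from the corollary in Sec. \ref{sec0302}. The initial condition is obtained from Eq. (\ref{eq030502}) at $t=0$: since $p_V(\boldsymbol{x}_T,0\vert\boldsymbol{x})=\delta_{\boldsymbol{x},\boldsymbol{x}_T}$, the numerator reduces to $p_V(\boldsymbol{x}_T,T\vert\boldsymbol{x}_0)\,\delta_{\boldsymbol{x},\boldsymbol{x}_T}$, which cancels the denominator and forces $\bar{\boldsymbol{x}}^{\text{NPP}}_V(0)=\boldsymbol{x}_T$ a.s. in view of Eq. (\ref{eq030506}).

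There is no substantive obstacle: every analytic ingredient — differentiation under the quotient, elimination of cross terms via simultaneous use of the forward and backward Kolmogorov equations, and identification of adjoint rate functions — has already been executed in the abstract setting of Secs. \ref{sec0301}--\ref{sec0302}. The only task left is to verify the two delta-function initial conditions above and to confirm that the chosen forward family $\{p_V(\boldsymbol{x},t\vert\boldsymbol{x}_0)\}$ does satisfy the forward equation invoked in that framework. Both are one-line checks, and no hypothesis beyond what has already been stipulated is needed. The closest thing to a subtlety is the convention $\frac{0}{0}=0$ flagged in the remark of Sec. \ref{sec0301}, which guarantees that the rate functions $\bar{r}^{\text{NPP}}_{\pm i}$ in Eq. (\ref{eq030505}) remain well-defined on configurations where the denominator may vanish; I would note this explicitly when writing out the argument.
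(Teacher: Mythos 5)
Your proposal is correct and follows essentially the same route as the paper, which itself obtains this corollary by specializing the general constructions of Secs. \ref{sec0301} and \ref{sec0302} to the forward family $\{p_V(\boldsymbol{x},t\vert\boldsymbol{x}_0)\}_{t\in[0,T]}$ and reading off the delta-function initial conditions. The identification of $\mathscr{N}^{*}_{\boldsymbol{x},t}$ with the corresponding $\mathscr{K}^{*}_{\boldsymbol{x},t}$ and your explicit note on the $\frac{0}{0}=0$ convention match the paper's intended argument.
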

	\begin{remark}
		Notice that both $q_{V}(\boldsymbol{x},t;\boldsymbol{x}_T,T)$ and $q^{\text{NPP}}_{V}(\boldsymbol{x},t;\boldsymbol{x}_T,T;\boldsymbol{x}_0)$ satisfy a master equation with the same operator $\mathscr{L}^{*}_{\boldsymbol{x},t}$. We conclude here that $q^{\text{NPP}}_{V}(\boldsymbol{x},t;\boldsymbol{x}_T,T;\boldsymbol{x}_0)$ is, in fact, the conditional probability of the family $\{q_{V}(\boldsymbol{x},t;\boldsymbol{x}_T,T)\}_{t\in[0,T]}$ as the Chapman–Kolmogorov-type equation 
		\begin{equation*}
			\sum_{\boldsymbol{x}_0\in V^{-1}\mathbb{N}^N}q_{V}(\boldsymbol{x}_0,0;\boldsymbol{x}_T,T)q^{\text{NPP}}_{V}(\boldsymbol{x},t;\boldsymbol{x}_T,T;\boldsymbol{x}_0)=q_{V}(\boldsymbol{x},t;\boldsymbol{x}_T,T),
		\end{equation*}
		is valid. 
	\end{remark}
	
	\section{Prehistorical Description of Optimal Fluctuations on Finite Time Intervals}\label{sec04}
	Assume that the conditions in part (e) of Proposition \ref{theo0206} hold. Let $\{\boldsymbol{\phi}_{\text{NOP}}(t;\boldsymbol{x}_T,T;\boldsymbol{x}_0):t\in [0,T]\}$ be the unique NOP connecting $\boldsymbol{x}_0$ and $\boldsymbol{x}_T$ in the time span $T$, and define $\hat{\boldsymbol{x}}_{\infty}(t)=\boldsymbol{\phi}_{\text{NOP}}(T-t;\boldsymbol{x}_T,T;\boldsymbol{x}_0)$. It follows from Eq. (\ref{eq020308}) that $\hat{\boldsymbol{x}}_{\infty}(t)$ satisfies
	\begin{equation}\label{eq0401}
		\hat{\boldsymbol{x}}_{\infty}(0)=\boldsymbol{x}_T,
	\end{equation}
	\begin{equation}\label{eq0402}
		\dot{\hat{\boldsymbol{x}}}_{\infty}(t)=\boldsymbol{G}(\hat{\boldsymbol{x}}_{\infty}(t),t),
	\end{equation}
	where
	\begin{equation*}
		\boldsymbol{G}(\boldsymbol{x},t)\triangleq-\sum_{i=1}^{M}\boldsymbol{\nu}_i\left(R_{+i}(\boldsymbol{x})e^{\boldsymbol{\nu}_i\cdot\nabla_{\boldsymbol{x}}S(\boldsymbol{x},T-t\vert\boldsymbol{x}_0)}-R_{-i}(\boldsymbol{x})e^{-\boldsymbol{\nu}_i\cdot\nabla_{\boldsymbol{x}}S(\boldsymbol{x},T-t\vert\boldsymbol{x}_0)}\right).
	\end{equation*}
	
	Let $\hat{\boldsymbol{x}}_{V}(t)$ be the Markov jump process defined by
	\begin{equation}\label{eq0403}
		\begin{aligned}
			\hat{\boldsymbol{x}}_{V}(t)=&\boldsymbol{x}^{**}(\boldsymbol{x}_T,V)\\
			&+V^{-1}\sum_{i=1}^{M}{\boldsymbol{\nu}_i\left\{Y_{+i}\left(\int_{0}^{t}\hat{r}_{+i}(V\hat{\boldsymbol{x}}_{V}(s),V,s)\mathrm{d}s\right)-Y_{-i}\left(\int_{0}^{t}\hat{r}_{-i}(V\hat{\boldsymbol{x}}_{V}(s),V,s)\mathrm{d}s\right)\right\}},
		\end{aligned}
	\end{equation}
	in which
	\begin{equation}\label{eq0404}
		\hat{r}_{\pm i}(V\boldsymbol{x},V,t)\triangleq\frac{p_V(\boldsymbol{x}\pm V^{-1}\boldsymbol{\nu}_i,T-t\vert\boldsymbol{x}^*(\boldsymbol{x}_0,V))r_{\mp i}(V\boldsymbol{x}\pm\boldsymbol{\nu}_i,V)}{p_V(\boldsymbol{x},T-t\vert\boldsymbol{x}^*(\boldsymbol{x}_0,V))}.
	\end{equation}
	Comparing it with Eq. (\ref{eq030506}), we know that
	\begin{equation*}
		P(\hat{\boldsymbol{x}}_{V}(t)=\boldsymbol{x})=\bar{q}^{\text{NPP}}_{V}(\boldsymbol{x},t;\boldsymbol{x}^{**}(\boldsymbol{x}_T,V),T;\boldsymbol{x}^{*}(\boldsymbol{x}_0,V)).
	\end{equation*}
	
	Denote
	\begin{equation*}
		\boldsymbol{G}_V(\boldsymbol{x},t)\triangleq{V}^{-1}\sum_{i=1}^{M}\boldsymbol{\nu}_i\left(\hat{r}_{+i}(V\boldsymbol{x},V,t)-\hat{r}_{-i}(V\boldsymbol{x},V,t)\right).
	\end{equation*}
	For any $T^*\in(0,T)$ and sufficiently small $\delta_0>0$, let $\hat{\Sigma}_{\delta_0,[0,T^*]}\triangleq\cup_{t\in[0,T^*]}B_{\delta_0}(\hat{\boldsymbol{x}}_{\infty}(t))$. Obviously, $\hat{\Sigma}_{\delta_0,[0,T^*]}=\Sigma_{\delta_0,[T-T^*,T]}$. The subsequent proposition of the type of the law of large numbers can be substantiated. The proof is provided in Appendix \ref{secA3}.
	
	\begin{proposition}\label{theo0401}
		Assume the conditions in Theorem \ref{theo0204}, part (e) of Proposition \ref{theo0206} and Lemma \ref{theo0207} hold. Then for any $T^*\in(0,T)$ there exists a constant $\delta_0$ so that for each $\delta<\delta_0$,
		\begin{equation}\label{eq0405}
			\lim_{V\to\infty}P\left(\sup_{t\in[0,T^*]}\left\vert\hat{\boldsymbol{x}}_{V}(t)-\hat{\boldsymbol{x}}_{\infty}(t)\right\vert>\delta\right)=0.
		\end{equation}
	\end{proposition}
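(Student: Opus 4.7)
The plan is to mimic Kurtz's proof of the law of large numbers (Theorem \ref{theo0201}) for the Markov jump process $\hat{\boldsymbol{x}}_V(t)$, using Lemma \ref{theo0207} to extract a deterministic limit for its rates. Starting from the Poisson representation (\ref{eq0403}), I would write
\begin{equation*}
\hat{\boldsymbol{x}}_V(t) = \boldsymbol{x}^{**}(\boldsymbol{x}_T,V) + \int_0^t \boldsymbol{G}_V(\hat{\boldsymbol{x}}_V(s),s)\,\mathrm{d}s + M_V(t),
\end{equation*}
where $M_V(t)$ is the martingale built from the compensated Poisson processes $\tilde{Y}_{\pm i}(u) = Y_{\pm i}(u)-u$. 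By the standard computation of quadratic variations for compensated Poisson integrals together with Doob's maximal inequality, and using that the rates $\hat{r}_{\pm i}$ are uniformly of order $V$ on the tube, I would obtain $E\bigl[\sup_{s\in[0,T^*]}\vert M_V(s\wedge\tau_V)\vert^2\bigr]\leq C/V$, where $\tau_V$ is a stopping time defined below; hence $\sup_{s\in[0,T^*]}\vert M_V(s\wedge\tau_V)\vert\to 0$ in probability.

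Next I would invoke Lemma \ref{theo0207}, equation (\ref{eq020312}): specializing to $\boldsymbol{x}^{**}(\boldsymbol{x},V)$ and using the Lipschitz continuity of $R_{\pm i}$, this gives
\begin{equation*}
V^{-1}\hat{r}_{+i}(V\boldsymbol{x},V,t) \;\longrightarrow\; R_{-i}(\boldsymbol{x})\, e^{-\boldsymbol{\nu}_i\cdot\nabla_{\boldsymbol{x}}S(\boldsymbol{x},T-t\vert\boldsymbol{x}_0)},
\end{equation*}
and analogously for $\hat{r}_{-i}$, uniformly for $(\boldsymbol{x},t)\in\hat{\Sigma}_{\delta_0,[0,T^*]}\times[0,T^*]$ (since $\hat{\Sigma}_{\delta_0,[0,T^*]}=\Sigma_{\delta_0,[T-T^*,T]}$). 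Summing yields $\boldsymbol{G}_V(\boldsymbol{x},t)\to\boldsymbol{G}(\boldsymbol{x},t)$ uniformly on this tube. Moreover, under the hypotheses of Proposition \ref{theo0206}(e), $\nabla_{\boldsymbol{x}}S(\boldsymbol{x},T-t\vert\boldsymbol{x}_0)$ is $\mathcal{C}^k$ on $\hat{\Sigma}_{\delta_0,[0,T^*]}\times[0,T^*]$, so $\boldsymbol{G}$ is Lipschitz on this set with some constant $L$ independent of $V$.

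Let $\delta\in(0,\delta_0)$ and set $\tau_V\triangleq\inf\{t\in[0,T^*]:\hat{\boldsymbol{x}}_V(t)\notin B_{\delta_0}(\hat{\boldsymbol{x}}_\infty(t))\}\wedge T^*$. On $[0,\tau_V]$, both $\hat{\boldsymbol{x}}_V(s)$ and $\hat{\boldsymbol{x}}_\infty(s)$ lie in the tube, so I may split
\begin{equation*}
\boldsymbol{G}_V(\hat{\boldsymbol{x}}_V,s)-\boldsymbol{G}(\hat{\boldsymbol{x}}_\infty,s) = \bigl[\boldsymbol{G}_V(\hat{\boldsymbol{x}}_V,s)-\boldsymbol{G}(\hat{\boldsymbol{x}}_V,s)\bigr] + \bigl[\boldsymbol{G}(\hat{\boldsymbol{x}}_V,s)-\boldsymbol{G}(\hat{\boldsymbol{x}}_\infty,s)\bigr].
\end{equation*}
The first bracket is bounded by $\varepsilon_V\to 0$ uniformly, and the second by $L\vert\hat{\boldsymbol{x}}_V(s)-\hat{\boldsymbol{x}}_\infty(s)\vert$. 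Since $\boldsymbol{x}^{**}(\boldsymbol{x}_T,V)\to\boldsymbol{x}_T=\hat{\boldsymbol{x}}_\infty(0)$, Gronwall's inequality applied to $\vert\hat{\boldsymbol{x}}_V(t\wedge\tau_V)-\hat{\boldsymbol{x}}_\infty(t\wedge\tau_V)\vert$ gives
\begin{equation*}
\sup_{t\in[0,T^*]}\vert\hat{\boldsymbol{x}}_V(t\wedge\tau_V)-\hat{\boldsymbol{x}}_\infty(t\wedge\tau_V)\vert \leq \bigl(\vert\boldsymbol{x}^{**}(\boldsymbol{x}_T,V)-\boldsymbol{x}_T\vert + T^*\varepsilon_V + \sup_s\vert M_V(s\wedge\tau_V)\vert\bigr) e^{L T^*},
\end{equation*}
which tends to $0$ in probability. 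Choosing $\delta_0$ small enough that the right side is strictly less than $\delta$ with high probability then forces $\tau_V=T^*$ with probability tending to $1$, completing the proof.

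The main obstacle is the combination of the stopping-time control with the fact that Lemma \ref{theo0207}'s asymptotic for the transition probability ratios is only known on the tube: one must confirm that $\hat{\boldsymbol{x}}_V$ remains inside the tube long enough to exploit the uniform convergence of $\boldsymbol{G}_V\to\boldsymbol{G}$ and the Lipschitz bound on $\boldsymbol{G}$, and this in turn requires verifying that the martingale term $M_V$ is small on $[0,\tau_V]$ rather than on the a-priori-unknown full interval $[0,T^*]$—which is handled by the optional stopping theorem applied to the quadratic variation before invoking Doob.
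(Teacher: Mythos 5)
Your proposal follows essentially the same route as the paper's proof in Appendix C: stop the process at the first exit time from the tube $\hat{\Sigma}_{\delta_0,[0,T^*]}$, use Lemma \ref{theo0207} to get uniform convergence $V^{-1}\hat{r}_{\pm i}\to R_{\mp i}e^{\mp\boldsymbol{\nu}_i\cdot\nabla_{\boldsymbol{x}}S}$ and hence $\boldsymbol{G}_V\to\boldsymbol{G}$ on the tube, apply Gronwall with the Lipschitz bound on $\boldsymbol{G}$, and control the compensated-Poisson martingale term at order $O(1/V)$ before concluding that the exit time exceeds $T^*$ with probability tending to one. The only difference is cosmetic — you bound the martingale via quadratic variation and Doob's $L^2$ maximal inequality where the paper uses Kurtz's convex-function martingale inequality with $\varphi(u)=u^2$ — and both yield the same estimate.
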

	Define
	\begin{equation*}
		{\boldsymbol{J}}_{V}(\boldsymbol{x},t)\triangleq{V}^{-1}\sum_{i=1}^{M}\boldsymbol{\nu}_i\otimes\boldsymbol{\nu}_i\left(\hat{r}_{+i}(V\boldsymbol{x},V,t)+\hat{r}_{-i}(V\boldsymbol{x},V,t)\right),
	\end{equation*}
	and
	\begin{equation*}
		{\boldsymbol{J}}(\boldsymbol{x},t)\triangleq\sum_{i=1}^{M}\boldsymbol{\nu}_i\otimes\boldsymbol{\nu}_i\left(R_{+i}(\boldsymbol{x})e^{\boldsymbol{\nu}_i\cdot\nabla_{\boldsymbol{x}}S(\boldsymbol{x},T-t\vert\boldsymbol{x}_0)}+R_{-i}(\boldsymbol{x})e^{-\boldsymbol{\nu}_i\cdot\nabla_{\boldsymbol{x}}S(\boldsymbol{x},T-t\vert\boldsymbol{x}_0)}\right).
	\end{equation*}
	The standard deviation of the type of the central limit theorem can also be estimated. We state it as a proposition, with the subsequent proof being postponed to the Appendix \ref{secA4}.
	
	\begin{proposition}\label{theo0402}
		In addition to the hypotheses formulated in Proposition \ref{theo0401}, we further suppose that for any $T^*\in(0,T)$ there exists a constant $\delta_0$ so that 
		\begin{equation}\label{eq0406}
			\lim_{V\to\infty}\sqrt{V}\vert{\boldsymbol{G}}_V(\boldsymbol{x},t)-{\boldsymbol{G}}(\boldsymbol{x},t)\vert=0,
		\end{equation}
		uniformly for $(\boldsymbol{x},t)\in\hat{\Sigma}_{\delta_0,[0,T^*]}\times[0,T^*]$.
		
		Then, for any $T^*\in(0,T)$, $\boldsymbol{\mu}_V(t)\triangleq\sqrt{V}(\hat{\boldsymbol{x}}_V(t)-\hat{\boldsymbol{x}}_{\infty}(t))$ converges weakly on the interval $[0,T^*]$ to a diffusion process $\boldsymbol{\mu}_{\infty}(t)$, with $\boldsymbol{\mu}_{\infty}(0)=0$ and its characteristic function $\Phi(\boldsymbol{\theta},t)$ satisfying
		\begin{equation}\label{eq0407}
			\frac{\partial}{\partial t}\Phi(\boldsymbol{\theta},t)=-\frac{1}{2}\left(\boldsymbol{\theta}\cdot{\boldsymbol{J}}(\hat{\boldsymbol{x}}_{\infty}(t),t)\cdot\boldsymbol{\theta}\right)\Phi(\boldsymbol{\theta},t)+\boldsymbol{\theta}\cdot\nabla_{\boldsymbol{x}}\boldsymbol{G}(\hat{\boldsymbol{x}}_{\infty}(t),t)\cdot\nabla_{\boldsymbol{\theta}}\Phi(\boldsymbol{\theta},t),\quad t\in[0,T^*].
		\end{equation}
	\end{proposition}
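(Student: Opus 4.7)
This proposition is the non-homogeneous, time-reversed analogue of Kurtz's functional central limit theorem (Theorem \ref{theo0203}), so my strategy is to mimic that proof while localizing inside the tube $\hat{\Sigma}_{\delta_0,[0,T^*]}$ dictated by Lemma \ref{theo0207} and Proposition \ref{theo0401}. I begin with the Poisson representation (\ref{eq0403}), written in compensated form $Y_{\pm i}(u)=u+\tilde{Y}_{\pm i}(u)$, and subtract the ODE (\ref{eq0402}). Multiplying the difference by $\sqrt{V}$ produces the decomposition
\begin{equation*}
\boldsymbol{\mu}_V(t)=\sqrt{V}\bigl[\boldsymbol{x}^{**}(\boldsymbol{x}_T,V)-\boldsymbol{x}_T\bigr]+\boldsymbol{M}_V(t)+\sqrt{V}\int_0^t\bigl[\boldsymbol{G}_V(\hat{\boldsymbol{x}}_V(s),s)-\boldsymbol{G}(\hat{\boldsymbol{x}}_\infty(s),s)\bigr]\mathrm{d}s,
\end{equation*}
where $\boldsymbol{M}_V$ is the rescaled, compensated Poisson contribution. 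The boundary term is $O(1/\sqrt{V})$ and drops out. In the drift integral I insert $\pm\boldsymbol{G}(\hat{\boldsymbol{x}}_V(s),s)$: the piece $\sqrt{V}[\boldsymbol{G}_V-\boldsymbol{G}](\hat{\boldsymbol{x}}_V(s),s)$ is $o(1)$ uniformly by hypothesis (\ref{eq0406}), while $\sqrt{V}[\boldsymbol{G}(\hat{\boldsymbol{x}}_V(s),s)-\boldsymbol{G}(\hat{\boldsymbol{x}}_\infty(s),s)]$ equals $\nabla_{\boldsymbol{x}}\boldsymbol{G}(\hat{\boldsymbol{x}}_\infty(s),s)\boldsymbol{\mu}_V(s)+o(1)$ by a first-order Taylor expansion, Proposition \ref{theo0401}, and the $\mathcal{C}^{k+1}$-regularity of $R_{\pm i}$ and $S(\cdot,\cdot|\boldsymbol{x}_0)$ inherited from Proposition \ref{theo0206}(e) (which gives $\boldsymbol{G}\in\mathcal{C}^{k}$, hence at least $\mathcal{C}^1$).

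For the martingale, Lemma \ref{theo0207}(\ref{eq020311}) together with $V^{-1}r_{\pm i}(V\boldsymbol{x},V)\to R_{\pm i}(\boldsymbol{x})$ yields $V^{-1}\hat{r}_{\pm i}(V\boldsymbol{x},V,t)\to R_{\mp i}(\boldsymbol{x})e^{\mp\boldsymbol{\nu}_i\cdot\nabla_{\boldsymbol{x}}S(\boldsymbol{x},T-t|\boldsymbol{x}_0)}$ uniformly on the tube, so by Proposition \ref{theo0401} the rescaled random clocks $V^{-1}\int_0^t\hat{r}_{\pm i}(V\hat{\boldsymbol{x}}_V(s),V,s)\mathrm{d}s$ converge in probability uniformly on $[0,T^*]$ to deterministic, continuous limits. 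The standard functional CLT $V^{-1/2}\tilde{Y}(V\cdot)\Rightarrow W(\cdot)$ for the compensated Poisson processes, combined with the random time-change theorem, then gives $\boldsymbol{M}_V\Rightarrow \boldsymbol{M}_\infty$ in the Skorohod space, where $\boldsymbol{M}_\infty$ is a continuous centered Gaussian martingale with predictable quadratic variation $\int_0^t\boldsymbol{J}(\hat{\boldsymbol{x}}_\infty(s),s)\mathrm{d}s$. A Gronwall estimate applied to the linear decomposition produces tightness of $\boldsymbol{\mu}_V$ and identifies the weak limit $\boldsymbol{\mu}_\infty$ as the unique solution of
\begin{equation*}
\mathrm{d}\boldsymbol{\mu}_\infty(t)=\nabla_{\boldsymbol{x}}\boldsymbol{G}(\hat{\boldsymbol{x}}_\infty(t),t)\boldsymbol{\mu}_\infty(t)\mathrm{d}t+\mathrm{d}\boldsymbol{M}_\infty(t),\qquad\boldsymbol{\mu}_\infty(0)=\boldsymbol{0}.
\end{equation*}
Applying Itô's formula to $e^{i\boldsymbol{\theta}\cdot\boldsymbol{\mu}_\infty(t)}$, taking expectations, and using $E[\boldsymbol{\mu}_\infty(t)e^{i\boldsymbol{\theta}\cdot\boldsymbol{\mu}_\infty(t)}]=-i\nabla_{\boldsymbol{\theta}}\Phi(\boldsymbol{\theta},t)$ yields the PDE (\ref{eq0407}).

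The principal difficulty is localization. Smoothness of $\boldsymbol{G}$, the rate limit of Lemma \ref{theo0207}, and the uniform control (\ref{eq0406}) are only available inside $\hat{\Sigma}_{\delta_0,[0,T^*]}$, and they must remain active along the trajectory of $\hat{\boldsymbol{x}}_V$ throughout the argument. The standard cure is a stopping-time device: introduce $\tau_V\triangleq\inf\{t\ge 0:\hat{\boldsymbol{x}}_V(t)\notin\hat{\Sigma}_{\delta_0,[0,T^*]}\}$, run the decomposition and the Gronwall/martingale-CLT argument on $[0,\tau_V\wedge T^*]$, and invoke Proposition \ref{theo0401} to conclude $P(\tau_V<T^*)\to 0$, so that the stopped and unstopped processes agree in the weak limit. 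A subsidiary point is the continuity of $\nabla_{\boldsymbol{x}}S(\boldsymbol{x},T-t|\boldsymbol{x}_0)$ in $(\boldsymbol{x},t)$ throughout the tube, needed for the quadratic variation to be genuinely the deterministic expression $\int_0^t\boldsymbol{J}(\hat{\boldsymbol{x}}_\infty(s),s)\mathrm{d}s$; this is again supplied by Proposition \ref{theo0206}(e).
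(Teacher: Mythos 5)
Your proposal is correct, but it reaches the conclusion by a genuinely different route from the paper. You use the classical semimartingale decomposition: compensate the Poisson clocks, prove a functional CLT for the martingale part $\boldsymbol{M}_V$ via the compensated-Poisson invariance principle and the random time-change theorem (the random clocks converge uniformly in probability to deterministic continuous limits, so joint convergence is automatic), obtain tightness of $\boldsymbol{\mu}_V$ from a Gronwall bound on the linear integral relation, identify the limit as the unique solution of the linear SDE $\mathrm{d}\boldsymbol{\mu}_\infty=\nabla_{\boldsymbol{x}}\boldsymbol{G}\,\boldsymbol{\mu}_\infty\,\mathrm{d}t+\mathrm{d}\boldsymbol{M}_\infty$, and only then derive the characteristic-function PDE (\ref{eq0407}) by It\^o's formula. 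The paper instead never passes through a limit SDE: it proves $\mathcal{C}$-tightness via the Jacod--Shiryaev criterion combined with the martingale inequality of Lemma \ref{theoA302}, and establishes convergence of finite-dimensional distributions by writing an integral equation for $\Phi_V(\boldsymbol{\theta},t\wedge\tau)$ directly from the weak infinitesimal generator of $(\check{\boldsymbol{x}}_V,\boldsymbol{\varsigma}_V)$, controlling a sequence of error terms $\Xi_{1,V},\dots,\Xi_{11,V}$, and closing the argument with the method of characteristics for the first-order PDE before invoking Gikhman--Skorokhod. Your approach buys a cleaner structural identification of the limit and avoids the error-term bookkeeping, at the cost of importing two external theorems (the martingale FCLT and the time-change lemma); the paper's approach is more elementary and self-contained, using only generator calculus and characteristic functions. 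Both handle localization identically, by stopping $\hat{\boldsymbol{x}}_V$ at the exit from $\hat{\Sigma}_{\delta_0,[0,T^*]}$ and using Proposition \ref{theo0401} to show the stopping is asymptotically inactive. The only point where your sketch is slightly loose is the Taylor step: the remainder in $\sqrt{V}\left[\boldsymbol{G}(\hat{\boldsymbol{x}}_V(s),s)-\boldsymbol{G}(\hat{\boldsymbol{x}}_\infty(s),s)\right]$ is of order $o(1)\,\vert\boldsymbol{\mu}_V(s)\vert$ rather than $o(1)$, so it must be absorbed into the Gronwall constant rather than discarded outright --- this is exactly the role of the term $\boldsymbol{\Xi}_{4,V}$ in the paper's proof, and your Gronwall framework accommodates it without difficulty.
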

	\begin{remark}
		In fact, $\boldsymbol{\mu}_{\infty}(t)$ is a Gaussian process that obeys the following stochastic differential equation
		\begin{equation*}
			\mathrm{d}\boldsymbol{\mu}_{\infty}(t)=\nabla_{\boldsymbol{x}}\boldsymbol{G}(\hat{\boldsymbol{x}}_{\infty}(t),t)\cdot\boldsymbol{\mu}_{\infty}(t)\mathrm{d}t+\boldsymbol{\sigma}(\hat{\boldsymbol{x}}_{\infty}(t),t)\cdot\mathrm{d}\boldsymbol{w}(t),
		\end{equation*}
		in which $\boldsymbol{\sigma}(\boldsymbol{x},t)\cdot\boldsymbol{\sigma}^{\top}(\boldsymbol{x},t)=\boldsymbol{J}(\boldsymbol{x},t)$.
	\end{remark}
	\begin{corollary}
		Under the assumptions of Propositions \ref{theo0401} and \ref{theo0402}, we can conclude that for any $T^*\in(0,T)$,
		\begin{itemize}
			\item[(a)] $\bar{q}^{\text{NPP}}_{V}(\boldsymbol{x},t;\boldsymbol{x}^{**}(\boldsymbol{x}_T,V),T;\boldsymbol{x}^{*}(\boldsymbol{x}_0,V))$ will focus on $\hat{\boldsymbol{x}}_{\infty}(t)=\boldsymbol{\phi}_{\text{NOP}}(T-t;\boldsymbol{x}_T,T;\boldsymbol{x}_0)$ as $V\to\infty$, and the focusing effect holds uniformly for $t\in[0,T^*]$;
			\item[(b)] for each $t\in[0,T^*]$ and sufficiently large $V$,  $\bar{q}^{\text{NPP}}_{V}(\boldsymbol{x},t;\boldsymbol{x}^{**}(\boldsymbol{x}_T,V),T;\boldsymbol{x}^{*}(\boldsymbol{x}_0,V))$ exhibits approximate conformity to a Gaussian distribution in the vicinity of $\hat{\boldsymbol{x}}_{\infty}(t)$, i.e., for $\boldsymbol{x}$ in the ${O}({1}/{\sqrt{V}})$ neighborhood of $\hat{\boldsymbol{x}}_{\infty}(t)$, 
			\begin{equation*}
				\bar{q}^{\text{NPP}}_{V}(\boldsymbol{x},t;\boldsymbol{x}^{**}(\boldsymbol{x}_T,V),T;\boldsymbol{x}^{*}(\boldsymbol{x}_0,V))\simeq\exp\left\{-\frac{V}{2}(\boldsymbol{x}-\hat{\boldsymbol{x}}_{\infty}(t))\cdot\bar{\boldsymbol{\kappa}}^{-1}(t)\cdot(\boldsymbol{x}-\hat{\boldsymbol{x}}_{\infty}(t))\right\},
			\end{equation*}
			in which $\bar{\boldsymbol{\kappa}}(t)$ satisfies the following Lyapunov matrix differential equation
			\begin{equation*}
				\dot{\bar{\boldsymbol{\kappa}}}(t)=\nabla_{\boldsymbol{x}}\boldsymbol{G}(\hat{\boldsymbol{x}}_{\infty}(t),t)\cdot\bar{\boldsymbol{\kappa}}(t)+\bar{\boldsymbol{\kappa}}(t)\cdot\nabla_{\boldsymbol{x}}\boldsymbol{G}^{\top}(\hat{\boldsymbol{x}}_{\infty}(t),t)+\boldsymbol{J}(\hat{\boldsymbol{x}}_{\infty}(t),t),
			\end{equation*}
			for $t\in[0,T^*]$, with the constraint $\bar{\boldsymbol{\kappa}}(0)=\boldsymbol{0}$.
		\end{itemize}
	\end{corollary}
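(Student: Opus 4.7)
The plan is to recognize the corollary as a direct consequence of Propositions \ref{theo0401} and \ref{theo0402}, exploiting the identification $P(\hat{\boldsymbol{x}}_V(t)=\boldsymbol{x}) = \bar{q}^{\text{NPP}}_V(\boldsymbol{x},t;\boldsymbol{x}^{**}(\boldsymbol{x}_T,V),T;\boldsymbol{x}^*(\boldsymbol{x}_0,V))$ already established in Sec. \ref{sec0304}. This identification converts statements about the prehistory distribution into statements about the marginal law of the non-homogeneous Markov jump process $\hat{\boldsymbol{x}}_V(t)$ defined by Eq. (\ref{eq0403}), so both parts reduce to translating the limit theorems for this process into pointwise statements about its distribution.

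For part (a), I would invoke Proposition \ref{theo0401} directly. The uniform-in-$t$ convergence in probability $\sup_{t\in[0,T^*]}|\hat{\boldsymbol{x}}_V(t)-\hat{\boldsymbol{x}}_\infty(t)|\to 0$ implies that for every $\delta>0$, $P(|\hat{\boldsymbol{x}}_V(t)-\hat{\boldsymbol{x}}_\infty(t)|>\delta)\to 0$ uniformly on $[0,T^*]$. Rewriting this probability as $\sum_{\boldsymbol{x}\notin B_\delta(\hat{\boldsymbol{x}}_\infty(t))}\bar{q}^{\text{NPP}}_V(\boldsymbol{x},t;\boldsymbol{x}^{**}(\boldsymbol{x}_T,V),T;\boldsymbol{x}^*(\boldsymbol{x}_0,V))$ yields the claimed focusing property on $\hat{\boldsymbol{x}}_\infty(t)=\boldsymbol{\phi}_{\text{NOP}}(T-t;\boldsymbol{x}_T,T;\boldsymbol{x}_0)$, with the limit holding uniformly in $t$.

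For part (b), the argument splits into two steps. First, I would solve the PDE (\ref{eq0407}) for the limiting characteristic function with initial condition $\Phi(\boldsymbol{\theta},0)=1$ (from $\boldsymbol{\mu}_\infty(0)=\boldsymbol{0}$) using the Gaussian ansatz $\Phi(\boldsymbol{\theta},t)=\exp\{-\tfrac12\boldsymbol{\theta}\cdot\bar{\boldsymbol{\kappa}}(t)\cdot\boldsymbol{\theta}\}$ with $\bar{\boldsymbol{\kappa}}(t)$ symmetric. Computing $\partial_t\Phi$ and $\nabla_{\boldsymbol{\theta}}\Phi=-\bar{\boldsymbol{\kappa}}(t)\cdot\boldsymbol{\theta}\,\Phi$, substituting into (\ref{eq0407}), and symmetrizing $\boldsymbol{\theta}\cdot\nabla_{\boldsymbol{x}}\boldsymbol{G}\cdot\bar{\boldsymbol{\kappa}}\cdot\boldsymbol{\theta}=\tfrac12\boldsymbol{\theta}\cdot(\nabla_{\boldsymbol{x}}\boldsymbol{G}\cdot\bar{\boldsymbol{\kappa}}+\bar{\boldsymbol{\kappa}}\cdot\nabla_{\boldsymbol{x}}\boldsymbol{G}^\top)\cdot\boldsymbol{\theta}$, and matching the quadratic forms yields exactly the stated Lyapunov equation with $\bar{\boldsymbol{\kappa}}(0)=\boldsymbol{0}$; this identifies $\boldsymbol{\mu}_\infty(t)$ as the centered Gaussian with covariance $\bar{\boldsymbol{\kappa}}(t)$. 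Second, for $\boldsymbol{x}=\hat{\boldsymbol{x}}_\infty(t)+V^{-1/2}\boldsymbol{\mu}$ with $\boldsymbol{\mu}=O(1)$, the weak convergence of $\sqrt{V}(\hat{\boldsymbol{x}}_V(t)-\hat{\boldsymbol{x}}_\infty(t))$ to $\boldsymbol{\mu}_\infty(t)$ translates (up to the lattice normalization $V^{-N/2}$ absorbed in the $\simeq$) into the claimed exponential Gaussian shape for $\bar{q}^{\text{NPP}}_V$.

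The main obstacle is this final step: weak convergence of rescaled jump processes does not by itself imply a pointwise density asymptotic on the lattice $V^{-1}\mathbb{N}^N$. Making the approximation fully rigorous would require a local central limit theorem for the reversed jump process, or equivalently a WKB-type refinement of Proposition \ref{theo0205} that controls the pre-factor $k^{\varepsilon,V}$ down to scale $V^{-1/2}$. Since the corollary is phrased with the soft symbol $\simeq$ and is interpreted as a heuristic consequence of the central-limit-type convergence in Proposition \ref{theo0402}, I would state the Gaussian profile at this level rather than pursue the sharper local-CLT formulation, noting that the Lyapunov equation is derived rigorously from (\ref{eq0407}) and that the focusing in part (a) is a rigorous consequence of Proposition \ref{theo0401}.
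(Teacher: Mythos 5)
Your proposal is correct and follows essentially the same route the paper intends: the corollary is stated without a separate proof precisely because it is the translation of Propositions \ref{theo0401} and \ref{theo0402} through the identification $P(\hat{\boldsymbol{x}}_{V}(t)=\boldsymbol{x})=\bar{q}^{\text{NPP}}_{V}(\boldsymbol{x},t;\boldsymbol{x}^{**}(\boldsymbol{x}_T,V),T;\boldsymbol{x}^{*}(\boldsymbol{x}_0,V))$, with the Lyapunov equation obtained exactly as you do, by inserting the Gaussian ansatz $\Phi(\boldsymbol{\theta},t)=\exp\{-\tfrac{1}{2}\boldsymbol{\theta}\cdot\bar{\boldsymbol{\kappa}}(t)\cdot\boldsymbol{\theta}\}$ into Eq. (\ref{eq0407}) and symmetrizing. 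Your caveat that weak convergence does not by itself yield a pointwise lattice asymptotic is well taken and is precisely why the paper phrases part (b) with $\simeq$ rather than as a local limit theorem.
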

	
	Note that the law of large numbers and the central limit theorem presented above are restricted to the interval $[0,T^*]$. To achieve the prehistorical description for the entire interval $[0,T]$, limit theorems for stochastic processes of the form (\ref{eq030306}) are also required. 
	
	Assume the conditions in part (e) of Proposition \ref{theo0206} hold. Define $\breve{\boldsymbol{x}}_{\infty}(t)=\boldsymbol{\phi}_{\text{NOP}}(t;\boldsymbol{x}_T,T;\boldsymbol{x}_0)$. Eq. (\ref{eq020309}) implies that $\breve{\boldsymbol{x}}_{\infty}(t)$ satisfies
	\begin{equation}\label{eq0408}
		\breve{\boldsymbol{x}}_{\infty}(0)=\boldsymbol{x}_0,
	\end{equation}
	\begin{equation}\label{eq0409}
		\dot{\breve{\boldsymbol{x}}}_{\infty}(t)=\boldsymbol{Q}(\breve{\boldsymbol{x}}_{\infty}(t),t),
	\end{equation}
	where
	\begin{equation*}
		\boldsymbol{Q}(\boldsymbol{x},t)\triangleq\sum_{i=1}^{M}\boldsymbol{\nu}_i\left(R_{+i}(\boldsymbol{x})e^{-\boldsymbol{\nu}_i\cdot\nabla_{\boldsymbol{x}}S(\boldsymbol{x}_T,T-t\vert\boldsymbol{x})}-R_{-i}(\boldsymbol{x})e^{\boldsymbol{\nu}_i\cdot\nabla_{\boldsymbol{x}}S(\boldsymbol{x}_T,T-t\vert\boldsymbol{x})}\right).
	\end{equation*}
	
	Let $\breve{\boldsymbol{x}}_{V}(t)$ be the Markov jump process defined by
	\begin{equation}\label{eq0410}
		\begin{aligned}
			\breve{\boldsymbol{x}}_{V}(t)=&\boldsymbol{x}^{*}(\boldsymbol{x}_0,V)\\
			&+V^{-1}\sum_{i=1}^{M}{\boldsymbol{\nu}_i\left\{Y_{+i}\left(\int_{0}^{t}\breve{r}_{+i}(V\breve{\boldsymbol{x}}_{V}(s),V,s)\mathrm{d}s\right)-Y_{-i}\left(\int_{0}^{t}\breve{r}_{-i}(V\breve{\boldsymbol{x}}_{V}(s),V,s)\mathrm{d}s\right)\right\}},
		\end{aligned}
	\end{equation}
	in which
	\begin{equation}\label{eq0411}
		\breve{r}_{\pm i}(V\boldsymbol{x},V,t)\triangleq\frac{p_V(\boldsymbol{x}^{**}(\boldsymbol{x}_T,V),T-t\vert\boldsymbol{x}\pm V^{-1}\boldsymbol{\nu}_i)r_{\pm i}(V\boldsymbol{x},V)}{p_V(\boldsymbol{x}^{**}(\boldsymbol{x}_T,V),T-t\vert\boldsymbol{x})}.
	\end{equation}
	Obviously, $P(\breve{\boldsymbol{x}}_{V}(t)=\boldsymbol{x})=q^{\text{NPP}}_{V}(\boldsymbol{x},t;\boldsymbol{x}^{**}(\boldsymbol{x}_T,V),T;\boldsymbol{x}^{*}(\boldsymbol{x}_0,V))$. 
	
	Let 
	\begin{equation*}
		\boldsymbol{Q}_{V}(\boldsymbol{x},t)\triangleq{V}^{-1}\sum_{i=1}^{M}\boldsymbol{\nu}_i\left(\breve{r}_{+i}(V\boldsymbol{x},V,t)-\breve{r}_{-i}(V\boldsymbol{x},V,t)\right),
	\end{equation*}
	\begin{equation*}
		\boldsymbol{W}_{V}(\boldsymbol{x},t)\triangleq{V}^{-1}\sum_{i=1}^{M}\boldsymbol{\nu}_i\otimes\boldsymbol{\nu}_i\left(\breve{r}_{+i}(V\boldsymbol{x},V,t)+\breve{r}_{-i}(V\boldsymbol{x},V,t)\right),
	\end{equation*}
	and
	\begin{equation*}
		\boldsymbol{W}(\boldsymbol{x},t)\triangleq\sum_{i=1}^{M}\boldsymbol{\nu}_i\otimes\boldsymbol{\nu}_i\left(R_{+i}(\boldsymbol{x})e^{-\boldsymbol{\nu}_i\cdot\nabla_{\boldsymbol{x}}S(\boldsymbol{x}_T,T-t\vert\boldsymbol{x})}+R_{-i}(\boldsymbol{x})e^{\boldsymbol{\nu}_i\cdot\nabla_{\boldsymbol{x}}S(\boldsymbol{x}_T,T-t\vert\boldsymbol{x})}\right).
	\end{equation*}
	For any $T^*\in(0,T)$ and sufficiently small $\delta_0>0$, let ${\Sigma}_{\delta_0,[0,T^*]}\triangleq\cup_{t\in[0,T^*]}B_{\delta_0}(\breve{\boldsymbol{x}}_{\infty}(t))$.
	The following result is in the nature of the law of large numbers and the central limit theorem. The proof is largely analogous to those of Propositions \ref{theo0401} and \ref{theo0402}. We will not repeat it.
	
	\begin{proposition}\label{theo0403}
		In addition to the conditions in Theorem \ref{theo0204} and part (e) of Proposition \ref{theo0206}, we assume that for any $T^{*}\in(0,T)$, there is a constant $\delta_0$ so that
		\begin{itemize}
			\item[(a)] $S(\boldsymbol{x}_T,T-t\vert\boldsymbol{x})$ is at least twice continuous differentiable in $\Sigma_{\delta_0,[0,T^*]}\times[0,T^*]$;
			\item[(b)] 
			$\lim_{V\to\infty}\left\vert\frac{p_V(\boldsymbol{x}^{**}(\boldsymbol{x}_T,V),T-t\vert\boldsymbol{x}^{**}(\boldsymbol{x},V)\pm V^{-1}\boldsymbol{\nu}_i)}{p_V(\boldsymbol{x}^{**}(\boldsymbol{x}_T,V),T-t\vert\boldsymbol{x}^{**}(\boldsymbol{x},V))}-e^{\mp \boldsymbol{\nu}_i\cdot\nabla_{\boldsymbol{x}}S(\boldsymbol{x}_T,T-t\vert\boldsymbol{x})}\right\vert=0$,
			uniformly for $(\boldsymbol{x},t)\in\Sigma_{\delta_0,[0,T^*]}\times[0,T^*]$ and $1\leq{i}\leq{M}$.
		\end{itemize}
		Then for any $T^{*}\in(0,T)$, there is a constant $\delta_0$ so that for each $\delta<\delta_0$,
		\begin{equation}\label{eq0412}
			\lim_{V\to\infty}P\left\{\sup_{t\in[0,T^*]}\left\vert\breve{\boldsymbol{x}}_{V}(t)-\breve{\boldsymbol{x}}_{\infty}(t)\right\vert>\delta\right\}=0.
		\end{equation}
		
		Furthermore, if we also assume that
		\begin{itemize}
			\item[(c)] 
			$\lim_{V\to\infty}\sqrt{V}\vert\boldsymbol{Q}_V(\boldsymbol{x},t)-\boldsymbol{Q}(\boldsymbol{x},t)\vert=0$,
			uniformly for $(\boldsymbol{x},t)\in\Sigma_{\delta_0,[0,T^*]}\times[0,T^*]$.
		\end{itemize}
		Then for any $T^{*}\in(0,T)$, $\boldsymbol{\upsilon}_{V}(t)\triangleq\sqrt{V}(\breve{\boldsymbol{x}}_V(t)-\breve{\boldsymbol{x}}_{\infty}(t))$ converges weakly on the interval $[0,T^*]$ to a diffusion process $\boldsymbol{\upsilon}_{\infty}(t)$, with $\boldsymbol{\upsilon}_{\infty}(0)=0$ and its characteristic function $\Psi(\boldsymbol{\theta},t)$ satisfying
		\begin{equation}\label{eq0413}
			\frac{\partial}{\partial{t}}\Psi(\boldsymbol{\theta},t)=-\frac{1}{2}\left(\boldsymbol{\theta}\cdot\boldsymbol{W}(\breve{\boldsymbol{x}}_{\infty}(t),t)\cdot\boldsymbol{\theta}\right)\Psi(\boldsymbol{\theta},t)+\boldsymbol{\theta}\cdot\nabla_{\boldsymbol{x}}\boldsymbol{Q}(\breve{\boldsymbol{x}}_{\infty}(t),t)\cdot\nabla_{\boldsymbol{\theta}}\Psi(\boldsymbol{\theta},t)\quad t\in[0,T^*].
		\end{equation}
	\end{proposition}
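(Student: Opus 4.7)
The plan is to mirror, step by step, the structure established in Propositions \ref{theo0401} and \ref{theo0402}, with assumptions (b) and (c) here playing the role that Lemma \ref{theo0207} played there. The starting point is the Poisson representation (\ref{eq0410}), which I would rewrite as
\begin{equation*}
\breve{\boldsymbol{x}}_{V}(t)-\breve{\boldsymbol{x}}_{\infty}(t) = \bigl(\boldsymbol{x}^{*}(\boldsymbol{x}_0,V)-\boldsymbol{x}_0\bigr)+\int_{0}^{t}\bigl[\boldsymbol{Q}_V(\breve{\boldsymbol{x}}_V(s),s)-\boldsymbol{Q}(\breve{\boldsymbol{x}}_\infty(s),s)\bigr]\mathrm{d}s+\boldsymbol{M}_V(t),
\end{equation*}
where $\boldsymbol{M}_V(t)$ collects the compensated Poisson martingales associated with the two $2M$ families $Y_{\pm i}$. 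From the assumption (b) one extracts, through the definition (\ref{eq0411}), a uniform bound of the form $\sup_{(\boldsymbol{x},t)\in\Sigma_{\delta_0,[0,T^*]}\times[0,T^*]}|\boldsymbol{Q}_V(\boldsymbol{x},t)-\boldsymbol{Q}(\boldsymbol{x},t)|\to 0$, which is the analogue of the consequence of Lemma \ref{theo0207} used in Proposition \ref{theo0401}.

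For the law of large numbers (\ref{eq0412}), I would introduce the stopping time $\tau_V\triangleq\inf\{t\geq 0:|\breve{\boldsymbol{x}}_V(t)-\breve{\boldsymbol{x}}_\infty(t)|\geq\delta_0\}$ to confine the dynamics to the tube on which assumption (b) is effective. On $\{t<\tau_V\}$ the drift discrepancy splits as
\begin{equation*}
\bigl|\boldsymbol{Q}_V(\breve{\boldsymbol{x}}_V,s)-\boldsymbol{Q}(\breve{\boldsymbol{x}}_\infty,s)\bigr|\leq\bigl|\boldsymbol{Q}_V(\breve{\boldsymbol{x}}_V,s)-\boldsymbol{Q}(\breve{\boldsymbol{x}}_V,s)\bigr|+\bigl|\boldsymbol{Q}(\breve{\boldsymbol{x}}_V,s)-\boldsymbol{Q}(\breve{\boldsymbol{x}}_\infty,s)\bigr|,
\end{equation*}
where the first term is uniformly $o(1)$ by assumption (b) and the second is bounded by a Lipschitz constant (supplied by the $\mathcal{C}^{k+1}$ smoothness of $S$ given by part (e) of Proposition \ref{theo0206} and assumption (a)) times $|\breve{\boldsymbol{x}}_V-\breve{\boldsymbol{x}}_\infty|$. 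The martingale remainder $\boldsymbol{M}_V(t)$ has quadratic variation of order $O(1/V)$ on the tube, so Doob's maximal inequality yields $\sup_{t\in[0,T^*\wedge\tau_V]}|\boldsymbol{M}_V(t)|\to 0$ in probability. A Gronwall application to the stopped equation then closes the bootstrap, showing simultaneously that $P(\tau_V\leq T^*)\to 0$ and establishing (\ref{eq0412}).

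For the central limit statement I would scale the previous identity by $\sqrt{V}$ and Taylor-expand the smooth part of the drift, obtaining
\begin{equation*}
\sqrt{V}\bigl[\boldsymbol{Q}(\breve{\boldsymbol{x}}_V,s)-\boldsymbol{Q}(\breve{\boldsymbol{x}}_\infty,s)\bigr]=\nabla_{\boldsymbol{x}}\boldsymbol{Q}(\breve{\boldsymbol{x}}_\infty,s)\cdot\boldsymbol{\upsilon}_V(s)+o_{\mathbb{P}}(1),
\end{equation*}
while the pointwise mismatch $\sqrt{V}(\boldsymbol{Q}_V-\boldsymbol{Q})$ is rendered negligible exactly by assumption (c). On the martingale side, combining the just-established LLN with the functional CLT for compensated Poisson processes (Rebolledo's theorem) gives weak convergence of $\sqrt{V}\boldsymbol{M}_V$ on $[0,T^*]$ to a continuous Gaussian martingale with quadratic variation $\int_0^t\boldsymbol{W}(\breve{\boldsymbol{x}}_\infty(s),s)\,\mathrm{d}s$. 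The tightness and convergence of finite-dimensional distributions of $\boldsymbol{\upsilon}_V$ then identify the limit $\boldsymbol{\upsilon}_\infty$ as the unique solution of a linear SDE driven by this Gaussian martingale, and applying It\^o's formula to $e^{i\boldsymbol{\theta}\cdot\boldsymbol{\upsilon}_\infty(t)}$ and taking expectations yields (\ref{eq0413}).

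The main obstacle is the circularity between the two ingredients: assumption (b) only controls the rate ratios on the tube $\Sigma_{\delta_0,[0,T^*]}$, but confining $\breve{\boldsymbol{x}}_V$ to that tube requires those very estimates. This is resolved cleanly by the stopping-time/bootstrap argument above, exactly as in the proofs of Propositions \ref{theo0401} and \ref{theo0402}, and it is here that the $\mathcal{C}^{k+1}$ regularity provided by part (e) of Proposition \ref{theo0206} is indispensable, since without it the Taylor linearization underlying the CLT and the Lipschitz control underlying the LLN would both fail on the tube.
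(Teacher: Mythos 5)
Your proposal is correct, and for the law-of-large-numbers half it is essentially identical to what the paper does (the paper states that the proof of Proposition \ref{theo0403} is "largely analogous" to those of Propositions \ref{theo0401} and \ref{theo0402} and refers to Appendices \ref{secA3} and \ref{secA4}): the stopping time confining $\breve{\boldsymbol{x}}_V$ to the tube, the truncated rates, the splitting of the drift error into the uniform discrepancy $\vert\boldsymbol{Q}_V-\boldsymbol{Q}\vert$ (extracted from hypothesis (b) together with the uniform convergence and boundedness of $V^{-1}r_{\pm i}$ from Theorem \ref{theo0204}) plus a Lipschitz term, the second-moment martingale maximal inequality (your Doob bound with quadratic variation $O(1/V)$ is exactly the paper's Lemma \ref{theoA302} with $\varphi(u)=u^2$), and Gronwall to close the bootstrap.

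The one place you genuinely diverge is the central limit half. The paper establishes $\mathcal{C}$-tightness via Jacod--Shiryaev and then proves convergence of finite-dimensional distributions by applying the weak infinitesimal generator to $\exp\{i\sqrt{V}\boldsymbol{\theta}\cdot\boldsymbol{\varsigma}_V\}$, expanding, collecting a long list of error terms, and solving the resulting integral equation for the characteristic function by the method of characteristics. You instead invoke Rebolledo's martingale functional CLT for the compensated Poisson part, identify the limit as the solution of a linear SDE, and recover Eq.\ (\ref{eq0413}) by It\^o's formula. Both routes are standard and reach the same limit; yours outsources the hard analytic bookkeeping to a known FCLT and is arguably cleaner to write, while the paper's direct computation is self-contained and produces the characteristic-function PDE (\ref{eq0413}) without passing through an SDE representation. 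Either way the hypotheses are used in the same places: (b) for the uniform drift convergence and the boundedness of $V^{-1}\breve{r}_{\pm i}$ on the tube, (c) to kill the $\sqrt{V}$-scaled drift mismatch, and the $\mathcal{C}^{k+1}$ regularity of $S(\boldsymbol{x}_T,T-t\vert\boldsymbol{x})$ from (a) and Proposition \ref{theo0206}(e) for the Taylor linearization of $\boldsymbol{Q}$. The only cosmetic omission is that you do not note $\sqrt{V}\vert\boldsymbol{x}^{*}(\boldsymbol{x}_0,V)-\boldsymbol{x}_0\vert\to 0$, which is immediate since the mismatch is $O(1/V)$.
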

	\begin{corollary} Under the assumptions of Propositions \ref{theo0401}, \ref{theo0402} and \ref{theo0403}, we have that
		\begin{itemize}
			\item[(a)] the non-stationary prehistory probability ${q}^{\text{NPP}}_{V}(\boldsymbol{x},t;\boldsymbol{x}^{**}(\boldsymbol{x}_T,V),T;\boldsymbol{x}^{*}(\boldsymbol{x}_0,V))$ will focus on $\boldsymbol{\phi}_{\text{NOP}}(t;\boldsymbol{x}_T,T;\boldsymbol{x}_0)$, uniformly for $t\in[0,T]$, as $V\to\infty$; 
			\item[(b)] for each $t\in[0,T]$, sufficiently large $V$, and each $\boldsymbol{x}$ in the ${O}({1}/{\sqrt{V}})$ neighborhood of $\boldsymbol{\phi}_{\text{NOP}}(t;\boldsymbol{x}_T,T;\boldsymbol{x}_0)$, 
			\begin{equation*}
				\begin{aligned}
					{q}^{\text{NPP}}_{V}&(\boldsymbol{x},t;\boldsymbol{x}^{**}(\boldsymbol{x}_T,V),T;\boldsymbol{x}^{*}(\boldsymbol{x}_0,V))\simeq\\
					&\exp\left\{-\frac{V}{2}(\boldsymbol{x}-\boldsymbol{\phi}_{\text{NOP}}(t;\boldsymbol{x}_T,T;\boldsymbol{x}_0))\cdot\boldsymbol{\kappa}^{-1}(t)\cdot(\boldsymbol{x}-\boldsymbol{\phi}_{\text{NOP}}(t;\boldsymbol{x}_T,T;\boldsymbol{x}_0))\right\},
				\end{aligned}
			\end{equation*}
			where ${\boldsymbol{\kappa}}(t)=\bar{\boldsymbol{\kappa}}(T-t)$ satisfies the following Lyapunov matrix differential equation
			\begin{equation*}
				\dot{\boldsymbol{\kappa}}(t)=\nabla_{\boldsymbol{x}}\boldsymbol{Q}(\breve{\boldsymbol{x}}_{\infty}(t),t)\cdot\boldsymbol{\kappa}(t)+\boldsymbol{\kappa}(t)\cdot\nabla_{\boldsymbol{x}}\boldsymbol{Q}^{\top}(\breve{\boldsymbol{x}}_{\infty}(t),t)+\boldsymbol{W}(\breve{\boldsymbol{x}}_{\infty}(t),t),
			\end{equation*}
			for $t\in[0,T]$, with the constraints ${\boldsymbol{\kappa}}(0)=\boldsymbol{0}$ and ${\boldsymbol{\kappa}}(T)=\boldsymbol{0}$.
		\end{itemize}
		This is the complete prehistorical description of optimal fluctuations in the non-stationary setting.
	\end{corollary}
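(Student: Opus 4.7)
The plan is to treat the corollary as a direct consequence of the three preceding propositions, exploiting the identification $P(\breve{\boldsymbol{x}}_V(t)=\boldsymbol{x})=q^{\text{NPP}}_V(\boldsymbol{x},t;\boldsymbol{x}^{**}(\boldsymbol{x}_T,V),T;\boldsymbol{x}^*(\boldsymbol{x}_0,V))$ made in Sec.~\ref{sec0304} together with the reverse-time identification $P(\hat{\boldsymbol{x}}_V(t)=\boldsymbol{x})=\bar{q}^{\text{NPP}}_V(\boldsymbol{x},t;\boldsymbol{x}^{**}(\boldsymbol{x}_T,V),T;\boldsymbol{x}^*(\boldsymbol{x}_0,V))=q^{\text{NPP}}_V(\boldsymbol{x},T-t;\cdot)$ for $\hat{\boldsymbol{x}}_V$. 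Under these identifications, parts (a) and (b) are essentially translations of the law-of-large-numbers and central-limit statements in Propositions~\ref{theo0401}--\ref{theo0403}, so the only real work will be to (i) promote conclusions valid on $[0,T^*]$ for any $T^*<T$ to conclusions on the full interval $[0,T]$, and (ii) verify that the forward and reverse Gaussian approximations are compatible on their overlap.

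For the focusing claim (a), I would fix an arbitrary small $\eta>0$ and choose $T^*\in(T-\eta,T)$ with $T^*>T/2$. Proposition~\ref{theo0403} supplies uniform concentration of $\breve{\boldsymbol{x}}_V$ on $\boldsymbol{\phi}_{\text{NOP}}$ over $[0,T^*]$, which by the law identification above reads as uniform focusing of $q^{\text{NPP}}_V(\cdot,t;\cdot)$ on $\boldsymbol{\phi}_{\text{NOP}}(t;\boldsymbol{x}_T,T;\boldsymbol{x}_0)$ for $t\in[0,T^*]$. Proposition~\ref{theo0401} supplies the analogous concentration for $\hat{\boldsymbol{x}}_V$, which after changing variables $s=T-t$ reads as uniform focusing of $q^{\text{NPP}}_V(\cdot,s;\cdot)$ on $\boldsymbol{\phi}_{\text{NOP}}(s;\boldsymbol{x}_T,T;\boldsymbol{x}_0)$ for $s\in[T-T^*,T]$. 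Since $T-T^*<T^*$, the two intervals cover $[0,T]$, and combining the two uniform-convergence bounds yields uniform focusing of $q^{\text{NPP}}_V$ on the whole of $[0,T]$.

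For the Gaussian approximation (b), the CLTs in Propositions~\ref{theo0402} and~\ref{theo0403} provide two candidate covariance functions along the NOP: $\boldsymbol{\kappa}(t)$ solving the stated forward Lyapunov ODE with $\boldsymbol{\kappa}(0)=\boldsymbol{0}$, valid on $[0,T^*]$, and $\bar{\boldsymbol{\kappa}}(s)$ solving the companion equation with $\bar{\boldsymbol{\kappa}}(0)=\boldsymbol{0}$, valid on $[0,T^*]$. Since both CLTs describe the same law $q^{\text{NPP}}_V$ at the corresponding time, their Gaussian profiles must agree on the overlap, forcing $\bar{\boldsymbol{\kappa}}(T-t)=\boldsymbol{\kappa}(t)$; in particular the ``extra'' boundary condition $\boldsymbol{\kappa}(T)=\bar{\boldsymbol{\kappa}}(0)=\boldsymbol{0}$ is immediate. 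At the ODE level, verifying $\bar{\boldsymbol{\kappa}}(T-t)=\boldsymbol{\kappa}(t)$ reduces to showing that the reverse Lyapunov system is the time-reversed forward system, which in turn reduces to the Hamilton-Jacobi identity $\nabla_{\boldsymbol{x}}S(\boldsymbol{x},t\vert\boldsymbol{x}_0)=-\nabla_{\boldsymbol{x}}S(\boldsymbol{x}_T,T-t\vert\boldsymbol{x})$ holding along $\boldsymbol{\phi}_{\text{NOP}}$ (Proposition~\ref{theo0206}(e)); this implies $\boldsymbol{G}(\hat{\boldsymbol{x}}_\infty(t),t)=-\boldsymbol{Q}(\breve{\boldsymbol{x}}_\infty(T-t),T-t)$ together with an analogous identity for the diffusion tensors $\boldsymbol{J}$ and $\boldsymbol{W}$, after which the equality of the two Lyapunov flows follows by uniqueness.

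The main obstacle will be this last consistency step. The two CLTs are proved independently and their claims agree on each separate subinterval; but the statement $\boldsymbol{\kappa}(T)=\boldsymbol{0}$ is genuinely a two-point-boundary-value property of the forward Lyapunov ODE, which a generic linearized stochastic system would not satisfy. Establishing that it \emph{does} hold here rests entirely on the extremality of $\boldsymbol{\phi}_{\text{NOP}}$ and the Hamilton-Jacobi duality, and is the only substantive content beyond routine combination of the three propositions.
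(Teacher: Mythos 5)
Your proposal is correct and follows the route the paper intends: the corollary is stated without a separate proof precisely because it is the routine combination you describe, with Proposition \ref{theo0403} covering $q^{\text{NPP}}_{V}$ on $[0,T^*]$, Propositions \ref{theo0401}--\ref{theo0402} covering $[T-T^*,T]$ after the substitution $t\mapsto T-t$ via the identification $\bar{q}^{\text{NPP}}_{V}(\boldsymbol{x},t;\cdot)=q^{\text{NPP}}_{V}(\boldsymbol{x},T-t;\cdot)$, and any $T^*>T/2$ making the two windows overlap. Your resolution of the two-point boundary condition by uniqueness of the weak limit of the identical one-time marginals is the right (and sufficient) argument; the alternative ODE-level verification you sketch is more delicate than you suggest, since $\nabla_{\boldsymbol{x}}\boldsymbol{G}$ and $\nabla_{\boldsymbol{x}}\boldsymbol{Q}$ involve the Hessians of the two action functions, which are not related by a mere sign flip away from the path.
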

	
	\section{Prehistorical Description of Optimal Fluctuations on Infinite Time Intervals}\label{sec05}
	Assume that the conditions in part (f) of Proposition \ref{theo0209} hold. Let $\{\boldsymbol{\phi}_{\text{OP}}(t;\boldsymbol{x}_{T}):t\in(-\infty,0]\}$ be the unique (stationary) optimal path connecting $\boldsymbol{x}_{\text{eq}}$ and $\boldsymbol{x}_T$, and define $\hat{\boldsymbol{x}}_{\infty}(t)=\boldsymbol{\phi}_{\text{OP}}(-t;\boldsymbol{x}_T)$. Eq. (\ref{eq020407}) gives
	\begin{equation}\label{eq0501}
		\hat{\boldsymbol{x}}_{\infty}(0)=\boldsymbol{x}_T,
	\end{equation}
	\begin{equation}\label{eq0502}
		\dot{\hat{\boldsymbol{x}}}_{\infty}(t)=\boldsymbol{G}(\hat{\boldsymbol{x}}_{\infty}(t)),
	\end{equation}
	in which 
	\begin{equation*}
		\boldsymbol{G}(\boldsymbol{x})\triangleq-\sum_{i=1}^{M}\boldsymbol{\nu}_i\left(R_{+i}(\boldsymbol{x})e^{\boldsymbol{\nu}_i\cdot\nabla_{\boldsymbol{x}}S(\boldsymbol{x})}-R_{-i}(\boldsymbol{x})e^{-\boldsymbol{\nu}_i\cdot\nabla_{\boldsymbol{x}}S(\boldsymbol{x})}\right).
	\end{equation*}
	
	Define a homogeneous Markov jump process $\hat{\boldsymbol{x}}_{V}(t)$, which is given by
	\begin{equation}\label{eq0503}
		\begin{aligned}
			\hat{\boldsymbol{x}}_{V}(t)=&\boldsymbol{x}^{**}(\boldsymbol{x}_T,V)\\
			&+V^{-1}\sum_{i=1}^{M}{\boldsymbol{\nu}_i\left\{Y_{+i}\left(\int_{0}^{t}\hat{r}_{+i}(V\hat{\boldsymbol{x}}_{V}(s),V)\mathrm{d}s\right)-Y_{-i}\left(\int_{0}^{t}\hat{r}_{-i}(V\hat{\boldsymbol{x}}_{V}(s),V)\mathrm{d}s\right)\right\}},
		\end{aligned}
	\end{equation}
	with 
	\begin{equation}\label{eq0504}
		\hat{r}_{\pm i}(V\boldsymbol{x},V)\triangleq\frac{\pi_V(\boldsymbol{x}\pm V^{-1}\boldsymbol{\nu}_i)r_{\mp i}(V\boldsymbol{x}\pm\boldsymbol{\nu}_i,V)}{\pi_V(\boldsymbol{x})}.
	\end{equation}
	Clearly, in this case, $P(\hat{\boldsymbol{x}}_{V}(t)=\boldsymbol{x})=\bar{q}^{\text{SPP}}_{V}(\boldsymbol{x},t;\boldsymbol{x}^{**}(\boldsymbol{x}_T,V),T)$.
	
	Define
	\begin{equation*}
		\boldsymbol{G}_V(\boldsymbol{x})\triangleq{V}^{-1}\sum_{i=1}^{M}\boldsymbol{\nu}_i\left(\hat{r}_{+i}(V\boldsymbol{x},V)-\hat{r}_{-i}(V\boldsymbol{x},V)\right),
	\end{equation*}
	\begin{equation*}
		\boldsymbol{J}_V(\boldsymbol{x})\triangleq{V}^{-1}\sum_{i=1}^{M}\boldsymbol{\nu}_{i}\otimes\boldsymbol{\nu}_{i}\left(\hat{r}_{+i}(V\boldsymbol{x},V)+\hat{r}_{-i}(V\boldsymbol{x},V)\right),
	\end{equation*}
	and
	\begin{equation*}
		\boldsymbol{J}(\boldsymbol{x})\triangleq\sum_{i=1}^{M}\boldsymbol{\nu}_{i}\otimes\boldsymbol{\nu}_{i}\left(R_{+i}(\boldsymbol{x})e^{\boldsymbol{\nu}_i\cdot\nabla_{\boldsymbol{x}}S(\boldsymbol{x})}+R_{-i}(\boldsymbol{x})e^{-\boldsymbol{\nu}_i\cdot\nabla_{\boldsymbol{x}}S(\boldsymbol{x})}\right).
	\end{equation*}
	For any $T^*\in(0,\infty)$ and sufficiently small $\delta_0>0$, let $\hat{\Sigma}_{\delta_0,[0,T^*]}=\cup_{t\in[0,T^*]}B_{\delta_0}(\hat{\boldsymbol{x}}_{\infty}(t))$. Obviously, in this case, $\hat{\Sigma}_{\delta_0,[0,T^*]}=\Sigma_{\delta_0,[-T^*,0]}$.
	We state the following analogs of Propositions \ref{theo0401} and \ref{theo0402}. The proofs are essentially the same, and will not be repeated.

	\begin{proposition}\label{theo0501}
		Assume the conditions in Theorems \ref{theo0204}, \ref{theo0208}, part (f) of Proposition \ref{theo0209} and Lemma \ref{theo0210} hold. Then for any $T^*\in(0,\infty)$ there is a constant $\delta_0$ so that for each $\delta<\delta_0$, we have 
		\begin{equation}\label{eq0505}
			\lim_{V\to\infty}P\left\{\sup_{t\in[0,T^*]}\left\vert\hat{\boldsymbol{x}}_{V}(t)-\hat{\boldsymbol{x}}_{\infty}(t)\right\vert>\delta\right\}=0.
		\end{equation}
	\end{proposition}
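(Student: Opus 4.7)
The plan is to follow the template of the proof of Proposition~\ref{theo0401}, substituting the stationary ratio estimate~(\ref{eq020409}) of Lemma~\ref{theo0210} for the non-stationary ratio~(\ref{eq020312}) used there, and reading all quantities as time-homogeneous. First I would establish, via Lemma~\ref{theo0210} together with hypothesis~(a) of Theorem~\ref{theo0201}, that
\[
V^{-1}\hat r_{\pm i}(V\boldsymbol x,V)=\frac{\pi_V(\boldsymbol x^{**}(\boldsymbol x,V)\pm V^{-1}\boldsymbol\nu_i)}{\pi_V(\boldsymbol x^{**}(\boldsymbol x,V))}\cdot V^{-1}r_{\mp i}(V\boldsymbol x\pm\boldsymbol\nu_i,V)\longrightarrow R_{\mp i}(\boldsymbol x)\,e^{\mp\boldsymbol\nu_i\cdot\nabla_{\boldsymbol x}S(\boldsymbol x)}
\]
uniformly on $\hat\Sigma_{\delta_0,[0,T^*]}$, which gives $\boldsymbol G_V\to\boldsymbol G$ uniformly on the same tube. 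The smoothness of $S$ granted by part~(f) of Proposition~\ref{theo0209}, combined with that of the $R_{\pm i}$, then ensures that $\boldsymbol G$ is Lipschitz on this neighborhood, which is the ingredient that drives the Gronwall step below.

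Next I would exploit the Poisson/martingale decomposition
\[
\hat{\boldsymbol x}_V(t)-\hat{\boldsymbol x}_\infty(t)=\bigl(\boldsymbol x^{**}(\boldsymbol x_T,V)-\boldsymbol x_T\bigr)+\int_0^t\bigl[\boldsymbol G_V(\hat{\boldsymbol x}_V(s))-\boldsymbol G(\hat{\boldsymbol x}_\infty(s))\bigr]\mathrm ds+\hat{\boldsymbol M}_V(t),
\]
where $\hat{\boldsymbol M}_V(t)$ is the sum of the compensated Poisson martingales from~(\ref{eq0503}). To keep the rates under control, introduce the stopping time $\tau_V=\inf\{t\ge 0:\hat{\boldsymbol x}_V(t)\notin\hat\Sigma_{\delta_0/2,[0,T^*]}\}$. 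On $\{t\le\tau_V\}$ the modified rates are uniformly bounded, so Doob's maximal inequality yields $E[\sup_{t\le T^*\wedge\tau_V}|\hat{\boldsymbol M}_V(t)|^2]=O(1/V)$. Splitting the drift integrand as $[\boldsymbol G_V(\hat{\boldsymbol x}_V)-\boldsymbol G(\hat{\boldsymbol x}_V)]+[\boldsymbol G(\hat{\boldsymbol x}_V)-\boldsymbol G(\hat{\boldsymbol x}_\infty)]$, treating the first summand by the uniform convergence above and the second by the Lipschitz continuity of $\boldsymbol G$, a standard Gronwall argument then delivers $\sup_{t\le T^*\wedge\tau_V}|\hat{\boldsymbol x}_V(t)-\hat{\boldsymbol x}_\infty(t)|\to 0$ in probability.

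The hard part is the localization bootstrap. Because the reversed rates $\hat r_{\pm i}$ inherit their asymptotic behaviour from $\pi_V$ only through Lemma~\ref{theo0210} inside the tube $\hat\Sigma_{\delta_0,[0,T^*]}$, one cannot a priori rule out an early excursion of $\hat{\boldsymbol x}_V(t)$ outside the tube. The closing step therefore runs as a self-consistency argument: the preliminary bound up to $\tau_V$, together with the fact that $\hat{\boldsymbol x}_\infty(t)$ lies strictly inside $\hat\Sigma_{\delta_0/2,[0,T^*]}$, forces $P(\tau_V\le T^*)\to 0$ and hence upgrades the estimate to the full interval $[0,T^*]$, yielding~(\ref{eq0505}). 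The arbitrariness of $T^*\in(0,\infty)$ introduces no additional difficulty, since $T^*$ is fixed before the passage $V\to\infty$.
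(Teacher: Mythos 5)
Your proposal is correct and follows essentially the same route as the paper: the paper proves Proposition \ref{theo0401} via exactly this combination of a stopping-time localization to the tube, the martingale (compensated Poisson) decomposition with a second-moment maximal inequality giving an $O(1/V)$ bound, the drift splitting $[\boldsymbol G_V-\boldsymbol G]+[\boldsymbol G(\hat{\boldsymbol x}_V)-\boldsymbol G(\hat{\boldsymbol x}_\infty)]$ handled by the uniform ratio convergence and a Gronwall argument, and the closing self-consistency step forcing $\tau>T^*$; it then states that Proposition \ref{theo0501} follows by the identical argument with Lemma \ref{theo0210} replacing Lemma \ref{theo0207}, which is precisely the substitution you make.
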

	
	\begin{proposition}\label{theo0502}
		In addition to the hypotheses formulated in Proposition \ref{theo0501}, we
		further suppose that for any $T^*\in(0,\infty)$, there is a constant $\delta_0$ so that
		\begin{equation*}
			\lim_{V\to\infty}\sqrt{V}\vert\boldsymbol{G}_V(\boldsymbol{x})-\boldsymbol{G}(\boldsymbol{x})\vert=0,
		\end{equation*}
		uniformly for $\boldsymbol{x}\in\hat{\Sigma}_{\delta_0,[0,T^*]}$.
		
		Then for any $T^*\in(0,\infty)$, $\boldsymbol{\mu}_V(t)\triangleq\sqrt{V}(\hat{\boldsymbol{x}}_V(t)-\hat{\boldsymbol{x}}_{\infty}(t))$ converges weakly on the interval $[0,T^*]$ to a diffusion process $\boldsymbol{\mu}_{\infty}(t)$, with $\boldsymbol{\mu}_{\infty}(0)=0$ and its characteristic function $\Phi(\boldsymbol{\theta},t)$ satisfying
		\begin{equation}\label{eq0506}
			\frac{\partial}{\partial{t}}\Phi(\boldsymbol{\theta},t)=-\frac{1}{2}\left(\boldsymbol{\theta}\cdot\boldsymbol{J}(\hat{\boldsymbol{x}}_{\infty}(t))\cdot\boldsymbol{\theta}\right)\Phi(\boldsymbol{\theta},t)+\boldsymbol{\theta}\cdot\nabla_{\boldsymbol{x}}\boldsymbol{G}(\hat{\boldsymbol{x}}_{\infty}(t))\cdot\nabla_{\boldsymbol{\theta}}\Phi(\boldsymbol{\theta},t),\quad t\in[0,T^*].
		\end{equation}
	\end{proposition}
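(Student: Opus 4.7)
The plan is to mirror the argument for Proposition \ref{theo0402}, since the stationary case differs from the non-stationary one only in that the reversed rates $\hat{r}_{\pm i}$ no longer depend explicitly on time. I would start from the random-time-changed Poisson representation (\ref{eq0503}) of $\hat{\boldsymbol{x}}_V(t)$, subtract the deterministic equation (\ref{eq0502}) for $\hat{\boldsymbol{x}}_\infty(t)$, multiply by $\sqrt{V}$, and decompose the result into a predictable drift and a local martingale by introducing the centered Poisson processes $\tilde{Y}_{\pm i}(u)\triangleq Y_{\pm i}(u)-u$. This yields
\begin{equation*}
\boldsymbol{\mu}_V(t)=\sqrt{V}\int_{0}^{t}\left[\boldsymbol{G}_V(\hat{\boldsymbol{x}}_V(s))-\boldsymbol{G}(\hat{\boldsymbol{x}}_\infty(s))\right]\mathrm{d}s+\boldsymbol{M}_V(t),
\end{equation*}
where $\boldsymbol{M}_V(t)$ collects the rescaled compensated Poisson terms.

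Next I would treat the drift and the noise separately. For the drift, I would split $\sqrt{V}[\boldsymbol{G}_V(\hat{\boldsymbol{x}}_V)-\boldsymbol{G}(\hat{\boldsymbol{x}}_\infty)]=\sqrt{V}[\boldsymbol{G}_V(\hat{\boldsymbol{x}}_V)-\boldsymbol{G}(\hat{\boldsymbol{x}}_V)]+\sqrt{V}[\boldsymbol{G}(\hat{\boldsymbol{x}}_V)-\boldsymbol{G}(\hat{\boldsymbol{x}}_\infty)]$; the first summand tends to zero uniformly on $\hat{\Sigma}_{\delta_0,[0,T^*]}$ by the extra hypothesis of this proposition, while the second summand is handled by a first-order Taylor expansion of $\boldsymbol{G}$ around $\hat{\boldsymbol{x}}_\infty(s)$, giving $\nabla_{\boldsymbol{x}}\boldsymbol{G}(\hat{\boldsymbol{x}}_\infty(s))\cdot\boldsymbol{\mu}_V(s)$ plus a remainder of order $\vert\boldsymbol{\mu}_V(s)\vert^2/\sqrt{V}$ that vanishes thanks to Proposition \ref{theo0501}. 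For the martingale, I would compute its predictable quadratic variation, which equals $\int_0^t \boldsymbol{J}_V(\hat{\boldsymbol{x}}_V(s))\mathrm{d}s$; by Proposition \ref{theo0501} and Lemma \ref{theo0210} (which supplies $\boldsymbol{J}_V\to\boldsymbol{J}$ uniformly on $\hat{\Sigma}_{\delta_0,[0,T^*]}$), this converges in probability uniformly on $[0,T^*]$ to $\int_0^t\boldsymbol{J}(\hat{\boldsymbol{x}}_\infty(s))\mathrm{d}s$.

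With these ingredients in place I would invoke the martingale functional central limit theorem to conclude that $\boldsymbol{M}_V(\cdot)$ converges weakly on $\mathcal{D}([0,T^*];\mathbb{R}^N)$ to a continuous Gaussian martingale with quadratic variation $\int_0^{\cdot} \boldsymbol{J}(\hat{\boldsymbol{x}}_\infty(s))\mathrm{d}s$. Combining this with the drift analysis and applying a Gronwall-type stability argument to the resulting linear stochastic integral equation, $\boldsymbol{\mu}_V$ converges weakly to the Gaussian process $\boldsymbol{\mu}_\infty(t)$ solving $\mathrm{d}\boldsymbol{\mu}_\infty(t)=\nabla_{\boldsymbol{x}}\boldsymbol{G}(\hat{\boldsymbol{x}}_\infty(t))\cdot\boldsymbol{\mu}_\infty(t)\,\mathrm{d}t+\boldsymbol{\sigma}(\hat{\boldsymbol{x}}_\infty(t))\cdot\mathrm{d}\boldsymbol{w}(t)$ with $\boldsymbol{\sigma}\boldsymbol{\sigma}^\top=\boldsymbol{J}$ and $\boldsymbol{\mu}_\infty(0)=0$. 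Applying It\^o's formula to $e^{i\boldsymbol{\theta}\cdot\boldsymbol{\mu}_\infty(t)}$ and taking expectations then yields the stated PDE (\ref{eq0506}) for the characteristic function $\Phi(\boldsymbol{\theta},t)$.

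The main obstacle is the uniform control of $\boldsymbol{G}_V-\boldsymbol{G}$ and $\boldsymbol{J}_V-\boldsymbol{J}$ on a tubular neighborhood of the optimal path: the reversed rates $\hat{r}_{\pm i}$ are defined through the quotients $\pi_V(\boldsymbol{x}\pm V^{-1}\boldsymbol{\nu}_i)/\pi_V(\boldsymbol{x})$, and only Lemma \ref{theo0210} characterizes their limiting behaviour, localized to $\Sigma_{\delta_0,[T^*,0]}$. It is the law of large numbers of Proposition \ref{theo0501} that confines $\hat{\boldsymbol{x}}_V$ to this tube with probability tending to one on $[0,T^*]$, which is what allows the extra hypothesis $\sqrt{V}\,(\boldsymbol{G}_V-\boldsymbol{G})\to 0$ to be applied consistently in the Taylor decomposition of the drift and the extra $\ln V/\sqrt{V}$ fluctuations picked up outside the tube to be discarded.
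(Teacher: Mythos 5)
Your proof is essentially correct, but it follows a genuinely different route from the paper's. The paper does not reprove Proposition \ref{theo0502} separately; it points to the proof of Proposition \ref{theo0402} in Appendix \ref{secA4}, which proceeds by (i) establishing $\mathcal{C}$-tightness of $\boldsymbol{\mu}_V$ through Doob/martingale inequalities applied to the stopped process $\hat{\boldsymbol{x}}_V(t\wedge\tau)$ (Lemmas \ref{theoA301}, \ref{theoA302}, \ref{theoA401}), (ii) proving convergence of the finite-dimensional distributions by writing an integral equation for the characteristic function $\Phi_V(\boldsymbol{\theta},t)$ via the weak infinitesimal generator of $(\check{\boldsymbol{x}}_V,\boldsymbol{\varsigma}_V)$, controlling roughly a dozen error terms $\Xi_{k,V}$, and then invoking the method of characteristics for the limiting first-order PDE (\ref{eq0506}), and (iii) concluding by the standard ``tightness plus finite-dimensional convergence'' theorem. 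You instead decompose $\boldsymbol{\mu}_V$ into a drift plus a compensated-Poisson local martingale, identify the predictable quadratic variation $\int_0^t\boldsymbol{J}_V(\hat{\boldsymbol{x}}_V(s))\,\mathrm{d}s$, apply the martingale functional CLT, and transfer the limit to $\boldsymbol{\mu}_V$ by Gronwall stability of the linear integral equation, deriving (\ref{eq0506}) a posteriori from It\^o's formula. Both arguments rest on the same two analytic inputs --- the uniform convergence $V^{-1}\hat{r}_{\pm i}\to R_{\mp i}e^{\mp\boldsymbol{\nu}_i\cdot\nabla_{\boldsymbol{x}}S}$ on the tube supplied by Lemma \ref{theo0210}, and the localization via the exit time $\tau$ justified by Proposition \ref{theo0501} --- and you identify both correctly. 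Your route is shorter and more modern (Ethier--Kurtz Chapter 11 style); the paper's characteristic-function route is heavier but yields (\ref{eq0506}) directly as the limit equation rather than as a consequence of the SDE representation.

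One small imprecision to repair: you claim the Taylor remainder $O(\vert\boldsymbol{\mu}_V(s)\vert^2/\sqrt{V})$ ``vanishes thanks to Proposition \ref{theo0501}.'' The law of large numbers only gives $\vert\hat{\boldsymbol{x}}_V-\hat{\boldsymbol{x}}_\infty\vert=o_P(1)$, not $o_P(V^{-1/4})$, so by itself it does not kill this term. Either first establish stochastic boundedness of $\sup_t\vert\boldsymbol{\mu}_V(t)\vert$ (which your martingale estimates plus a Lipschitz bound on $\boldsymbol{G}$ and Gronwall already give), or do as the paper does and use the exact mean-value form $\nabla_{\boldsymbol{x}}\boldsymbol{G}(\check{\boldsymbol{x}}_V^*(s))\cdot(\hat{\boldsymbol{x}}_V(s)-\hat{\boldsymbol{x}}_\infty(s))$ together with continuity of $\nabla_{\boldsymbol{x}}\boldsymbol{G}$, which only needs the LLN. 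With that adjustment the argument closes.
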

	Although the definition of the stationary prehistory probability is contingent on a specified parameter $T$, it can be observed that the process $\hat{\boldsymbol{x}}_V(t)$ (as well as $\bar{\boldsymbol{x}}_V^{\text{SPP}}$) itself is not influenced by this quantity. The aforementioned theorems are applicable to any $T^*\in(0,\infty)$ as long as the pertinent conditions are met. At this point, if we let $T^*=T$, then the subsequent conclusions can be derived.
	\begin{corollary}
		Under the assumptions of Propositions \ref{theo0501} and \ref{theo0502}, we can conclude that for any $T>0$,
		\begin{itemize}
			\item[(a)] the stationary prehistory probability ${q}^{\text{SPP}}_{V}(\boldsymbol{x},t;\boldsymbol{x}^{**}(\boldsymbol{x}_T,V),T)$ will focus on $\boldsymbol{\phi}_{\text{OP}}(t-T;\boldsymbol{x}_T)$, uniformly for $t\in[0,T]$, as $V\to\infty$;
			\item[(b)] for each $t\in[0,T]$, sufficiently large $V$, and each $\boldsymbol{x}$ in the ${O}({1}/{\sqrt{V}})$ neighborhood of $\boldsymbol{\phi}_{\text{OP}}(t-T;\boldsymbol{x}_T)$, 
			\begin{equation*}
				\begin{aligned}
					{q}^{\text{SPP}}_{V}(\boldsymbol{x},t;&\boldsymbol{x}^{**}(\boldsymbol{x}_T,V),T)\simeq\\
					&\exp\left\{-\frac{V}{2}(\boldsymbol{x}-\boldsymbol{\phi}_{\text{OP}}(t-T;\boldsymbol{x}_T))\cdot\bar{\boldsymbol{\kappa}}^{-1}(T-t)\cdot(\boldsymbol{x}-\boldsymbol{\phi}_{\text{OP}}(t-T;\boldsymbol{x}_T))\right\},
				\end{aligned}
			\end{equation*}
			in which $\bar{\boldsymbol{\kappa}}(t)$ satisfies the Lyapunov matrix differential equation
			\begin{equation*}
				\dot{\bar{\boldsymbol{\kappa}}}(t)=\nabla_{\boldsymbol{x}}\boldsymbol{G}(\hat{\boldsymbol{x}}_{\infty}(t))\cdot\bar{\boldsymbol{\kappa}}(t)+\bar{\boldsymbol{\kappa}}(t)\cdot\nabla_{\boldsymbol{x}}\boldsymbol{G}^{\top}(\hat{\boldsymbol{x}}_{\infty}(t))+\boldsymbol{J}(\hat{\boldsymbol{x}}_{\infty}(t)),
			\end{equation*}
			for $t\in[0,T]$, with the initial condition $\dot{\bar{\boldsymbol{\kappa}}}(0)=\boldsymbol{0}$. 
		\end{itemize}
		This restores the complete prehistorical description of optimal fluctuations in the stationary setting.
	\end{corollary}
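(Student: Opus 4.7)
The plan is to deduce the corollary directly from Propositions \ref{theo0501} and \ref{theo0502} by exploiting the two relations established just before Proposition \ref{theo0501}, namely $P(\hat{\boldsymbol{x}}_V(t) = \boldsymbol{x}) = \bar{q}^{\text{SPP}}_V(\boldsymbol{x}, t; \boldsymbol{x}^{**}(\boldsymbol{x}_T, V), T)$, together with the time-reversal identity $q^{\text{SPP}}_V(\boldsymbol{x}, t; \cdot, T) = \bar{q}^{\text{SPP}}_V(\boldsymbol{x}, T-t; \cdot, T)$ from Eq.~(\ref{eq030402}). Composed, these give $q^{\text{SPP}}_V(\boldsymbol{x}, t; \boldsymbol{x}^{**}(\boldsymbol{x}_T, V), T) = P(\hat{\boldsymbol{x}}_V(T-t) = \boldsymbol{x})$, so every asymptotic statement about $\hat{\boldsymbol{x}}_V$ at time $T-t$ transfers to $q^{\text{SPP}}_V$ at time $t$ under the substitution $s = T-t$. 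Since Propositions \ref{theo0501} and \ref{theo0502} hold for any $T^* \in (0, \infty)$, one may freely take $T^* = T$.

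For part (a), applying Proposition \ref{theo0501} with $T^*=T$ gives $\sup_{s \in [0,T]} |\hat{\boldsymbol{x}}_V(s) - \hat{\boldsymbol{x}}_\infty(s)| \to 0$ in probability, with $\hat{\boldsymbol{x}}_\infty(s) = \boldsymbol{\phi}_{\text{OP}}(-s; \boldsymbol{x}_T)$. Taking $s = T-t$ localizes $q^{\text{SPP}}_V(\cdot, t; \cdot, T)$ on $\hat{\boldsymbol{x}}_\infty(T-t) = \boldsymbol{\phi}_{\text{OP}}(t-T; \boldsymbol{x}_T)$, uniformly in $t \in [0, T]$, which is exactly claim (a). For part (b), I would invoke Proposition \ref{theo0502}. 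Since $\boldsymbol{\mu}_\infty(s)$ is a centered Gaussian process, its characteristic function has the form $\Phi(\boldsymbol{\theta}, s) = \exp(-\tfrac{1}{2} \boldsymbol{\theta}^\top \bar{\boldsymbol{\kappa}}(s) \boldsymbol{\theta})$. Substituting this ansatz into Eq.~(\ref{eq0506}), using $\partial_t \Phi = -\tfrac{1}{2} (\boldsymbol{\theta}^\top \dot{\bar{\boldsymbol{\kappa}}}(s) \boldsymbol{\theta}) \Phi$ and $\nabla_{\boldsymbol{\theta}} \Phi = -\bar{\boldsymbol{\kappa}}(s) \boldsymbol{\theta} \Phi$, and then symmetrizing in $\boldsymbol{\theta}$, yields the stated Lyapunov equation for $\bar{\boldsymbol{\kappa}}(s)$, with $\bar{\boldsymbol{\kappa}}(0) = \boldsymbol{0}$ coming from $\boldsymbol{\mu}_\infty(0) = 0$. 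The local Gaussian density for $\hat{\boldsymbol{x}}_V(s)$ near $\hat{\boldsymbol{x}}_\infty(s)$, after substituting $s = T-t$, then gives the claimed form of $q^{\text{SPP}}_V$ with covariance $V^{-1}\bar{\boldsymbol{\kappa}}(T-t)$.

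The main obstacle is the passage from the distributional convergence of Proposition \ref{theo0502} to the pointwise local Gaussian form in part (b): weak convergence of $\sqrt{V}(\hat{\boldsymbol{x}}_V - \hat{\boldsymbol{x}}_\infty)$ to a Gaussian process does not, strictly speaking, imply a pointwise approximation of the discrete probabilities by a Gaussian envelope. The gap is bridged using Lemma \ref{theo0210}: the pre-factor $K(\boldsymbol{x})$ is continuous and strictly positive in a tube around the optimal path, so the exponential order of $q^{\text{SPP}}_V$ is controlled by the quadratic form coming from the CLT, justifying the ``$\simeq$'' at that logarithmic accuracy. Any sharper pointwise statement (e.g.\ convergence of the density ratio to $1$) would require a local-CLT-style refinement of Proposition \ref{theo0502}, which is not pursued here.
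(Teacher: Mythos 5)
Your proposal is correct and follows essentially the same route the paper intends: the paper states this corollary as an immediate consequence of setting $T^*=T$ in Propositions \ref{theo0501} and \ref{theo0502} and transferring the results through the identity $P(\hat{\boldsymbol{x}}_{V}(t)=\boldsymbol{x})=\bar{q}^{\text{SPP}}_{V}(\boldsymbol{x},t;\boldsymbol{x}^{**}(\boldsymbol{x}_T,V),T)$ and the time-reversal substitution $s=T-t$, exactly as you do, and your derivation of the Lyapunov equation from the characteristic-function PDE (\ref{eq0506}) is the intended reading of part (b). Your remark that the ``$\simeq$'' in part (b) is only a CLT-level (not local-CLT) approximation accurately reflects the level of rigor the paper itself adopts here.
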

	\begin{remark}
		In the vicinity of the equilibrium $\boldsymbol{x}_{\text{eq}}$, $S(\boldsymbol{x})$ can be locally approximated by the quadratic form $S(\boldsymbol{x})\simeq 1/2(\boldsymbol{x}-\boldsymbol{x}_{\text{eq}})\cdot\boldsymbol{\iota}\cdot(\boldsymbol{x}-\boldsymbol{x}_{\text{eq}})$, where the symmetric matrix $\boldsymbol{\iota}$ satisfies the following Riccati algebraic equation 
		\begin{equation*}
			\nabla_{\boldsymbol{x}}\boldsymbol{F}^{\top}(\boldsymbol{x}_{\text{eq}})\cdot\boldsymbol{\iota}+\boldsymbol{\iota}\cdot\nabla_{\boldsymbol{x}}\boldsymbol{F}(\boldsymbol{x}_{\text{eq}})+\boldsymbol{\iota}\cdot\boldsymbol{J}(\boldsymbol{x}_{\text{eq}})\cdot\boldsymbol{\iota}=0.
		\end{equation*}
		(See also the fluctuation-dissipation theorem for stochastic chemical reaction models for details \cite{Hao_Ge_2017}.)
		
		Combined with the fact $\lim_{t\to\infty}\hat{\boldsymbol{x}}_{\infty}(t)=\boldsymbol{x}_{\text{eq}}$, we know
		\begin{equation*}
			\lim_{t\to\infty}\bar{\boldsymbol{\kappa}}(t)=\bar{\boldsymbol{\kappa}}_{\infty}=\boldsymbol{\iota}^{-1},
		\end{equation*}
		where $\bar{\boldsymbol{\kappa}}_{\infty}$ obeys the Lyapunov algebraic equation
		\begin{equation*}
			\nabla_{\boldsymbol{x}}\boldsymbol{F}(\boldsymbol{x}_{\text{eq}})\cdot\bar{\boldsymbol{\kappa}}_{\infty}+\bar{\boldsymbol{\kappa}}_{\infty}\cdot\nabla_{\boldsymbol{x}}\boldsymbol{F}^{\top}(\boldsymbol{x}_{\text{eq}})+\boldsymbol{J}(\boldsymbol{x}_{\text{eq}})=0.
		\end{equation*}
	\end{remark}
	\begin{remark}
		One can also use the techniques outlined in chapters 4 and 6 of [\onlinecite{Freidlin_2012}] to generalize the results of this section to cases where the corresponding deterministic system possesses either other types of attractive invariant sets (such as limit cycles), or multiple coexisting attractors.
	\end{remark}
	\section{Numerical Examples}\label{sec06}
	\subsection{A Chemical Monostable System}
	We consider the case of a reaction $A\ce{<=>[\mathit{r_{+}}][\mathit{r_{-}}]}S$ in which the concentration of $A$ is constant. In Delbr\"uck-Gillespie's exposition on the subject of chemical kinetics,
	\begin{equation*}
		r_{+}(n,V)=k_{+}n_{A},\quad r_{-}(n,V)=k_{-}n,\quad \nu=1,
	\end{equation*}
	with
	\begin{equation*}
		R_{+}(x)=k_{+}a,\quad R_{-}(x)=k_{-}x,
	\end{equation*}
	where $n_A$ and $a$ represent the number of molecules and the concentration of $A$, respectively. For simplicity, the parameter is set to $k_{+}a=1$ and $k_{-}=1$. The deterministic equation (\ref{eq020102}) yields a unique stable fixed point $x_{eq}=1$. Now we utilize the algorithms presented in Appendix \ref{secA6} to show how the non-stationary and stationary prehistory probabilities can be employed to approximate the NOP and the OP, respectively.
	
	\begin{figure*}
		\centering
		\subfigure{\begin{minipage}[b]{.3\linewidth}
				
				\includegraphics[scale=0.2]{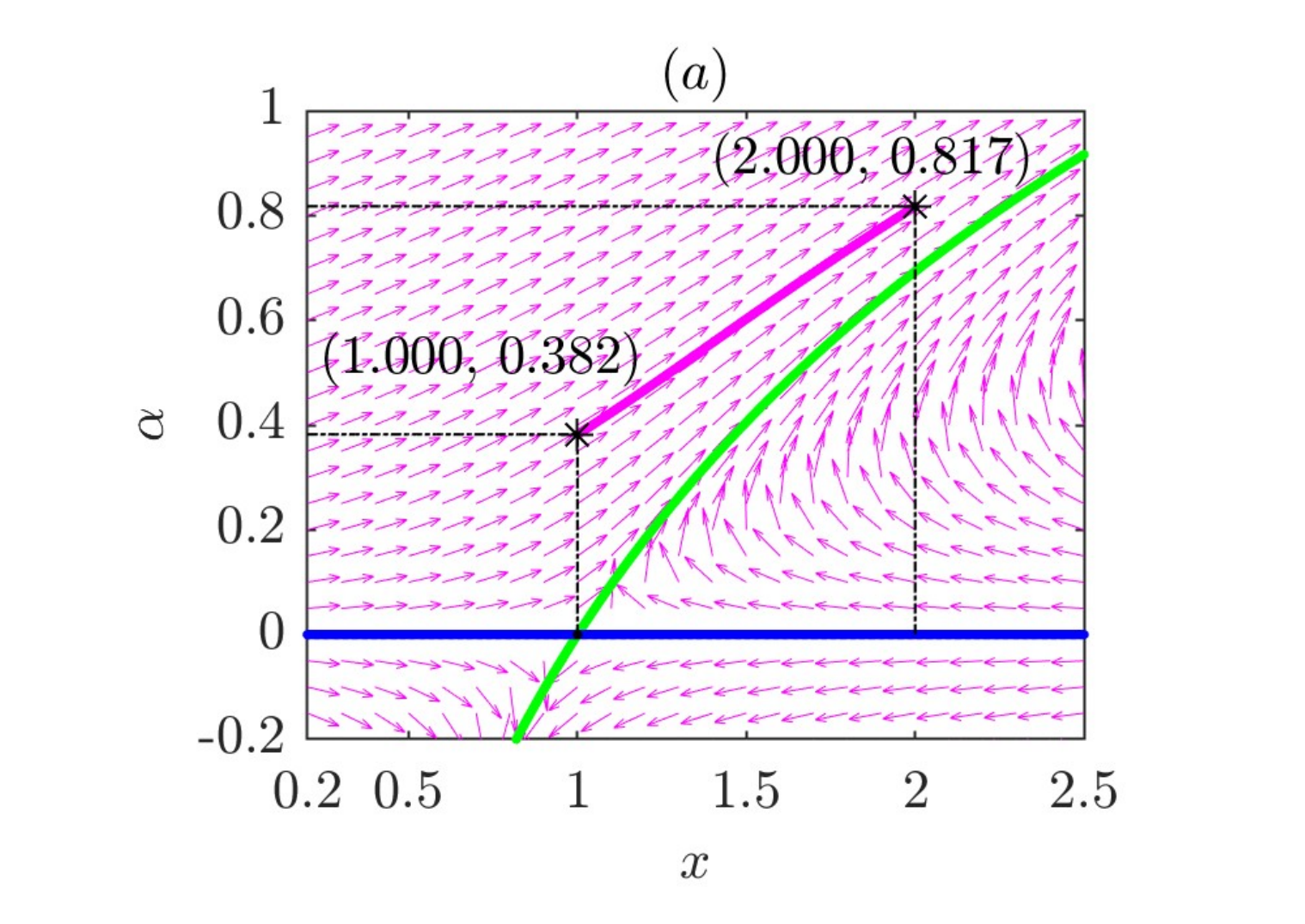}
			\end{minipage}
			\label{fig01a}
		}
		\subfigure{\begin{minipage}[b]{.3\linewidth}
				\includegraphics[scale=0.2]{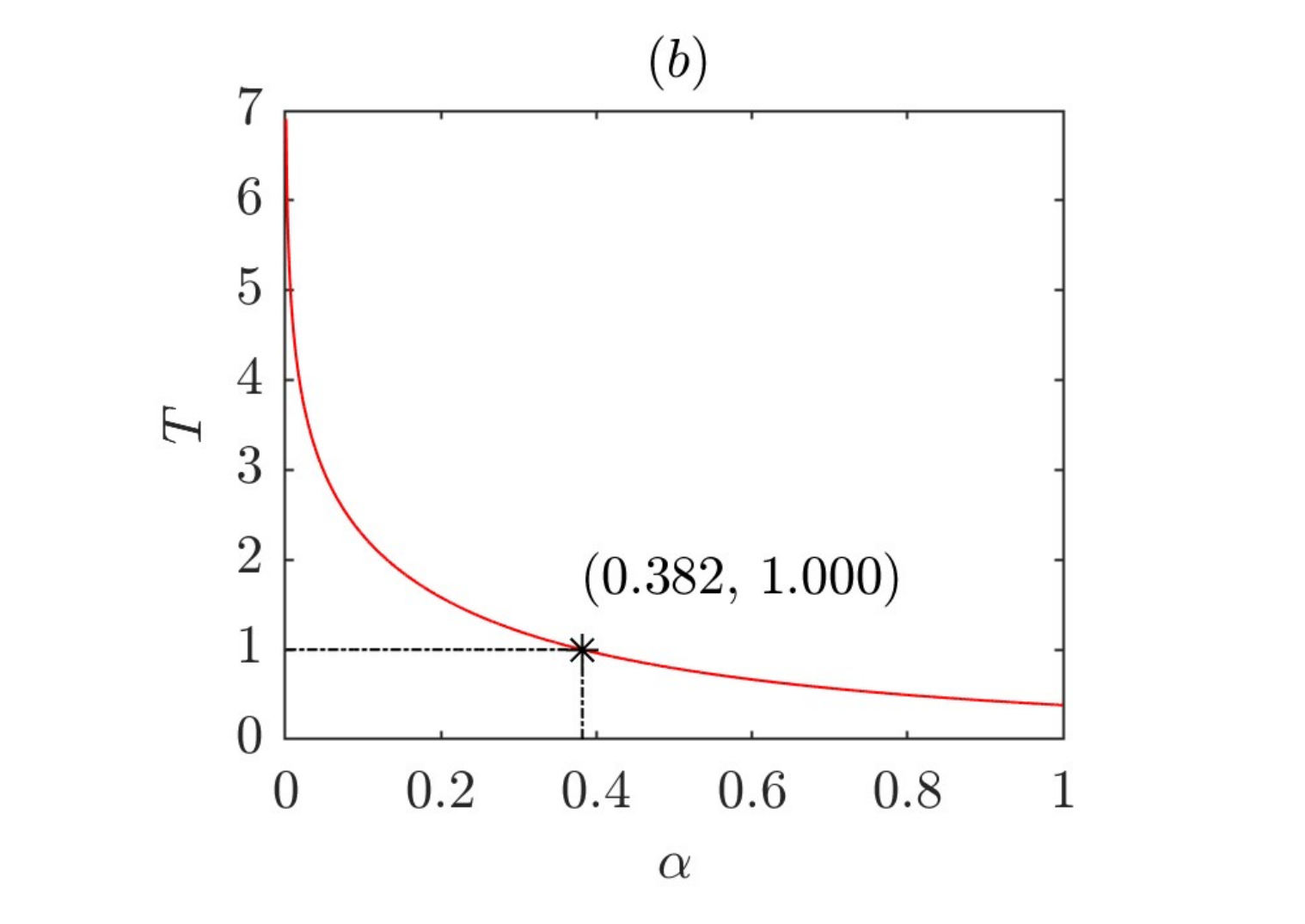}
			\end{minipage}
			\label{fig01b}
		}
		\caption{(a) The Hamiltonian vector field, the stable (blue) and unstable (green) manifolds of the fixed point $(1,0)$. The unique (magenta) solution to the constrained Hamiltonian problem with $x_0=1$, $x_T=2$, $T=1$. (b) The profiles of $T(\alpha)$ versus $\alpha$ for $x_0=1$, $x_T=2$. The monotonicity indicates that each $T$ corresponds to a unique NOP.}
		\label{fig01}
	\end{figure*}
	\begin{figure*}
		\centering
		\subfigure{\begin{minipage}[b]{.3\linewidth}
				\centering
				\includegraphics[scale=0.2]{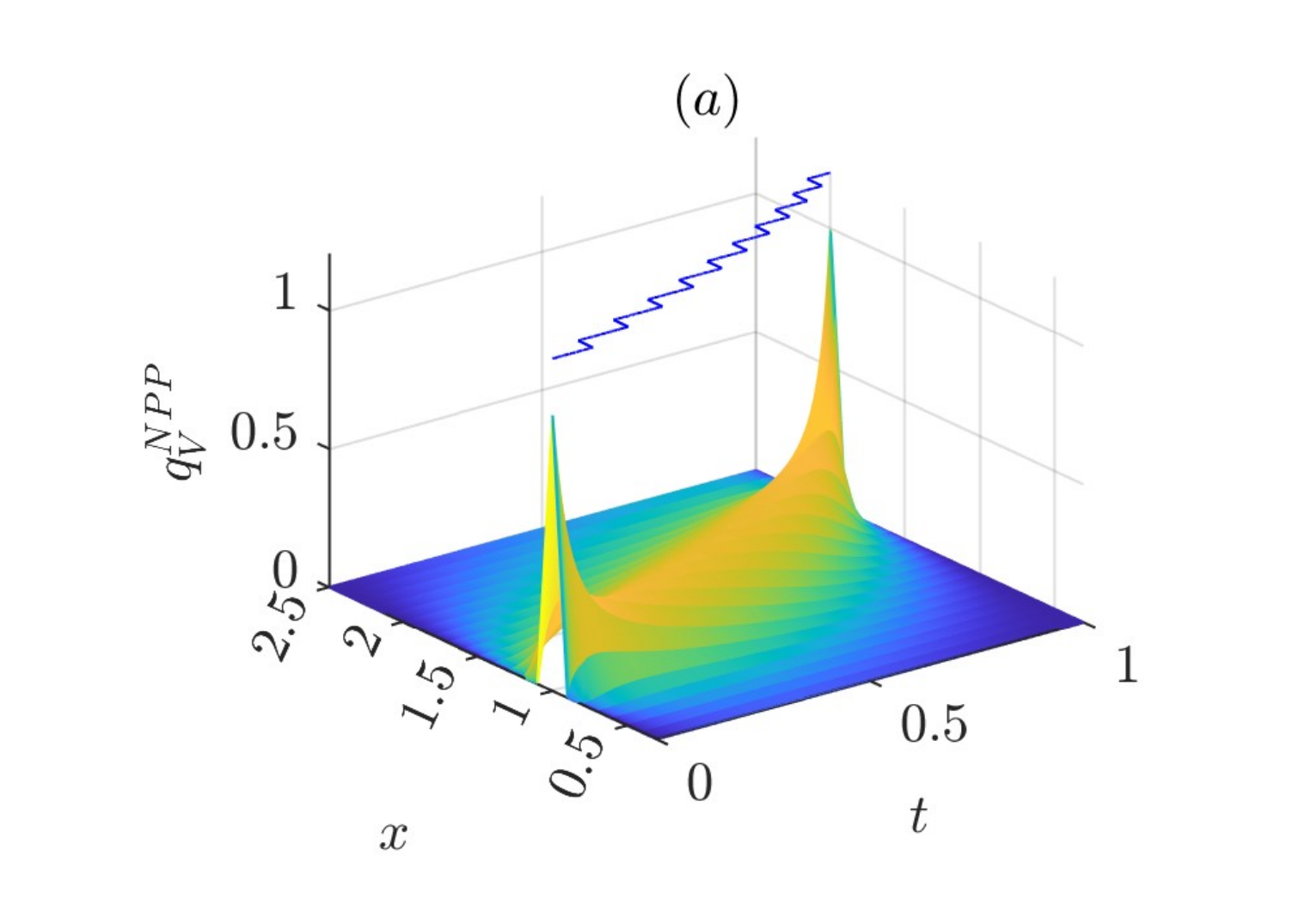}
			\end{minipage}
			\label{fig02a}
		}
		\subfigure{\begin{minipage}[b]{.3\linewidth}
				\centering
				\includegraphics[scale=0.2]{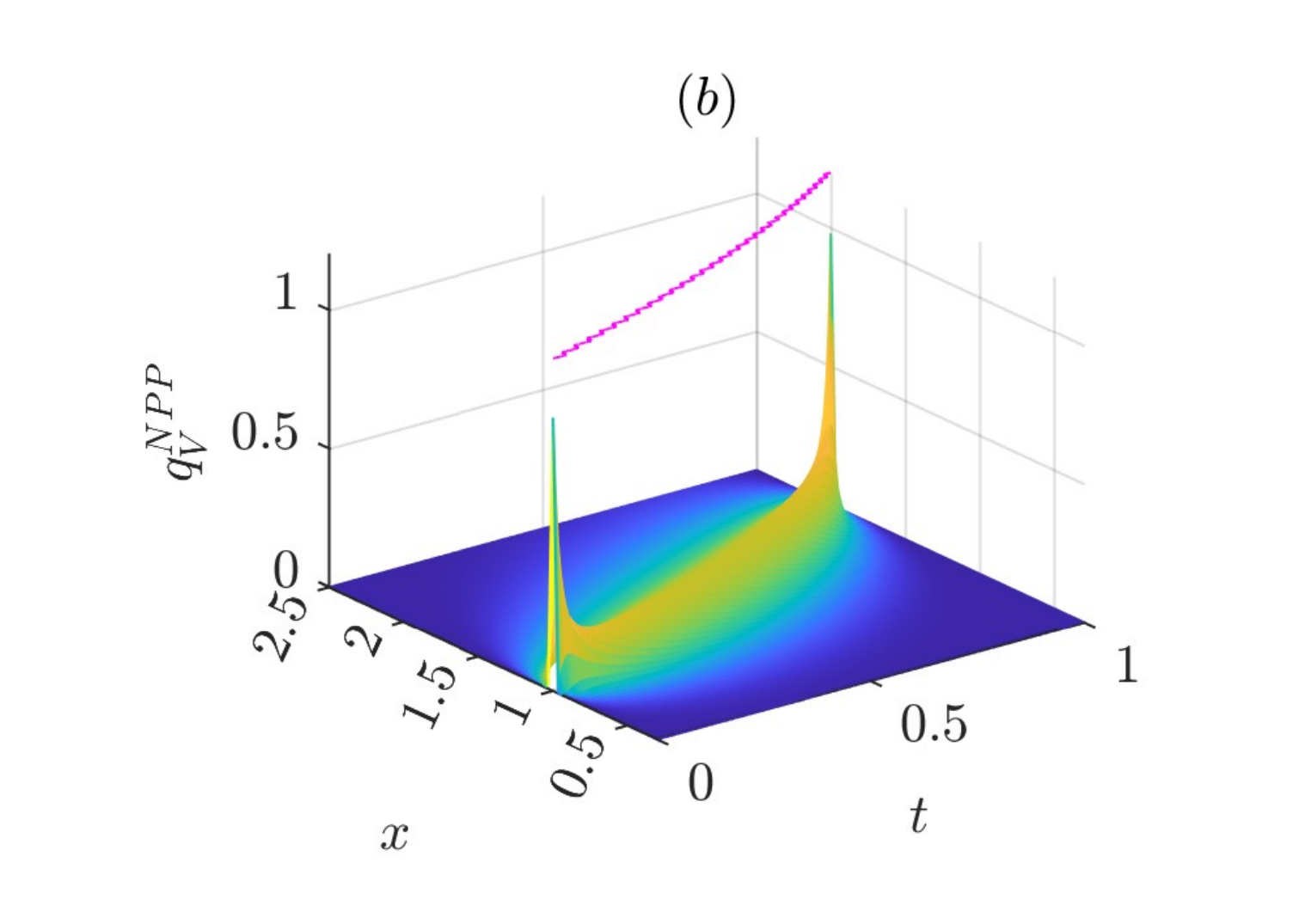}
			\end{minipage}
			\label{fig02b}
		}
		
		\subfigure{\begin{minipage}[b]{.3\linewidth}
				\centering
				\includegraphics[scale=0.2]{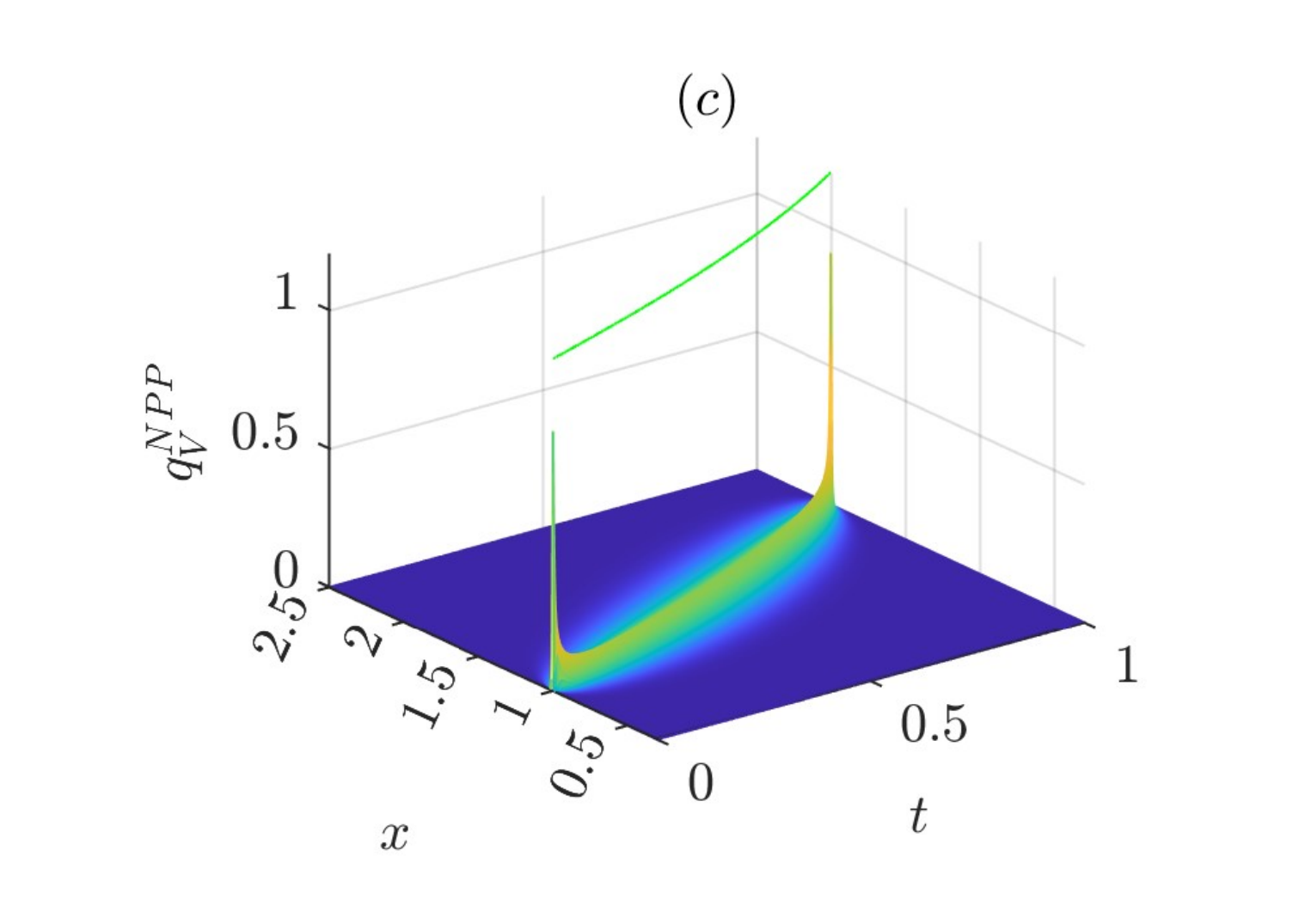}
			\end{minipage}
			\label{fig02c}
		}
		\subfigure{\begin{minipage}[b]{.3\linewidth}
				\centering
				\includegraphics[scale=0.2]{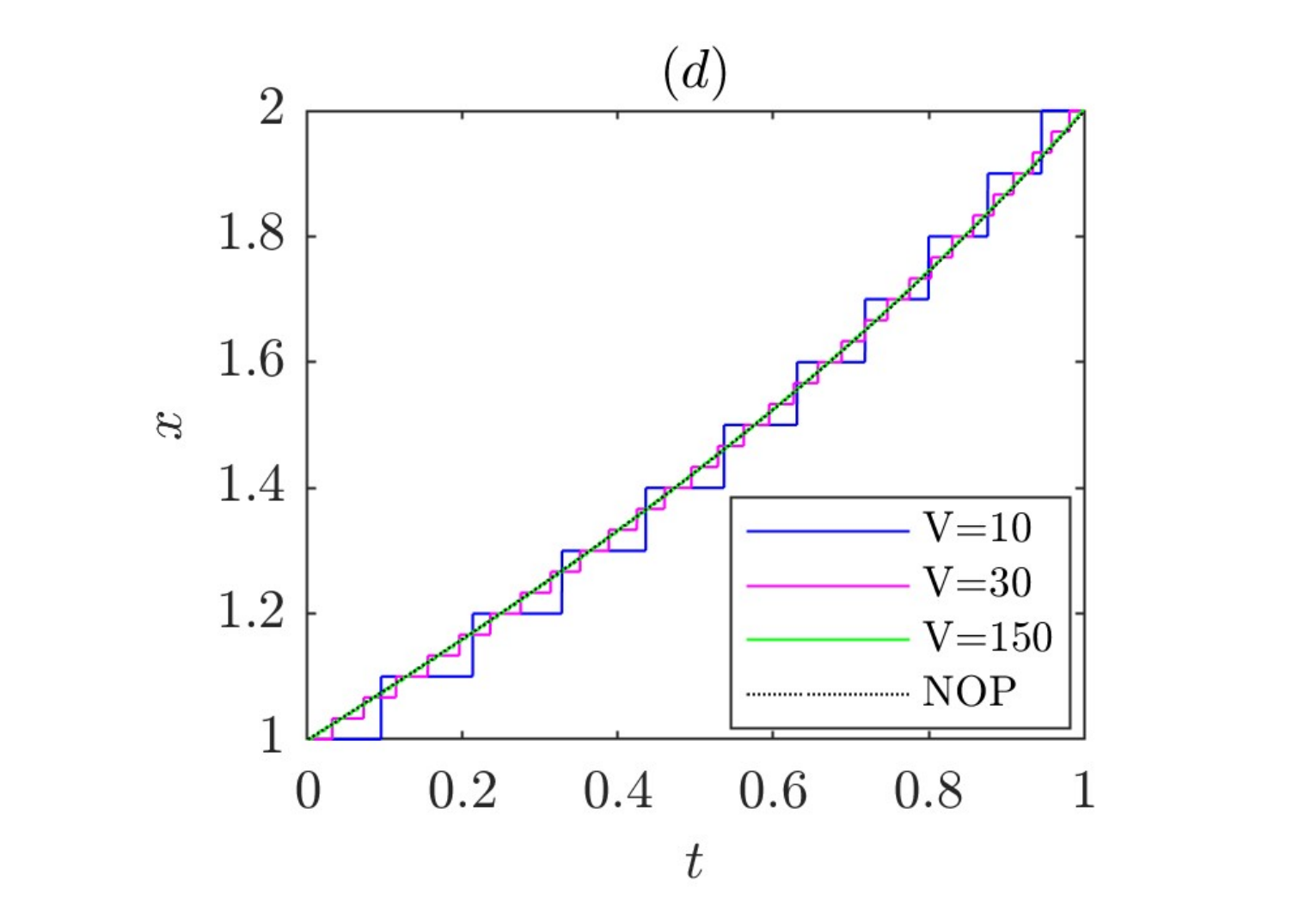}
			\end{minipage}
			\label{fig02d}
		}
		\caption{The non-stationary prehistory probabilities and their peak trajectories for (a) $V=10$, (b) $V=30$, (c) $V=150$. (d) The convergence of these peak trajectories to the NOP. The parameters are set as $x_0=1$, $x_T=2$, and $T=1$.}
		\label{fig02}
	\end{figure*}
	\begin{figure*}
		\centering
		\subfigure{\begin{minipage}[b]{.3\linewidth}
				\centering
				\includegraphics[scale=0.2]{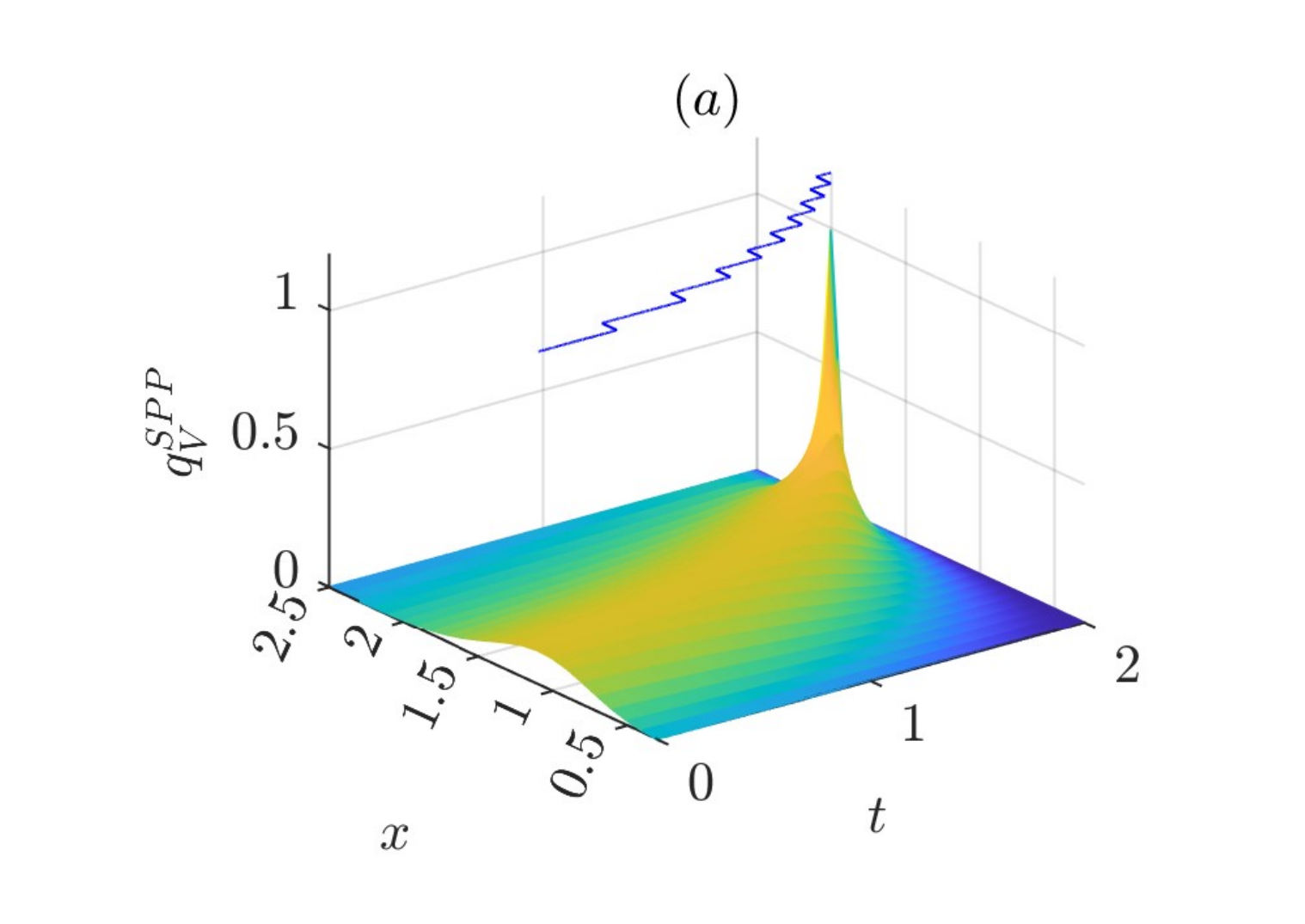}
			\end{minipage}
			\label{fig03a}
		}
		\subfigure{\begin{minipage}[b]{.3\linewidth}
				\centering
				\includegraphics[scale=0.2]{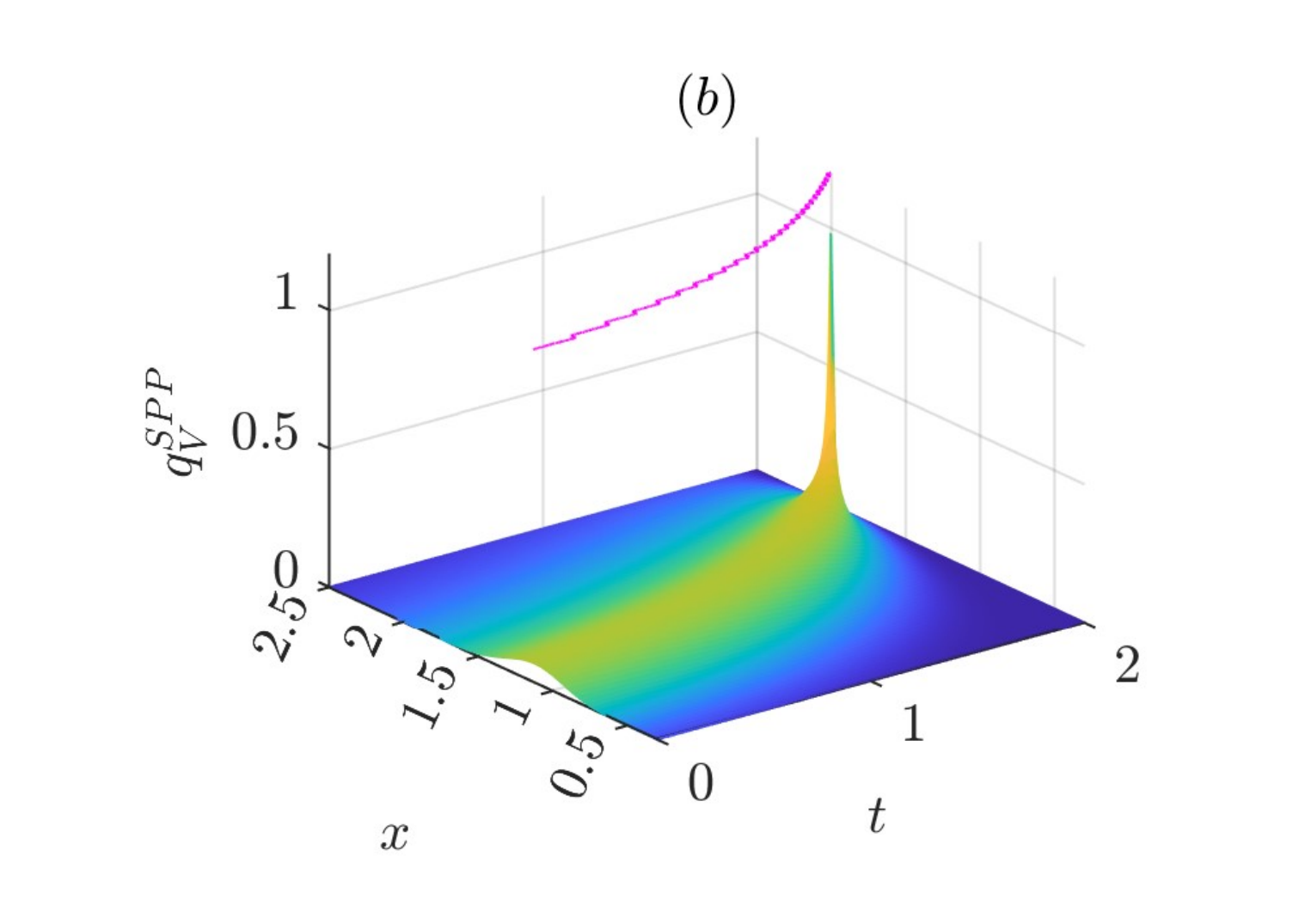}
			\end{minipage}
			\label{fig03b}
		}
		
		\subfigure{\begin{minipage}[b]{.3\linewidth}
				\centering
				\includegraphics[scale=0.2]{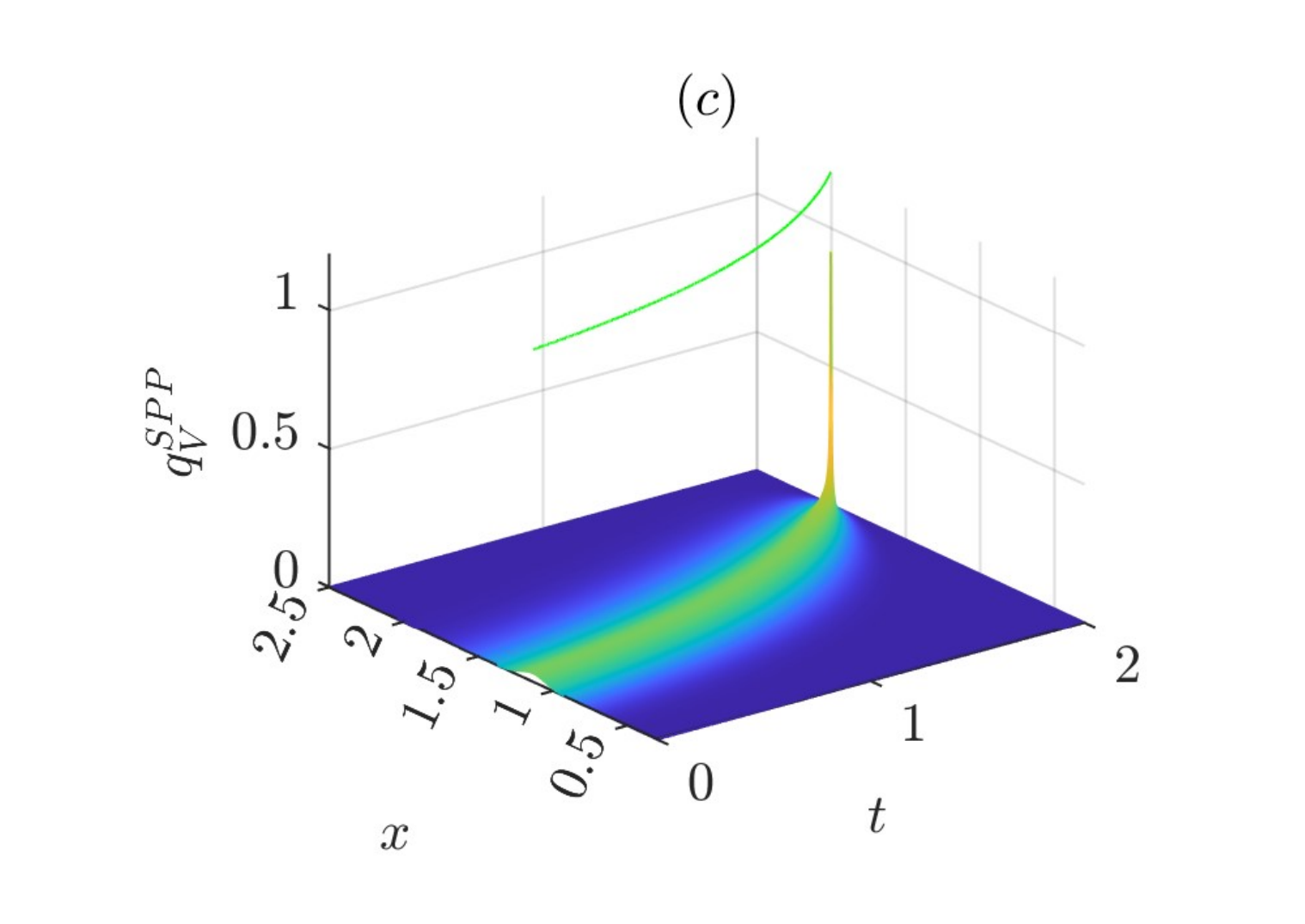}
			\end{minipage}
			\label{fig03c}
		}
		\subfigure{\begin{minipage}[b]{.3\linewidth}
				\centering
				\includegraphics[scale=0.2]{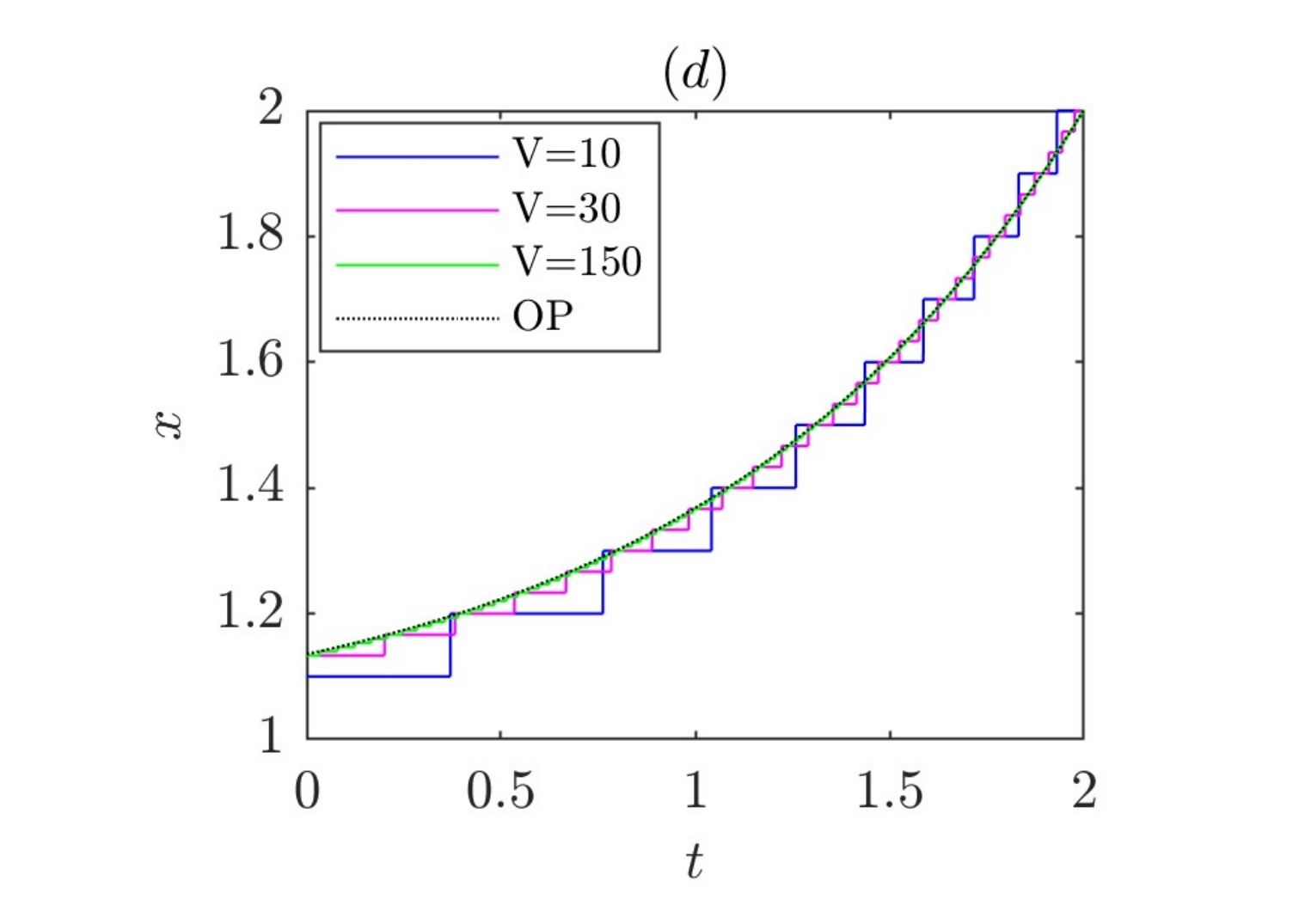}
			\end{minipage}
			\label{fig03d}
		}
		\caption{The stationary prehistory probabilities and their peak trajectories for (a) $V=10$, (b) $V=30$, (c) $V=150$. (d) The convergence of these peak trajectories to the segment $\{\phi_{\text{OP}}(t-T;x_T):t\in[0,T]\}$ of the OP. The parameters are set as $x_T=2$, and $T=2$.}
		\label{fig03}
	\end{figure*}
	
	The Hamiltonian vector field (\ref{eq020303}) is plotted in Fig. \ref{fig01a}. Let $x_0=1$, $x_T=2$. As illustrated in Fig. \ref{fig01b}, the numerical results indicate that for each $T>0$ there is a unique initial momentum $\alpha$ corresponding to a unique solution to the constrained Hamiltonian problem (\ref{eq020303}). In the case of $T=1$, the value of $\alpha$ is $0.382$. The associated solution is displayed in the phase space (Fig. \ref{fig01}) by the magenta solid line, whose projection is exactly the NOP and is exhibited in Fig. \ref{fig02d} by the black dashed line. The non-stationary prehistory probabilities and their peak trajectories for $V=10,\,30,\,150$ are illustrated in Figs. \ref{fig02a}, \ref{fig02b}, \ref{fig02c}, respectively. By comparing these results with the NOP, one can observe a clear focusing effect of the non-stationary prehistory probability on the NOP as $V\to\infty$.
	
	The stationary distribution is given by
	\begin{equation*}
		\pi_V\left(\frac{n}{V}\right)=\exp(-Vk_{+}a/k_{-})\frac{(Vk_{+}a/k_{-})^n}{n!}=e^{-V}\frac{V^n}{n!}.
	\end{equation*}
	Utilizing the Stirling's formula, it can be deduced that as $V\to\infty$,
	\begin{equation*}
		\pi_V(x)\simeq(2\pi{x}V)^{\frac{1}{2V}}e^{-VS(x)},
	\end{equation*}
	where
	\begin{equation*}
		S(x)=\int_{x_{eq}}^{x}\ln\left(\frac{R_{-}(u)}{R_{+}(u)}\right)\mathrm{d}u=x\ln(x)-x+1.
	\end{equation*}
	For each $x_T\in\mathbb{R}_{+}$, the OP connecting $x_{eq}$ with $x_T$ is the unique solution of Eq. (\ref{eq020407}), and is given by
	\begin{equation*}
		\phi_{\text{OP}}(t;x_T)=(x_T-1)e^t+1, \quad t\in(-\infty,0].
	\end{equation*}
	Let $x_T=2$ and $T=2$. The stationary prehistory probabilities and their peak trajectories for $V=10,\,30,\,150$ are presented in Fig. \ref{fig03}, indicating a clear focusing effect of the stationary prehistory probability on the segment $\{\phi_{\text{OP}}(t-T;x_T):t\in[0,T]\}$ of the OP as $V\to\infty$.
	
	\begin{figure*}
		\centering
		\subfigure{\begin{minipage}[b]{.3\linewidth}
				\centering
				\includegraphics[scale=0.2]{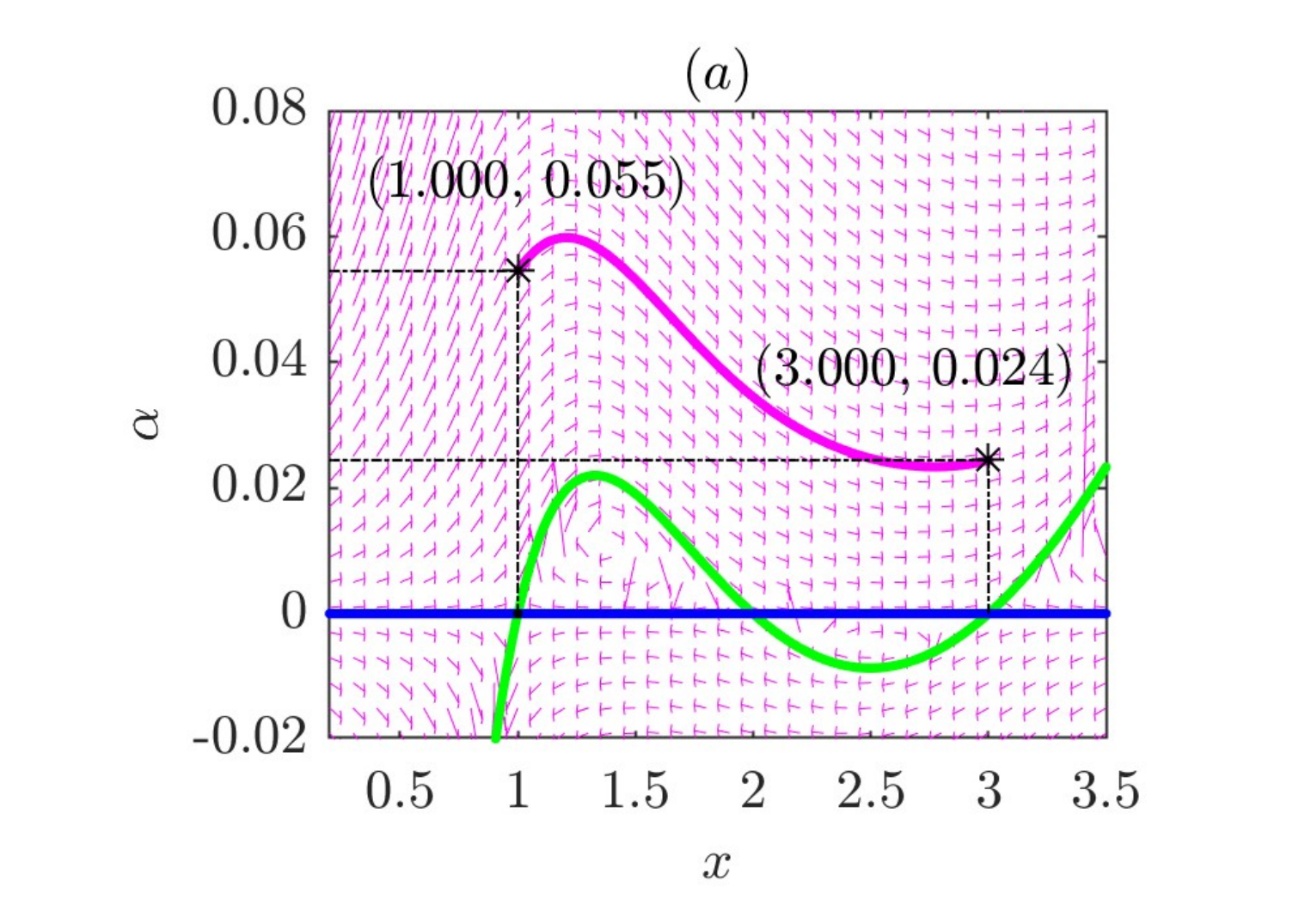}
			\end{minipage}
			\label{fig04a}
		}
		\subfigure{\begin{minipage}[b]{.3\linewidth}
				\centering
				\includegraphics[scale=0.2]{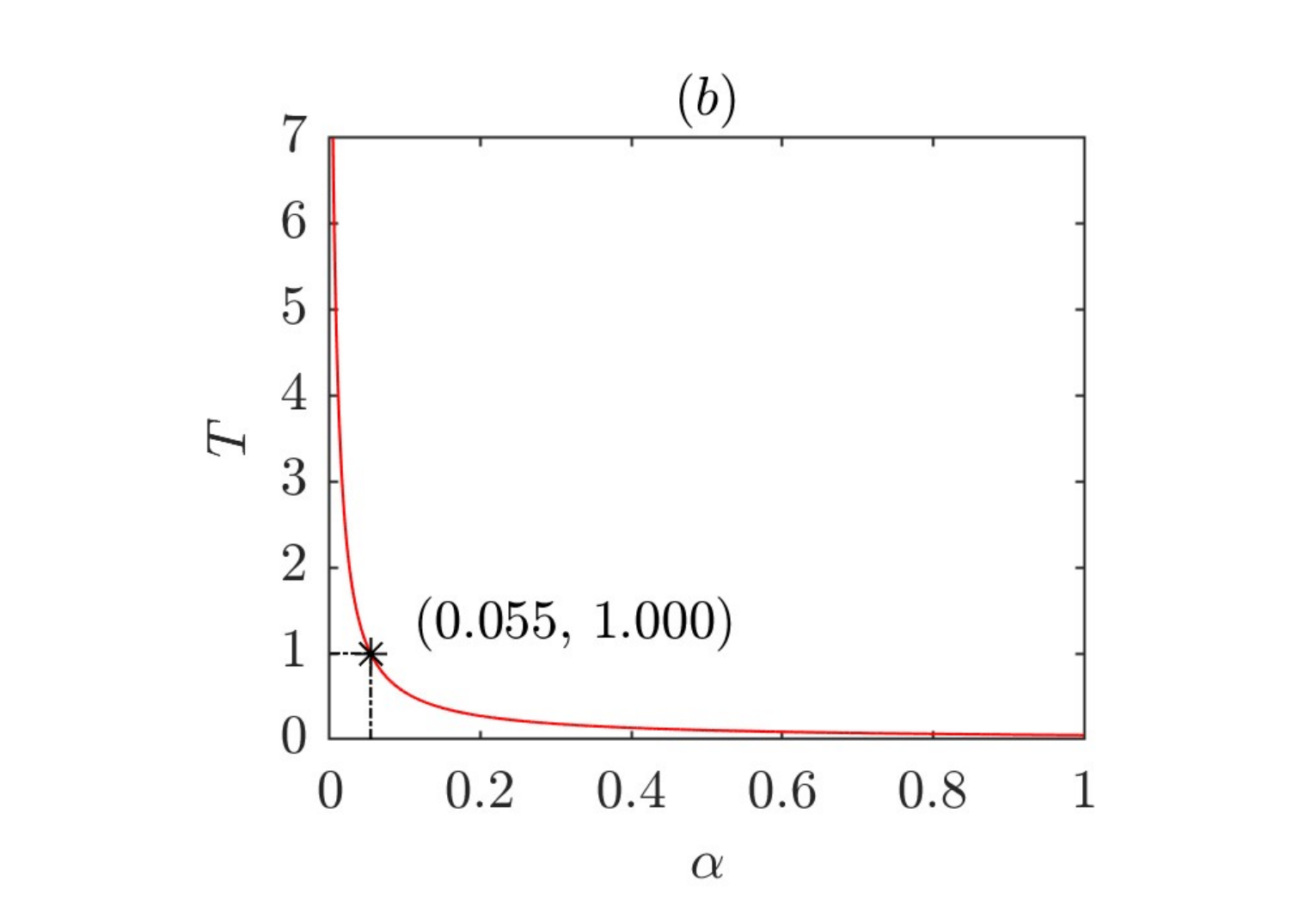}
			\end{minipage}
			\label{fig04b}
		}
		\caption{(a) The Hamiltonian vector field, the invariant (blue and green) manifolds and the unique (magenta) solution to the constrained Hamiltonian problem with $x_0=1$, $x_T=3$, $T=1$. (b) The profiles of $T(\alpha)$ versus $\alpha$ for $x_0=1$, $x_T=3$. The monotonicity indicates that each $T$ corresponds to a unique NOP.}
		\label{fig04}
	\end{figure*}
	
	\begin{figure*}
		\centering
		\subfigure{\begin{minipage}[b]{.3\linewidth}
				\centering
				\includegraphics[scale=0.2]{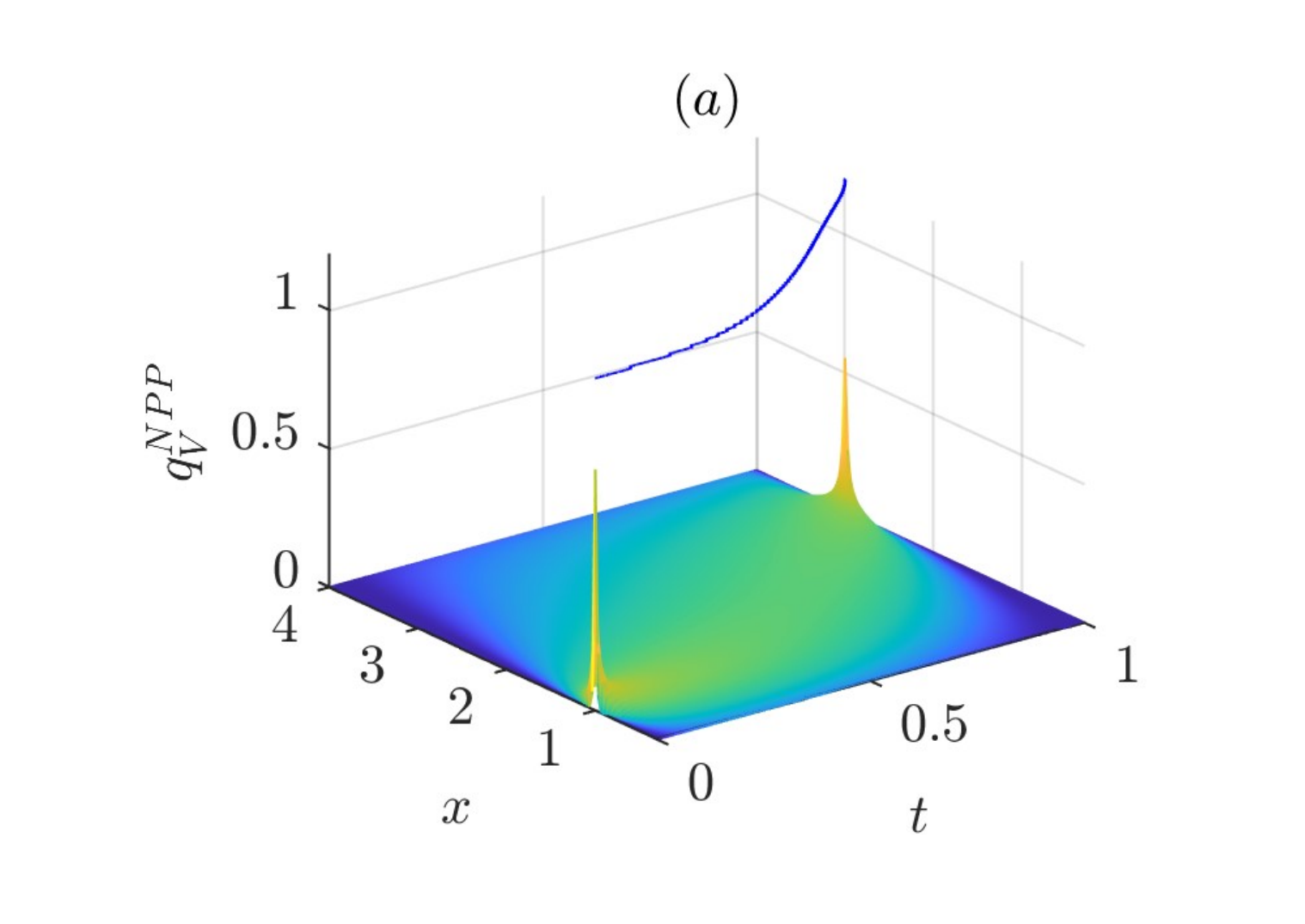}
			\end{minipage}
			\label{fig05a}
		}
		\subfigure{\begin{minipage}[b]{.3\linewidth}
				\centering
				\includegraphics[scale=0.2]{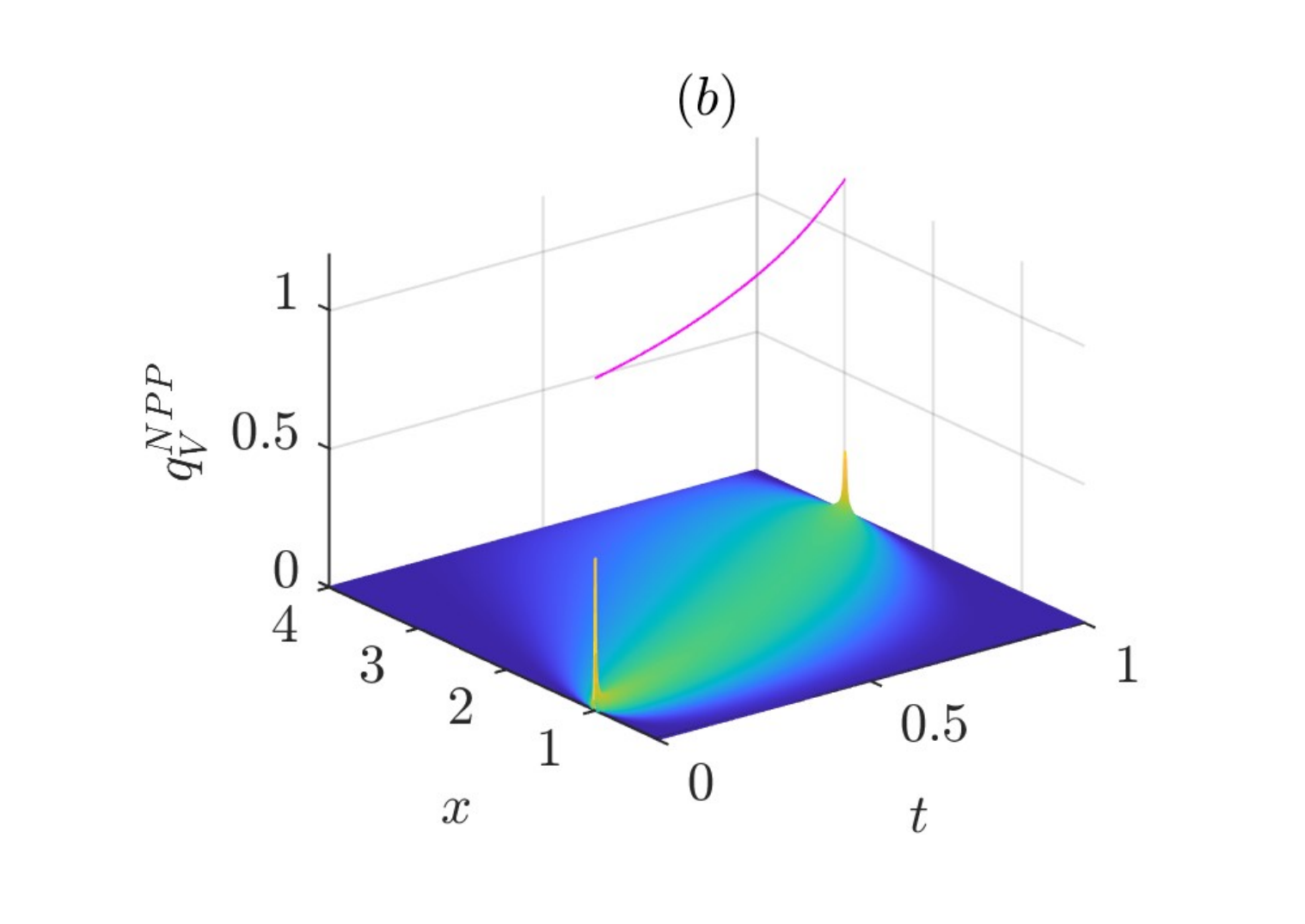}
			\end{minipage}
			\label{fig05b}
		}
		
		\subfigure{\begin{minipage}[b]{.3\linewidth}
				\centering
				\includegraphics[scale=0.2]{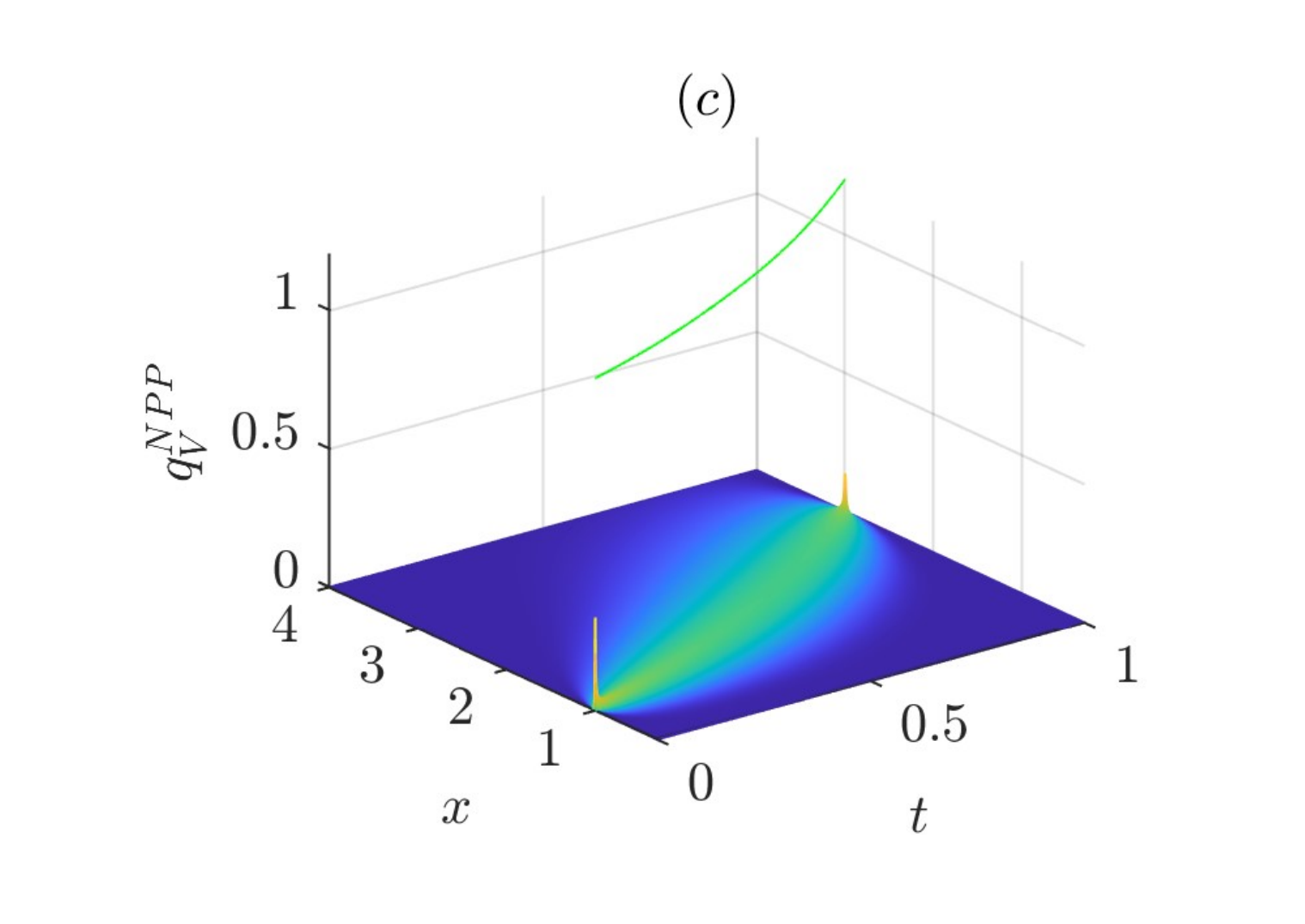}
			\end{minipage}
			\label{fig05c}
		}
		\subfigure{\begin{minipage}[b]{.3\linewidth}
				\centering
				\includegraphics[scale=0.2]{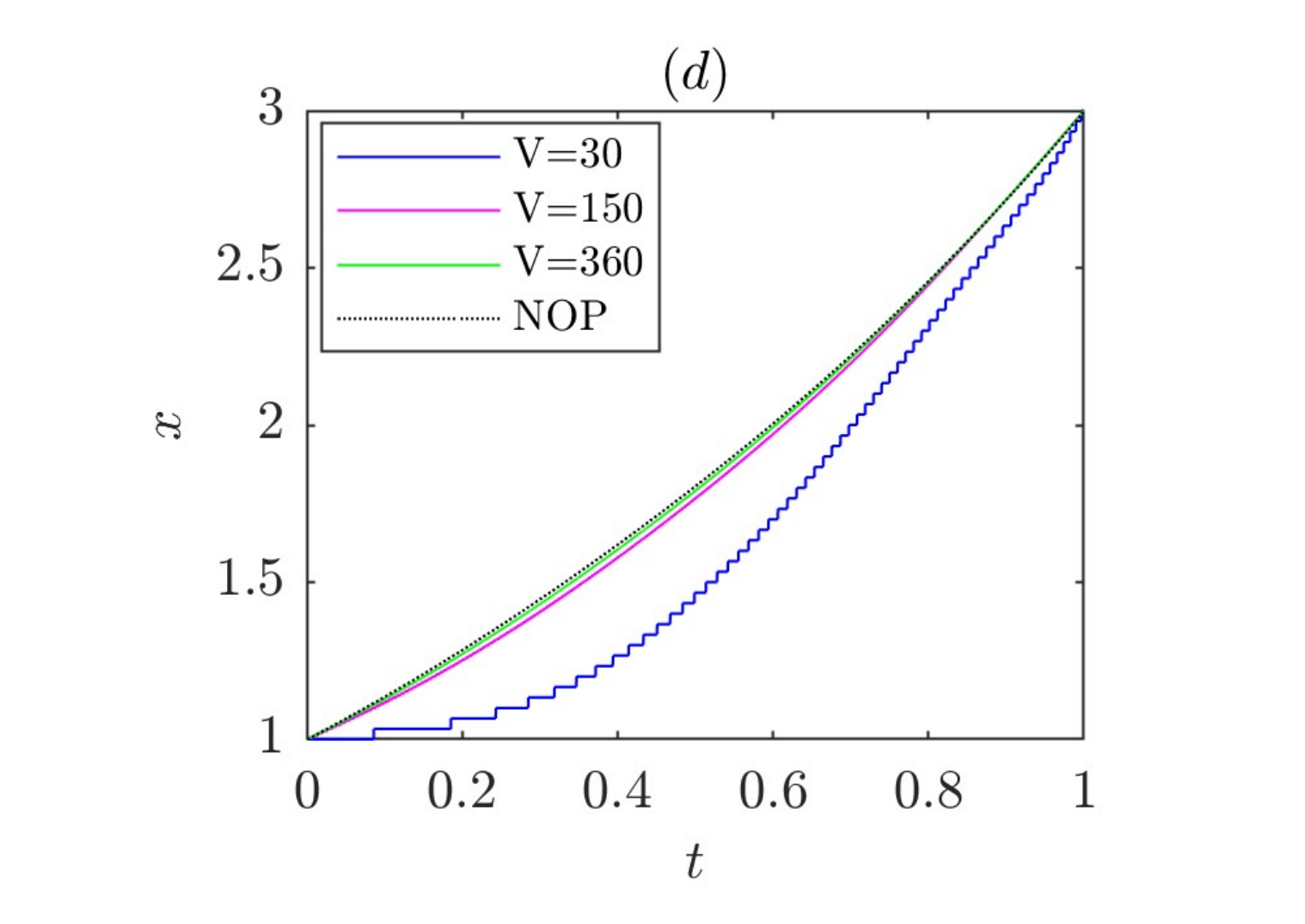}
			\end{minipage}
			\label{fig05d}
		}
		\caption{The non-stationary prehistory probabilities and their peak trajectories for (a) $V=30$, (b) $V=150$, (c) $V=360$. (d) The convergence of these peak trajectories to the NOP. The parameters are set as $x_0=1$, $x_T=3$, and $T=1$.}
		\label{fig05}
	\end{figure*}
	\subsection{A chemical Bistable System}
	We consider the system
	\begin{equation*}
			A+2S\ce{<=>[\mathit{r_{+1}}][\mathit{r_{-1}}]}3S,\quad	A\ce{<=>[\mathit{r_{+2}}][\mathit{r_{-2}}]}S.
	\end{equation*}
	The concentration of $A$ is fixed so that
	\begin{equation*}
		\begin{aligned}
			r_{+1}(n,V)=k_{+1}n_{A}n(n-1)/V^2,\quad &r_{-1}(n,V)=k_{-1}n(n-1)(n-2)/V^2,\quad \nu_1=1,\\
			r_{+2}(n,V)=k_{+2}n_{A},\quad &r_{-2}(n,V)=k_{-2}n, \quad \nu_2=1,
		\end{aligned}
	\end{equation*}
	with 
	\begin{equation*}
		\begin{aligned}
			R_{+1}(x)=k_{+1}ax^2,\quad R_{-1}(x)=k_{-1}x^3,\\
			R_{+2}(x)=k_{+2}a,\quad R_{-2}(x)=k_{-2}x.
		\end{aligned}
	\end{equation*}
	It is important to note that the sum of independent Poisson processes is also a Poisson process. In this particular instance, Eq. (\ref{eq010102}) can be rewritten as
	\begin{equation*}
		{x}_{V}(t)={x}_{V}(0)
		+V^{-1}\left(Y_{+}\left(\int_{0}^{t}{r_{+}(V{x}_{V}(s),V)\mathrm{d}s}\right)-Y_{-}\left(\int_{0}^{t}{r_{-}(V{x}_{V}(s),V)\mathrm{d}s}\right)\right),
	\end{equation*}
	with 
	\begin{equation*}
		\begin{aligned}
			r_{+}(n,V)=r_{+1}(n,V)+r_{+2}(n,V),\quad &r_{-}(n,V)=r_{-1}(n,V)+r_{-2}(n,V),\\
			R_{+}(x)=R_{+1}(x)+R_{+2}(x),\quad &R_{-}(x)=R_{-1}(x)+R_{-2}(x).
		\end{aligned}	
	\end{equation*}
	For simplicity, we assume that $k_{+1}a=6$, $k_{+2}a=6$, $k_{-1}=1$, $k_{-2}=11$. The deterministic system (\ref{eq020102}) has two stable equilibria $x^{sl}_{eq}=1$ and $x^{sr}_{eq}=3$, separated by an unstable equilibrium $x^{u}_{eq}=2$. Let $x_0=1$ and $x_T=2$. As illustrated in Fig. \ref{fig04}, each $T$ corresponds to a unique NOP. In this case, we set $T=1$. Fig. \ref{fig05} demonstrates the analogous focusing effect of the non-stationary prehistory probability on the NOP as $V\to\infty$.
	
	\section{Conclusions}\label{sec07}
	The prehistorical description of the optimal fluctuations for chemical reaction systems in both non-stationary and stationary settings was presented in this paper. Specifically, it was demonstrated that the non-stationary and stationary prehistory probabilities act as the conditional probability of a reversed process in the same setting, respectively. In the macroscopic limit, the stochastic trajectories of the reversed processes focus on a deterministic one as a consequence of the law of large numbers. This is precisely the position where the corresponding optimal path is located. Furthermore, the findings of the type of the central limit theorem show that in the vicinity of the NOP or the OP, the local deviation of the associated reversed process is approximately Gaussian. It is evident that the location of the optimal path (the NOP or the OP) and the statistical characteristics of the nearby trajectories are associated with a specified reversed process. This provides a comprehensive understanding of the optimal fluctuations.

		\begin{acknowledgments}
			The authors acknowledge support from the National Natural Science Foundation of China (Nos. 12172167, 12202195, 12302035), the Jiangsu Funding Program for Excellent Postdoctoral Talent (No. 2023ZB591), and the Natural Science Foundation of Jiangsu Province, China (No. BK20220917).
		\end{acknowledgments}
		
		\section*{Data Availability Statement}
		
		The data that support the findings of this study are available from the corresponding author upon reasonable request.

		\appendix
		\section{Proofs of Proposition \ref{theo0206} and lemma \ref{theo0207}}\label{secA1}
		\begin{proof}[Proof of Proposition \ref{theo0206}]
			Since $\boldsymbol{\nu}^\top(\mathbb{R}^M)\perp\boldsymbol{\nu}^{-1}(\boldsymbol{0})$ and $\text{Dim}(\boldsymbol{\nu}^\top(\mathbb{R}^M))+\text{Dim}(\boldsymbol{\nu}^{-1}(\boldsymbol{0}))=N$, for any $\boldsymbol{\alpha},\boldsymbol{\beta}\in\mathbb{R}^N$, there exist unique decompositions $\boldsymbol{\alpha}=\boldsymbol{\alpha}_{\text{Im}}+\boldsymbol{\alpha}_{\text{Nu}}$ and $\boldsymbol{\beta}=\boldsymbol{\beta}_{\text{Im}}+\boldsymbol{\beta}_{\text{Nu}}$ such that $\boldsymbol{\alpha}_\text{Im},\boldsymbol{\beta}_\text{Im}\in\boldsymbol{\nu}^\top(\mathbb{R}^M)$ and $\boldsymbol{\alpha}_\text{Nu},\boldsymbol{\beta}_\text{Nu}\in\boldsymbol{\nu}^{-1}(\boldsymbol{0})$. Then $L(\boldsymbol{x},\boldsymbol{\beta})$ can be rewritten as 
			\begin{equation*}
				L(\boldsymbol{x},\boldsymbol{\beta})=\sup_{\boldsymbol{\alpha}_{\text{Im}}\in\boldsymbol{\nu}^\top(\mathbb{R}^M),\,\boldsymbol{\alpha}_{\text{Nu}}\in\boldsymbol{\nu}^{-1}(\boldsymbol{0})}\left(\boldsymbol{\alpha}_{\text{Im}}\cdot\boldsymbol{\beta}_{\text{Im}}+\boldsymbol{\alpha}_{\text{Nu}}\cdot\boldsymbol{\beta}_{\text{Nu}}-H(\boldsymbol{x},\boldsymbol{\alpha}_{\text{Im}})\right).
			\end{equation*}
			That is,
			\begin{equation*}
				L(\boldsymbol{x},\boldsymbol{\beta})=\begin{cases}
					\sup_{\boldsymbol{\alpha}\in\boldsymbol{\nu}^\top(\mathbb{R}^M)}\left(\boldsymbol{\alpha}\cdot\boldsymbol{\beta}-H(\boldsymbol{x},\boldsymbol{\alpha})\right),&\text{if $\boldsymbol{\beta}\in\boldsymbol{\nu}^\top(\mathbb{R}^M)$}\\
					\infty,&\text{otherwise}.
				\end{cases}
			\end{equation*}
			It follows from [\onlinecite[Sections 5.1, 5.2]{Freidlin_2012}] (cf. also [\onlinecite[Section 5.2]{Shwartz_1995}], [\onlinecite[p. 231]{Venttsel_1977a}], or [\onlinecite{Rockafellar_1997}]) that, in the increment space $\boldsymbol{\nu}^\top(\mathbb{R}^M)$, (i) $L$ is strictly convex and smooth in the second argument; (ii) $L$ is nonnegative; (iii) $L(\boldsymbol{x},\boldsymbol{F}(\boldsymbol{x}))=0$; (iv) $L(\boldsymbol{x},\boldsymbol{\beta})/\vert\boldsymbol{\beta}\vert\to\infty$ as $\vert\boldsymbol{\beta}\vert\to\infty$, uniformly for all $\boldsymbol{x}\in\mathbb{R}^N_+$; (v) $L$ is bounded for $\boldsymbol{\beta}$ in any bounded subset of $\boldsymbol{\nu}^\top(\mathbb{R}^M)$, uniformly in $\boldsymbol{x}$. We now use these properties to complete the proof.
			
			(a) For each $T>0$, and $\boldsymbol{x}_0, \boldsymbol{x}_T\in\mathbb{R}_+^N$ satisfying $\boldsymbol{x}_T-\boldsymbol{x}_0\in\boldsymbol{\nu}^\top(\mathbb{R}^M)$, we put $\boldsymbol{\phi}(t)\triangleq\boldsymbol{x}_0\frac{T-t}{T}+\boldsymbol{x}_T\frac{t}{T}$. Let $R=\vert\frac{\boldsymbol{x}_T-\boldsymbol{x}_0}{T}\vert+1$. By the property (v) above, there exists a constant $\Gamma_1=\Gamma_1(R)$ such that $L(\boldsymbol{\phi}(t),\dot{\boldsymbol{\phi}}(t))<\Gamma_1$ for $t\in[0,T]$.
			Then $S(\boldsymbol{x}_T,T\vert\boldsymbol{x}_0)\leq I_{[0,T]}\left(\left\{\boldsymbol{\phi}(t):t\in[0,T]\right\}\right)\leq \Gamma_1T<\infty$.
			
			If $\boldsymbol{x}_T-\boldsymbol{x}_0\notin\boldsymbol{\nu}^\top(\mathbb{R}^M)$, for any $\{\boldsymbol{\phi}(t):t\in[0,T],\boldsymbol{\phi}(0)=\boldsymbol{x}_0,\boldsymbol{\phi}(T)=\boldsymbol{x}_T\}$ that is absolutely continuous, there must exist a subset $A_1$ of $[0,T]$ with positive measure such that $\dot{\boldsymbol{\phi}}(t)\notin\boldsymbol{\nu}^\top(\mathbb{R}^M)$ for $t\in{A_1}$. Therefore, $I_{[0,T]}\left(\left\{\boldsymbol{\phi}(t):t\in[0,T]\right\}\right)=\infty$, which proves part (a).
			
			(b) Denote $C_{1}\triangleq{S}(\boldsymbol{x}_T,T\vert\boldsymbol{x}_0)<\infty$. Then for each $m\geq{1}$, there exists a function $\{\boldsymbol{\phi}_{m}(t):t\in[0,T]\}$ so that $\boldsymbol{\phi}_{m}(0)=\boldsymbol{x}_0$, $\boldsymbol{\phi}_{m}(T)=\boldsymbol{x}_T$ and
			\begin{equation*}
				C_{1}\leq{I}_{[0,T]}(\{\boldsymbol{\phi}_{m}(t):t\in[0,T]\})\leq{C_{1}}+\frac{1}{m}.
			\end{equation*}
			It follows that $\{\boldsymbol{\phi}_{m}(t):t\in[0,T]\}\in\Phi_{\boldsymbol{x}_{0},[0,T]}(C_{1}+1)$ for all $m\geq{1}$. Since $\Phi_{\boldsymbol{x}_{0},[0,T]}(C_{1}+1)$ is compact in $\left(\mathcal{D}([0,T];\mathbb{R}^N), \rho\right)$, it is also sequentially compact. As a result, there exists a subsequence $\{\boldsymbol{\phi}_{m_i}(t):t\in[0,T]\}$ that converges to some function $\{\boldsymbol{\phi}^{*}(t):t\in[0,T]\}$ with $\boldsymbol{\phi}^{*}(0)=\boldsymbol{x}_0$, $\boldsymbol{\phi}^{*}(T)=\boldsymbol{x}_T$. The lower semi-continuity of $I_{[0,T]}$ implies
			\begin{equation*}
				{I}_{[0,T]}(\{\boldsymbol{\phi}^{*}(t):t\in[0,T]\})\leq\liminf_{i\to\infty}{I}_{[0,T]}(\{\boldsymbol{\phi}_{m_i}(t):t\in[0,T]\})\leq{C_{1}}.
			\end{equation*}
			Therefore, Equation (\ref{eq020301}) attains its minimum value at the function $\{\boldsymbol{\phi}^{*}(t):t\in[0,T]\}$, which is exactly a NOP connecting $\boldsymbol{x}_0$ with $\boldsymbol{x}_T$ in the time span $T$.
			
			According to the assumptions, $L(\boldsymbol{x},\boldsymbol{\beta})/\vert\boldsymbol{\beta}\vert\to\infty$ as $\vert\boldsymbol{\beta}\vert\to\infty$, uniformly for all $\boldsymbol{x}\in\mathbb{R}^N_+$ (cf. [\onlinecite[Lemmas 5.17, 5.32]{Shwartz_1995}]), i.e., for each $\gamma>0$, there exists a constant $R=R(\gamma)>0$ such that $L(\boldsymbol{x},\boldsymbol{\beta})\geq\frac{1}{\gamma}\vert\boldsymbol{\beta}\vert$ for all $\boldsymbol{x}\in\mathbb{R}^N_+,\,\boldsymbol{\beta}\in\mathbb{R}^N$ with $\vert\boldsymbol{\beta}\vert>R$. For each $T>0$ and $\boldsymbol{x}_0,\,\boldsymbol{x}_T\in\mathbb{R}^N_+$ satisfying $\boldsymbol{x}_0\neq\boldsymbol{x}_T$, let $\{\boldsymbol{\phi}^{*}(t):t\in[0,T]\}$ denote a NOP that connects $\boldsymbol{x}_0$ with $\boldsymbol{x}_T$ in the time span $T$. Define $A_2\triangleq\{t\in[0,T]:\vert\dot{\boldsymbol{\phi}^{*}}(t)\vert\leq{R}\}$ and $A_3\triangleq[0,T]\setminus{A_2}$. Then
			\begin{equation*}
				\begin{aligned}
					\left\vert\frac{\boldsymbol{x}_T-\boldsymbol{x}_0}{T}\right\vert&=\left\vert\frac{1}{T}\int_{0}^{T}\dot{\boldsymbol{\phi}^{*}}(t)\mathrm{d}t\right\vert\\
					&\leq\frac{1}{T}\int_{0}^{T}\left\vert\dot{\boldsymbol{\phi}^{*}}(t)\right\vert\mathrm{d}t\\
					&=\frac{1}{T}\left(\int_{A_2}\left\vert\dot{\boldsymbol{\phi}^{*}}(t)\right\vert\mathrm{d}t+\int_{A_3}\left\vert\dot{\boldsymbol{\phi}^{*}}(t)\right\vert\mathrm{d}t\right)\\
					&\leq\frac{1}{T}\left(\text{Measure}(A_2)R+\int_{A_3}\left\vert\dot{\boldsymbol{\phi}^{*}}(t)\right\vert\mathrm{d}t\right).
				\end{aligned}
			\end{equation*}
			It follows that
			\begin{equation*}
				\frac{1}{T}\int_{A_3}\left\vert\dot{\boldsymbol{\phi}^{*}}(t)\right\vert\mathrm{d}t\geq\frac{1}{T}\int_{0}^{T}\left\vert\dot{\boldsymbol{\phi}^{*}}(t)\right\vert\mathrm{d}t-\frac{\text{Measure}(A_2)R}{T}\geq\left\vert\frac{\boldsymbol{x}_T-\boldsymbol{x}_0}{T}\right\vert-R.
			\end{equation*}
			Consequently,
			\begin{equation*}
				\begin{aligned}
					S(\boldsymbol{x}_T,T\vert\boldsymbol{x}_0)&=I_{[0,T]}(\{\boldsymbol{\phi}^{*}(t):t\in[0,T]\})\\
					&=\int_{0}^{T}L(\boldsymbol{\phi}^{*}(t),\dot{\boldsymbol{\phi}^{*}}(t))\mathrm{d}t\\
					&=\int_{A_2}L(\boldsymbol{\phi}^{*}(t),\dot{\boldsymbol{\phi}^{*}}(t))\mathrm{d}t+\int_{A_3}L(\boldsymbol{\phi}^{*}(t),\dot{\boldsymbol{\phi}^{*}}(t))\mathrm{d}t\\
					&\geq\frac{1}{\gamma}\int_{A_3}\left\vert\dot{\boldsymbol{\phi}^{*}}(t)\right\vert\mathrm{d}t\\
					&\geq\frac{1}{\gamma}\left(\left\vert{\boldsymbol{x}_T-\boldsymbol{x}_0}\right\vert-TR\right).
				\end{aligned}
			\end{equation*}
			If we select the parameters $\boldsymbol{x}_0,\boldsymbol{x}_T$ and $T$ so that $\left\vert{\boldsymbol{x}_T-\boldsymbol{x}_0}\right\vert>TR+1$ or $T<\left\vert{\boldsymbol{x}_T-\boldsymbol{x}_0}\right\vert/2R$, part (b) of this theorem then follows.
			
			(c) Assume that $\{\boldsymbol{\phi}(t+t_1):t\in[0,t_2-t_1]\}$ is not a NOP. Let $\{\boldsymbol{\phi}^*(t):t\in[0,t_2-t_1]\}$ be a NOP connecting $\boldsymbol{\phi}(t_1)$ with $\boldsymbol{\phi}(t_2)$ in the time span $t_2-t_1$. We construct a new path connecting $\boldsymbol{x}_0$ with $\boldsymbol{x}_T$ in the time span $T$ as $\{\bar{\boldsymbol{\phi}}(t):\bar{\boldsymbol{\phi}}(t)=\boldsymbol{\phi}(t)\;\text{for}\;t\in[0,t_1]\cup[t_2,T],\bar{\boldsymbol{\phi}}(t)=\boldsymbol{\phi}^*(t-t_1)\;\text{for}\;t\in[t_1,t_2]\}$. It follows that $I_{[0,T]}\left(\left\{\bar{\boldsymbol{\phi}}(t):t\in[0,T]\right\}\right)<I_{[0,T]}\left(\left\{\boldsymbol{\phi}(t):t\in[0,T]\right\}\right)=S(\boldsymbol{x}_T,T\vert\boldsymbol{x}_0)$. This contradicts the definition of the NOP.
			
			If the NOP that links $\boldsymbol{\phi}(t_1)$ with $\boldsymbol{\phi}(t_2)$ in the time span $t_2-t_1$ is not unique, then there exists another function $\{\boldsymbol{\phi}^{*}(t):t\in[0,t_2-t_1]\}$ that satisfies $\boldsymbol{\phi}^{*}(0)=\boldsymbol{\phi}(t_1)$, $\boldsymbol{\phi}^{*}(t_2-t_1)=\boldsymbol{\phi}(t_2)$ and
			\begin{equation*}
				{I}_{[0,t_2-t_1]}(\{\boldsymbol{\phi}^{*}(t):t\in[0,t_2-t_1]\})={I}_{[0,t_2-t_1]}(\{\boldsymbol{\phi}(t+t_1):t\in[0,t_2-t_1]\}).
			\end{equation*}
			It follows that the function $\{\bar{\boldsymbol{\phi}}(t):t\in[0,T]\}$ defined above shares the same value of the action functional $I_{[0,T]}$ as that of $\{\boldsymbol{\phi}(t):t\in[0,T]\}$, which contradicts the uniqueness of $\{\boldsymbol{\phi}_{\text{NOP}}(t;\boldsymbol{x}_T,T;\boldsymbol{x}_0):t\in[0,T]\}$.
			
			(d) The $\mathcal{C}^{k+1}$-smoothness of $(\boldsymbol{x}(t),\boldsymbol{\alpha}(t))$ was proved in [\onlinecite[Theorem 1]{Day_1985}] (cf. also [\onlinecite[Section 2.6]{Cesari_1983}]), which naturally satisfies Eq. (\ref{eq020303}) as a classical result of calculus of variations (cf. [\onlinecite[Theorem 2.2.i]{Cesari_1983}]). Equations (\ref{eq020304}) and (\ref{eq020305}) follow from part (c) and the fact
			\begin{equation*}
				L(\boldsymbol{x},\boldsymbol{\beta})=\boldsymbol{\alpha}^{*}(\boldsymbol{x},\boldsymbol{\beta})\cdot\boldsymbol{\beta}-H(\boldsymbol{x},\boldsymbol{\alpha}^{*}(\boldsymbol{x},\boldsymbol{\beta})),
			\end{equation*}
			where $\boldsymbol{\alpha}^{*}=\boldsymbol{\alpha}^{*}(\boldsymbol{x},\boldsymbol{\beta})=\nabla_{\boldsymbol{\beta}}L(\boldsymbol{x},\boldsymbol{\beta})$ solves $\boldsymbol{\beta}=\nabla_{\boldsymbol{\alpha}}H(\boldsymbol{x},\boldsymbol{\alpha}^{*})$.
			
			(e) Let $\boldsymbol{x}(t;\boldsymbol{y},\boldsymbol{q}),\,\boldsymbol{\alpha}(t;\boldsymbol{y},\boldsymbol{q})$ denote the solution maps of Eq. (\ref{eq020303}) with the initial conditions $\boldsymbol{x}(0;\boldsymbol{y},\boldsymbol{q})=\boldsymbol{y},\,\boldsymbol{\alpha}(0;\boldsymbol{y},\boldsymbol{q})=\boldsymbol{q}$. (By the continuous dependence theorem of solutions of ordinary differential equations on initial values, they are $\mathcal{C}^k$ with respect to the arguments $(\boldsymbol{y},\boldsymbol{q})$ since $H(\boldsymbol{x},\boldsymbol{\alpha})$ is $\mathcal{C}^{k+1}$.) Then $\boldsymbol{\phi}_{\text{NOP}}(t;\boldsymbol{x}_T,T;\boldsymbol{x}_0)$ can be embedded in the $2N$ parameter family $\boldsymbol{x}(t;\boldsymbol{y},\boldsymbol{q})$ in the sense that
			\begin{equation*}
				\boldsymbol{\phi}_{\text{NOP}}(t;\boldsymbol{x}_T,T;\boldsymbol{x}_0)=\boldsymbol{x}(t;\boldsymbol{y},\boldsymbol{q})\vert_{(\boldsymbol{y},\boldsymbol{q})=(\boldsymbol{x}_0,\boldsymbol{\alpha}_0)},\;\;t\in[0,T],
			\end{equation*}
			with
			\begin{equation*}
				\boldsymbol{\alpha}_0\triangleq\nabla_{\boldsymbol{\beta}}L(\boldsymbol{x}_0,\dot{\boldsymbol{\phi}}(0;\boldsymbol{x}_T,T;\boldsymbol{x}_0)).
			\end{equation*}
			In addition, it is also clear that   $\{\boldsymbol{x}(t-\gamma_0;\boldsymbol{x}_0,\boldsymbol{\alpha}_0):t\in[0,T+2\gamma_0]\}$ is the NOP involved in the hypothesis.
			
			First, we claim that for each $t\in(0,T+\gamma_0)$, $\{\boldsymbol{x}(u;\boldsymbol{x}_0,\boldsymbol{\alpha}_0):u\in[0,t]\}$ is a NOP that connects $\boldsymbol{x}_0$ with $\boldsymbol{x}(t;\boldsymbol{x}_0,\boldsymbol{\alpha}_0)$ in the time span $t$, and moreover is the only one. Indeed if there were a second one, it would be possible to represent it by $\{{\boldsymbol{x}}(u;\boldsymbol{x}_0,\bar{\boldsymbol{\alpha}}_0):u\in[0,t]\}$ with some $\bar{\boldsymbol{\alpha}}_0\in\mathbb{R}^N$ subject to  ${\boldsymbol{x}}(t;\boldsymbol{x}_0,\bar{\boldsymbol{\alpha}}_0)={\boldsymbol{x}}(t;\boldsymbol{x}_0,\boldsymbol{\alpha}_0)$ but $\bar{\boldsymbol{\alpha}}_0\neq{\boldsymbol{\alpha}}_0$. Then we must have $\boldsymbol{\alpha}(t;\boldsymbol{x}_0,\bar{\boldsymbol{\alpha}}_0)\neq\boldsymbol{\alpha}(t;\boldsymbol{x}_0,\boldsymbol{\alpha}_0)$, and thus $\dot{{\boldsymbol{x}}}(t;\boldsymbol{x}_0,\bar{\boldsymbol{\alpha}}_0)\neq\dot{\boldsymbol{x}}(t;\boldsymbol{x}_0,\boldsymbol{\alpha}_0)$. Concatenating it with $\{\boldsymbol{x}(u;\boldsymbol{x}_0,\boldsymbol{\alpha}_0):u\in[t,T+\gamma_0]\}$ would produce a function achieving the minimum for $S(\boldsymbol{x}(T+\gamma_0;\boldsymbol{x}_0,\boldsymbol{\alpha}_0),T+\gamma_0\vert\boldsymbol{x}_0)$, but which would violate the $\mathcal{C}^{k+1}$-smoothness of the NOP $\{\boldsymbol{\phi}_{\text{NOP}}(u;\boldsymbol{x}(T+\gamma_0;\boldsymbol{x}_0,\boldsymbol{\alpha}_0),T+\gamma_0;\boldsymbol{x}_0):u\in[0,T+\gamma_0]\}$ because of the corner at $t$.
			
			The second thing we want to show is that for each $t\in(0,T+\gamma_0)$,
			\begin{equation*}
				\text{Det}\left(\frac{\partial}{\partial{\boldsymbol{q}}}\boldsymbol{x}(t;\boldsymbol{x}_0,\boldsymbol{q})\left\vert_{\boldsymbol{q}=\boldsymbol{\alpha}_0}\right.\right)\neq{0}.
			\end{equation*}
			Suppose it is not true. Then there exist $t\in(0,T+\gamma_0)$ and $\boldsymbol{c}\in\mathbb{R}^N$, $\boldsymbol{c}\neq\boldsymbol{0}$ so that
			\begin{equation*}
				\frac{\partial}{\partial{\boldsymbol{q}}}\boldsymbol{x}(t;\boldsymbol{x}_0,\boldsymbol{q})\left\vert_{\boldsymbol{q}=\boldsymbol{\alpha}_0}\right.\cdot\boldsymbol{c}=\boldsymbol{0}.
			\end{equation*}
			Define $\boldsymbol{\eta}(u)\triangleq\frac{\partial}{\partial{\boldsymbol{q}}}\boldsymbol{x}(u;\boldsymbol{x}_0,\boldsymbol{q})\left\vert_{\boldsymbol{q}=\boldsymbol{\alpha}_0}\right.\cdot\boldsymbol{c},\,u\in[0,t]$. It follows from [\onlinecite[Theorem 2.5.ii]{Cesari_1983}] and the fact
			\begin{equation*}
				\dot{\boldsymbol{\eta}}(0)=\frac{\partial}{\partial{\boldsymbol{q}}}\dot{\boldsymbol{x}}(0;\boldsymbol{x}_0,\boldsymbol{q})\left\vert_{\boldsymbol{q}=\boldsymbol{\alpha}_0}\right.\cdot\boldsymbol{c}=\nabla_{\boldsymbol{\alpha}}\otimes\nabla_{\boldsymbol{\alpha}}H(\boldsymbol{x}_0,\boldsymbol{\alpha}_0)\cdot\boldsymbol{c}\neq\boldsymbol{0},
			\end{equation*}
			that $\{\boldsymbol{\eta}(u):t\in[0,t]\}$ is a non-trivial solution of the accessory system relative to the NOP $\{\boldsymbol{x}(u;\boldsymbol{x}_0,\boldsymbol{\alpha}_0):u\in[0,T+\gamma_0]\}$ (cf. [\onlinecite[Section 2.5]{Cesari_1983}]). Consequently, $(t, \boldsymbol{x}(t;\boldsymbol{x}_0,\boldsymbol{\alpha}_0))$ is conjugate to $(0,\boldsymbol{x}_0)$ on the NOP, which contradicts the Jacobi's conjugate necessary condition formulated in [\onlinecite[Theorem 2.5.i]{Cesari_1983}]. 
			
			For each $T^{*}\in(0,T)$, let $\gamma_{1}=\frac{\gamma_0}{3}\wedge\frac{T-T^{*}}{3}$. Define
			\begin{equation*}
				\Omega_{\delta_{0},(T-T^*-\gamma_{1},T+\gamma_{1})}\triangleq\{(\boldsymbol{z},t)\in\mathbb{R}^N_+\times(T-T^*-\gamma_{1},T+\gamma_{1}):\boldsymbol{z}\in{B}_{\delta_0}(\boldsymbol{x}(t;\boldsymbol{x}_0,\boldsymbol{\alpha}_0))\}.
			\end{equation*}
			Note that $\boldsymbol{x}(t;\boldsymbol{x}_0,\boldsymbol{q})$ is $\mathcal{C}^k$ with respect to the argument $\boldsymbol{q}$, and the matrix  $\frac{\partial}{\partial{\boldsymbol{q}}}\boldsymbol{x}(t;\boldsymbol{x}_0,\boldsymbol{q})\left\vert_{\boldsymbol{q}=\boldsymbol{\alpha}_0}\right.$ is uniformly non-degenerate on the time interval $[T-T^*-\gamma_{1},T+\gamma_{1}]$. By the inverse function theorem, if $\delta_0$ is chosen to be small enough, then for every $(\boldsymbol{z},t)\in\Omega_{\delta_{0},(T-T^*-\gamma_{1},T+\gamma_{1})}$, there exists a unique $\mathcal{C}^{k}$ function $\boldsymbol{q}=\boldsymbol{q}(\boldsymbol{z},t)$ such that $\boldsymbol{x}(t;\boldsymbol{x}_0,\boldsymbol{q}(\boldsymbol{z},t))=\boldsymbol{z}$ and $\vert\boldsymbol{q}(\boldsymbol{z},t)-\boldsymbol{\alpha}_0\vert\to{0}$ uniformly with respect to $t\in(T-T^*-\gamma_{1},T+\gamma_{1})$ as $\vert\boldsymbol{z}-\boldsymbol{x}(t;\boldsymbol{x}_0,\boldsymbol{\alpha}_0)\vert\to{0}$. 
			
			Now, let us demonstrate that $\{\boldsymbol{x}(u;\boldsymbol{x}_0,\boldsymbol{q}(\boldsymbol{z},t)):u\in[0,t]\}$ is the unique NOP that connects $\boldsymbol{x}_0$ with $\boldsymbol{z}$ in the time span $t$. 
			
			Suppose it is not true. Then there is $T^*\in(0,T)$ so that for each $m\geq{1}$, there always exist $(\boldsymbol{z}_m,t_m)\in\Omega_{1/m,(T-T^*-\gamma_{1},T+\gamma_{1})}$ and $\bar{\boldsymbol{q}}_m$ such that (i) $\vert\bar{\boldsymbol{q}}_m-\boldsymbol{\alpha}_0\vert>\eta_0$ (with $\eta_0$ being a positive constant independent of the parameters $m$, $\boldsymbol{z}_m$ and $t_m$); (ii) $\{\boldsymbol{x}(u;\boldsymbol{x}_0,\bar{\boldsymbol{q}}_m):u\in[0,t_m]\}$ is a NOP connecting $\boldsymbol{x}_0$ with $\boldsymbol{z}_m$ in the time span $t_m$, i.e.,
			\begin{equation*}
				\boldsymbol{x}(t_m;\boldsymbol{x}_0,\bar{\boldsymbol{q}}_m)=\boldsymbol{z}_m,
			\end{equation*}
			and 
			\begin{equation*}
				\begin{aligned}
					S(\boldsymbol{z}_m,t_m\vert\boldsymbol{x}_0)&=I_{[0,t_m]}(\{\boldsymbol{x}(u;\boldsymbol{x}_0,\bar{\boldsymbol{q}}_m):u\in[0,t_m]\})\\
					&\leq{I}_{[0,t_m]}(\{\boldsymbol{x}(u;\boldsymbol{x}_0,\boldsymbol{q}(\boldsymbol{z}_m,t_m)):u\in[0,t_m]\}).
				\end{aligned}
			\end{equation*}
			Since $\Omega_{\delta_{0},(T-T^*-\gamma_{1},T+\gamma_{1})}$ is precompact in $\mathbb{R}^N_+\times\mathbb{R}$, the sequence $(\boldsymbol{z}_m,t_m)$ has a subsequence $(\boldsymbol{z}_{m_i},t_{m_i})$ that converges to some point $(\boldsymbol{z}^{*},t^{*})$ with $\boldsymbol{z}^{*}=\boldsymbol{x}(t^{*};\boldsymbol{x}_0,\boldsymbol{\alpha}_0),\,t^{*}\in[T-T^{*}-\gamma_{1},T+\gamma_{1}]$. Thus, for sufficiently large $m_i$ such that $\vert{t_{m_i}-t^{*}}\vert<\gamma_{1}$, we can define a family of functions $\{\boldsymbol{\phi}_{m_i}(u):u\in[0,t^{*}+\gamma_{1}]\}$ by
			\begin{equation*}
				\boldsymbol{\phi}_{m_i}(u)\triangleq\begin{cases}
					\boldsymbol{x}(u;\boldsymbol{x}_0,\bar{\boldsymbol{q}}_{m_i}), &u\in[0,t_{m_i}],\\
					\boldsymbol{x}_{\infty}(u-t_{m_i};\boldsymbol{z}_{m_i}), &u\in[t_{m_i},t^{*}+\gamma_{1}].
				\end{cases}
			\end{equation*}
			where $\boldsymbol{x}_{\infty}(s;\boldsymbol{z}_{m_i})$ means the solution of Eq. (\ref{eq020102}) with the initial condition $\boldsymbol{x}_{\infty}(0;\boldsymbol{z}_{m_i})=\boldsymbol{z}_{m_i}$. Clearly, $\boldsymbol{\phi}_{m_i}(0)=\boldsymbol{x}_0$ and 
			\begin{equation*}
				\begin{aligned}
					I_{[0,t^{*}+\gamma_{1}]}(\{\boldsymbol{\phi}_{m_i}(u):u\in[0,t^{*}+\gamma_{1}]\})&=I_{[0,t_{m_i}]}(\{\boldsymbol{x}(u;\boldsymbol{x}_0,\bar{\boldsymbol{q}}_{m_i}):u\in[0,t_{m_i}]\})\\
					&=S(\boldsymbol{z}_{m_i},t_{m_i}\vert\boldsymbol{x}_0).
				\end{aligned}
			\end{equation*}
			Since $(\boldsymbol{z}_{m_i},t_{m_i})\in\Omega_{1/{m_i},(T-T^*-\gamma_{1},T+\gamma_{1})}$, it is easy to check that 
			\begin{equation*}
				\left\vert\frac{\boldsymbol{z}_{m_i}-\boldsymbol{x}_0}{t_{m_i}}\right\vert\leq\frac{\max_{u\in(T-T^{*}-\gamma_{1},T+\gamma_{1})}\vert\boldsymbol{x}(u;\boldsymbol{x}_0,\boldsymbol{\alpha}_0)-\boldsymbol{x}_0\vert+1}{T-T^{*}-\gamma_{1}}.
			\end{equation*}
			Let $R$ denote the constant on the right. By part (a) of this theorem, there exists $\Gamma_1=\Gamma_1(R)$ such that 
			\begin{equation*}
				S(\boldsymbol{z}_{m_i},t_{m_i}\vert\boldsymbol{x}_0)\leq{\Gamma_1}t_{m_i}\leq{\Gamma_1}(T+\gamma_{1}).
			\end{equation*}
			Define $C_2\triangleq{\Gamma_1(T+\gamma_{1})}<\infty$. The fact $\{\boldsymbol{\phi}_{m_i}(u):u\in[0,t^{*}+\gamma_{1}]\}\in\Phi_{\boldsymbol{x}_{0},[0,t^{*}+\gamma_{1}]}(C_2)$ implies that the sequence $\{\boldsymbol{\phi}_{m_i}(u):u\in[0,t^{*}+\gamma_{1}]\}$ is uniformly bounded and equicontinuous. (The equicontinuity follows from [\onlinecite[Theorem 5.1]{Venttsel_1977b}]. The uniform boundedness is a consequence of the equicontinuity and the fact $\boldsymbol{\phi}_{m_i}(0)\equiv \boldsymbol{x}_0$.) As a result, it has a further subsequence $\{\boldsymbol{\phi}_{m_{i_j}}(u):u\in[0,t^{*}+\gamma_{1}]\}$ that converges to some function $\{\boldsymbol{\phi}^{*}(u):u\in[0,t^{*}+\gamma_{1}]\}$ with $\boldsymbol{\phi}^{*}(0)=\boldsymbol{x}_0$. Note that
			\begin{equation*}
				\vert\boldsymbol{\phi}^{*}(t^{*})-\boldsymbol{z}^{*}\vert\leq\vert\boldsymbol{\phi}^{*}(t^{*})-\boldsymbol{\phi}_{m_{i_j}}(t^{*})\vert+\vert\boldsymbol{\phi}_{m_{i_j}}(t^{*})-\boldsymbol{\phi}_{m_{i_j}}(t_{m_{i_j}})\vert+\vert\boldsymbol{z}_{m_{i_j}}-\boldsymbol{z}^{*}\vert.
			\end{equation*}
			The term on the right goes to zero due to the convergence of $\boldsymbol{\phi}_{m_{i_j}}(t^{*})$ to $\boldsymbol{\phi}^{*}(t^{*})$, the equicontinuity of $\boldsymbol{\phi}_{m_{i_j}}(u)$ and the convergence of $(\boldsymbol{z}_{m_{i_j}},t_{m_{i_j}})$ to $(\boldsymbol{z}^{*},t^{*})$. Hence, 
			\begin{equation*}
				\boldsymbol{\phi}^{*}(t^{*})=\boldsymbol{z}^{*}=\boldsymbol{x}(t^{*};\boldsymbol{x}_0,\boldsymbol{\alpha}_0).
			\end{equation*}
			By the lower semi-continuity of $I_{[0,t^{*}+\gamma_{1}]}$ and the continuity theorem of solutions of Eq. (\ref{eq020303}) on initial conditions, we have
			\begin{equation*}
				\begin{aligned}
					I_{[0,t^{*}]}(\{\boldsymbol{\phi}^{*}(u):u\in[0,t^{*}]\})&\leq{I}_{[0,t^{*}+\gamma_{1}]}(\{\boldsymbol{\phi}^{*}(u):u\in[0,t^{*}+\gamma_{1}]\})\\
					&\leq\liminf_{j\to\infty}{I}_{[0,t^{*}+\gamma_{1}]}(\{\boldsymbol{\phi}_{m_{i_j}}(u):u\in[0,t^{*}+\gamma_{1}]\})\\
					&=\liminf_{j\to\infty}{I}_{[0,t_{m_{i_j}}]}(\{\boldsymbol{x}(u;\boldsymbol{x}_0,\bar{\boldsymbol{q}}_{m_{i_j}}):u\in[0,t_{m_{i_j}}]\})\\
					&\leq\liminf_{j\to\infty}{I}_{[0,t_{m_{i_j}}]}(\{\boldsymbol{x}(u;\boldsymbol{x}_0,\boldsymbol{q}(\boldsymbol{z}_{m_{i_j}},t_{m_{i_j}})):u\in[0,t_{m_{i_j}}]\})\\
					&={I}_{[0,t^{*}]}(\{\boldsymbol{x}(u;\boldsymbol{x}_0,\boldsymbol{\alpha}_0):u\in[0,t^{*}]\}).
				\end{aligned}
			\end{equation*}
			It follows that $\{\boldsymbol{\phi}^{*}(u):u\in[0,t^{*}]\}$ is a NOP that connects $\boldsymbol{x}_0$ with $\boldsymbol{z}^{*}=\boldsymbol{x}(t^{*};\boldsymbol{x}_0,\boldsymbol{\alpha}_0)$ in the time span $t^{*}$. Consequently, we can rewritten it as 
			\begin{equation*}
				\boldsymbol{\phi}^{*}(u)=\boldsymbol{x}(u;\boldsymbol{x}_0,\bar{\boldsymbol{q}}^{*}),\;\;u\in[0,t^{*}],
			\end{equation*}
			where $\bar{\boldsymbol{q}}^{*}$ is a limit point of the sequence $\boldsymbol{q}_{m_{i_j}}$, and thus satisfies $\vert\bar{\boldsymbol{q}}^{*}-\boldsymbol{\alpha}_0\vert\geq\eta_0$. This means that $\{\boldsymbol{\phi}^{*}(u):u\in[0,t^{*}]\}$ is different from $\{\boldsymbol{x}(u;\boldsymbol{x}_0,\boldsymbol{\alpha}_0):u\in[0,t^{*}]\}$, contradicting the uniqueness of $\{\boldsymbol{\phi}_{\text{NOP}}(u;\boldsymbol{z}^{*},t^{*};\boldsymbol{x}_0):u\in[0,t^{*}]\}$.
			
			If $(\boldsymbol{z},t)\in\Omega_{\delta_{0},[T-T^*,T]}\subset\Omega_{\delta_{0},(T-T^*-\gamma_{1},T+\gamma_{1})}$, then
			\begin{equation*}
				S(\boldsymbol{z},t\vert\boldsymbol{x}_0)=\int_{0}^{t}{L(\boldsymbol{x}(u;\boldsymbol{x}_0,\boldsymbol{q}(\boldsymbol{z},t)),\dot{\boldsymbol{x}}(u;\boldsymbol{x}_0,\boldsymbol{q}(\boldsymbol{z},t)))}\mathrm{d}u.
			\end{equation*}
			Differentiating it with respect to $\boldsymbol{z}$ and $t$ respectively, and using the usual identity in [\onlinecite[Lemma 2.9.i]{Cesari_1983}], we have
			\begin{equation}\label{eqA10101}
				\frac{\partial{S}(\boldsymbol{z},t\vert\boldsymbol{x}_0)}{\partial\boldsymbol{z}}=\nabla_{\boldsymbol{\beta}}L(\boldsymbol{x}(t;\boldsymbol{x}_0,\boldsymbol{q}(\boldsymbol{z},t)),\dot{\boldsymbol{x}}(t;\boldsymbol{x}_0,\boldsymbol{q}(\boldsymbol{z},t)))=\boldsymbol{\alpha}(t;\boldsymbol{x}_0,\boldsymbol{q}(\boldsymbol{z},t)),
			\end{equation}
			and
			\begin{equation}\label{eqA10102}
				\frac{\partial{S}(\boldsymbol{z},t\vert\boldsymbol{x}_0)}{\partial{t}}=
				-H(\boldsymbol{x}(t;\boldsymbol{x}_0,\boldsymbol{q}(\boldsymbol{z},t)),\boldsymbol{\alpha}(t;\boldsymbol{x}_0,\boldsymbol{q}(\boldsymbol{z},t))).
			\end{equation}
			Obviously, both of the terms on the right are $\mathcal{C}^k$. So, $S(\boldsymbol{z},t\vert\boldsymbol{x}_0)$ is $\mathcal{C}^{k+1}$ for each $(\boldsymbol{z},t)\in\Omega_{\delta_{0},[T-T^*,T]}$. Substituting Eq. (\ref{eqA10101}) into Eq. (\ref{eqA10102}) yields
			\begin{equation*}
				\frac{\partial{S}(\boldsymbol{z},t\vert\boldsymbol{x}_0)}{\partial{t}}=-H\left(\boldsymbol{z},\frac{\partial{S}(\boldsymbol{z},t\vert\boldsymbol{x}_0)}{\partial\boldsymbol{z}}\right)
			\end{equation*}
			which proves Eq. (\ref{eq020306}). In particular, let $(\boldsymbol{z},t)=(\boldsymbol{x}(t;\boldsymbol{x}_0,\boldsymbol{\alpha}_0),t)$, it follows from Eq. (\ref{eq020303}) that for each $t\in[T-T^{*},T]$,
			\begin{equation*}
				\begin{aligned}
					\dot{\boldsymbol{x}}(t;\boldsymbol{x}_0,\boldsymbol{\alpha}_0)&=\nabla_{\boldsymbol{\alpha}}H(\boldsymbol{x}(t;\boldsymbol{x}_0,\boldsymbol{\alpha}_0),\boldsymbol{\alpha}(t;\boldsymbol{x}_0,\boldsymbol{\alpha}_0))\\
					&=\nabla_{\boldsymbol{\alpha}}H(\boldsymbol{x}(t;\boldsymbol{x}_0,\boldsymbol{\alpha}_0),\nabla_{\boldsymbol{x}}S(\boldsymbol{x}(t;\boldsymbol{x}_0,\boldsymbol{\alpha}_0),t\vert\boldsymbol{x}_0))
				\end{aligned}
			\end{equation*}
			This verifies Eq. (\ref{eq020308}).
			
			If we fix $\boldsymbol{x}_T$ and let $(\boldsymbol{x},t)$ vary in the domain $\Omega_{\delta_{0},[0,T^*]}$, the $\mathcal{C}^{k+1}$-smoothness of $S(\boldsymbol{x}_T,T-t\vert\boldsymbol{x})$, Equations (\ref{eq020307}) and (\ref{eq020309}) can be proved in a similar way. We will not repeat them here.
			
			The rest of this theorem will follow immediately if we can prove Eq. (\ref{eq020310}). Since $S(\boldsymbol{x},t\vert\boldsymbol{x}_0)$ is $\mathcal{C}^{k+1}$ in $\Omega_{\delta_0,[T-T^*,T]}$, we know that for each $(\boldsymbol{x},t)\in\Omega_{\delta_0,[T-T^*,T]}$ and sufficiently small $\varepsilon$, $D\triangleq B_{\varepsilon}(\boldsymbol{x})$ satisfies the condition required in Theorem \ref{theo0205}, and for each $\gamma>0$ there exists a constant $V_0=V_0(\boldsymbol{x},t,\boldsymbol{x}_0,\varepsilon,\gamma)$ such that once $V>V_0$,
			\begin{equation*}
				P_{\boldsymbol{x}_0}\left(\boldsymbol{x}_V(t)\in B_{\varepsilon}(\boldsymbol{x})\right)\geq\exp\left(-V\left(\inf_{\boldsymbol{y}\in B_{\varepsilon}(\boldsymbol{x})}S(\boldsymbol{y},t\vert \boldsymbol{x}_0)+\gamma\right)\right),
			\end{equation*}
			\begin{equation*}
				P_{\boldsymbol{x}_0}\left(\boldsymbol{x}_V(t)\in B_{\varepsilon}(\boldsymbol{x})\right)\leq\exp\left(-V\left(\inf_{\boldsymbol{y}\in B_{\varepsilon}(\boldsymbol{x})}S(\boldsymbol{y},t\vert \boldsymbol{x}_0)-\gamma\right)\right).
			\end{equation*}
			These inequalities are still valid if we replace $\boldsymbol{x}_0$ and $V>V_0$ by $\boldsymbol{x}^*(\boldsymbol{x}_0,V)$ and $V>V_0(\boldsymbol{x},t,\boldsymbol{x}^*(\boldsymbol{x}_0,V),\varepsilon,\gamma)$ respectively. Notice that for $(\boldsymbol{x},t)\in\Omega_{\delta_0,[T-T^*,T]}$ and sufficiently small $\varepsilon$, $S(\boldsymbol{y},T\vert \boldsymbol{x}_0)$ is also  $\mathcal{C}^{k+1}$ with respect to $\boldsymbol{x}_0$. By choosing sufficiently large $V$ such that 
			\begin{equation*}
				\left\vert\inf_{\boldsymbol{y}\in B_{\varepsilon}(\boldsymbol{x})}S(\boldsymbol{y},T\vert \boldsymbol{x}^*(\boldsymbol{x}_0,V))-\inf_{\boldsymbol{y}\in B_{\varepsilon}(\boldsymbol{x})}S(\boldsymbol{y},T\vert \boldsymbol{x}_0)\right\vert<\frac{\gamma}{2},
			\end{equation*}
			then we obtain Eq. (\ref{eq020310}).
		\end{proof}
		\begin{proof}[Proof of Lemma \ref{theo0207}]
			According to the assumptions, we know that for any $\eta>0$, there exist $V_0$ and $\varepsilon_0$ such that for $V>V_0$ and $\varepsilon<\varepsilon_0$,
			\begin{equation*}
				\left\vert\frac{k^{\varepsilon,V}(\boldsymbol{x},t\vert\boldsymbol{x}_0)}{f^{\varepsilon,V}}-K(\boldsymbol{x},t\vert\boldsymbol{x}_0)\right\vert<\eta,
			\end{equation*}
			\begin{equation*}
				\left\vert\frac{k^{\varepsilon,V}(\boldsymbol{x}\pm V^{-1}\boldsymbol{\nu}_i,t\vert\boldsymbol{x}_0)}{f^{\varepsilon,V}}-K(\boldsymbol{x}\pm V^{-1}\boldsymbol{\nu}_i,t\vert\boldsymbol{x}_0)\right\vert<\eta,
			\end{equation*}
			and
			\begin{equation*}
				\left\vert K(\boldsymbol{x}\pm V^{-1}\boldsymbol{\nu}_i,t\vert\boldsymbol{x}_0)-K(\boldsymbol{x},t\vert\boldsymbol{x}_0)\right\vert<\eta.
			\end{equation*}
			Putting all these facts together, we get
			\begin{equation*}
				\lim_{V\to\infty,\,\varepsilon\to 0}\frac{k^{\varepsilon,V}(\boldsymbol{x}\pm V^{-1}\boldsymbol{\nu}_i,t\vert\boldsymbol{x}_0)}{k^{\varepsilon,V}(\boldsymbol{x},t\vert\boldsymbol{x}_0)}=1.
			\end{equation*}
			
			Note that 
			\begin{equation*}
				P_{\boldsymbol{x}^*(\boldsymbol{x}_0,V)}\left(\boldsymbol{x}_V(t)\in B_{\bar{\varepsilon}(V)}(\boldsymbol{x})\right)=k^{\bar{\varepsilon}(V),V}(\boldsymbol{x},t\vert\boldsymbol{x}_0)\exp\left(-V\inf_{\boldsymbol{y}\in B_{\bar{\varepsilon}(V)}(\boldsymbol{x})}S(\boldsymbol{y},t\vert \boldsymbol{x}_0)\right),
			\end{equation*}
			\begin{equation*}
				\begin{aligned}
					P_{\boldsymbol{x}^*(\boldsymbol{x}_0,V)}&\left(\boldsymbol{x}_V(t)\in B_{\bar{\varepsilon}(V)}(\boldsymbol{x}\pm V^{-1}\boldsymbol{\nu}_i)\right)\\
					=&k^{\bar{\varepsilon}(V),V}(\boldsymbol{x}\pm V^{-1}\boldsymbol{\nu}_i,t\vert\boldsymbol{x}_0)\exp\left(-V\inf_{\boldsymbol{y}\in B_{\bar{\varepsilon}(V)}(\boldsymbol{x}\pm V^{-1}\boldsymbol{\nu}_i))}S(\boldsymbol{y},t\vert \boldsymbol{x}_0)\right).
				\end{aligned}
			\end{equation*}
			Therefore, in order to show Eq. (\ref{eq020311}), it suffices to show that
			\begin{equation*}
				\lim_{V\to\infty}V\left(\inf_{\boldsymbol{y}\in B_{\bar{\varepsilon}(V)}(\boldsymbol{x})}S(\boldsymbol{y},t\vert \boldsymbol{x}_0)-\inf_{\boldsymbol{y}\in B_{\bar{\varepsilon}(V)}(\boldsymbol{x}\pm V^{-1}\boldsymbol{\nu}_i)}S(\boldsymbol{y},t\vert \boldsymbol{x}_0)\right)=\mp\boldsymbol{\nu}_i\cdot\nabla_{\boldsymbol{x}}S(\boldsymbol{x},t\vert \boldsymbol{x}_0).
			\end{equation*}
			
			Since $S(\boldsymbol{x},t\vert \boldsymbol{x}_0)$ is $\mathcal{C}^{2}$ in $\Sigma_{\delta_0,[T-T^*,T]}\times[T-T^*,T]$, we have
			\begin{equation*}
				\inf_{\boldsymbol{y}\in B_{\bar{\varepsilon}(V)}(\boldsymbol{x})}S(\boldsymbol{y},t\vert \boldsymbol{x}_0)-S(\boldsymbol{x},t\vert \boldsymbol{x}_0)=-\left\vert\nabla_{\boldsymbol{x}}S(\boldsymbol{x},t\vert \boldsymbol{x}_0)\right\vert\bar{\varepsilon}(V)+{O}(\bar{\varepsilon}^2(V)),
			\end{equation*}
			\begin{equation*}
				\inf_{\boldsymbol{y}\in B_{\bar{\varepsilon}(V)}(\boldsymbol{x}\pm V^{-1}\boldsymbol{\nu}_i))}S(\boldsymbol{y},t\vert \boldsymbol{x}_0)-S(\boldsymbol{x}\pm V^{-1}\boldsymbol{\nu}_i,t\vert \boldsymbol{x}_0)=-\left\vert\nabla_{\boldsymbol{x}}S(\boldsymbol{x}\pm V^{-1}\boldsymbol{\nu}_i,t\vert \boldsymbol{x}_0)\right\vert\bar{\varepsilon}(V)+{O}(\bar{\varepsilon}^2(V)),
			\end{equation*}
			and 
			\begin{equation*}
				\lim_{V\to\infty}V\left(S(\boldsymbol{x},t\vert \boldsymbol{x}_0)-S(\boldsymbol{x}\pm V^{-1}\boldsymbol{\nu}_i,t\vert \boldsymbol{x}_0)\right)=\mp\boldsymbol{\nu}_i\cdot\nabla_{\boldsymbol{x}}S(\boldsymbol{x},t\vert \boldsymbol{x}_0).
			\end{equation*}
			Regrouping the terms, we obtain Eq. (\ref{eq020311}). The convergence is uniform in $\Sigma_{\delta_0,[T-T^*,T]}\times[T-T^*,T]$ since the assumptions hold uniformly in the same domain.
			
			Notice that $\vert\boldsymbol{x}^{**}(\boldsymbol{x},V)-\boldsymbol{x}\vert\leq{O}(1/V)$. If we choose an appropriate parameter $\bar{\varepsilon}(V)$ so that there is a unique point $\boldsymbol{x}^{**}(\boldsymbol{x},V)$ in $B_{\bar{\varepsilon}(V)}(\boldsymbol{x})$, then 
			\begin{equation*}
				P_{\boldsymbol{x}^*(\boldsymbol{x}_0,V)}\left(\boldsymbol{x}_V(t)\in B_{\bar{\varepsilon}(V)}(\boldsymbol{x})\right)=p_V(\boldsymbol{x}^{**}(\boldsymbol{x},V),t\vert\boldsymbol{x}^*(\boldsymbol{x}_0,V)),
			\end{equation*}
			and 
			\begin{equation*}
				P_{\boldsymbol{x}^*(\boldsymbol{x}_0,V)}\left(\boldsymbol{x}_V(t)\in B_{\bar{\varepsilon}(V)}(\boldsymbol{x}\pm V^{-1}\boldsymbol{\nu}_i)\right)=p_V(\boldsymbol{x}^{**}(\boldsymbol{x},V)\pm V^{-1}\boldsymbol{\nu}_i,t\vert\boldsymbol{x}^*(\boldsymbol{x}_0,V)),
			\end{equation*}
			which implies Eq. (\ref{eq020312}).
		\end{proof} 
		
		\section{Proof of Proposition \ref{theo0209}}\label{secA2}
		\begin{proof}
			(a) Let $\boldsymbol{x}_0=\boldsymbol{x}_{eq}$, $\boldsymbol{x}_T=\boldsymbol{x}$. Then for any $T>0$, $S(\boldsymbol{x})\leq S(\boldsymbol{x}_T,T\vert\boldsymbol{x}_0)<\infty$.
			
			(b) For any $\boldsymbol{x},\,\boldsymbol{y}\in\mathbb{R}^N_{+}$, we set $T=\vert\boldsymbol{x}-\boldsymbol{y}\vert$ and $\boldsymbol{\phi}(t)=\boldsymbol{x}+\frac{\boldsymbol{y}-\boldsymbol{x}}{\vert\boldsymbol{y}-\boldsymbol{x}\vert}t$. Since $L$ is bounded for $\boldsymbol{\beta}$ in any bounded subset of $\boldsymbol{\nu}^\top(\mathbb{R}^M)$, uniformly in $\boldsymbol{x}$, there exists a constant $C_3$ such that $L(\boldsymbol{\phi}(t),\dot{\boldsymbol{\phi}}(t))<C_3$ for $t\in[0,T]$. Hence, $S(\boldsymbol{y})-S(\boldsymbol{x})\leq{I}_{[0,T]}(\{\boldsymbol{\phi}(t):t\in[0,T]\})\leq C_3T=C_3\vert\boldsymbol{x}-\boldsymbol{y}\vert$. Swapping $\boldsymbol{x}$ and $\boldsymbol{y}$, we finally prove part (b).
			
			(c) Suppose $\left\{\boldsymbol{\phi}(t):t\in[T_1,T^*]\right\}$ is not an (the unique) OP connecting $\boldsymbol{x}_{eq}$ with $\boldsymbol{\phi}(T^*)$. Then there must be another OP connecting $\boldsymbol{x}_{eq}$ with $\boldsymbol{\phi}(T^*)$, so that its value of the rate function is no more than that of $\left\{\boldsymbol{\phi}(t):t\in[T_1,T^*]\right\}$. Concatenating it with $\left\{\boldsymbol{\phi}(t):t\in[T^*,T_2]\right\}$ would produce a new path achieving the minimum for $S(\boldsymbol{x})$, but which would violate the fact that $\left\{\boldsymbol{\phi}(t):t\in[T_1,T_2],\boldsymbol{\phi}(T_1)=\boldsymbol{x}_{eq},\boldsymbol{\phi}(T_2)=\boldsymbol{x}\right\}$ ($-\infty\leq T_1<T_2\leq\infty$) is an (the unique) OP linking $\boldsymbol{x}_{eq}$ and $\boldsymbol{x}$.
			
			Suppose $\left\{\boldsymbol{\phi}(t+t_1):t\in[0,t_2-t_1]\right\}$ is not a (the unique) NOP that connects $\boldsymbol{\phi}(t_1)$ with $\boldsymbol{\phi}(t_2)$ in all possible time span. Then there must exist another NOP connecting $\boldsymbol{\phi}(t_1)$ with $\boldsymbol{\phi}(t_2)$ in some time span, so that its value of the rate function is no more than that of $\left\{\boldsymbol{\phi}(t+t_1):t\in[0,t_2-t_1]\right\}$. Concatenating it with 
			$\left\{\boldsymbol{\phi}(t):t\in[T_1,t_1]\right\}$ and $\left\{\boldsymbol{\phi}(t):t\in[t_2,T_2]\right\}$ would produce a new path achieving the minimum for $S(\boldsymbol{x})$, but which would violate the fact that $\{\boldsymbol{\phi}(t):t\in[T_1,T_2],\boldsymbol{\phi}(T_1)=\boldsymbol{x}_{eq},\boldsymbol{\phi}(T_2)=\boldsymbol{x}\}$ ($-\infty\leq T_1<T_2\leq\infty$) is an (the unique) OP linking $\boldsymbol{x}_{eq}$ and $\boldsymbol{x}$.
			
			(d) The existence of the optimal path follows from [\onlinecite[Corollary 1]{Day_1985}]. The OP can be represented by $\left\{\boldsymbol{\phi}_{\text{OP}}(t;\boldsymbol{x}):t\in(-\infty,0],\lim_{t\to-\infty}\boldsymbol{\phi}_{\text{OP}}(t;\boldsymbol{x})=\boldsymbol{x}_{eq},\boldsymbol{\phi}_{\text{OP}}(0;\boldsymbol{x})=\boldsymbol{x}\right\}$ as a result of [\onlinecite[Lemma 2]{Day_1985}].
			
			(e) The $\mathcal{C}^{k+1}$-smoothness of $(\boldsymbol{x}(t),\boldsymbol{\alpha}(t))$ and Eq. (\ref{eq020404}) were proved in [\onlinecite[Theorem 1]{Day_1985}]. Eq. (\ref{eq020405}) can be derived from [\onlinecite[Corollary 3]{Day_1985}].
			
			(f) The $\mathcal{C}^{k+1}$-smoothness of $S(\boldsymbol{x})$ can be found in [\onlinecite[Theorems 2; 6]{Day_1985}]. Referring to [\onlinecite[Corollary 5]{Day_1985}], it can be seen that $S(\boldsymbol{x})$ satisfies the stationary Hamilton-Jacobi equation (Eq. (\ref{eq020406})). In this case, the OP is naturally the unique solution of Eq. (\ref{eq020407}) as a result of part (e) and the $\mathcal{C}^{k+1}$-smoothness of $S(\boldsymbol{x})$.
		\end{proof}
		\section{Proof of Proposition \ref{theo0401}}\label{secA3}
		\begin{lemma}\label{theoA301}
			Assume the conditions of Proposition \ref{theo0401} hold. Let $\tau\triangleq\inf\{t>0:\hat{\boldsymbol{x}}_{V}(t)\notin\hat{\Sigma}_{\delta_0,[0,T^*]}\}$ be the first time for $\hat{\boldsymbol{x}}_{V}(t)$ to escape from $\hat{\Sigma}_{\delta_0,[0,T^*]}$. Then in order to show that
			\begin{equation*}
				\lim_{V\to\infty}P\left\{\sup_{t\in[0,T^*]}\left\vert\hat{\boldsymbol{x}}_{V}(t\wedge\tau)-\hat{\boldsymbol{x}}_{\infty}(t\wedge\tau)\right\vert>\delta\right\}=0,
			\end{equation*}
			for every sufficiently small $\delta>0$, it suffices to show that
			\begin{equation*}
				\lim_{V\to\infty}P\left\{\sup_{t\in[0,T^*]}\left\vert\hat{\boldsymbol{x}}_{V}(t\wedge\tau)-\boldsymbol{x}^{**}(\boldsymbol{x}_T,V)-\int_0^{t\wedge\tau}\boldsymbol{G}_V(\hat{\boldsymbol{x}}_{V}(s),s)\mathrm{d}s\right\vert>\delta\right\}=0,
			\end{equation*}
			for every sufficiently small $\delta>0$.
		\end{lemma}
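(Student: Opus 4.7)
The plan is to perform the standard ``Doob-Meyer''-type decomposition of the deviation and then apply Gronwall's inequality. Specifically, for $t\in[0,T^*]$ write
\begin{equation*}
    \hat{\boldsymbol{x}}_{V}(t\wedge\tau)-\hat{\boldsymbol{x}}_{\infty}(t\wedge\tau)
    =\boldsymbol{A}_V(t)+\boldsymbol{B}_V(t)+\boldsymbol{C}_V(t),
\end{equation*}
where $\boldsymbol{A}_V(t)\triangleq\hat{\boldsymbol{x}}_{V}(t\wedge\tau)-\boldsymbol{x}^{**}(\boldsymbol{x}_T,V)-\int_0^{t\wedge\tau}\boldsymbol{G}_V(\hat{\boldsymbol{x}}_{V}(s),s)\,\mathrm{d}s$ is the quantity controlled by the hypothesis, $\boldsymbol{B}_V(t)\triangleq\boldsymbol{x}^{**}(\boldsymbol{x}_T,V)-\boldsymbol{x}_T$ is the deterministic initial-condition error of order $O(1/V)$ that vanishes uniformly, and $\boldsymbol{C}_V(t)\triangleq\int_0^{t\wedge\tau}[\boldsymbol{G}_V(\hat{\boldsymbol{x}}_{V}(s),s)-\boldsymbol{G}(\hat{\boldsymbol{x}}_{\infty}(s),s)]\,\mathrm{d}s$ is the drift-mismatch term.

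The term $\boldsymbol{C}_V(t)$ is then split further as
\begin{equation*}
    \boldsymbol{C}_V(t)=\int_0^{t\wedge\tau}\!\bigl[\boldsymbol{G}_V(\hat{\boldsymbol{x}}_{V}(s),s)-\boldsymbol{G}(\hat{\boldsymbol{x}}_{V}(s),s)\bigr]\,\mathrm{d}s
    +\int_0^{t\wedge\tau}\!\bigl[\boldsymbol{G}(\hat{\boldsymbol{x}}_{V}(s),s)-\boldsymbol{G}(\hat{\boldsymbol{x}}_{\infty}(s),s)\bigr]\,\mathrm{d}s.
\end{equation*}
By construction, for $s\leq\tau$ we have $\hat{\boldsymbol{x}}_V(s)\in\hat{\Sigma}_{\delta_0,[0,T^*]}$, so Lemma \ref{theo0207} applied to the rates $\hat{r}_{\pm i}/r_{\mp i}$ yields uniform convergence $\boldsymbol{G}_V(\boldsymbol{x},s)\to\boldsymbol{G}(\boldsymbol{x},s)$ on $\hat{\Sigma}_{\delta_0,[0,T^*]}\times[0,T^*]$, so the first integral tends to zero uniformly in $t$. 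For the second integral, the $\mathcal{C}^{k+1}$-smoothness of $\nabla_{\boldsymbol{x}}S(\boldsymbol{x},T-t\mid\boldsymbol{x}_0)$ on $\hat{\Sigma}_{\delta_0,[0,T^*]}\times[0,T^*]$ (supplied by part (e) of Proposition \ref{theo0206}) together with the smoothness of $R_{\pm i}$ makes $\boldsymbol{G}(\cdot,s)$ Lipschitz on the closed tube, uniformly in $s\in[0,T^*]$.

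Taking suprema and combining the three bounds gives, for every sufficiently small $\delta>0$,
\begin{equation*}
    \sup_{u\in[0,t]}\bigl\vert\hat{\boldsymbol{x}}_{V}(u\wedge\tau)-\hat{\boldsymbol{x}}_{\infty}(u\wedge\tau)\bigr\vert
    \leq\sup_{u\in[0,T^*]}\vert\boldsymbol{A}_V(u)\vert+\vert\boldsymbol{B}_V\vert+\eta_V+L\int_0^{t}\sup_{u\in[0,s]}\bigl\vert\hat{\boldsymbol{x}}_{V}(u\wedge\tau)-\hat{\boldsymbol{x}}_{\infty}(u\wedge\tau)\bigr\vert\,\mathrm{d}s,
\end{equation*}
with $\eta_V\to 0$ as $V\to\infty$ and $L$ the Lipschitz constant. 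Gronwall's inequality then yields
\begin{equation*}
    \sup_{u\in[0,T^*]}\bigl\vert\hat{\boldsymbol{x}}_{V}(u\wedge\tau)-\hat{\boldsymbol{x}}_{\infty}(u\wedge\tau)\bigr\vert
    \leq\Bigl(\sup_{u\in[0,T^*]}\vert\boldsymbol{A}_V(u)\vert+\vert\boldsymbol{B}_V\vert+\eta_V\Bigr)e^{LT^*}.
\end{equation*}
If the right-hand side concentrates below $\delta$ with probability tending to $1$, so does the left-hand side, which is exactly the desired conclusion.

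The main obstacle is keeping the argument rigorous while working with the stopping time $\tau$: one must verify that the uniform estimates from Lemma \ref{theo0207} (which hold only on $\hat{\Sigma}_{\delta_0,[0,T^*]}$) are legitimately applied, since prior to $\tau$ the trajectory $\hat{\boldsymbol{x}}_V(s)$ genuinely lies in this tube. A clean way is to replace $\boldsymbol{G}_V$ by a version extended beyond the tube (e.g.\ by a Lipschitz extension of $\boldsymbol{G}$) and observe that this modification agrees with $\boldsymbol{G}_V$ on $[0,\tau]$, so the estimates above are unaffected. Everything else is either the triangle inequality, uniform convergence already supplied by the hypothesis, or Gronwall.
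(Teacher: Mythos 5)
Your proposal is correct and follows essentially the same route as the paper's proof: the identical four-term decomposition (martingale-type term, initial-condition error, $\boldsymbol{G}_V-\boldsymbol{G}$ mismatch via Lemma \ref{theo0207} combined with the convergence of $V^{-1}r_{\pm i}$ to $R_{\pm i}$, and the Lipschitz term), followed by Gronwall. Your suggested handling of the stopping time by extending the drift beyond the tube is exactly what the paper implements with its stopped process $\check{\boldsymbol{x}}_V$ and truncated rates $\check{r}_{\pm i}$, $\check{\boldsymbol{G}}_V$, $\check{\boldsymbol{G}}$.
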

		\begin{proof}
			Since $\hat{\boldsymbol{x}}_{V}(t)$ is a pure jump process with right continuous piecewise constant trajectories, we know that $\hat{\boldsymbol{x}}_{V}(\tau)\notin\hat{\Sigma}_{\delta_0,[0,T^*]}$. Let $\check{\boldsymbol{x}}_{V}(t)=\hat{\boldsymbol{x}}_{V}(t\wedge\tau)$. Then $\check{\boldsymbol{x}}_{V}(t)$ is a pure jump Markov process with the rate functions
			\begin{equation*}
				\check{r}_{\pm i}(V\boldsymbol{x},V,t)=\begin{cases}
					\hat{r}_{\pm i}(V\boldsymbol{x},V,t),\quad &\boldsymbol{x}\in\hat{\Sigma}_{\delta_0,[0,T^*]},\\
					0,\quad &\text{otherwise}.
				\end{cases}
			\end{equation*}
			Let
			\begin{equation*}
				\check{\boldsymbol{G}}_V(\boldsymbol{x},t)\triangleq{V}^{-1}\sum_{i=1}^{M}\boldsymbol{\nu}_i\left(\check{r}_{+i}(V\boldsymbol{x},V,t)-\check{r}_{-i}(V\boldsymbol{x},V,t)\right),
			\end{equation*}
			and 
			\begin{equation*}
				\check{\boldsymbol{G}}(\boldsymbol{x},t)\triangleq\begin{cases}
					\boldsymbol{G}(\boldsymbol{x},t),\quad &\boldsymbol{x}\in\hat{\Sigma}_{\delta_0,[0,T^*]},\\
					0,\quad &\text{otherwise},
				\end{cases}
			\end{equation*}
			we can obtain
			\begin{equation*}
				\begin{aligned}
					\vert\check{\boldsymbol{x}}_{V}(t)-\hat{\boldsymbol{x}}_{\infty}(t\wedge\tau)\vert=&\vert\hat{\boldsymbol{x}}_{V}(t\wedge\tau)-\hat{\boldsymbol{x}}_{\infty}(t\wedge\tau)\vert\\
					=&\left\vert\hat{\boldsymbol{x}}_{V}(t\wedge\tau)-\boldsymbol{x}_T-\int_0^{t\wedge\tau}\boldsymbol{G}(\hat{\boldsymbol{x}}_{\infty}(s),s)\mathrm{d}s\right\vert\\
					\leq&\left\vert\hat{\boldsymbol{x}}_{V}(t\wedge\tau)-\boldsymbol{x}^{**}(\boldsymbol{x}_T,V)-\int_0^{t\wedge\tau}\boldsymbol{G}_V(\hat{\boldsymbol{x}}_{V}(s),s)\mathrm{d}s\right\vert\\
					&+\left\vert\boldsymbol{x}^{**}(\boldsymbol{x}_T,V)-\boldsymbol{x}_T\right\vert\\
					&+\int_0^{t\wedge\tau}\left\vert\boldsymbol{G}_V(\hat{\boldsymbol{x}}_{V}(s),s)-\boldsymbol{G}(\hat{\boldsymbol{x}}_{V}(s),s)\right\vert\mathrm{d}s\\
					&+\int_0^{t\wedge\tau}\left\vert\boldsymbol{G}(\hat{\boldsymbol{x}}_{V}(s),s)-\boldsymbol{G}(\hat{\boldsymbol{x}}_{\infty}(s),s)\right\vert\mathrm{d}s\\
					=&\left\vert\check{\boldsymbol{x}}_{V}(t)-\boldsymbol{x}^{**}(\boldsymbol{x}_T,V)-\int_0^{t}\check{\boldsymbol{G}}_V(\check{\boldsymbol{x}}_{V}(s),s)\mathrm{d}s\right\vert\\
					&+\left\vert\boldsymbol{x}^{**}(\boldsymbol{x}_T,V)-\boldsymbol{x}_T\right\vert\\
					&+\int_0^{t}\left\vert\check{\boldsymbol{G}}_V(\check{\boldsymbol{x}}_{V}(s),s)-\check{\boldsymbol{G}}(\check{\boldsymbol{x}}_{V}(s),s)\right\vert\mathrm{d}s\\
					&+\int_0^{t\wedge\tau}\left\vert\check{\boldsymbol{G}}(\hat{\boldsymbol{x}}_{V}(s),s)-\check{\boldsymbol{G}}(\hat{\boldsymbol{x}}_{\infty}(s),s)\right\vert\mathrm{d}s.
				\end{aligned}
			\end{equation*}
			
			Notice that 
			\begin{equation*}
				\begin{aligned}
					\big\vert V^{-1}\hat{r}_{\pm i}&(V\boldsymbol{x},V,t)-R_{\mp i}(\boldsymbol{x})e^{\mp\boldsymbol{\nu}_i\cdot\nabla_{\boldsymbol{x}}S(\boldsymbol{x},T-t\vert\boldsymbol{x}_0)}\big\vert\\
					\leq&\left\vert\frac{p_V(\boldsymbol{x}\pm V^{-1}\boldsymbol{\nu}_i,T-t\vert\boldsymbol{x}^*(\boldsymbol{x}_0,V))}{p_V(\boldsymbol{x},T-t\vert\boldsymbol{x}^*(\boldsymbol{x}_0,V))}-e^{\mp\boldsymbol{\nu}_i\cdot\nabla_{\boldsymbol{x}}S(\boldsymbol{x},T-t\vert\boldsymbol{x}_0)}\right\vert V^{-1} r_{\mp i}(V\boldsymbol{x}\pm\boldsymbol{\nu}_i,V)\\
					&+\left\vert V^{-1} r_{\mp i}(V\boldsymbol{x}\pm\boldsymbol{\nu}_i,V)-R_{\mp i}(\boldsymbol{x})\right\vert e^{\mp\boldsymbol{\nu}_i\cdot\nabla_{\boldsymbol{x}}S(\boldsymbol{x},T-t\vert\boldsymbol{x}_0)}.
				\end{aligned}
			\end{equation*}
			The second term on the right goes to zero by the uniform convergence of $V^{-1}r_{\pm i}(V\boldsymbol{x},V)$ to $R_{\pm i}(\boldsymbol{x})$, the smoothness of $R_{\pm i}(\boldsymbol{x})$, and the boundedness of ${\nabla_{\boldsymbol{x}}S(\boldsymbol{x},T-t\vert\boldsymbol{x}_0)}$ for $\boldsymbol{x}\in\hat{\Sigma}_{\delta_0,[0,T^*]}$ and $t\in[0,T^*]$. 
			
			According to Lemma \ref{theo0207}, for each $\boldsymbol{x}\in\left\{\boldsymbol{y}\in\mathbb{R}_+^N:\boldsymbol{y}=\boldsymbol{x}^{*}(\boldsymbol{x}_0,V)+V^{-1}\boldsymbol{\nu}^\top\boldsymbol{k},\boldsymbol{k}\in\mathbb{Z}^M\right\}\cap\hat{\Sigma}_{\delta_0,[0,T^*]}$ and $t\in[0,T^{*}]$, there exists a point $\boldsymbol{z}\in\hat{\Sigma}_{\delta_0,[0,T^*]}$ satisfying $\boldsymbol{x}=\boldsymbol{x}^{**}(\boldsymbol{z},V)$, such that for sufficiently large $V$,
			\begin{equation*}
				\begin{aligned}
					&\bigg\vert\frac{p_V(\boldsymbol{x}\pm V^{-1}\boldsymbol{\nu}_i,T-t\vert\boldsymbol{x}^*(\boldsymbol{x}_0,V))}{p_V(\boldsymbol{x},T-t\vert\boldsymbol{x}^*(\boldsymbol{x}_0,V))}-e^{\mp\boldsymbol{\nu}_i\cdot\nabla_{\boldsymbol{x}}S(\boldsymbol{x},T-t\vert\boldsymbol{x}_0)}\bigg\vert\\
					&\quad\quad\quad\quad\quad\quad\leq\bigg\vert\frac{p_V(\boldsymbol{x}\pm V^{-1}\boldsymbol{\nu}_i,T-t\vert\boldsymbol{x}^*(\boldsymbol{x}_0,V))}{p_V(\boldsymbol{x},T-t\vert\boldsymbol{x}^*(\boldsymbol{x}_0,V))}-e^{\mp\boldsymbol{\nu}_i\cdot\nabla_{\boldsymbol{x}}S(\boldsymbol{z},T-t\vert\boldsymbol{x}_0)}\bigg\vert\\
					&\quad\quad\quad\quad\quad\quad\quad+\big\vert e^{\mp\boldsymbol{\nu}_i\cdot\nabla_{\boldsymbol{x}}S(\boldsymbol{x},T-t\vert\boldsymbol{x}_0)}-e^{\mp\boldsymbol{\nu}_i\cdot\nabla_{\boldsymbol{x}}S(\boldsymbol{z},T-t\vert\boldsymbol{x}_0)}\big\vert\\
					&\quad\quad\quad\quad\quad\quad={o}(1)+{O}({1}/{V}).
				\end{aligned}
			\end{equation*}
			Combined with the boundedness of $V^{-1}r_{\pm i}(V\boldsymbol{x},V)$, we get
			\begin{equation*}
				\lim_{V\to\infty}V^{-1}\hat{r}_{\pm i}(V\boldsymbol{x},V,t)=R_{\mp i}(\boldsymbol{x})e^{\mp\boldsymbol{\nu}_i\cdot\nabla_{\boldsymbol{x}}S(\boldsymbol{x},T-t\vert\boldsymbol{x}_0)},
			\end{equation*}
			uniformly for $\boldsymbol{x}\in\left\{\boldsymbol{y}\in\mathbb{R}_+^N:\boldsymbol{y}=\boldsymbol{x}^{*}(\boldsymbol{x}_0,V)+V^{-1}\boldsymbol{\nu}^\top\boldsymbol{k},\boldsymbol{k}\in\mathbb{Z}^M\right\}\cap\hat{\Sigma}_{\delta_0,[0,T^*]}$ and $t\in[0,T^*]$. Hence, for any $\eta>0$, there exists a constant $V_0$ such that once $V>V_0$,
			\begin{equation*}
				\left\vert\check{\boldsymbol{G}}_V(\boldsymbol{x},t)-\check{\boldsymbol{G}}(\boldsymbol{x},t)\right\vert<\eta.
			\end{equation*}
			
			Furthermore, by the $\mathcal{C}^{1}$-smoothness of $G(\boldsymbol{x},t)$ in $\hat{\Sigma}_{\delta_0,[0,T^*]}\times[0,T^*]$ and the precompactness of $\hat{\Sigma}_{\delta_0,[0,T^*]}\times[0,T^*]$ in $\mathbb{R}_+^N\times[0,T]$, we know that for sufficiently small $\delta_0$,
			\begin{equation*}
				L_1\triangleq\sup_{(\boldsymbol{x},t)\in\hat{\Sigma}_{\delta_0,[0,T^*]}\times[0,T^*]}\vert\nabla_{\boldsymbol{x}}G(\boldsymbol{x},t)\vert<\infty.
			\end{equation*}
			Consequently,
			\begin{equation*}
				\begin{aligned}
					\int_0^{t\wedge\tau}\left\vert\check{\boldsymbol{G}}(\hat{\boldsymbol{x}}_{V}(s),s)-\check{\boldsymbol{G}}(\hat{\boldsymbol{x}}_{\infty}(s),s)\right\vert\mathrm{d}s\leq&L_1\int_0^{t\wedge\tau}\left\vert\hat{\boldsymbol{x}}_{V}(s)-\hat{\boldsymbol{x}}_{\infty}(s)\right\vert\mathrm{d}s\\
					=&L_1\int_0^{t\wedge\tau}\left\vert\check{\boldsymbol{x}}_{V}(s)-\hat{\boldsymbol{x}}_{\infty}(s\wedge\tau)\right\vert\mathrm{d}s\\
					\leq&L_1\int_0^{t}\left\vert\check{\boldsymbol{x}}_{V}(s)-\hat{\boldsymbol{x}}_{\infty}(s\wedge\tau)\right\vert\mathrm{d}s.
				\end{aligned}
			\end{equation*}
			
			Combined with the fact that for sufficiently large $V$,
			\begin{equation*}
				\vert\boldsymbol{x}^{**}(\boldsymbol{x}_T,V)-\boldsymbol{x}_T\vert<\eta,
			\end{equation*}
			and the Gronwall's inequality, we have 
			\begin{equation*}
				\sup_{t\in[0,T^*]}\left\vert\check{\boldsymbol{x}}_{V}(t)-\hat{\boldsymbol{x}}_{\infty}(t\wedge\tau)\right\vert\leq\left(\Delta_{V}+\eta(1+T^*)\right)e^{L_1T^*},
			\end{equation*}
			in which 
			$\Delta_{V}\triangleq\sup_{t\in[0,T^*]}\left\vert\check{\boldsymbol{x}}_{V}(t)-\boldsymbol{x}^{**}(\boldsymbol{x}_T,V)-\int_0^{t}\check{\boldsymbol{G}}_V(\check{\boldsymbol{x}}_{V}(s),s)\mathrm{d}s\right\vert$. This completes the proof.
		\end{proof}
		\begin{lemma}\label{theoA302}
			Assume the conditions of Proposition \ref{theo0401} hold. Let $\varphi:\mathbb{R}\to\mathbb{R}$ be a nonnegative, even, convex function such that both $\varphi$ and $\varphi{'}$ are absolutely continuous. If $\varphi(0)=\varphi{'}(0)=0$ and $\varphi{''}$ is non-negative and non-increasing on $(0,+\infty)$ (for example, $\varphi(u)=\vert{u}\vert^{\theta},\,1<\theta\leq{2}$), then for each $\delta>0$,
			\begin{equation*}
				P\left\{\Delta_{V}>\delta\right\}\leq\frac{8T^{*}\Gamma_2V}{N\varphi(\delta)}\sum_{i=1}^{M}\sum_{j=1}^{N}\varphi\left(\frac{C(N)\nu_{ij}}{2V}\right),
			\end{equation*}
			where
			\begin{equation*}
				\Gamma_2\triangleq\sup_{(\boldsymbol{x},t)\in\hat{\Sigma}_{\delta_0,[0,T^*]}\times[0,T^*],\,1\leq i\leq M}V^{-1}\hat{r}_{\pm i}(V\boldsymbol{x},V,t)<\infty.
			\end{equation*}
			
		\end{lemma}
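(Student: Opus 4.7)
The plan is to exploit the fact that the process
$$\boldsymbol{M}_V(t) \triangleq \check{\boldsymbol{x}}_V(t) - \boldsymbol{x}^{**}(\boldsymbol{x}_T,V) - \int_0^t \check{\boldsymbol{G}}_V(\check{\boldsymbol{x}}_V(s),s)\,\mathrm{d}s$$
is a vector-valued martingale. Indeed, from the Poisson representation analogous to Eq. (\ref{eq0403}), one has
$$\boldsymbol{M}_V(t) = V^{-1}\sum_{i=1}^{M}\boldsymbol{\nu}_i\left\{\tilde{Y}_{+i}\!\left(\textstyle\int_0^t \check{r}_{+i}\,\mathrm{d}s\right) - \tilde{Y}_{-i}\!\left(\textstyle\int_0^t \check{r}_{-i}\,\mathrm{d}s\right)\right\},$$
where $\tilde{Y}_{\pm i}(u) = Y_{\pm i}(u) - u$ are the compensated standard Poisson processes, which are martingales under the natural filtration. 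Hence each scalar component $M_V^j(t)$ is a real martingale with bounded jumps of size $\pm \nu_{ij}/V$.

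I would then reduce the vector estimate to componentwise estimates: since $\{\Delta_V > \delta\} \subset \bigcup_j \{\sup_{t \leq T^*} |M_V^j(t)| > \delta/\sqrt{N}\}$, a union bound gives
$$P(\Delta_V > \delta) \leq \sum_{j=1}^N P\!\left(\sup_{t\in[0,T^*]} |M_V^j(t)| > \delta/\sqrt{N}\right).$$
Because $\varphi$ is nonnegative, even, and convex, $\varphi(M_V^j(t))$ is a nonnegative submartingale, so Doob's maximal inequality yields
$$P\!\left(\sup_{t\in[0,T^*]} |M_V^j(t)| > \delta/\sqrt{N}\right) \leq \frac{\mathbb{E}[\varphi(M_V^j(T^*))]}{\varphi(\delta/\sqrt{N})}.$$

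The heart of the proof is bounding $\mathbb{E}[\varphi(M_V^j(T^*))]$. The key pointwise inequality, valid because $\varphi''$ is nonnegative and nonincreasing on $(0,\infty)$ and $\varphi(0)=\varphi'(0)=0$, is
$$\varphi(x+y) - \varphi(x) - y\,\varphi'(x) \leq \varphi(y), \qquad x,y\in\mathbb{R},$$
obtained by writing the left side as $\int_0^y (y-s)\varphi''(x+s)\,\mathrm{d}s$ and using monotonicity of $\varphi''$. Applying the Dynkin formula for the jump process $M_V^j$, every jump at reaction $\pm i$ contributes to the compensator of $\varphi(M_V^j)$ a term bounded by $\check{r}_{\pm i}\cdot \varphi(\nu_{ij}/V)$; integrating in time and using the uniform bound $V^{-1}\check{r}_{\pm i} \leq \Gamma_2$ gives
$$\mathbb{E}[\varphi(M_V^j(T^*))] \leq 2T^*\Gamma_2 V \sum_{i=1}^M \varphi(\nu_{ij}/V).$$
A second application of the $\varphi''$-monotonicity yields the scaling $\varphi(\lambda u) \leq \lambda^2 \varphi(u)$ for $\lambda \geq 1$, which both converts $\varphi(\delta/\sqrt{N})$ in the denominator to $\varphi(\delta)/N$ and, after choosing an appropriate $C(N)$, absorbs the resulting factors into the form $\varphi(C(N)\nu_{ij}/(2V))$ asserted in the statement. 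Assembling all estimates delivers the claimed bound.

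The principal difficulty is step of bounding the expectation $\mathbb{E}[\varphi(M_V^j(T^*))]$: one must apply an Itô/Dynkin-type formula for a compensated jump martingale and use the convexity-plus-monotonicity structure of $\varphi$ to control the compensator by a sum over reaction channels, rather than by a term that blows up with the number of jumps. Beyond this, tracking the constants through the scaling $\varphi(\lambda u) \leq \lambda^2 \varphi(u)$ and the componentwise reduction is purely bookkeeping, but it is where the specific constants $8$, $1/N$, and $C(N)/2$ in the statement emerge.
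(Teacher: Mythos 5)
Your overall strategy is the same as the paper's: identify $\boldsymbol{\xi}_V(t)=\check{\boldsymbol{x}}_V(t)-\boldsymbol{x}^{**}(\boldsymbol{x}_T,V)-\int_0^t\check{\boldsymbol{G}}_V(\check{\boldsymbol{x}}_V(s),s)\,\mathrm{d}s$ as a compensated-Poisson martingale, apply a martingale maximal inequality to $\varphi$ of it, and control $E[\varphi]$ through the generator (Dynkin/Kurtz's Lemma 2.9) using the bound $V^{-1}\check{r}_{\pm i}\leq\Gamma_2$. The only structural difference is that you reduce to components by a union bound with threshold $\delta/\sqrt{N}$, whereas the paper keeps a single maximal inequality for $|\boldsymbol{\xi}_V|$ and uses the convexity bound $\varphi(|\boldsymbol{\xi}|)\leq N^{-1}\sum_j\varphi(C(N)\xi_j)$; this is only a matter of which $N$-dependent constants appear.

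There is, however, a genuine gap: your ``key pointwise inequality'' $\varphi(x+y)-\varphi(x)-y\varphi'(x)\leq\varphi(y)$ is false for the class of $\varphi$ admitted by the lemma. Writing the left side as $\int_0^y(y-s)\varphi''(x+s)\,\mathrm{d}s$, the comparison $\varphi''(x+s)\leq\varphi''(s)$ that you invoke uses that $\varphi''$ is even and non-increasing on $(0,\infty)$, hence requires $|x+s|\geq s$; this holds when $x$ and $y$ have the same sign but fails when the martingale crosses the origin. Concretely, for $\varphi(u)=|u|^{3/2}$, $x=-1/2$, $y=1$ one computes $\varphi(x+y)-\varphi(x)-y\varphi'(x)=\tfrac{3}{2}(1/2)^{1/2}\approx1.061>1=\varphi(y)$. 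The correct estimate, which the paper takes from Kurtz, is
\begin{equation*}
\varphi(z+u)-\varphi(z)-u\varphi'(z)=\int_0^u\!\!\int_0^v\varphi''(z+w)\,\mathrm{d}w\,\mathrm{d}v\leq 2\int_0^{|u|}\!\!\int_0^{v/2}\varphi''(w)\,\mathrm{d}w\,\mathrm{d}v=4\varphi\!\left(\tfrac{1}{2}|u|\right),
\end{equation*}
where the middle inequality is the rearrangement observation that the integral of the symmetric, radially non-increasing function $\varphi''$ over an interval of length $v$ is maximized by the interval centered at the origin. This $4\varphi(|u|/2)$ is precisely the source of the factor $1/2$ inside $\varphi$ and of the constant $8=4\times2$ in the stated bound (the extra $2$ coming from $\check{r}_{+i}+\check{r}_{-i}\leq2\Gamma_2V$); your version would lose both. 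Note that for $\varphi(u)=u^2$ your inequality happens to hold with equality, which is all that the proof of Proposition \ref{theo0401} actually uses, but the lemma as stated covers, e.g., $\varphi(u)=|u|^\theta$ with $1<\theta<2$, where your step breaks down.
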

		\begin{proof}
			Define $\boldsymbol{\xi}_{V}(t)\triangleq\check{\boldsymbol{x}}_{V}(t)-\boldsymbol{x}^{**}(\boldsymbol{x}_T,V)-\int_0^{t}\check{\boldsymbol{G}}_V(\check{\boldsymbol{x}}_{V}(s),s)\mathrm{d}s$. Then $\boldsymbol{\xi}_{V}(t)$ forms a martingale. The martingale inequality implies
			\begin{equation*}
				P\left\{\sup_{t\in[0,T^*]}\left\vert\boldsymbol{\xi}_{V}(t)\right\vert>\delta\right\}\leq[\varphi(\delta)]^{-1}E[\varphi(\vert\boldsymbol{\xi}_{V}(T^*)\vert)].
			\end{equation*}
			Therefore, the problem is to estimate the expectation on the right-hand side.
			
			Let $\boldsymbol{\zeta}_V(t)=(\check{\boldsymbol{x}}_{V}(t),\boldsymbol{\xi}_{V}(t))^{\top}$, we know that $\boldsymbol{\zeta}_V(t)$ is a Markov process. For any bounded continuously differentiable function $f(\boldsymbol{x},\boldsymbol{\xi})$, the infinitesimal generator of $\boldsymbol{\zeta}_V(t)$ is given by
			\begin{equation*}
				\begin{aligned}
					\mathscr{A}_{\boldsymbol{x},\boldsymbol{\xi},t}f(&\boldsymbol{x},\boldsymbol{\xi})\triangleq\lim_{h\downarrow 0}\frac{E_{(\check{\boldsymbol{x}}_{V}(t),\boldsymbol{\xi}_{V}(t))=(\boldsymbol{x},\boldsymbol{\xi})}f(\check{\boldsymbol{x}}_{V}(t+h),\boldsymbol{\xi}_{V}(t+h))-f(\boldsymbol{x},\boldsymbol{\xi})}{h}\\
					=&\sum_{i=1}^{M}\left(f(\boldsymbol{x}+V^{-1}\boldsymbol{\nu}_i,\boldsymbol{\xi}+V^{-1}\boldsymbol{\nu}_i)-f(\boldsymbol{x},\boldsymbol{\xi})-V^{-1}\boldsymbol{\nu}_{i}\cdot\nabla_{\boldsymbol{\xi}}f(\boldsymbol{x},\boldsymbol{\xi})\right)\check{r}_{+i}(V\boldsymbol{x},V,t)\\
					&+\sum_{i=1}^{M}\left(f(\boldsymbol{x}-V^{-1}\boldsymbol{\nu}_i,\boldsymbol{\xi}-V^{-1}\boldsymbol{\nu}_i)-f(\boldsymbol{x},\boldsymbol{\xi})+V^{-1}\boldsymbol{\nu}_{i}\cdot\nabla_{\boldsymbol{\xi}}f(\boldsymbol{x},\boldsymbol{\xi})\right)\check{r}_{-i}(V\boldsymbol{x},V,t).
				\end{aligned}
			\end{equation*}
			Notice that there exists a constant $C(N)$ so that 
			\begin{equation*}
				\varphi(\vert\boldsymbol{\xi}\vert)\leq\varphi\left(C(N)\frac{\vert{\xi}_1\vert+\cdots+\vert{\xi}_N\vert}{N}\right)\leq N^{-1}\sum_{j=1}^{N}\varphi(C(N)\vert{\xi}_{j}\vert)=N^{-1}\sum_{j=1}^{N}\varphi(C(N){\xi}_{j}).
			\end{equation*}
			Let $f(\boldsymbol{x},\boldsymbol{\xi})=N^{-1}\sum_{j=1}^{N}\varphi(C(N){\xi}_{j})$. Lemma 2.9 of [\onlinecite{Kurtz_1971}] gives
			\begin{equation*}
				E[\varphi(\vert\boldsymbol{\xi}_{V}(T^*)\vert)]\leq E[f(\boldsymbol{\xi}_{V}(T^*))]\leq f(0)+\int_{0}^{T^*}E[\mathscr{A}_{\boldsymbol{x},\boldsymbol{\xi},t}f(\check{\boldsymbol{x}}_V(t),\boldsymbol{\xi}_V(t))]\mathrm{d}t.
			\end{equation*}
			Then, the fact \cite{Kurtz_1972b}
			\begin{equation*}
				\begin{aligned}
					\varphi(z+u)-\varphi(z)-u\varphi'(z)=&\int_{0}^{u}\int_{0}^{v}\varphi{''}(z+w)\mathrm{d}w\mathrm{d}v\\
					\leq&2\int_{0}^{\vert u\vert}\int_{0}^{v/2}\varphi{''}(w)\mathrm{d}w\mathrm{d}v\\
					=&4\varphi(\frac{1}{2}\vert u\vert),
				\end{aligned}
			\end{equation*}
			implies
			\begin{equation*}
				\begin{aligned}
					E[\varphi(\vert\boldsymbol{\xi}_{V}(T^*)\vert)]\leq& 4N^{-1}\sum_{i=1}^{M}\sum_{j=1}^{N}\varphi(\frac{C(N)\nu_{ij}}{2V})\int_{0}^{T^*} E[\check{r}_{+i}(V\check{\boldsymbol{x}}_V(t),V,t)+\check{r}_{-i}(V\check{\boldsymbol{x}}_V(t),V,t)]\mathrm{d}t\\
					\leq&8T^{*}\Gamma_2VN^{-1}\sum_{i=1}^{M}\sum_{j=1}^{N}\varphi(\frac{C(N)\nu_{ij}}{2V}).
				\end{aligned}
			\end{equation*}		
			This completes the lemma.
		\end{proof}
		\begin{proof}[Proof of proposition \ref{theo0401}]
			Let $\varphi(u)=u^2$, then Lemmas \ref{theoA301} and \ref{theoA302} imply that for any $T^*\in(0,T)$, there is a constant $\delta_0$ so that for each $\eta>0$,
			\begin{equation*}
				\lim_{V\to\infty}P\left\{\sup_{t\in[0,T^*]}\left\vert\hat{\boldsymbol{x}}_{V}(t\wedge\tau)-\hat{\boldsymbol{x}}_{\infty}(t\wedge\tau)\right\vert>\eta\right\}=0.
			\end{equation*}
			If $\sup_{t\in[0,T^*]}\left\vert\hat{\boldsymbol{x}}_{V}(t\wedge\tau)-\hat{\boldsymbol{x}}_{\infty}(t\wedge\tau)\right\vert\leq\eta\leq\delta<\delta_0$, then we know that $\hat{\boldsymbol{x}}_{V}(t\wedge\tau)\in\hat{\Sigma}_{\delta,[0,T^*]}\subset\hat{\Sigma}_{\delta_0,[0,T^*]}$ for $t\in[0,T^*]$, and hence $T^*<\tau$. Consequently, $\sup_{t\in[0,T^*]}\left\vert\hat{\boldsymbol{x}}_{V}(t)-\hat{\boldsymbol{x}}_{\infty}(t)\right\vert\leq\delta$, and this theorem follows. 
		\end{proof}
		
		\section{Proof of Proposition \ref{theo0402}}\label{secA4}
		\begin{lemma}\label{theoA401}
			Assume the conditions of Proposition \ref{theo0402} hold. Then $\boldsymbol{\mu}_V(t)$ is tight (more precisely, $\mathcal{C}$-tight) in $\mathcal{D}([0,T^*];\mathbb{R}^N)$.
		\end{lemma}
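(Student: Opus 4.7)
The plan is to decompose $\boldsymbol{\mu}_V(t)$ into a martingale piece plus a drift remainder and close the system via Gronwall's inequality, then verify Aldous' tightness criterion. Setting $\boldsymbol{M}_V(t) \triangleq \hat{\boldsymbol{x}}_V(t) - \boldsymbol{x}^{**}(\boldsymbol{x}_T,V) - \int_0^t \boldsymbol{G}_V(\hat{\boldsymbol{x}}_V(s),s)\,\mathrm{d}s$, which is a local martingale by construction, yields the identity
\begin{equation*}
\boldsymbol{\mu}_V(t) = \sqrt{V}\boldsymbol{M}_V(t) + \sqrt{V}[\boldsymbol{x}^{**}(\boldsymbol{x}_T,V)-\boldsymbol{x}_T] + \int_0^t \sqrt{V}[\boldsymbol{G}_V(\hat{\boldsymbol{x}}_V(s),s) - \boldsymbol{G}(\hat{\boldsymbol{x}}_V(s),s)]\,\mathrm{d}s + \int_0^t \sqrt{V}[\boldsymbol{G}(\hat{\boldsymbol{x}}_V(s),s) - \boldsymbol{G}(\hat{\boldsymbol{x}}_\infty(s),s)]\,\mathrm{d}s.
\end{equation*}
Since Proposition \ref{theo0401} guarantees $P(\tau \leq T^*) \to 0$ for $\tau$ the first exit time from $\hat{\Sigma}_{\delta_0,[0,T^*]}$, I would localize the whole argument to the event $\{\tau > T^*\}$, where transition rates and derivatives of $\boldsymbol{G}$ admit uniform bounds; this is implemented by working with the stopped martingale $\boldsymbol{M}_V(\cdot \wedge \tau)$ and transferring back at the end.

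Next I would estimate the martingale term using its predictable quadratic variation,
\begin{equation*}
\langle \sqrt{V}\boldsymbol{M}_V \rangle_t = \int_0^t \boldsymbol{J}_V(\hat{\boldsymbol{x}}_V(s),s)\,\mathrm{d}s,
\end{equation*}
whose trace is bounded on the good event by $T^* \Gamma_2 \sum_i |\boldsymbol{\nu}_i|^2$. Doob's $L^2$ inequality applied componentwise gives $\sup_V E[\sup_{t\leq T^*} V|\boldsymbol{M}_V(t)|^2] < \infty$. A first-order Taylor expansion rewrites the last integrand as $\nabla_{\boldsymbol{x}} \boldsymbol{G}(\tilde{\boldsymbol{x}}_V(s),s) \cdot \boldsymbol{\mu}_V(s)$ for an intermediate point $\tilde{\boldsymbol{x}}_V(s)$, while assumption (\ref{eq0406}) drives the penultimate integrand to zero uniformly at the $\sqrt{V}$-scale. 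Because $\nabla_{\boldsymbol{x}} \boldsymbol{G}$ is bounded on $\hat{\Sigma}_{\delta_0,[0,T^*]} \times [0,T^*]$, Gronwall's inequality combined with the martingale estimate produces the compact-containment bound $\sup_V E[\sup_{t\leq T^*}|\boldsymbol{\mu}_V(t)|^2] < \infty$. For the oscillation ingredient of Aldous' criterion, for stopping times $\sigma_V \leq \tau_V \leq (\sigma_V + \delta) \wedge T^*$, the martingale increment contributes $O(\sqrt{\delta})$ by the same quadratic variation bound, and the absolutely continuous terms contribute $O(\delta)$ thanks to the uniform $L^2$ control of $\boldsymbol{\mu}_V$ and (\ref{eq0406}); taking $\delta$ small provides the required modulus.

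Finally, to upgrade from Skorohod tightness to $\mathcal{C}$-tightness I would simply observe that the jumps of $\boldsymbol{\mu}_V$ are bounded by $V^{-1/2} \max_i |\boldsymbol{\nu}_i| \to 0$, so every weak limit point is almost surely continuous. I expect the main obstacle to be the Gronwall closure: one must guarantee that the Taylor remainder for $\boldsymbol{G}$ is genuinely lower order after multiplication by $\sqrt{V}$, which in turn requires the $\mathcal{C}^{k+1}$-smoothness of $S$ inherited from part (e) of Proposition \ref{theo0206} (and hence of $\boldsymbol{G}$) together with the uniform convergence (\ref{eq0406}). A secondary nuisance is the synchronization of the localization by $\tau$ with the martingale identity, resolved by the standard optional-stopping argument and the vanishing probability $P(\tau \leq T^*) \to 0$.
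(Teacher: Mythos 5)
Your proposal is correct and follows essentially the same route as the paper's proof: the same martingale-plus-drift decomposition of $\boldsymbol{\mu}_V$, localization to $\hat{\Sigma}_{\delta_0,[0,T^*]}$ via the exit time $\tau$ with $P(\tau\leq T^*)\to 0$, a second-moment bound on the martingale part through its quadratic variation (the paper packages this as Lemma \ref{theoA302} with $\varphi(u)=u^2$ rather than Doob's $L^2$ inequality), and a Gronwall closure using the Lipschitz bound $L_1$ on $\nabla_{\boldsymbol{x}}\boldsymbol{G}$. The only cosmetic difference is that you verify Aldous' criterion with stopping times and then kill the jumps to get $\mathcal{C}$-tightness, whereas the paper checks the deterministic-increment conditions of Jacod--Shiryaev's $\mathcal{C}$-tightness criterion directly; also note that for this lemma no Taylor remainder control is actually needed—the Lipschitz estimate $\vert\boldsymbol{G}(\boldsymbol{x},s)-\boldsymbol{G}(\hat{\boldsymbol{x}}_\infty(s),s)\vert\leq L_1\vert\boldsymbol{x}-\hat{\boldsymbol{x}}_\infty(s)\vert$ already scales correctly under multiplication by $\sqrt{V}$, so the obstacle you flag arises only in the finite-dimensional-distribution step (Lemma \ref{theoA402}).
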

		\begin{proof}
			According to [\onlinecite[Chapter VI, Proposition 3.26]{Jacod_2013}], in order to prove the tightness (more precisely, the $\mathcal{C}$-tightness) of the sequence $\boldsymbol{\mu}_V(t)$, it suffices to show that for any $\varepsilon>0$, there are $K>0$ and $V_{1}>0$ with
			\begin{equation}\label{eq64}
				\sup_{V>V_{1}}P\left\{\sup_{t\in[0,T^*]}\left\vert\boldsymbol{\mu}_{V}(t)\right\vert>K\right\}<\varepsilon,
			\end{equation}
			and for any $\eta>0$, $\varepsilon>0$, there are $\gamma>0$ and $V_{2}>0$ with
			\begin{equation}\label{eq65}
				\sup_{V>V_{2}}P\left\{\sup_{0\leq t_1\leq t_2\leq T^*,\,t_2-t_1\leq \gamma}\left\vert\boldsymbol{\mu}_{V}(t_2)-\boldsymbol{\mu}_{V}(t_1)\right\vert>\eta\right\}<\varepsilon.
			\end{equation}
			
			Using the assumptions in this lemma and the results in Proposition \ref{theo0401}, we know that if $V>\frac{K^2}{\delta^2_0}$, 
			\begin{equation*}
				\begin{aligned}
					P\left\{\sup_{t\in[0,T^*]}\left\vert\boldsymbol{\mu}_{V}(t)\right\vert\leq K\right\}=&P\left\{\sup_{t\in[0,T^*]}\left\vert\hat{\boldsymbol{x}}_V(t)-\hat{\boldsymbol{x}}_{\infty}(t)\right\vert\leq \frac{K}{\sqrt{V}}<\delta_0\right\}\\
					=&P\left\{\sup_{t\in[0,T^*]}\left\vert\hat{\boldsymbol{x}}_V(t\wedge\tau)-\hat{\boldsymbol{x}}_{\infty}(t\wedge\tau)\right\vert\leq \frac{K}{\sqrt{V}}<\delta_0\right\}\\
					=&P\left\{\sup_{t\in[0,T^*]}\left\vert\boldsymbol{\mu}_{V}(t\wedge\tau)\right\vert\leq K\right\},
				\end{aligned}
			\end{equation*}
			and for sufficiently large $V$ (for example, $V>V^*(K)$),
			\begin{equation*}
				\sqrt{V}\vert\boldsymbol{x}^{**}(\boldsymbol{x}_T,V)-\boldsymbol{x}_T\vert<\frac{K}{3e^{L_1T^*}},
			\end{equation*} 
			\begin{equation*}
				\sup_{(\boldsymbol{x},t)\in\hat{\Sigma}_{\delta_0,[0,T^*]}\times[0,T^*]}\sqrt{V}\vert{\boldsymbol{G}}_V(\boldsymbol{x},t)-{\boldsymbol{G}}(\boldsymbol{x},t)\vert\leq\frac{K}{3T^*e^{L_1T^*}}.
			\end{equation*}
			Combined with the fact
			\begin{equation*}
				\sup_{t\in[0,T^*]}\left\vert\boldsymbol{\mu}_{V}(t\wedge\tau)\right\vert\leq\sqrt{V}\left(\Delta_{V}+\vert\boldsymbol{x}^{**}(\boldsymbol{x}_T,V)-\boldsymbol{x}_T\vert+\int_0^{T^*}\left\vert\check{\boldsymbol{G}}_V(\check{\boldsymbol{x}}_{V}(s),s)-\check{\boldsymbol{G}}(\check{\boldsymbol{x}}_{V}(s),s)\right\vert\mathrm{d}s\right)e^{L_1T^*},
			\end{equation*}
			we obtain
			\begin{equation*}
				P\left\{\sup_{t\in[0,T^*]}\left\vert\boldsymbol{\mu}_{V}(t)\right\vert>K\right\}\leq P\left\{\Delta_{V}>\frac{K}{3\sqrt{V}e^{L_1T^*}}\right\}\leq\frac{8T^{*}\Gamma_2V}{N\varphi(\frac{K}{3\sqrt{V}e^{L_1T^*}})}\sum_{i=1}^{M}\sum_{j=1}^{N}\varphi(\frac{C(N)\nu_{ij}}{2V}).
			\end{equation*}
			Set $\varphi(u)=u^2$, let $K$ be selected so that
			\begin{equation*}
				\frac{8T^{*}\Gamma_2V}{N\varphi(\frac{K}{3\sqrt{V}e^{L_1T^*}})}\sum_{i=1}^{M}\sum_{j=1}^{N}\varphi(\frac{C(N)\nu_{ij}}{2V})<\varepsilon,
			\end{equation*}
			and define $V_{1}=\max({K^2}/{\delta^2_0},V^*(K))$, then we can get Eq. (\ref{eq64}).
			
			Utilizing the Markov property, it can be shown easily that for sufficiently large $V$
			\begin{equation*}
				P\left\{\sup_{0\leq t_1\leq t_2\leq T^*}\left\vert\boldsymbol{\mu}_{V}(t_2)-\boldsymbol{\mu}_{V}(t_1)\right\vert>\eta\right\}\leq\frac{8(t_2-t_1)\Gamma_2V}{N\varphi(\frac{\eta}{3\sqrt{V}e^{L_1T^*}})}\sum_{i=1}^{M}\sum_{j=1}^{N}\varphi(\frac{C(N)\nu_{ij}}{2V}).
			\end{equation*}
			Set $\varphi(u)=u^2$, choose sufficiently small $\gamma$ such that $t_2-t_1\leq\gamma$,
			\begin{equation*}
				\frac{8\gamma\Gamma_2V}{N\varphi(\frac{\eta}{3\sqrt{V}e^{L_1T^*}})}\sum_{i=1}^{M}\sum_{j=1}^{N}\varphi(\frac{C(N)\nu_{ij}}{2V})<\varepsilon,
			\end{equation*}
			and let $V_{2}=\max({\gamma^2}/{\delta^2_0},V^*(\gamma))$, then we can achieve Eq. (\ref{eq65}).
		\end{proof}
		\begin{lemma}\label{theoA402}
			Under the hypotheses of Proposition \ref{theo0402}, the finite-dimensional distributions of $\boldsymbol{\mu}_V(t)$ on the interval $[0,T^*]$ converge to the finite-dimensional distributions of the diffusion $\boldsymbol{\mu}_{\infty}(t)$.
		\end{lemma}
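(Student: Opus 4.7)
The plan is to prove convergence of joint characteristic functions, from which finite-dimensional distributional convergence follows. By iterating the Markov property of $\hat{\boldsymbol{x}}_V(t)$, the joint characteristic function $E[\exp(i\sum_{j=1}^{k}\boldsymbol{\theta}_{j}\cdot\boldsymbol{\mu}_V(t_j))]$ at times $0\leq t_1<\cdots<t_k\leq T^*$ reduces to a composition of conditional characteristic functions $\Phi_V(\boldsymbol{\theta},t;s,\boldsymbol{y})\triangleq E[\exp(i\boldsymbol{\theta}\cdot\boldsymbol{\mu}_V(t))\mid\boldsymbol{\mu}_V(s)=\boldsymbol{y}]$. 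It therefore suffices to show $\Phi_V\to\Phi$ uniformly for $\boldsymbol{y}$ in compact sets, and I sketch this for the representative case $s=0$, $\boldsymbol{y}=\boldsymbol{0}$; the general case is analogous since the hypotheses of Proposition~\ref{theo0401} and Lemma~\ref{theo0207} are invariant under the corresponding shift.

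The process $\boldsymbol{\mu}_V(t)=\sqrt{V}(\hat{\boldsymbol{x}}_V(t)-\hat{\boldsymbol{x}}_\infty(t))$ is a non-homogeneous Markov process whose generator acts on smooth $g(\boldsymbol{\mu})$ via
\begin{equation*}
\mathscr{B}_{V,t}g(\boldsymbol{\mu})=\sum_{i=1}^{M}[g(\boldsymbol{\mu}+V^{-1/2}\boldsymbol{\nu}_i)-g(\boldsymbol{\mu})]\hat{r}_{+i}(V\boldsymbol{x},V,t)+\sum_{i=1}^{M}[g(\boldsymbol{\mu}-V^{-1/2}\boldsymbol{\nu}_i)-g(\boldsymbol{\mu})]\hat{r}_{-i}(V\boldsymbol{x},V,t)-\sqrt{V}\boldsymbol{G}(\hat{\boldsymbol{x}}_\infty(t),t)\cdot\nabla g(\boldsymbol{\mu}),
\end{equation*}
with $\boldsymbol{x}=\hat{\boldsymbol{x}}_\infty(t)+V^{-1/2}\boldsymbol{\mu}$. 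Setting $g(\boldsymbol{\mu})=e^{i\boldsymbol{\theta}\cdot\boldsymbol{\mu}}$ and Taylor expanding $e^{\pm iV^{-1/2}\boldsymbol{\theta}\cdot\boldsymbol{\nu}_i}-1$ to second order gives
\begin{equation*}
\mathscr{B}_{V,t}g(\boldsymbol{\mu})=\Bigl\{i\sqrt{V}\boldsymbol{\theta}\cdot[\boldsymbol{G}_V(\boldsymbol{x},t)-\boldsymbol{G}(\hat{\boldsymbol{x}}_\infty(t),t)]-\tfrac{1}{2}\boldsymbol{\theta}\cdot\boldsymbol{J}_V(\boldsymbol{x},t)\cdot\boldsymbol{\theta}\Bigr\}g(\boldsymbol{\mu})+R_V(\boldsymbol{\mu},t),
\end{equation*}
with a remainder $\vert R_V\vert=O(V^{-1/2}\vert\boldsymbol{\theta}\vert^3)$ uniformly for $(\boldsymbol{x},t)\in\hat{\Sigma}_{\delta_0,[0,T^*]}\times[0,T^*]$. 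The singular $\sqrt{V}$ term is disarmed by the decomposition
\begin{equation*}
\sqrt{V}[\boldsymbol{G}_V(\boldsymbol{x},t)-\boldsymbol{G}(\hat{\boldsymbol{x}}_\infty(t),t)]=\sqrt{V}[\boldsymbol{G}_V(\hat{\boldsymbol{x}}_\infty(t),t)-\boldsymbol{G}(\hat{\boldsymbol{x}}_\infty(t),t)]+\nabla_{\boldsymbol{x}}\boldsymbol{G}(\hat{\boldsymbol{x}}_\infty(t),t)\cdot\boldsymbol{\mu}+o(1),
\end{equation*}
where the first summand vanishes by the hypothesis in Eq.~(\ref{eq0406}) and the second becomes the drift of the limit SDE; meanwhile $\boldsymbol{J}_V\to\boldsymbol{J}$ uniformly by Lemma~\ref{theo0207} together with the smoothness of $R_{\pm i}$.

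Inserting the above into Dynkin's formula $\frac{\mathrm{d}}{\mathrm{d}t}\Phi_V(\boldsymbol{\theta},t)=E[\mathscr{B}_{V,t}e^{i\boldsymbol{\theta}\cdot\boldsymbol{\mu}_V(t)}]$ and using the identity $E[i\mu_k e^{i\boldsymbol{\theta}\cdot\boldsymbol{\mu}}]=\partial_{\theta_k}\Phi_V$ converts it into a transport equation of the form of Eq.~(\ref{eq0407}) plus a negligible error $\varepsilon_V(\boldsymbol{\theta},t)\to 0$. Pointwise convergence $\Phi_V\to\Phi$ then follows by a Grönwall-type estimate applied to the difference of the two resulting integral equations, and the Markov iteration extends this to joint characteristic functions at $t_1,\dots,t_k$. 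The principal obstacle is justifying the exchange of limit and expectation: the expansion of $\mathscr{B}_{V,t}g$ holds only inside the tube $\hat{\Sigma}_{\delta_0,[0,T^*]}$, yet the drift coefficient $\nabla_{\boldsymbol{x}}\boldsymbol{G}\cdot\boldsymbol{\mu}$ is linear in $\boldsymbol{\mu}$. To circumvent this I would work with the stopped process $\boldsymbol{\mu}_V(t\wedge\tau)$ defined via the stopping time $\tau$ of Appendix~\ref{secA3}; Proposition~\ref{theo0401} yields $P(\tau\leq T^*)\to 0$, so replacing $\boldsymbol{\mu}_V$ by its stopped version changes $\Phi_V$ by $o(1)$, while the $\mathcal{C}$-tightness from Lemma~\ref{theoA401} together with the second-moment estimate in Lemma~\ref{theoA302} (applied with $\varphi(u)=u^2$) supplies the uniform-in-$V$ control of $E[\vert\boldsymbol{\mu}_V(t\wedge\tau)\vert]$ needed to absorb the linear factor in the remainder. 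The most delicate verification will be that these bounds on the stopped process, combined with Eq.~(\ref{eq0406}), are strong enough to make every $o(1)$ term in the expansion of $\mathscr{B}_{V,t}g$ vanish uniformly in $t\in[0,T^*]$ after taking expectations, thereby legitimizing the passage to the PDE~(\ref{eq0407}).
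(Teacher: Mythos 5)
Your proposal is correct and follows essentially the same route as the paper's proof: both work with the characteristic function of the stopped process, expand the (weak) infinitesimal generator applied to $e^{i\boldsymbol{\theta}\cdot\boldsymbol{\mu}}$, kill the singular $\sqrt{V}$ drift term via the decomposition into the part vanishing by Eq.~(\ref{eq0406}) plus the linearization $\nabla_{\boldsymbol{x}}\boldsymbol{G}\cdot\boldsymbol{\mu}$, convert $E[i\boldsymbol{\mu}e^{i\boldsymbol{\theta}\cdot\boldsymbol{\mu}}]$ into $\nabla_{\boldsymbol{\theta}}\Phi_V$ to obtain the transport equation~(\ref{eq0407}), control the remainders on the tube $\hat{\Sigma}_{\delta_0,[0,T^*]}$ using Proposition~\ref{theo0401} and the second-moment estimate, and finish with a characteristics/Grönwall argument plus the Markov property. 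The only differences are presentational (you phrase the generator computation directly on $\boldsymbol{\mu}_V$ rather than via the auxiliary martingale $\boldsymbol{\xi}_V$ and the process $\boldsymbol{\varsigma}_V$, and you are slightly more explicit about iterating conditional characteristic functions for the finite-dimensional step).
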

		\begin{proof}
			Denote $\check{\boldsymbol{J}}_{V}(\boldsymbol{x},t)\triangleq{\boldsymbol{J}}_{V}(\boldsymbol{x},t)1_{\hat{\Sigma}_{\delta_0,[0,T^*]}}(\boldsymbol{x})$ and $\check{\boldsymbol{J}}(\boldsymbol{x},t)\triangleq{\boldsymbol{J}}(\boldsymbol{x},t)1_{\hat{\Sigma}_{\delta_0,[0,T^*]}}(\boldsymbol{x})$, where $1_{\hat{\Sigma}_{\delta_0,[0,T^*]}}(\boldsymbol{x})$ is the indicator function of $\hat{\Sigma}_{\delta_0,[0,T^*]}$. Clearly, ${\boldsymbol{J}}_{V}(\boldsymbol{x},t)$ converges to ${\boldsymbol{J}}(\boldsymbol{x},t)$ uniformly for $(\boldsymbol{x},t)\in\hat{\Sigma}_{\delta_0,[0,T^*]}\times[0,T^*]$. 
			
			Let $\Phi_{V}(\boldsymbol{\theta},t)\triangleq E\exp\{i\boldsymbol{\theta}\cdot\boldsymbol{\mu}_{V}(t)\}$. For each $\delta<\delta_0$, let $A_4=\{\sup_{t\in[0,T^*]}\vert\boldsymbol{\mu}_{V}(t\wedge\tau)\vert\leq\sqrt{V}\delta\}=\{\sup_{t\in[0,T^*]}\vert\hat{\boldsymbol{x}}_V(t\wedge\tau)-\hat{\boldsymbol{x}}_{\infty}(t\wedge\tau)\vert\leq\delta\}$. Then we have
			\begin{equation*}
				\begin{aligned}
					\Phi_{V}(\boldsymbol{\theta},t\wedge\tau)=&E\exp\{i\boldsymbol{\theta}\cdot\boldsymbol{\mu}_{V}(t\wedge\tau)\}1_{A_4}+E\exp\{i\boldsymbol{\theta}\cdot\boldsymbol{\mu}_{V}(t\wedge\tau)\}1_{A_4^{\text{C}}}\\
					=&E\exp\{i\boldsymbol{\theta}\cdot\boldsymbol{\mu}_{V}(t)\}1_{A_4}+E\exp\{i\boldsymbol{\theta}\cdot\boldsymbol{\mu}_{V}(t\wedge\tau)\}1_{A_4^{\text{C}}},
				\end{aligned}
			\end{equation*}
			followed by
			\begin{equation*}
				\Phi_{V}(\boldsymbol{\theta},t)-\Phi_{V}(\boldsymbol{\theta},t\wedge\tau)=E\exp\{i\boldsymbol{\theta}\cdot\boldsymbol{\mu}_{V}(t)\}1_{A_4^{\text{C}}}-E\exp\{i\boldsymbol{\theta}\cdot\boldsymbol{\mu}_{V}(t\wedge\tau)\}1_{A_4^{\text{C}}},
			\end{equation*}
			where $1_{A_4}$ is the indicator function of $A_4$ and $A_4^{\text{C}}$ stands for the complement of $A_4$.  Let $\Xi_{1,V}(\boldsymbol{\theta},t)$ be the term on the right, then
			\begin{equation*}
				\sup_{t\in[0,T^*]}\vert\Xi_{1,V}(\boldsymbol{\theta},t)\vert\leq 2P\left\{\sup_{t\in[0,T^*]}\vert\hat{\boldsymbol{x}}_{V}(t\wedge\tau)-\hat{\boldsymbol{x}}_{\infty}(t\wedge\tau)\vert>\delta\right\}\to {0},\quad \text{as}\;V\to\infty.
			\end{equation*}
			
			Denote
			\begin{equation*}
				\Lambda_{1,V}(\boldsymbol{\theta},t)\triangleq E\exp\big\{i\boldsymbol{\theta}\cdot\sqrt{V}\big[\boldsymbol{\xi}_V(t)+\int_0^{t\wedge\tau}(\boldsymbol{G}(\hat{\boldsymbol{x}}_{V}(s\wedge\tau),s)-\boldsymbol{G}(\hat{\boldsymbol{x}}_{\infty}(s\wedge\tau),s))\mathrm{d}s\big]\big\},
			\end{equation*}
			we know that
			\begin{equation*}
				\begin{aligned}
					\Xi_{2,V}(\boldsymbol{\theta}&,t)\triangleq\Phi_{V}(\boldsymbol{\theta},t\wedge\tau)-\Lambda_{1,V}(\boldsymbol{\theta},t)\\
					=&E\exp\bigg\{i\boldsymbol{\theta}\cdot\sqrt{V}\bigg[\boldsymbol{\xi}_V(t)+\int_0^{t\wedge\tau}(\boldsymbol{G}(\hat{\boldsymbol{x}}_{V}(s\wedge\tau),s)-\boldsymbol{G}(\hat{\boldsymbol{x}}_{\infty}(s\wedge\tau),s))\mathrm{d}s\bigg]\bigg\}\times\\
					&\bigg[\exp\bigg\{i\boldsymbol{\theta}\cdot\sqrt{V}\bigg[\boldsymbol{x}^{**}(\boldsymbol{x}_T,V)-\boldsymbol{x}_T+\int_0^{t\wedge\tau}(\boldsymbol{G}_V(\hat{\boldsymbol{x}}_{V}(s\wedge\tau),s)-\boldsymbol{G}(\hat{\boldsymbol{x}}_{V}(s\wedge\tau),s))\mathrm{d}s\bigg]\bigg\}-1\bigg],
				\end{aligned}
			\end{equation*}
			goes to zero uniformly for $t\in[0,T^*]$ due to Eq. (\ref{eq0406}) and the fact $\sqrt{V}\vert\boldsymbol{x}^{**}(\boldsymbol{x}_T,V)-\boldsymbol{x}_T\vert\to 0$.
			
			Define
			\begin{equation*}
				\begin{aligned}
					\Lambda_{2,V}(\boldsymbol{\theta},t)\triangleq E\exp\big\{i\boldsymbol{\theta}\cdot\sqrt{V}\big[&\boldsymbol{\xi}_V(t)+\int_0^{t}(\check{\boldsymbol{G}}(\check{\boldsymbol{x}}_{V}(s),s)-\check{\boldsymbol{G}}(\hat{\boldsymbol{x}}_{\infty}(s\wedge\tau),s))\mathrm{d}s\big]\big\},
				\end{aligned}
			\end{equation*}
			and 
			\begin{equation*}
				\Xi_{3,V}(\boldsymbol{\theta},t)\triangleq\Lambda_{1,V}(\boldsymbol{\theta},t)-\Lambda_{2,V}(\boldsymbol{\theta},t).
			\end{equation*}
			It can be verified easily that 
			\begin{equation*}
				\sup_{t\in[0,T^*]}\vert\Xi_{3,V}(\boldsymbol{\theta},t)\vert\leq 2P\left\{\sup_{t\in[0,T^*]}\vert\hat{\boldsymbol{x}}_{V}(t\wedge\tau)-\hat{\boldsymbol{x}}_{\infty}(t\wedge\tau)\vert>\delta\right\}.
			\end{equation*}
			
			Let $\boldsymbol{\varsigma}_{V}(t)\triangleq\boldsymbol{\xi}_{V}(t)+\int_0^{t}(\check{\boldsymbol{G}}(\check{\boldsymbol{x}}_{V}(s),s)-\check{\boldsymbol{G}}(\hat{\boldsymbol{x}}_{\infty}(s\wedge\tau),s))\mathrm{d}s$, then $(\check{\boldsymbol{x}}_{V}(t),\boldsymbol{\varsigma}_{V}(t))^{\top}$ is a Markov process. According to Lemma 2.6 of [\onlinecite{Kurtz_1971}], the function $\exp\{i\boldsymbol{\theta}\cdot\sqrt{V}\boldsymbol{\varsigma}\}$ is in the domain of the weak infinitesimal operator of $(\check{\boldsymbol{x}}_{V}(t),\boldsymbol{\varsigma}_{V}(t))^{\top}$. Hence, 
			\begin{equation*}
				\begin{aligned}
					\Lambda_{2,V}(\boldsymbol{\theta},t)-1=&\int_{0}^{t}E\exp\{i\sqrt{V}\boldsymbol{\theta}\cdot\boldsymbol{\varsigma}_V(s)\}\bigg\{\sum_{j=1}^{M}\bigg[\exp\{i{V}^{-1/2}\boldsymbol{\theta}\cdot\boldsymbol{\nu}_j\}-1-iV^{-1/2}\boldsymbol{\theta}\cdot\boldsymbol{\nu}_j\bigg]\\
					&\times\check{r}_{+j}(V\check{\boldsymbol{x}}_{V}(s),V,s)+\sum_{j=1}^{M}\bigg[\exp\{-i{V}^{-1/2}\boldsymbol{\theta}\cdot\boldsymbol{\nu}_j\}-1+iV^{-1/2}\boldsymbol{\theta}\cdot\boldsymbol{\nu}_j\bigg]\\
					&\times\check{r}_{-j}(V\check{\boldsymbol{x}}_{V}(s),V,s)+i\sqrt{V}\boldsymbol{\theta}\cdot\bigg[\check{\boldsymbol{G}}(\check{\boldsymbol{x}}_{V}(s),s)-\check{\boldsymbol{G}}(\hat{\boldsymbol{x}}_{\infty}(s\wedge\tau),s)\bigg]\bigg\}\mathrm{d}s.
				\end{aligned}
			\end{equation*}
			If $\sup_{t\in[0,T^*]}\vert\hat{\boldsymbol{x}}_V(t\wedge\tau)-\hat{\boldsymbol{x}}_{\infty}(t\wedge\tau)\vert\leq\delta$, then for any $\varepsilon$, there exist a parameter $\delta_{1}>0$ and a point $\check{\boldsymbol{x}}^*_V(t)$ between $\check{\boldsymbol{x}}_{V}(t)$ and $\hat{\boldsymbol{x}}_{\infty}(t\wedge\tau)$ for each $t\in[0,T^*]$, such that once $\delta<\delta_{1}$,
			\begin{equation*}
				\check{\boldsymbol{G}}(\check{\boldsymbol{x}}_{V}(t),t)-\check{\boldsymbol{G}}(\hat{\boldsymbol{x}}_{\infty}(t\wedge\tau),t)=\nabla_{\boldsymbol{x}}\check{\boldsymbol{G}}(\check{\boldsymbol{x}}^*_V(t),t)\cdot(\check{\boldsymbol{x}}_{V}(t)-\hat{\boldsymbol{x}}_{\infty}(t\wedge\tau)),
			\end{equation*}
			with $\boldsymbol{\Xi}_{4,V}(t)\triangleq\nabla_{\boldsymbol{x}}\check{\boldsymbol{G}}(\check{\boldsymbol{x}}^*_V(t),t)-\nabla_{\boldsymbol{x}}\check{\boldsymbol{G}}(\hat{\boldsymbol{x}}_{\infty}(t\wedge\tau),t)$ satisfying
			\begin{equation*}
				\sup_{t\in[0,T^*]}\vert\boldsymbol{\Xi}_{4,V}(t)\vert<\varepsilon.
			\end{equation*}
			Consequently,
			\begin{equation*}
				\begin{aligned}
					\Lambda_{2,V}^{(1)}(\boldsymbol{\theta},t)\triangleq&\int_{0}^{t}E\bigg\{i\sqrt{V}\boldsymbol{\theta}\cdot\bigg[\check{\boldsymbol{G}}(\check{\boldsymbol{x}}_{V}(s),s)-\check{\boldsymbol{G}}(\hat{\boldsymbol{x}}_{\infty}(s\wedge\tau),s)\bigg]\exp\{i\sqrt{V}\boldsymbol{\theta}\cdot\boldsymbol{\varsigma}_V(s)\}\bigg\}\mathrm{d}s\\
					=&\int_{0}^{t}E\bigg\{i\sqrt{V}\boldsymbol{\theta}\cdot\bigg(\nabla_{\boldsymbol{x}}\check{\boldsymbol{G}}(\hat{\boldsymbol{x}}_{\infty}(s\wedge\tau),s)+\boldsymbol{\Xi}_{4,V}(s)\bigg)\cdot(\check{\boldsymbol{x}}_{V}(s)-\hat{\boldsymbol{x}}_{\infty}(s\wedge\tau))\exp\{i\sqrt{V}\boldsymbol{\theta}\cdot\boldsymbol{\varsigma}_V(s)\}\bigg\}\mathrm{d}s\\
					&+\Xi_{5,V}(\boldsymbol{\theta},t)\\
					=&\int_{0}^{t}E\bigg\{i\sqrt{V}\boldsymbol{\theta}\cdot\bigg(\nabla_{\boldsymbol{x}}\check{\boldsymbol{G}}(\hat{\boldsymbol{x}}_{\infty}(s),s)+\boldsymbol{\Xi}_{4,V}(s)\bigg)\cdot(\check{\boldsymbol{x}}_{V}(s)-\hat{\boldsymbol{x}}_{\infty}(s\wedge\tau))\exp\{i\sqrt{V}\boldsymbol{\theta}\cdot\boldsymbol{\varsigma}_V(s)\}\bigg\}\mathrm{d}s\\
					&+\Xi_{5,V}(\boldsymbol{\theta},t)+\Xi_{6,V}(\boldsymbol{\theta},t)\\
					=&\int_{0}^{t}i\sqrt{V}\boldsymbol{\theta}\cdot\nabla_{\boldsymbol{x}}\check{\boldsymbol{G}}(\hat{\boldsymbol{x}}_{\infty}(s),s)\cdot{E}\left((\check{\boldsymbol{x}}_{V}(s)-\hat{\boldsymbol{x}}_{\infty}(s\wedge\tau))\exp\{i\sqrt{V}\boldsymbol{\theta}\cdot\boldsymbol{\varsigma}_V(s)\}\right)\mathrm{d}s\\
					&+\Xi_{5,V}(\boldsymbol{\theta},t)+\Xi_{6,V}(\boldsymbol{\theta},t)+\Xi_{7,V}(\boldsymbol{\theta},t)\\
					=&\int_{0}^{t}\boldsymbol{\theta}\cdot\nabla_{\boldsymbol{x}}\check{\boldsymbol{G}}(\hat{\boldsymbol{x}}_{\infty}(s),s)\cdot\nabla_{\boldsymbol{\theta}}\Phi_{V}(\boldsymbol{\theta},s\wedge\tau)\mathrm{d}s\\
					&+\Xi_{5,V}(\boldsymbol{\theta},t)+\Xi_{6,V}(\boldsymbol{\theta},t)+\Xi_{7,V}(\boldsymbol{\theta},t)+\Xi_{8,V}(\boldsymbol{\theta},t)
				\end{aligned}
			\end{equation*}
			in which $\Xi_{5,V}(\boldsymbol{\theta},t)$, $\Xi_{6,V}(\boldsymbol{\theta},t)$, $\Xi_{7,V}(\boldsymbol{\theta},t)$ and $\Xi_{8,V}(\boldsymbol{\theta},t)$ converge to zero uniformly for $t\in[0,T^*]$. This leaves
			\begin{equation*}
				\begin{aligned}
					\Lambda_{2,V}^{(2)}(\boldsymbol{\theta},t)\triangleq&\Lambda_{2,V}(\boldsymbol{\theta},t)-1-\Lambda_{2,V}^{(1)}(\boldsymbol{\theta},t)\\
					=&-\int_{0}^{t}\bigg(\frac{1}{2}\boldsymbol{\theta}\cdot\check{\boldsymbol{J}}(\hat{\boldsymbol{x}}_{\infty}(t),t)\cdot\boldsymbol{\theta}\bigg)\Lambda_{2,V}(\boldsymbol{\theta},s)\mathrm{d}s\\
					&+\int_{0}^{t}E\bigg(\frac{1}{2}\boldsymbol{\theta}\cdot\left(\check{\boldsymbol{J}}(\hat{\boldsymbol{x}}_{\infty}(t),t)-\check{\boldsymbol{J}}_{V}(\check{\boldsymbol{x}}_{V}(t),t)\right)\cdot\boldsymbol{\theta}\bigg)\exp\{i\sqrt{V}\boldsymbol{\theta}\cdot\boldsymbol{\varsigma}_V(s)\}\mathrm{d}s\\
					&+\int_{0}^{t}E\exp\{i\sqrt{V}\boldsymbol{\theta}\cdot\boldsymbol{\varsigma}_V(s)\}\bigg\{\sum_{j=1}^{M}\varrho({V}^{-1/2}\boldsymbol{\theta}\cdot\boldsymbol{\nu}_j)(\boldsymbol{\theta}\cdot\boldsymbol{\nu}_j)^2V^{-1}\check{r}_{+j}(V\check{\boldsymbol{x}}_{V}(s),V,s)\\
					&\qquad\qquad\qquad+\sum_{j=1}^{M}\varrho(-{V}^{-1/2}\boldsymbol{\theta}\cdot\boldsymbol{\nu}_j)(\boldsymbol{\theta}\cdot\boldsymbol{\nu}_j)^2V^{-1}\check{r}_{+j}(V\check{\boldsymbol{x}}_{V}(s),V,s)\bigg\}\mathrm{d}s,
				\end{aligned}
			\end{equation*}
			where $\varrho(u)=(e^{iu}-1-iu+u^2/2)/u^2$.
			The second term on the right, call it $\Xi_{9,V}(\boldsymbol{\theta},t)$, converges to zero by the uniform convergence of $\check{\boldsymbol{J}}_V$ to $\check{\boldsymbol{J}}$, the uniform continuity of $\check{\boldsymbol{J}}$ and Lemmas \ref{theoA301}, \ref{theoA302}. Moreover, $\lim_{u\to 0}\varrho(u)=0$ implies the third term on the right, denote it by $\Xi_{10,V}(\boldsymbol{\theta},t)$, vanishes as $V\to\infty$.
			
			In summary, we have
			\begin{equation*}
				\begin{aligned}
					\Phi_{V}(\boldsymbol{\theta},t\wedge\tau)-1=&-\int_{0}^{t}\bigg(\frac{1}{2}\boldsymbol{\theta}\cdot\check{\boldsymbol{J}}(\hat{\boldsymbol{x}}_{\infty}(s),s)\cdot\boldsymbol{\theta}\bigg)\Phi_{V}(\boldsymbol{\theta},s\wedge\tau)\mathrm{d}s\\
					&+\int_{0}^{t}\bigg\{\boldsymbol{\theta}\cdot\nabla_{\boldsymbol{x}}\check{\boldsymbol{G}}(\hat{\boldsymbol{x}}_{\infty}(s),s)\cdot\nabla_{\boldsymbol{\theta}}\Phi_{V}(\boldsymbol{\theta},s\wedge\tau)\bigg\}\mathrm{d}s\\
					&+\Xi_{11,V}(\boldsymbol{\theta},t),
				\end{aligned}
			\end{equation*}
			followed by
			\begin{equation*}
				\begin{aligned}
					\Phi_{V}(\boldsymbol{\theta},t\wedge\tau)-\Phi(\boldsymbol{\theta},t)=&-\int_{0}^{t}\bigg(\frac{1}{2}\boldsymbol{\theta}\cdot\check{\boldsymbol{J}}(\hat{\boldsymbol{x}}_{\infty}(s),s)\cdot\boldsymbol{\theta}\bigg)(\Phi_{V}(\boldsymbol{\theta},s\wedge\tau)-\Phi(\boldsymbol{\theta},s))\mathrm{d}s\\
					&+\int_{0}^{t}\bigg\{\boldsymbol{\theta}\cdot\nabla_{\boldsymbol{x}}\check{\boldsymbol{G}}(\hat{\boldsymbol{x}}_{\infty}(s),s)\cdot\nabla_{\boldsymbol{\theta}}(\Phi_{V}(\boldsymbol{\theta},s\wedge\tau)-\Phi(\boldsymbol{\theta},s))\bigg\}\mathrm{d}s\\
					&+\Xi_{11,V}(\boldsymbol{\theta},t),
				\end{aligned}
			\end{equation*}
			where
			\begin{equation*}
				\begin{aligned}
					\Xi_{11,V}(\boldsymbol{\theta},t)\triangleq&\int_{0}^{t}\bigg(\frac{1}{2}\boldsymbol{\theta}\cdot\check{\boldsymbol{J}}(\hat{\boldsymbol{x}}_{\infty}(s),s)\cdot\boldsymbol{\theta}\bigg)(\Xi_{2,V}(\boldsymbol{\theta},s)+\Xi_{3,V}(\boldsymbol{\theta},s))\mathrm{d}s\\
					&+\Xi_{2,V}(\boldsymbol{\theta},s)+\Xi_{3,V}(\boldsymbol{\theta},s)+\Xi_{5,V}(\boldsymbol{\theta},s)+\Xi_{6,V}(\boldsymbol{\theta},s)\\
					&+\Xi_{7,V}(\boldsymbol{\theta},s)+\Xi_{8,V}(\boldsymbol{\theta},s)+\Xi_{9,V}(\boldsymbol{\theta},s)+\Xi_{10,V}(\boldsymbol{\theta},s).
				\end{aligned}
			\end{equation*}
			By the method of characteristics for first-order partial differential equations, one can show that
			\begin{equation*}
				\lim_{V\to\infty}\sup_{t\in[0,T^*]}\vert\Phi_{V}(\boldsymbol{\theta},t\wedge\tau)-\Phi(\boldsymbol{\theta},t)\vert=0,
			\end{equation*}
			and hence
			\begin{equation*}
				\lim_{V\to\infty}\sup_{t\in[0,T^*]}\vert\Phi_{V}(\boldsymbol{\theta},t)-\Phi(\boldsymbol{\theta},t)\vert=0.
			\end{equation*}
			Consequently, the convergence of the finite-dimensional distributions of $\boldsymbol{\mu}_V(t)$ to the finite-dimensional distributions of $\boldsymbol{\mu}_{\infty}(t)$ follows from this fact and the Markov property.
		\end{proof}
		\begin{proof}[Proof of Proposition \ref{theo0402}]
			Proposition \ref{theo0402} follows from Lemmas \ref{theoA401}, \ref{theoA402} and [\onlinecite[Chapter 9, Theorem 5.2]{Gikhman_1969}] (cf. also [\onlinecite[Chapter VI, Theorem 3.20]{Jacod_2013}]).
		\end{proof}

		\section{Algorithms for Calculating the Non-stationary and Stationary Prehistory Probabilities}\label{secA6}
		In this section, the focus is on the case of $N=1$ and $M=1$. The stochastic model is 
		\begin{equation*}
			{x}_{V}(t)={x}^{*}({x}_0,V)
			+\frac{\nu}{V}\left(Y_{+}\left(\int_{0}^{t}{r_{+}(V{x}_{V}(s),V)\mathrm{d}s}\right)-Y_{-}\left(\int_{0}^{t}{r_{-}(V{x}_{V}(s),V)\mathrm{d}s}\right)\right),
		\end{equation*}
		Assume ${x}^{*}({x}_0,V)\in V^{-1}\nu\mathbb{N}$, we know that the state space is $V^{-1}\nu\mathbb{N}$.
		\subsection{An Algorithm for Calculating the Non-stationary Prehistory Probability}
		\textbf{Initialize:}
		\begin{itemize}
			\item[(1)] Choose a domain $D=[x_l,x_r]\subset\mathbb{R}_{+}$ such that $x_0,x_T\in{D}$, the vector field $F(x)$ at the boundary $\partial{D}$ is directed towards the interior, and
			\begin{equation*}
				\min_{x\in\partial{D},0\leq{t}\leq{T}}S(x,t\vert x_0)>S(x_T,T\vert x_0).
			\end{equation*}
			In other words, the NOP connecting $x_0$ with $x_T$ in the time span $T$ is entirely contained within the domain $D$, and neglecting all of the trajectories that escape before the moment T has negligible impact on the calculation of the NOP. Define $N_x\triangleq\lfloor(x_l-x_r)V/\nu\rfloor$. Evidently, the number of states in $D$ is equal to $N_x$. 
			\item[(2)] Partition the state space into $N_x+1$ subsets: The $i$th subset, denoted by $D_i$, is defined as a set containing the single point $x(i)=(\lfloor{x}_lV/\nu\rfloor+i)\nu/V$ for $i=1,\cdots,N_x$. The $(N_x+1)$th subset is defined as the complement of $\{x(i):i=1,\cdots,N_x\}$.
			\item[(3)] Discretize the time interval $[0,T]$ into $N_t$ steps with a uniform size of $\Delta{t}=T/N_t$.
			\item[(4)] By the Euler $\tau$-leaping method \cite{Kurtz_2015},
			\begin{equation*}
				{x}_{V}(t+\Delta{t})\simeq{x}_{V}(t)
				+\frac{\nu}{V}\left(\text{Poisson}\left(r_{+}(V{x}_{V}(t),V)\Delta{t}\right)-\text{Poisson}\left(r_{-}(V{x}_{V}(t),V)\Delta{t}\right)\right),
			\end{equation*}
			where $\text{Poisson}(u)$ signifies a Poisson-distributed stochastic variable with parameter $u$, and the different variables are independent of each other. Define a homogeneous Markov chain $x^{\text{stop}}_V(m\Delta{t})$ for $m\in\mathbb{N}$, which is the discrete approximation of the stopped process $x^{\text{stop}}_V(t)$ (a process generated by $x_V(t)$ that stops when it escapes from the domain $D$). The transition probability of this process is given by
			\begin{equation*}
				\begin{gathered}
					\begin{aligned}
						P_{i,j}&=P({x}_{V}(t+\Delta{t})=x(j)\vert{x}_{V}(t)=x(i))\\
						&=P_{\text{Skellam}}\left(\frac{V(x(j)-x(i))}{\nu};r_{+}(V{x}(i),V)\Delta{t},r_{-}(V{x}(i),V)\Delta{t}\right),\quad 1\leq i,j\leq N_x,
					\end{aligned}\\
					P_{i,N_x+1}=1-\sum_{j=1}^{N_x}P_{i,j},\quad P_{N_x+1,j}=0,\quad 1\leq i,j\leq N_x,\\
					P_{N_x+1,N_x+1}=1,
				\end{gathered}
			\end{equation*}
			where $P_{\text{Skellam}}(\cdot;u_1,u_2)$ denotes the Skellam distribution with parameters $u_1$ and $u_2$.
		\end{itemize}
		\textbf{Algorithm:}
		\begin{itemize}
			\item[(1)] The family of probabilities $\{p_V(x,t\vert{x}^{*}({x}_0,V))\}_{t\in[0,T]}$ can be approximated by the formula
			\begin{equation*}
				p_V(x(i),m\Delta{t}\vert{x}^{*}({x}_0,V))\simeq p_{i}(m)\triangleq P(x^{\text{stop}}_V(m\Delta{t})=x(i)),\quad m=0,\cdots,N_t,
			\end{equation*}
			where $p_{i}(m)$ can be calculated by
			\begin{equation*}
				p_{i}(0)=\begin{cases}
					1, \quad x(i)={x}^{*}({x}_0,V),\\
					0, \quad \text{otherwise},
				\end{cases}
			\end{equation*}
			\begin{equation*}
				p_{j}(m+1)=\sum_{i=1}^{N_x+1}p_{i}(m)P_{i,j},\quad j=1,\cdots,N_x+1,\quad m=0,\cdots,N_t-1.
			\end{equation*}
			\item[(2)] The reversed evolution law (\ref{eq030504}) of the process $\bar{x}_V^{\text{NPP}}$ can be described by $\bar{P}^{(m)}_{i,j}$, which is defined for $i,j=1,\cdots,N_x+1$, and $m=0,\cdots,N_t-1$ by
			\begin{equation*}
				\bar{P}^{(m)}_{i,j}=\begin{cases}
					\frac{p_{j}(m)P_{j,i}}{p_{i}(m+1)},\quad &p_{i}(m+1)>0,\\
					0,\quad &p_{i}(m+1)=0.
				\end{cases}
			\end{equation*}
			\item[(3)] The non-stationary prehistory probability ${q}^{\text{NPP}}_{V}(\boldsymbol{x},t;\boldsymbol{x}^{**}(\boldsymbol{x}_T,V),T;\boldsymbol{x}^{*}(\boldsymbol{x}_0,V))$ can be approximated by the formula
			\begin{equation*}
				{q}^{\text{NPP}}_{V}({x}(i),m\Delta{t};{x}^{**}({x}_T,V),T;{x}^{*}({x}_0,V))\simeq \bar{p}_{i}(m),\quad m=0,\cdots,N_t,
			\end{equation*}
			with
			\begin{equation*}
				\bar{p}_{i}(N_t)=\begin{cases}
					1\quad x(i)={x}^{**}({x}_T,V),\\
					0\quad \text{otherwise},
				\end{cases}
			\end{equation*}
			\begin{equation*}
				\bar{p}_{j}(m)=\sum_{i=1}^{N_x+1}\bar{p}_{i}(m+1)\bar{P}^{(m)}_{i,j}\quad j=1,\cdots,N_x+1,\quad m=N_t-1,\cdots,0.
			\end{equation*}
			
		\end{itemize}
		
		\subsection{An Algorithm for Calculating the Stationary Prehistory Probability}
		In comparison with the preceding algorithm, minor adjustments must be made.
		\begin{itemize}
			\item[(1)] Here, we choose a domain $D=[x_l,x_r]\subset\mathbb{R}_{+}$ such that $x_T\in{D}$, the vector field $F(x)$ at the boundary $\partial{D}$ is directed towards the interior, and 
			\begin{equation*}
				\min_{x\in\partial{D}}S(x)>S(x_T).
			\end{equation*}
			In this case, the OP connecting $x_0$ with $x_T$ is entirely contained within $D$. Concentrating on the trajectories that do not leave from the domain $D$ should be sufficiently precise enough.
			\item[(2)] Let $p_i(0)=\pi_V(x(i)),\;i=1,\cdots,N_x$ and $p_{N_x+1}(0)=1-\sum_{i=1}^{N_x}\pi_V(x(i))$, we know that as $V\to\infty$, $p_i(m)$ remains nearly invariant, i.e., $p_i(m)\simeq p_i(0)$ for $i=1,\cdots,N_x+1$.
			\item[(3)] Repeat the step (2) and (3) above, we can obtain the reversed evolution law (\ref{eq030404}) of the process $\bar{x}_{V}^{\text{SPP}}$ and the stationary prehistory probability ${q}^{\text{SPP}}_{V}({x},t;{x}^{**}({x}_T,V),T)$, respectively.
		\end{itemize}
		\begin{remark}
			(a) If $\{\phi(t):t\in[0,T]\}$ is a NOP connecting $x_0$ with $x_T$ in the time span $T$, we know that $\{\phi(t):t\in[m\Delta{t},(m+1)\Delta{t}]\}$ is also a NOP. The preceding algorithm furnishes us with a means to capture the optimal fluctuation from $\phi(m\Delta{t})$ to $\phi((m+1)\Delta{t})$ in the time span $\Delta{t}$. In order to achieve a satisfactory focusing effect of the non-stationary prehistory probability on the NOP, it is necessary that the transition probability $P_{i,j}$ contain all the information of the form $p_V(\phi((m+1)\Delta{t}),\Delta{t}\vert\phi(m\Delta{t}))$. If $S(x_T,T\vert x_0)>0$, then $p_V(\phi((m+1)\Delta{t}),\Delta{t}\vert\phi(m\Delta{t}))$ is exponentially small. For sufficiently large $V$ such that the probability is lower than the machine precision (in MATLAB, this is approximately $10^{-308}$), this quantity will be set to be zero in the computer. It can be deduced that the algorithm is likely to fail. From a computational perspective, in order to enhance the precision of our algorithm, it is imperative to ensure that the noise is not set so weak that the exponentially small probability falls below the minimum limit that the computer can identify. Conversely, it is essential to avoid setting the noise so strong that the incompatibility of the peak trajectory of the non-stationary prehistory probability with the NOP obtained by large deviation theory becomes apparent. This phenomenon also manifests in the stationary setting.
			
			(b) The case of a higher dimension suggests that the optimal fluctuations may be observed less frequently, thereby necessitating an escalation in the computational effort required to analyze such phenomena. Therefore, it is imperative to emphasise that, despite the conclusions being valid in arbitrarily high dimensions, the numerical calculation based on the non-stationary and stationary prehistory probabilities is only useful for systems with lower dimensions due to limitations in computational power. For this reason, we have elected to present examples exclusively with $N = 1$ in this particular paper.
			
			(c) The algorithms presented herein are employed solely to facilitate comprehension of the prehistorical description of the optimal fluctuations. In order to achieve a more precise approximation of the optimal path, reference should be made to the minimum action method \cite{Weinan_2004,Heymann_2008}.
		\end{remark}

		\nocite{*}
		\bibliography{Rerferences}

\begin{thebibliography}{62}%
\makeatletter
\providecommand \@ifxundefined [1]{%
 \@ifx{#1\undefined}
}%
\providecommand \@ifnum [1]{%
 \ifnum #1\expandafter \@firstoftwo
 \else \expandafter \@secondoftwo
 \fi
}%
\providecommand \@ifx [1]{%
 \ifx #1\expandafter \@firstoftwo
 \else \expandafter \@secondoftwo
 \fi
}%
\providecommand \natexlab [1]{#1}%
\providecommand \enquote  [1]{``#1''}%
\providecommand \bibnamefont  [1]{#1}%
\providecommand \bibfnamefont [1]{#1}%
\providecommand \citenamefont [1]{#1}%
\providecommand \href@noop [0]{\@secondoftwo}%
\providecommand \href [0]{\begingroup \@sanitize@url \@href}%
\providecommand \@href[1]{\@@startlink{#1}\@@href}%
\providecommand \@@href[1]{\endgroup#1\@@endlink}%
\providecommand \@sanitize@url [0]{\catcode `\\12\catcode `\$12\catcode
  `\&12\catcode `\#12\catcode `\^12\catcode `\_12\catcode `\%12\relax}%
\providecommand \@@startlink[1]{}%
\providecommand \@@endlink[0]{}%
\providecommand \url  [0]{\begingroup\@sanitize@url \@url }%
\providecommand \@url [1]{\endgroup\@href {#1}{\urlprefix }}%
\providecommand \urlprefix  [0]{URL }%
\providecommand \Eprint [0]{\href }%
\providecommand \doibase [0]{http://dx.doi.org/}%
\providecommand \selectlanguage [0]{\@gobble}%
\providecommand \bibinfo  [0]{\@secondoftwo}%
\providecommand \bibfield  [0]{\@secondoftwo}%
\providecommand \translation [1]{[#1]}%
\providecommand \BibitemOpen [0]{}%
\providecommand \bibitemStop [0]{}%
\providecommand \bibitemNoStop [0]{.\EOS\space}%
\providecommand \EOS [0]{\spacefactor3000\relax}%
\providecommand \BibitemShut  [1]{\csname bibitem#1\endcsname}%
\let\auto@bib@innerbib\@empty
\bibitem [{\citenamefont {Roma}\ \emph {et~al.}(2005)\citenamefont {Roma},
  \citenamefont {O'Flanagan}, \citenamefont {Ruckenstein}, \citenamefont
  {Sengupta},\ and\ \citenamefont {Mukhopadhyay}}]{Roma_2005}%
  \BibitemOpen
  \bibfield  {author} {\bibinfo {author} {\bibfnamefont {D.~M.}\ \bibnamefont
  {Roma}}, \bibinfo {author} {\bibfnamefont {R.~A.}\ \bibnamefont
  {O'Flanagan}}, \bibinfo {author} {\bibfnamefont {A.~E.}\ \bibnamefont
  {Ruckenstein}}, \bibinfo {author} {\bibfnamefont {A.~M.}\ \bibnamefont
  {Sengupta}}, \ and\ \bibinfo {author} {\bibfnamefont {R.}~\bibnamefont
  {Mukhopadhyay}},\ }\bibfield  {title} {\enquote {\bibinfo {title} {Optimal
  path to epigenetic switching},}\ }\href {\doibase 10.1103/PhysRevE.71.011902}
  {\bibfield  {journal} {\bibinfo  {journal} {Phys. Rev. E}\ }\textbf {\bibinfo
  {volume} {71}},\ \bibinfo {pages} {011902} (\bibinfo {year}
  {2005})}\BibitemShut {NoStop}%
\bibitem [{\citenamefont {Larsen}\ and\ \citenamefont
  {Schuss}(1978)}]{Schuss_1978}%
  \BibitemOpen
  \bibfield  {author} {\bibinfo {author} {\bibfnamefont {E.~W.}\ \bibnamefont
  {Larsen}}\ and\ \bibinfo {author} {\bibfnamefont {Z.}~\bibnamefont
  {Schuss}},\ }\bibfield  {title} {\enquote {\bibinfo {title} {Diffusion tensor
  for atomic migration in crystals},}\ }\href {\doibase
  10.1103/PhysRevB.18.2050} {\bibfield  {journal} {\bibinfo  {journal} {Phys.
  Rev. B}\ }\textbf {\bibinfo {volume} {18}},\ \bibinfo {pages} {2050--2058}
  (\bibinfo {year} {1978})}\BibitemShut {NoStop}%
\bibitem [{\citenamefont {Schuss}\ and\ \citenamefont
  {Matkowsky}(1979)}]{Schuss_1979}%
  \BibitemOpen
  \bibfield  {author} {\bibinfo {author} {\bibfnamefont {Z.}~\bibnamefont
  {Schuss}}\ and\ \bibinfo {author} {\bibfnamefont {B.~J.}\ \bibnamefont
  {Matkowsky}},\ }\bibfield  {title} {\enquote {\bibinfo {title} {The exit
  problem: a new approach to diffusion across potential barriers},}\ }\href
  {\doibase 10.1137/0136043} {\bibfield  {journal} {\bibinfo  {journal} {SIAM
  Journal on Applied Mathematics}\ }\textbf {\bibinfo {volume} {36}},\ \bibinfo
  {pages} {604--623} (\bibinfo {year} {1979})}\BibitemShut {NoStop}%
\bibitem [{\citenamefont {Zhang}, \citenamefont {Qian},\ and\ \citenamefont
  {Qian}(2012)}]{Zhang_2012}%
  \BibitemOpen
  \bibfield  {author} {\bibinfo {author} {\bibfnamefont {X.}~\bibnamefont
  {Zhang}}, \bibinfo {author} {\bibfnamefont {H.}~\bibnamefont {Qian}}, \ and\
  \bibinfo {author} {\bibfnamefont {M.}~\bibnamefont {Qian}},\ }\bibfield
  {title} {\enquote {\bibinfo {title} {Stochastic theory of nonequilibrium
  steady states and its applications. part i},}\ }\href {\doibase
  https://doi.org/10.1016/j.physrep.2011.09.002} {\bibfield  {journal}
  {\bibinfo  {journal} {Physics Reports}\ }\textbf {\bibinfo {volume} {510}},\
  \bibinfo {pages} {1--86} (\bibinfo {year} {2012})}\BibitemShut {NoStop}%
\bibitem [{\citenamefont {Schlögl}(1972)}]{Schlogl_1972}%
  \BibitemOpen
  \bibfield  {author} {\bibinfo {author} {\bibfnamefont {F.}~\bibnamefont
  {Schlögl}},\ }\bibfield  {title} {\enquote {\bibinfo {title} {Chemical
  reaction models for nonequilibrium phase transitions},}\ }\href {\doibase
  10.1007/BF01379769} {\bibfield  {journal} {\bibinfo  {journal} {Zeitschrift
  für Physik}\ }\textbf {\bibinfo {volume} {253}},\ \bibinfo {pages}
  {147--161} (\bibinfo {year} {1972})}\BibitemShut {NoStop}%
\bibitem [{\citenamefont {Ge}\ and\ \citenamefont {Qian}(2011)}]{Hao_Ge_2010}%
  \BibitemOpen
  \bibfield  {author} {\bibinfo {author} {\bibfnamefont {H.}~\bibnamefont
  {Ge}}\ and\ \bibinfo {author} {\bibfnamefont {H.}~\bibnamefont {Qian}},\
  }\bibfield  {title} {\enquote {\bibinfo {title} {Nonequilibrium phase
  transition in mesoscopic biochemical systems: from stochastic to nonlinear
  dynamics and beyond},}\ }\href {\doibase 10.1098/rsif.2010.0202} {\bibfield
  {journal} {\bibinfo  {journal} {Journal of The Royal Society Interface}\
  }\textbf {\bibinfo {volume} {8}},\ \bibinfo {pages} {107--116} (\bibinfo
  {year} {2011})}\BibitemShut {NoStop}%
\bibitem [{\citenamefont {Freidlin}(2000)}]{Freidlin_2000}%
  \BibitemOpen
  \bibfield  {author} {\bibinfo {author} {\bibfnamefont {M.~I.}\ \bibnamefont
  {Freidlin}},\ }\bibfield  {title} {\enquote {\bibinfo {title}
  {Quasi-deterministic approximation, metastability and stochastic
  resonance},}\ }\href {\doibase https://doi.org/10.1016/S0167-2789(99)00191-8}
  {\bibfield  {journal} {\bibinfo  {journal} {Physica D: Nonlinear Phenomena}\
  }\textbf {\bibinfo {volume} {137}},\ \bibinfo {pages} {333--352} (\bibinfo
  {year} {2000})}\BibitemShut {NoStop}%
\bibitem [{\citenamefont {Freidlin}(2001)}]{Freidlin_2001}%
  \BibitemOpen
  \bibfield  {author} {\bibinfo {author} {\bibfnamefont {M.~I.}\ \bibnamefont
  {Freidlin}},\ }\bibfield  {title} {\enquote {\bibinfo {title} {On stable
  oscillations and equilibriums induced by small noise},}\ }\href {\doibase
  10.1023/A:1004827921214} {\bibfield  {journal} {\bibinfo  {journal} {Journal
  of Statistical Physics}\ }\textbf {\bibinfo {volume} {103}},\ \bibinfo
  {pages} {283--300} (\bibinfo {year} {2001})}\BibitemShut {NoStop}%
\bibitem [{\citenamefont {Muratov}, \citenamefont {Vanden-Eijnden},\ and\
  \citenamefont {E}(2005)}]{Muratov_2005}%
  \BibitemOpen
  \bibfield  {author} {\bibinfo {author} {\bibfnamefont {C.~B.}\ \bibnamefont
  {Muratov}}, \bibinfo {author} {\bibfnamefont {E.}~\bibnamefont
  {Vanden-Eijnden}}, \ and\ \bibinfo {author} {\bibfnamefont {W.}~\bibnamefont
  {E}},\ }\bibfield  {title} {\enquote {\bibinfo {title} {Self-induced
  stochastic resonance in excitable systems},}\ }\href {\doibase
  https://doi.org/10.1016/j.physd.2005.07.014} {\bibfield  {journal} {\bibinfo
  {journal} {Physica D: Nonlinear Phenomena}\ }\textbf {\bibinfo {volume}
  {210}},\ \bibinfo {pages} {227--240} (\bibinfo {year} {2005})}\BibitemShut
  {NoStop}%
\bibitem [{\citenamefont {Maier}\ and\ \citenamefont
  {Stein}(1992)}]{Maier_1992}%
  \BibitemOpen
  \bibfield  {author} {\bibinfo {author} {\bibfnamefont {R.~S.}\ \bibnamefont
  {Maier}}\ and\ \bibinfo {author} {\bibfnamefont {D.~L.}\ \bibnamefont
  {Stein}},\ }\bibfield  {title} {\enquote {\bibinfo {title} {Transition-rate
  theory for nongradient drift fields},}\ }\href {\doibase
  10.1103/PhysRevLett.69.3691} {\bibfield  {journal} {\bibinfo  {journal}
  {Phys. Rev. Lett.}\ }\textbf {\bibinfo {volume} {69}},\ \bibinfo {pages}
  {3691--3695} (\bibinfo {year} {1992})}\BibitemShut {NoStop}%
\bibitem [{\citenamefont {Chinarov}, \citenamefont {Dykman},\ and\
  \citenamefont {Smelyanskiy}(1993)}]{Chinarov_1993}%
  \BibitemOpen
  \bibfield  {author} {\bibinfo {author} {\bibfnamefont {V.~A.}\ \bibnamefont
  {Chinarov}}, \bibinfo {author} {\bibfnamefont {M.~I.}\ \bibnamefont
  {Dykman}}, \ and\ \bibinfo {author} {\bibfnamefont {V.~N.}\ \bibnamefont
  {Smelyanskiy}},\ }\bibfield  {title} {\enquote {\bibinfo {title} {Dissipative
  corrections to escape probabilities of thermal-nonequilibrium systems},}\
  }\href {\doibase 10.1103/PhysRevE.47.2448} {\bibfield  {journal} {\bibinfo
  {journal} {Phys. Rev. E}\ }\textbf {\bibinfo {volume} {47}},\ \bibinfo
  {pages} {2448--2461} (\bibinfo {year} {1993})}\BibitemShut {NoStop}%
\bibitem [{\citenamefont {Holcman}\ and\ \citenamefont
  {Schuss}(2018)}]{Holcman_2018}%
  \BibitemOpen
  \bibfield  {author} {\bibinfo {author} {\bibfnamefont {D.}~\bibnamefont
  {Holcman}}\ and\ \bibinfo {author} {\bibfnamefont {Z.}~\bibnamefont
  {Schuss}},\ }\href@noop {} {\emph {\bibinfo {title} {Asymptotics of Elliptic
  and Parabolic PDEs}}}\ (\bibinfo  {publisher} {Springer},\ \bibinfo {address}
  {Switzerland},\ \bibinfo {year} {2018})\BibitemShut {NoStop}%
\bibitem [{\citenamefont {Bray}\ and\ \citenamefont
  {McKane}(1989)}]{Bray_1989}%
  \BibitemOpen
  \bibfield  {author} {\bibinfo {author} {\bibfnamefont {A.~J.}\ \bibnamefont
  {Bray}}\ and\ \bibinfo {author} {\bibfnamefont {A.~J.}\ \bibnamefont
  {McKane}},\ }\bibfield  {title} {\enquote {\bibinfo {title} {Instanton
  calculation of the escape rate for activation over a potential barrier driven
  by colored noise},}\ }\href {\doibase 10.1103/PhysRevLett.62.493} {\bibfield
  {journal} {\bibinfo  {journal} {Phys. Rev. Lett.}\ }\textbf {\bibinfo
  {volume} {62}},\ \bibinfo {pages} {493--496} (\bibinfo {year}
  {1989})}\BibitemShut {NoStop}%
\bibitem [{\citenamefont {Dykman}(1990)}]{Dykman_1990}%
  \BibitemOpen
  \bibfield  {author} {\bibinfo {author} {\bibfnamefont {M.~I.}\ \bibnamefont
  {Dykman}},\ }\bibfield  {title} {\enquote {\bibinfo {title} {Large
  fluctuations and fluctuational transitions in systems driven by colored
  gaussian noise: A high-frequency noise},}\ }\href {\doibase
  10.1103/PhysRevA.42.2020} {\bibfield  {journal} {\bibinfo  {journal} {Phys.
  Rev. A}\ }\textbf {\bibinfo {volume} {42}},\ \bibinfo {pages} {2020--2029}
  (\bibinfo {year} {1990})}\BibitemShut {NoStop}%
\bibitem [{\citenamefont {Dykman}\ \emph {et~al.}(1993)\citenamefont {Dykman},
  \citenamefont {Mannella}, \citenamefont {McClintock}, \citenamefont {Stein},\
  and\ \citenamefont {Stocks}}]{Dykman_1993b}%
  \BibitemOpen
  \bibfield  {author} {\bibinfo {author} {\bibfnamefont {M.~I.}\ \bibnamefont
  {Dykman}}, \bibinfo {author} {\bibfnamefont {R.}~\bibnamefont {Mannella}},
  \bibinfo {author} {\bibfnamefont {P.~V.~E.}\ \bibnamefont {McClintock}},
  \bibinfo {author} {\bibfnamefont {N.~D.}\ \bibnamefont {Stein}}, \ and\
  \bibinfo {author} {\bibfnamefont {N.~G.}\ \bibnamefont {Stocks}},\ }\bibfield
   {title} {\enquote {\bibinfo {title} {Probability distributions and escape
  rates for systems driven by quasimonochromatic noise},}\ }\href {\doibase
  10.1103/PhysRevE.47.3996} {\bibfield  {journal} {\bibinfo  {journal} {Phys.
  Rev. E}\ }\textbf {\bibinfo {volume} {47}},\ \bibinfo {pages} {3996--4009}
  (\bibinfo {year} {1993})}\BibitemShut {NoStop}%
\bibitem [{\citenamefont {Freidlin}\ and\ \citenamefont
  {Wentzell}(2012)}]{Freidlin_2012}%
  \BibitemOpen
  \bibfield  {author} {\bibinfo {author} {\bibfnamefont {M.}~\bibnamefont
  {Freidlin}}\ and\ \bibinfo {author} {\bibfnamefont {A.}~\bibnamefont
  {Wentzell}},\ }\href@noop {} {\emph {\bibinfo {title} {Random Perturbations
  of Dynamical Systems}}}\ (\bibinfo  {publisher} {Springer},\ \bibinfo
  {address} {Berlin, Heidelberg},\ \bibinfo {year} {2012})\BibitemShut
  {NoStop}%
\bibitem [{\citenamefont {Graham}(1977)}]{Graham_1977}%
  \BibitemOpen
  \bibfield  {author} {\bibinfo {author} {\bibfnamefont {R.}~\bibnamefont
  {Graham}},\ }\bibfield  {title} {\enquote {\bibinfo {title} {Path integral
  formulation of general diffusion processes},}\ }\href {\doibase
  10.1007/BF01312935} {\bibfield  {journal} {\bibinfo  {journal} {Zeitschrift
  für Physik B Condensed Matter}\ }\textbf {\bibinfo {volume} {26}},\ \bibinfo
  {pages} {281--290} (\bibinfo {year} {1977})}\BibitemShut {NoStop}%
\bibitem [{\citenamefont {McKane}, \citenamefont {Luckock},\ and\ \citenamefont
  {Bray}(1990)}]{McKane_1990a}%
  \BibitemOpen
  \bibfield  {author} {\bibinfo {author} {\bibfnamefont {A.~J.}\ \bibnamefont
  {McKane}}, \bibinfo {author} {\bibfnamefont {H.~C.}\ \bibnamefont {Luckock}},
  \ and\ \bibinfo {author} {\bibfnamefont {A.~J.}\ \bibnamefont {Bray}},\
  }\bibfield  {title} {\enquote {\bibinfo {title} {Path integrals and
  non-markov processes. i. general formalism},}\ }\href {\doibase
  10.1103/PhysRevA.41.644} {\bibfield  {journal} {\bibinfo  {journal} {Phys.
  Rev. A}\ }\textbf {\bibinfo {volume} {41}},\ \bibinfo {pages} {644--656}
  (\bibinfo {year} {1990})}\BibitemShut {NoStop}%
\bibitem [{\citenamefont {Bray}, \citenamefont {McKane},\ and\ \citenamefont
  {Newman}(1990)}]{McKane_1990b}%
  \BibitemOpen
  \bibfield  {author} {\bibinfo {author} {\bibfnamefont {A.~J.}\ \bibnamefont
  {Bray}}, \bibinfo {author} {\bibfnamefont {A.~J.}\ \bibnamefont {McKane}}, \
  and\ \bibinfo {author} {\bibfnamefont {T.~J.}\ \bibnamefont {Newman}},\
  }\bibfield  {title} {\enquote {\bibinfo {title} {Path integrals and
  non-markov processes. ii. escape rates and stationary distributions in the
  weak-noise limit},}\ }\href {\doibase 10.1103/PhysRevA.41.657} {\bibfield
  {journal} {\bibinfo  {journal} {Phys. Rev. A}\ }\textbf {\bibinfo {volume}
  {41}},\ \bibinfo {pages} {657--667} (\bibinfo {year} {1990})}\BibitemShut
  {NoStop}%
\bibitem [{\citenamefont {Naeh}\ \emph {et~al.}(1990)\citenamefont {Naeh},
  \citenamefont {K\l{}osek}, \citenamefont {Matkowsky},\ and\ \citenamefont
  {Schuss}}]{Naeh_1990}%
  \BibitemOpen
  \bibfield  {author} {\bibinfo {author} {\bibfnamefont {T.}~\bibnamefont
  {Naeh}}, \bibinfo {author} {\bibfnamefont {M.~M.}\ \bibnamefont {K\l{}osek}},
  \bibinfo {author} {\bibfnamefont {B.~J.}\ \bibnamefont {Matkowsky}}, \ and\
  \bibinfo {author} {\bibfnamefont {Z.}~\bibnamefont {Schuss}},\ }\bibfield
  {title} {\enquote {\bibinfo {title} {A direct approach to the exit
  problem},}\ }\href {\doibase 10.1137/0150036} {\bibfield  {journal} {\bibinfo
   {journal} {SIAM Journal on Applied Mathematics}\ }\textbf {\bibinfo {volume}
  {50}},\ \bibinfo {pages} {595--627} (\bibinfo {year} {1990})}\BibitemShut
  {NoStop}%
\bibitem [{\citenamefont {Maier}\ and\ \citenamefont
  {Stein}(1997)}]{Maier_1997}%
  \BibitemOpen
  \bibfield  {author} {\bibinfo {author} {\bibfnamefont {R.~S.}\ \bibnamefont
  {Maier}}\ and\ \bibinfo {author} {\bibfnamefont {D.~L.}\ \bibnamefont
  {Stein}},\ }\bibfield  {title} {\enquote {\bibinfo {title} {Limiting exit
  location distributions in the stochastic exit problem},}\ }\href {\doibase
  10.1137/S0036139994271753} {\bibfield  {journal} {\bibinfo  {journal} {SIAM
  Journal on Applied Mathematics}\ }\textbf {\bibinfo {volume} {57}},\ \bibinfo
  {pages} {752--790} (\bibinfo {year} {1997})}\BibitemShut {NoStop}%
\bibitem [{\citenamefont {Luchinsky}\ \emph {et~al.}(1999)\citenamefont
  {Luchinsky}, \citenamefont {Maier}, \citenamefont {Mannella}, \citenamefont
  {McClintock},\ and\ \citenamefont {Stein}}]{Luchinsky_1999}%
  \BibitemOpen
  \bibfield  {author} {\bibinfo {author} {\bibfnamefont {D.~G.}\ \bibnamefont
  {Luchinsky}}, \bibinfo {author} {\bibfnamefont {R.~S.}\ \bibnamefont
  {Maier}}, \bibinfo {author} {\bibfnamefont {R.}~\bibnamefont {Mannella}},
  \bibinfo {author} {\bibfnamefont {P.~V.~E.}\ \bibnamefont {McClintock}}, \
  and\ \bibinfo {author} {\bibfnamefont {D.~L.}\ \bibnamefont {Stein}},\
  }\bibfield  {title} {\enquote {\bibinfo {title} {Observation of saddle-point
  avoidance in noise-induced escape},}\ }\href {\doibase
  10.1103/PhysRevLett.82.1806} {\bibfield  {journal} {\bibinfo  {journal}
  {Phys. Rev. Lett.}\ }\textbf {\bibinfo {volume} {82}},\ \bibinfo {pages}
  {1806--1809} (\bibinfo {year} {1999})}\BibitemShut {NoStop}%
\bibitem [{\citenamefont {Maier}\ and\ \citenamefont
  {Stein}(1993)}]{Maier_1993}%
  \BibitemOpen
  \bibfield  {author} {\bibinfo {author} {\bibfnamefont {R.~S.}\ \bibnamefont
  {Maier}}\ and\ \bibinfo {author} {\bibfnamefont {D.~L.}\ \bibnamefont
  {Stein}},\ }\bibfield  {title} {\enquote {\bibinfo {title} {Effect of
  focusing and caustics on exit phenomena in systems lacking detailed
  balance},}\ }\href {\doibase 10.1103/PhysRevLett.71.1783} {\bibfield
  {journal} {\bibinfo  {journal} {Phys. Rev. Lett.}\ }\textbf {\bibinfo
  {volume} {71}},\ \bibinfo {pages} {1783--1786} (\bibinfo {year}
  {1993})}\BibitemShut {NoStop}%
\bibitem [{\citenamefont {Dykman}, \citenamefont {Millonas},\ and\
  \citenamefont {Smelyanskiy}(1994)}]{Dykman_1994b}%
  \BibitemOpen
  \bibfield  {author} {\bibinfo {author} {\bibfnamefont {M.~I.}\ \bibnamefont
  {Dykman}}, \bibinfo {author} {\bibfnamefont {M.~M.}\ \bibnamefont
  {Millonas}}, \ and\ \bibinfo {author} {\bibfnamefont {V.~N.}\ \bibnamefont
  {Smelyanskiy}},\ }\bibfield  {title} {\enquote {\bibinfo {title} {Observable
  and hidden singular features of large fluctuations in nonequilibrium
  systems},}\ }\href {\doibase 10.1016/0375-9601(94)90426-X} {\bibfield
  {journal} {\bibinfo  {journal} {Physics Letters A}\ }\textbf {\bibinfo
  {volume} {195}},\ \bibinfo {pages} {53--58} (\bibinfo {year}
  {1994})}\BibitemShut {NoStop}%
\bibitem [{\citenamefont {Dykman}\ \emph {et~al.}(1996)\citenamefont {Dykman},
  \citenamefont {Luchinsky}, \citenamefont {McClintock},\ and\ \citenamefont
  {Smelyanskiy}}]{Dykman_1996}%
  \BibitemOpen
  \bibfield  {author} {\bibinfo {author} {\bibfnamefont {M.~I.}\ \bibnamefont
  {Dykman}}, \bibinfo {author} {\bibfnamefont {D.~G.}\ \bibnamefont
  {Luchinsky}}, \bibinfo {author} {\bibfnamefont {P.~V.~E.}\ \bibnamefont
  {McClintock}}, \ and\ \bibinfo {author} {\bibfnamefont {V.~N.}\ \bibnamefont
  {Smelyanskiy}},\ }\bibfield  {title} {\enquote {\bibinfo {title} {Corrals and
  critical behavior of the distribution of fluctuational paths},}\ }\href
  {\doibase 10.1103/PhysRevLett.77.5229} {\bibfield  {journal} {\bibinfo
  {journal} {Phys. Rev. Lett.}\ }\textbf {\bibinfo {volume} {77}},\ \bibinfo
  {pages} {5229--5232} (\bibinfo {year} {1996})}\BibitemShut {NoStop}%
\bibitem [{\citenamefont {Smelyanskiy}, \citenamefont {Dykman},\ and\
  \citenamefont {Maier}(1997)}]{Smelyanskiy_1997}%
  \BibitemOpen
  \bibfield  {author} {\bibinfo {author} {\bibfnamefont {V.~N.}\ \bibnamefont
  {Smelyanskiy}}, \bibinfo {author} {\bibfnamefont {M.~I.}\ \bibnamefont
  {Dykman}}, \ and\ \bibinfo {author} {\bibfnamefont {R.~S.}\ \bibnamefont
  {Maier}},\ }\bibfield  {title} {\enquote {\bibinfo {title} {Topological
  features of large fluctuations to the interior of a limit cycle},}\ }\href
  {\doibase 10.1103/PhysRevE.55.2369} {\bibfield  {journal} {\bibinfo
  {journal} {Phys. Rev. E}\ }\textbf {\bibinfo {volume} {55}},\ \bibinfo
  {pages} {2369--2391} (\bibinfo {year} {1997})}\BibitemShut {NoStop}%
\bibitem [{\citenamefont {Dykman}\ \emph {et~al.}(1992)\citenamefont {Dykman},
  \citenamefont {McClintock}, \citenamefont {Smelyanski}, \citenamefont
  {Stein},\ and\ \citenamefont {Stocks}}]{Dykman_1992}%
  \BibitemOpen
  \bibfield  {author} {\bibinfo {author} {\bibfnamefont {M.~I.}\ \bibnamefont
  {Dykman}}, \bibinfo {author} {\bibfnamefont {P.~V.~E.}\ \bibnamefont
  {McClintock}}, \bibinfo {author} {\bibfnamefont {V.~N.}\ \bibnamefont
  {Smelyanski}}, \bibinfo {author} {\bibfnamefont {N.~D.}\ \bibnamefont
  {Stein}}, \ and\ \bibinfo {author} {\bibfnamefont {N.~G.}\ \bibnamefont
  {Stocks}},\ }\bibfield  {title} {\enquote {\bibinfo {title} {Optimal paths
  and the prehistory problem for large fluctuations in noise-driven systems},}\
  }\href {\doibase 10.1103/PhysRevLett.68.2718} {\bibfield  {journal} {\bibinfo
   {journal} {Phys. Rev. Lett.}\ }\textbf {\bibinfo {volume} {68}},\ \bibinfo
  {pages} {2718--2721} (\bibinfo {year} {1992})}\BibitemShut {NoStop}%
\bibitem [{\citenamefont {Zhao}, \citenamefont {Li},\ and\ \citenamefont
  {Liu}(2022)}]{Zhao_2022}%
  \BibitemOpen
  \bibfield  {author} {\bibinfo {author} {\bibfnamefont {F.}~\bibnamefont
  {Zhao}}, \bibinfo {author} {\bibfnamefont {Y.}~\bibnamefont {Li}}, \ and\
  \bibinfo {author} {\bibfnamefont {X.}~\bibnamefont {Liu}},\ }\bibfield
  {title} {\enquote {\bibinfo {title} {A prehistorical approach to optimal
  fluctuations for general langevin dynamics with weak gaussian white
  noises},}\ }\href {\doibase 10.3390/app12178569} {\bibfield  {journal}
  {\bibinfo  {journal} {Applied Sciences}\ }\textbf {\bibinfo {volume} {12}}
  (\bibinfo {year} {2022}),\ 10.3390/app12178569}\BibitemShut {NoStop}%
\bibitem [{\citenamefont {Anishchenko}\ \emph {et~al.}(2001)\citenamefont
  {Anishchenko}, \citenamefont {Khovanov}, \citenamefont {Khovanov},
  \citenamefont {Luchinsky},\ and\ \citenamefont
  {McClintock}}]{ANISHCHENKO_2001}%
  \BibitemOpen
  \bibfield  {author} {\bibinfo {author} {\bibfnamefont {V.~S.}\ \bibnamefont
  {Anishchenko}}, \bibinfo {author} {\bibfnamefont {I.~A.}\ \bibnamefont
  {Khovanov}}, \bibinfo {author} {\bibfnamefont {N.~A.}\ \bibnamefont
  {Khovanov}}, \bibinfo {author} {\bibfnamefont {D.~G.}\ \bibnamefont
  {Luchinsky}}, \ and\ \bibinfo {author} {\bibfnamefont {P.~V.~E.}\
  \bibnamefont {McClintock}},\ }\bibfield  {title} {\enquote {\bibinfo {title}
  {Noise-induced escape from the lorenz attractor},}\ }\href {\doibase
  10.1142/S0219477501000111} {\bibfield  {journal} {\bibinfo  {journal}
  {Fluctuation and Noise Letters}\ }\textbf {\bibinfo {volume} {01}},\ \bibinfo
  {pages} {L27--L33} (\bibinfo {year} {2001})}\BibitemShut {NoStop}%
\bibitem [{\citenamefont {Gillespie}(2007)}]{Gillespie_2007}%
  \BibitemOpen
  \bibfield  {author} {\bibinfo {author} {\bibfnamefont {D.~T.}\ \bibnamefont
  {Gillespie}},\ }\bibfield  {title} {\enquote {\bibinfo {title} {Stochastic
  simulation of chemical kinetics},}\ }\href {\doibase
  https://doi.org/10.1146/annurev.physchem.58.032806.104637} {\bibfield
  {journal} {\bibinfo  {journal} {Annual Review of Physical Chemistry}\
  }\textbf {\bibinfo {volume} {58}},\ \bibinfo {pages} {35--55} (\bibinfo
  {year} {2007})}\BibitemShut {NoStop}%
\bibitem [{\citenamefont {Horn}\ and\ \citenamefont
  {Jackson}(1972)}]{Horn_1972}%
  \BibitemOpen
  \bibfield  {author} {\bibinfo {author} {\bibfnamefont {F.}~\bibnamefont
  {Horn}}\ and\ \bibinfo {author} {\bibfnamefont {R.}~\bibnamefont {Jackson}},\
  }\bibfield  {title} {\enquote {\bibinfo {title} {General mass action
  kinetics},}\ }\href {\doibase 10.1007/BF00251225} {\bibfield  {journal}
  {\bibinfo  {journal} {Archive for Rational Mechanics and Analysis}\ }\textbf
  {\bibinfo {volume} {47}},\ \bibinfo {pages} {81--116} (\bibinfo {year}
  {1972})}\BibitemShut {NoStop}%
\bibitem [{\citenamefont {Ge}\ and\ \citenamefont {Qian}(2017)}]{Hao_Ge_2017}%
  \BibitemOpen
  \bibfield  {author} {\bibinfo {author} {\bibfnamefont {H.}~\bibnamefont
  {Ge}}\ and\ \bibinfo {author} {\bibfnamefont {H.}~\bibnamefont {Qian}},\
  }\bibfield  {title} {\enquote {\bibinfo {title} {Mathematical formalism of
  nonequilibrium thermodynamics for nonlinear chemical reaction systems with
  general rate law},}\ }\href {\doibase 10.1007/s10955-016-1678-6} {\bibfield
  {journal} {\bibinfo  {journal} {Journal of Statistical Physics}\ }\textbf
  {\bibinfo {volume} {166}},\ \bibinfo {pages} {190--209} (\bibinfo {year}
  {2017})}\BibitemShut {NoStop}%
\bibitem [{\citenamefont {Kurtz}(1972{\natexlab{a}})}]{Kurtz_1972a}%
  \BibitemOpen
  \bibfield  {author} {\bibinfo {author} {\bibfnamefont {T.~G.}\ \bibnamefont
  {Kurtz}},\ }\bibfield  {title} {\enquote {\bibinfo {title} {The relationship
  between stochastic and deterministic models for chemical reactions},}\ }\href
  {\doibase 10.1063/1.1678692} {\bibfield  {journal} {\bibinfo  {journal} {The
  Journal of Chemical Physics}\ }\textbf {\bibinfo {volume} {57}},\ \bibinfo
  {pages} {2976--2978} (\bibinfo {year} {1972}{\natexlab{a}})}\BibitemShut
  {NoStop}%
\bibitem [{\citenamefont {Gillespie}(1977)}]{Gillespie_1977}%
  \BibitemOpen
  \bibfield  {author} {\bibinfo {author} {\bibfnamefont {D.~T.}\ \bibnamefont
  {Gillespie}},\ }\bibfield  {title} {\enquote {\bibinfo {title} {Exact
  stochastic simulation of coupled chemical reactions},}\ }\href {\doibase
  10.1021/j100540a008} {\bibfield  {journal} {\bibinfo  {journal} {The Journal
  of Physical Chemistry}\ }\textbf {\bibinfo {volume} {81}},\ \bibinfo {pages}
  {2340--2361} (\bibinfo {year} {1977})}\BibitemShut {NoStop}%
\bibitem [{\citenamefont {Ge}\ and\ \citenamefont
  {Qian}(2016{\natexlab{a}})}]{Hao_Ge_2016a}%
  \BibitemOpen
  \bibfield  {author} {\bibinfo {author} {\bibfnamefont {H.}~\bibnamefont
  {Ge}}\ and\ \bibinfo {author} {\bibfnamefont {H.}~\bibnamefont {Qian}},\
  }\bibfield  {title} {\enquote {\bibinfo {title} {Nonequilibrium thermodynamic
  formalism of nonlinear chemical reaction systems with waage–guldberg’s
  law of mass action},}\ }\href {\doibase
  https://doi.org/10.1016/j.chemphys.2016.03.026} {\bibfield  {journal}
  {\bibinfo  {journal} {Chemical Physics}\ }\textbf {\bibinfo {volume} {472}},\
  \bibinfo {pages} {241--248} (\bibinfo {year}
  {2016}{\natexlab{a}})}\BibitemShut {NoStop}%
\bibitem [{\citenamefont {Anderson}\ and\ \citenamefont
  {Kurtz}(2015)}]{Kurtz_2015}%
  \BibitemOpen
  \bibfield  {author} {\bibinfo {author} {\bibfnamefont {D.~F.}\ \bibnamefont
  {Anderson}}\ and\ \bibinfo {author} {\bibfnamefont {T.~G.}\ \bibnamefont
  {Kurtz}},\ }\href@noop {} {\emph {\bibinfo {title} {Stochastic Analysis of
  Biochemical Systems}}}\ (\bibinfo  {publisher} {Springer},\ \bibinfo
  {address} {Switzerland},\ \bibinfo {year} {2015})\BibitemShut {NoStop}%
\bibitem [{\citenamefont {Anderson}\ \emph {et~al.}(2015)\citenamefont
  {Anderson}, \citenamefont {Craciun}, \citenamefont {Gopalkrishnan},\ and\
  \citenamefont {Wiuf}}]{Anderson_2015}%
  \BibitemOpen
  \bibfield  {author} {\bibinfo {author} {\bibfnamefont {D.~F.}\ \bibnamefont
  {Anderson}}, \bibinfo {author} {\bibfnamefont {G.}~\bibnamefont {Craciun}},
  \bibinfo {author} {\bibfnamefont {M.}~\bibnamefont {Gopalkrishnan}}, \ and\
  \bibinfo {author} {\bibfnamefont {C.}~\bibnamefont {Wiuf}},\ }\bibfield
  {title} {\enquote {\bibinfo {title} {Lyapunov functions, stationary
  distributions, and non-equilibrium potential for reaction networks},}\ }\href
  {\doibase 10.1007/s11538-015-0102-8} {\bibfield  {journal} {\bibinfo
  {journal} {Bulletin of Mathematical Biology}\ }\textbf {\bibinfo {volume}
  {77}},\ \bibinfo {pages} {1744--1767} (\bibinfo {year} {2015})}\BibitemShut
  {NoStop}%
\bibitem [{\citenamefont {Dykman}\ \emph {et~al.}(1994)\citenamefont {Dykman},
  \citenamefont {Mori}, \citenamefont {Ross},\ and\ \citenamefont
  {Hunt}}]{Dykman_1994a}%
  \BibitemOpen
  \bibfield  {author} {\bibinfo {author} {\bibfnamefont {M.~I.}\ \bibnamefont
  {Dykman}}, \bibinfo {author} {\bibfnamefont {E.}~\bibnamefont {Mori}},
  \bibinfo {author} {\bibfnamefont {J.}~\bibnamefont {Ross}}, \ and\ \bibinfo
  {author} {\bibfnamefont {P.~M.}\ \bibnamefont {Hunt}},\ }\bibfield  {title}
  {\enquote {\bibinfo {title} {Large fluctuations and optimal paths in chemical
  kinetics},}\ }\href {\doibase 10.1063/1.467139} {\bibfield  {journal}
  {\bibinfo  {journal} {The Journal of Chemical Physics}\ }\textbf {\bibinfo
  {volume} {100}},\ \bibinfo {pages} {5735--5750} (\bibinfo {year}
  {1994})}\BibitemShut {NoStop}%
\bibitem [{\citenamefont {Shwartz}\ and\ \citenamefont
  {Weiss}(1995)}]{Shwartz_1995}%
  \BibitemOpen
  \bibfield  {author} {\bibinfo {author} {\bibfnamefont {A.}~\bibnamefont
  {Shwartz}}\ and\ \bibinfo {author} {\bibfnamefont {A.}~\bibnamefont
  {Weiss}},\ }\href@noop {} {\emph {\bibinfo {title} {Large Deviations for
  Performance Analysis: Queues, Communication and Computing}}}\ (\bibinfo
  {publisher} {CRC Press},\ \bibinfo {address} {New York},\ \bibinfo {year}
  {1995})\BibitemShut {NoStop}%
\bibitem [{\citenamefont {Knessl}\ \emph {et~al.}(1984)\citenamefont {Knessl},
  \citenamefont {Mangel}, \citenamefont {Matkowsky}, \citenamefont {Schuss},\
  and\ \citenamefont {Tier}}]{Knessl_1984}%
  \BibitemOpen
  \bibfield  {author} {\bibinfo {author} {\bibfnamefont {C.}~\bibnamefont
  {Knessl}}, \bibinfo {author} {\bibfnamefont {M.}~\bibnamefont {Mangel}},
  \bibinfo {author} {\bibfnamefont {B.~J.}\ \bibnamefont {Matkowsky}}, \bibinfo
  {author} {\bibfnamefont {Z.}~\bibnamefont {Schuss}}, \ and\ \bibinfo {author}
  {\bibfnamefont {C.}~\bibnamefont {Tier}},\ }\bibfield  {title} {\enquote
  {\bibinfo {title} {Solution of kramers-moyal equations for problems in
  chemical physics},}\ }\href {\doibase 10.1063/1.447815} {\bibfield  {journal}
  {\bibinfo  {journal} {The Journal of Chemical Physics}\ }\textbf {\bibinfo
  {volume} {81}},\ \bibinfo {pages} {1285--1293} (\bibinfo {year}
  {1984})}\BibitemShut {NoStop}%
\bibitem [{\citenamefont {Pakdaman}, \citenamefont {Thieullen},\ and\
  \citenamefont {Wainrib}(2010)}]{Pakdaman_2010}%
  \BibitemOpen
  \bibfield  {author} {\bibinfo {author} {\bibfnamefont {K.}~\bibnamefont
  {Pakdaman}}, \bibinfo {author} {\bibfnamefont {M.}~\bibnamefont {Thieullen}},
  \ and\ \bibinfo {author} {\bibfnamefont {G.}~\bibnamefont {Wainrib}},\
  }\bibfield  {title} {\enquote {\bibinfo {title} {Diffusion approximation of
  birth–death processes: Comparison in terms of large deviations and exit
  points},}\ }\href {\doibase https://doi.org/10.1016/j.spl.2010.03.006}
  {\bibfield  {journal} {\bibinfo  {journal} {Statistics and Probability
  Letters}\ }\textbf {\bibinfo {volume} {80}},\ \bibinfo {pages} {1121--1127}
  (\bibinfo {year} {2010})}\BibitemShut {NoStop}%
\bibitem [{\citenamefont {Kurtz}(1978)}]{Kurtz_1978}%
  \BibitemOpen
  \bibfield  {author} {\bibinfo {author} {\bibfnamefont {T.~G.}\ \bibnamefont
  {Kurtz}},\ }\bibfield  {title} {\enquote {\bibinfo {title} {Strong
  approximation theorems for density dependent markov chains},}\ }\href
  {\doibase https://doi.org/10.1016/0304-4149(78)90020-0} {\bibfield  {journal}
  {\bibinfo  {journal} {Stochastic Processes and their Applications}\ }\textbf
  {\bibinfo {volume} {6}},\ \bibinfo {pages} {223--240} (\bibinfo {year}
  {1978})}\BibitemShut {NoStop}%
\bibitem [{\citenamefont {Kampen}(1961)}]{Kampen_1961}%
  \BibitemOpen
  \bibfield  {author} {\bibinfo {author} {\bibfnamefont {N.~G.}\ \bibnamefont
  {Kampen}},\ }\bibfield  {title} {\enquote {\bibinfo {title} {A power series
  expansion of the master equation},}\ }\href {\doibase 10.1139/p61-056}
  {\bibfield  {journal} {\bibinfo  {journal} {Canadian Journal of Physics}\
  }\textbf {\bibinfo {volume} {39}},\ \bibinfo {pages} {551--567} (\bibinfo
  {year} {1961})}\BibitemShut {NoStop}%
\bibitem [{\citenamefont {Gillespie}(2000)}]{Gillespie_2000}%
  \BibitemOpen
  \bibfield  {author} {\bibinfo {author} {\bibfnamefont {D.~T.}\ \bibnamefont
  {Gillespie}},\ }\bibfield  {title} {\enquote {\bibinfo {title} {The chemical
  langevin equation},}\ }\href {\doibase 10.1063/1.481811} {\bibfield
  {journal} {\bibinfo  {journal} {The Journal of Chemical Physics}\ }\textbf
  {\bibinfo {volume} {113}},\ \bibinfo {pages} {297--306} (\bibinfo {year}
  {2000})}\BibitemShut {NoStop}%
\bibitem [{\citenamefont {Dykman}, \citenamefont {Chu},\ and\ \citenamefont
  {Ross}(1993)}]{Dykman_1993}%
  \BibitemOpen
  \bibfield  {author} {\bibinfo {author} {\bibfnamefont {M.}~\bibnamefont
  {Dykman}}, \bibinfo {author} {\bibfnamefont {X.}~\bibnamefont {Chu}}, \ and\
  \bibinfo {author} {\bibfnamefont {J.}~\bibnamefont {Ross}},\ }\bibfield
  {title} {\enquote {\bibinfo {title} {Stationary probability distribution near
  stable limit cycles far from hopf bifurcation points},}\ }\href {\doibase
  10.1103/PhysRevE.48.1646} {\bibfield  {journal} {\bibinfo  {journal} {Phys.
  Rev. E}\ }\textbf {\bibinfo {volume} {48}},\ \bibinfo {pages} {1646--1654}
  (\bibinfo {year} {1993})}\BibitemShut {NoStop}%
\bibitem [{\citenamefont {Gang}(1987)}]{Hu_Gang_1987}%
  \BibitemOpen
  \bibfield  {author} {\bibinfo {author} {\bibfnamefont {H.}~\bibnamefont
  {Gang}},\ }\bibfield  {title} {\enquote {\bibinfo {title} {Stationary
  solution of master equations in the large-system-size limit},}\ }\href
  {\doibase 10.1103/PhysRevA.36.5782} {\bibfield  {journal} {\bibinfo
  {journal} {Phys. Rev. A}\ }\textbf {\bibinfo {volume} {36}},\ \bibinfo
  {pages} {5782--5790} (\bibinfo {year} {1987})}\BibitemShut {NoStop}%
\bibitem [{\citenamefont {Ge}\ and\ \citenamefont
  {Qian}(2016{\natexlab{b}})}]{Hao_Ge_2016b}%
  \BibitemOpen
  \bibfield  {author} {\bibinfo {author} {\bibfnamefont {H.}~\bibnamefont
  {Ge}}\ and\ \bibinfo {author} {\bibfnamefont {H.}~\bibnamefont {Qian}},\
  }\bibfield  {title} {\enquote {\bibinfo {title} {Mesoscopic kinetic basis of
  macroscopic chemical thermodynamics: A mathematical theory},}\ }\href
  {\doibase 10.1103/PhysRevE.94.052150} {\bibfield  {journal} {\bibinfo
  {journal} {Phys. Rev. E}\ }\textbf {\bibinfo {volume} {94}},\ \bibinfo
  {pages} {052150} (\bibinfo {year} {2016}{\natexlab{b}})}\BibitemShut
  {NoStop}%
\bibitem [{\citenamefont {Day}\ and\ \citenamefont {Darden}(1985)}]{Day_1985}%
  \BibitemOpen
  \bibfield  {author} {\bibinfo {author} {\bibfnamefont {M.~V.}\ \bibnamefont
  {Day}}\ and\ \bibinfo {author} {\bibfnamefont {T.~A.}\ \bibnamefont
  {Darden}},\ }\bibfield  {title} {\enquote {\bibinfo {title} {Some regularity
  results on the ventcel-freidlin quasi-potential function},}\ }\href {\doibase
  10.1007/BF01442211} {\bibfield  {journal} {\bibinfo  {journal} {Applied
  Mathematics and Optimization}\ }\textbf {\bibinfo {volume} {13}},\ \bibinfo
  {pages} {259--282} (\bibinfo {year} {1985})}\BibitemShut {NoStop}%
\bibitem [{\citenamefont {Day}(1986)}]{Day_1986}%
  \BibitemOpen
  \bibfield  {author} {\bibinfo {author} {\bibfnamefont {M.~V.}\ \bibnamefont
  {Day}},\ }\bibfield  {title} {\enquote {\bibinfo {title} {The exterior sphere
  condition for the ventcel-freidlin quasi-potential function},}\ }\href
  {\doibase 10.1007/BF01442227} {\bibfield  {journal} {\bibinfo  {journal}
  {Applied Mathematics and Optimization}\ }\textbf {\bibinfo {volume} {14}},\
  \bibinfo {pages} {49--54} (\bibinfo {year} {1986})}\BibitemShut {NoStop}%
\bibitem [{\citenamefont {Haussmann}\ and\ \citenamefont
  {Pardoux}(1986)}]{Haussmann_1986}%
  \BibitemOpen
  \bibfield  {author} {\bibinfo {author} {\bibfnamefont {U.~G.}\ \bibnamefont
  {Haussmann}}\ and\ \bibinfo {author} {\bibfnamefont {E.}~\bibnamefont
  {Pardoux}},\ }\bibfield  {title} {\enquote {\bibinfo {title} {Time reversal
  of diffusions},}\ }\href {http://www.jstor.org/stable/2243859} {\bibfield
  {journal} {\bibinfo  {journal} {The Annals of Probability}\ }\textbf
  {\bibinfo {volume} {14}},\ \bibinfo {pages} {1188--1205} (\bibinfo {year}
  {1986})}\BibitemShut {NoStop}%
\bibitem [{\citenamefont {Cattiaux}\ \emph {et~al.}(2023)\citenamefont
  {Cattiaux}, \citenamefont {Conforti}, \citenamefont {Gentil},\ and\
  \citenamefont {L{\'e}onard}}]{Cattiaux_2023}%
  \BibitemOpen
  \bibfield  {author} {\bibinfo {author} {\bibfnamefont {P.}~\bibnamefont
  {Cattiaux}}, \bibinfo {author} {\bibfnamefont {G.}~\bibnamefont {Conforti}},
  \bibinfo {author} {\bibfnamefont {I.}~\bibnamefont {Gentil}}, \ and\ \bibinfo
  {author} {\bibfnamefont {C.}~\bibnamefont {L{\'e}onard}},\ }\bibfield
  {title} {\enquote {\bibinfo {title} {{Time reversal of diffusion processes
  under a finite entropy condition}},}\ }\href {\doibase 10.1214/22-AIHP1320}
  {\bibfield  {journal} {\bibinfo  {journal} {Annales de l'Institut Henri
  Poincaré, Probabilités et Statistiques}\ }\textbf {\bibinfo {volume}
  {59}},\ \bibinfo {pages} {1844 -- 1881} (\bibinfo {year} {2023})}\BibitemShut
  {NoStop}%
\bibitem [{\citenamefont {Conforti}\ and\ \citenamefont
  {Léonard}(2022)}]{Conforti_2022}%
  \BibitemOpen
  \bibfield  {author} {\bibinfo {author} {\bibfnamefont {G.}~\bibnamefont
  {Conforti}}\ and\ \bibinfo {author} {\bibfnamefont {C.}~\bibnamefont
  {Léonard}},\ }\bibfield  {title} {\enquote {\bibinfo {title} {Time reversal
  of markov processes with jumps under a finite entropy condition},}\ }\href
  {\doibase https://doi.org/10.1016/j.spa.2021.10.002} {\bibfield  {journal}
  {\bibinfo  {journal} {Stochastic Processes and their Applications}\ }\textbf
  {\bibinfo {volume} {144}},\ \bibinfo {pages} {85--124} (\bibinfo {year}
  {2022})}\BibitemShut {NoStop}%
\bibitem [{\citenamefont {Ventsel’}(1977{\natexlab{a}})}]{Venttsel_1977a}%
  \BibitemOpen
  \bibfield  {author} {\bibinfo {author} {\bibfnamefont {A.~D.}\ \bibnamefont
  {Ventsel’}},\ }\bibfield  {title} {\enquote {\bibinfo {title} {Rough limit
  theorems on large deviations for markov stochastic processes. i},}\ }\href
  {\doibase 10.1137/1121030} {\bibfield  {journal} {\bibinfo  {journal} {Theory
  of Probability \& Its Applications}\ }\textbf {\bibinfo {volume} {21}},\
  \bibinfo {pages} {227--242} (\bibinfo {year}
  {1977}{\natexlab{a}})}\BibitemShut {NoStop}%
\bibitem [{\citenamefont {Rockafellar}(1997)}]{Rockafellar_1997}%
  \BibitemOpen
  \bibfield  {author} {\bibinfo {author} {\bibfnamefont {R.~T.}\ \bibnamefont
  {Rockafellar}},\ }\href@noop {} {\emph {\bibinfo {title} {Convex analysis}}}\
  (\bibinfo  {publisher} {Princeton university press},\ \bibinfo {address}
  {Princeton, New Jersey},\ \bibinfo {year} {1997})\BibitemShut {NoStop}%
\bibitem [{\citenamefont {Cesari}(1983)}]{Cesari_1983}%
  \BibitemOpen
  \bibfield  {author} {\bibinfo {author} {\bibfnamefont {L.}~\bibnamefont
  {Cesari}},\ }\href@noop {} {\emph {\bibinfo {title} {Optimization—Theory
  and Applications: Problems with Ordinary Differential Equations}}}\ (\bibinfo
   {publisher} {Springer New York},\ \bibinfo {address} {New York},\ \bibinfo
  {year} {1983})\BibitemShut {NoStop}%
\bibitem [{\citenamefont {Ventsel’}(1977{\natexlab{b}})}]{Venttsel_1977b}%
  \BibitemOpen
  \bibfield  {author} {\bibinfo {author} {\bibfnamefont {A.~D.}\ \bibnamefont
  {Ventsel’}},\ }\bibfield  {title} {\enquote {\bibinfo {title} {Rough limit
  theorems on large deviations for markov stochastic processes, ii},}\ }\href
  {\doibase 10.1137/1121062} {\bibfield  {journal} {\bibinfo  {journal} {Theory
  of Probability \& Its Applications}\ }\textbf {\bibinfo {volume} {21}},\
  \bibinfo {pages} {499--512} (\bibinfo {year}
  {1977}{\natexlab{b}})}\BibitemShut {NoStop}%
\bibitem [{\citenamefont {Kurtz}(1971)}]{Kurtz_1971}%
  \BibitemOpen
  \bibfield  {author} {\bibinfo {author} {\bibfnamefont {T.~G.}\ \bibnamefont
  {Kurtz}},\ }\bibfield  {title} {\enquote {\bibinfo {title} {Limit theorems
  for sequences of jump markov processes approximating ordinary differential
  processes},}\ }\href {\doibase 10.2307/3211904} {\bibfield  {journal}
  {\bibinfo  {journal} {Journal of Applied Probability}\ }\textbf {\bibinfo
  {volume} {8}},\ \bibinfo {pages} {344–356} (\bibinfo {year}
  {1971})}\BibitemShut {NoStop}%
\bibitem [{\citenamefont {Kurtz}(1972{\natexlab{b}})}]{Kurtz_1972b}%
  \BibitemOpen
  \bibfield  {author} {\bibinfo {author} {\bibfnamefont {T.~G.}\ \bibnamefont
  {Kurtz}},\ }\bibfield  {title} {\enquote {\bibinfo {title} {Inequalities for
  the law of large numbers},}\ }\href@noop {} {\bibfield  {journal} {\bibinfo
  {journal} {The Annals of Mathematical Statistics}\ }\textbf {\bibinfo
  {volume} {43}},\ \bibinfo {pages} {1874--1883} (\bibinfo {year}
  {1972}{\natexlab{b}})}\BibitemShut {NoStop}%
\bibitem [{\citenamefont {Jacod}\ and\ \citenamefont
  {Shiryaev}(2013)}]{Jacod_2013}%
  \BibitemOpen
  \bibfield  {author} {\bibinfo {author} {\bibfnamefont {J.}~\bibnamefont
  {Jacod}}\ and\ \bibinfo {author} {\bibfnamefont {A.}~\bibnamefont
  {Shiryaev}},\ }\href@noop {} {\emph {\bibinfo {title} {Limit Theorems for
  Stochastic Processes}}}\ (\bibinfo  {publisher} {Springer Science and
  Business Media},\ \bibinfo {address} {Germany},\ \bibinfo {year}
  {2013})\BibitemShut {NoStop}%
\bibitem [{\citenamefont {Gikhman}\ and\ \citenamefont
  {Skorokhod}(1969)}]{Gikhman_1969}%
  \BibitemOpen
  \bibfield  {author} {\bibinfo {author} {\bibfnamefont {I.~I.}\ \bibnamefont
  {Gikhman}}\ and\ \bibinfo {author} {\bibfnamefont {A.~V.}\ \bibnamefont
  {Skorokhod}},\ }\href@noop {} {\emph {\bibinfo {title} {Introduction to The
  Theory of Random Processes}}}\ (\bibinfo  {publisher} {Nauka Press},\
  \bibinfo {address} {Moscow},\ \bibinfo {year} {1969})\BibitemShut {NoStop}%
\bibitem [{\citenamefont {Weinan}, \citenamefont {Ren},\ and\ \citenamefont
  {Vanden-Eijnden}(2004)}]{Weinan_2004}%
  \BibitemOpen
  \bibfield  {author} {\bibinfo {author} {\bibfnamefont {E.}~\bibnamefont
  {Weinan}}, \bibinfo {author} {\bibfnamefont {W.}~\bibnamefont {Ren}}, \ and\
  \bibinfo {author} {\bibfnamefont {E.}~\bibnamefont {Vanden-Eijnden}},\
  }\bibfield  {title} {\enquote {\bibinfo {title} {Minimum action method for
  the study of rare events},}\ }\href@noop {} {\bibfield  {journal} {\bibinfo
  {journal} {Communications on Pure and Applied Mathematics}\ }\textbf
  {\bibinfo {volume} {57}},\ \bibinfo {pages} {637--656} (\bibinfo {year}
  {2004})}\BibitemShut {NoStop}%
\bibitem [{\citenamefont {Heymann}\ and\ \citenamefont
  {Vanden-Eijnden}(2008)}]{Heymann_2008}%
  \BibitemOpen
  \bibfield  {author} {\bibinfo {author} {\bibfnamefont {M.}~\bibnamefont
  {Heymann}}\ and\ \bibinfo {author} {\bibfnamefont {E.}~\bibnamefont
  {Vanden-Eijnden}},\ }\bibfield  {title} {\enquote {\bibinfo {title} {The
  geometric minimum action method: A least action principle on the space of
  curves},}\ }\href {\doibase https://doi.org/10.1002/cpa.20238} {\bibfield
  {journal} {\bibinfo  {journal} {Communications on Pure and Applied
  Mathematics}\ }\textbf {\bibinfo {volume} {61}},\ \bibinfo {pages}
  {1052--1117} (\bibinfo {year} {2008})}\BibitemShut {NoStop}%
\end{thebibliography}%
		
	\end{document}